\documentclass[11pt,a4paper]{article}
\usepackage[utf8]{inputenc}
\usepackage[T1]{fontenc}
\usepackage{lmodern}
\usepackage{amsmath,amssymb,amsthm}
\usepackage[margin=2.5cm]{geometry}
\usepackage{microtype}
\usepackage{enumitem}
\usepackage{booktabs}
\usepackage{array}
\usepackage{xcolor}
\usepackage{url}
\usepackage{cite}
\usepackage[colorlinks=true,linkcolor=blue,citecolor=blue,urlcolor=blue]{hyperref}
\hypersetup{pdftitle={Resolvent Components of Completely Normal Elements and the Morgan--Mullen Conjecture},pdfauthor={Florian Hellwig}}

\newcommand{\F}{\mathbb{F}}
\newcommand{\Z}{\mathbb{Z}}
\newcommand{\ord}{\operatorname{ord}}
\newcommand{\lcm}{\operatorname{lcm}}
\newcommand{\rad}{\operatorname{rad}}
\newcommand{\CN}{\mathrm{CN}}
\newcommand{\PCN}{\mathrm{PCN}}
\newcommand{\Supp}{\operatorname{Supp}}
\newcommand{\cC}{\mathcal{C}}
\newcommand{\cE}{\mathcal{E}}
\newcommand{\Wcl}{W_{\mathrm{cl}}}
\newcommand{\Wcr}{W_{\mathrm{cr}}}

\newtheorem{theorem}{Theorem}[section]
\newtheorem{proposition}[theorem]{Proposition}
\newtheorem{lemma}[theorem]{Lemma}
\newtheorem{corollary}[theorem]{Corollary}
\newtheorem{conjecture}[theorem]{Conjecture}
\theoremstyle{definition}
\newtheorem{definition}[theorem]{Definition}
\theoremstyle{remark}
\newtheorem{remark}[theorem]{Remark}

\title{Resolvent Components of Completely Normal Elements and the Morgan--Mullen Conjecture}
\author{Florian Hellwig\\[2pt]
\small Institute of Mathematics, University of Zurich, Winterthurerstrasse 190, 8057 Zurich, Switzerland\\
\small \texttt{florian.hellwig@fhellwig.com}}
\date{October 2026}

\begin{document}
\maketitle

\begin{abstract}
\noindent
We study completely normal elements of $\F_{q^n}$ over $\F_q$, that is, elements that are normal over every
intermediate field, through partial Lagrange resolvents. In these coordinates, complete normality splits into
independent components, in a way closely related to Hachenberger's decomposition theory. Our main tool is a structure
theorem for the components; together with a finite computation, it shows that every component has weight (the sum
of the probabilities that its conditions fail) at most $3/4$, uniformly in $n$. Combined with the classical character-sum
method, this gives an explicit sufficient criterion for the existence of a primitive completely normal element. We
verify the criterion for all $n>q$ except $70$ pairs $(q,n)$, which we settle with explicit elements found by
computer. Since the case $n\le q$ is due to Garefalakis and Kapetanakis, this confirms the conjecture of Morgan and
Mullen (1996) that every $\F_{q^n}$ contains a primitive element that is completely normal over $\F_q$. The same
estimates give two further results. First, Hachenberger's conjectured lower bound $(q-1)^{n'}q^{(p^a-1)n'}$ for the
number of completely normal elements, where $p$ is the characteristic and $n=p^an'$ with $p\nmid n'$, holds for all
pairs $(q,n)$; it was known for regular pairs. Second, primitivity and complete normality are asymptotically
independent: as $q^n\to\infty$, the number of primitive completely normal elements is
$(1+o(1))\,\varphi(q^n-1)/(q^n-1)$ times the number of completely normal elements (for fixed $n$ this was known). Our
arguments for $n>q$ and for both further results, including every computer-assisted step, are formalised in Lean~4
with Mathlib; the formalisation takes the case $n\le q$ and an explicit bound of Rosser and Schoenfeld as axioms and
relies on Lean's compiler and interpreter for the large computations.
\end{abstract}

\smallskip\noindent\textit{2020 Mathematics Subject Classification.} Primary 11T30; Secondary 12E20, 11T24, 68V20.

\smallskip\noindent\textit{Keywords.} Morgan--Mullen conjecture, finite fields, normal basis, completely normal element,
primitive element, Lagrange resolvent, character sums, formal verification, Lean.

\section{Introduction}\label{sec:intro}

A basis of an extension $\F_{q^n}/\F_q$ of finite fields is normal if it consists of the conjugates of a single
element, which is then called normal; normal bases of finite fields have been studied since the nineteenth century. An
element is primitive if it generates the multiplicative group of $\F_{q^n}$. The \emph{primitive normal basis
theorem}, proved in general by Lenstra and Schoof \cite{lenstra1987primitive}, states that every extension
$\F_{q^n}/\F_q$ contains an element that is both primitive and normal. An element is completely normal if it is normal
not only over $\F_q$ but over every intermediate field, and by the \emph{complete normal basis theorem} of Blessenohl
and Johnsen \cite{blessenohl1986}, completely normal elements exist in every extension of finite fields. In 1996
Morgan and Mullen conjectured that primitivity and complete normality can always be combined: every extension
$\F_{q^n}/\F_q$ should contain a primitive completely normal element. Garefalakis and Kapetanakis
\cite{garefalakis2019jpaa} proved the conjecture for $n\le q$, and it has also been proved for several further classes
of extensions, in particular by Hachenberger. In this paper we prove it for $n>q$.

To state the conjecture precisely, let $q$ be a power of a prime $p$ and let $n\ge1$. Put $E=\F_{q^n}$ and $\sigma(x)=x^q$, and write $n=p^an'$ with
$p\nmid n'$.
An element $w\in E$ is \emph{normal} over $\F_{q^d}$ ($d\mid n$) if its conjugates $\sigma^{dt}(w)$, $0\le t<n/d$,
form a basis of $E$ over $\F_{q^d}$; it is \emph{completely normal} (CN) over $\F_q$ if it is normal over $\F_{q^d}$ for
every $d\mid n$, and it is \emph{primitive} if it generates the cyclic group $E^\times$. We say that $(q,n)$ has the
\emph{Morgan--Mullen property} if $E$ contains a primitive element that is completely normal over $\F_q$.
Further notation is collected in Section~\ref{subsec:notation}.

\begin{conjecture}[Morgan--Mullen \cite{morgan1996}]\label{conj:mm}
Every pair $(q,n)$ has the Morgan--Mullen property.
\end{conjecture}
Our approach to the case $n>q$ is to study completely normal elements through partial
Lagrange resolvents, which we use to bound the density of these elements from below; this bound is then
combined with the classical character-sum method and a finite computation. Together with the case $n\le q$, this
confirms Conjecture~\ref{conj:mm} (Theorem~\ref{thm:intro} below). The idea of the proof is outlined in
Section~\ref{subsec:idea}.

\subsection{Background}
That every extension of finite fields has a normal basis goes back to Eisenstein~\cite{eisenstein1850lehrsaetze},
\mbox{Sch\"onemann}~\cite{schoenemann1850} and Hensel~\cite{hensel1888}; for arbitrary finite Galois extensions the normal basis
theorem is due to Deuring \cite{deuring1932galois} (see also \cite{lang2002algebra}), and Noether
\cite{noether1932normalbasis} had given an argument for certain infinite ground fields (see \cite{blessenohl2007normal}).
The number of normal elements of $\F_{q^n}$ over $\F_q$ was determined by
Hensel \cite{hensel1888} and, in terms of linearised polynomials, by Ore \cite{ore1934contributions} (see
\cite[Theorem~3.73]{lidl1997finite}). Normal bases also play a role in
arithmetic in finite fields \cite{mullin1989optimal,menezes1993applications,gao1993normal,gao2013normal,gao2013complexity}.
For general background on finite fields we refer to
\cite{lidl1997finite,jungnickel1993finite,mullen2013handbook,hachenberger2020topics}.

Before the general proof of Lenstra and Schoof \cite{lenstra1987primitive}, the primitive normal basis theorem had
been proved by Carlitz for large $q^n$ \cite{carlitz1952primitive,carlitz1952problems} and by Davenport for prime
fields \cite{davenport1968bases}; later, Cohen and Huczynska gave a computer-free proof \cite{cohen2003pnbt}. Many refinements have been studied, for instance
with prescribed traces, in towers of extensions, in a strong form, for $k$-normal elements, or for generators of
cyclic $\F_q[x]$-modules
\cite{cohen1999prescribed,hachenberger1999towers,cohen2010strong,hachenberger2016cyclic,huczynska2013knormal,reis2018onenormal,kapetanakis2015thesis,kapetanakis2019variations,mazumder2026traces};
see \cite{cohen2013prescribed} for a survey of results on prescribed coefficients, and \cite{cohen1990primitive}
for primitive elements with prescribed trace.

Completely normal elements arose in Faith's study of completely basic fields \cite{faith1957}, where he also proved
their existence over infinite ground fields; an extension
is \emph{completely basic} if every normal element is completely normal. Blessenohl and Johnsen
proved the complete normal basis theorem for every finite Galois extension \cite{blessenohl1986} (part of their proof
was later simplified by Blessenohl \cite{blessenohl1990supplement});
Hachenberger \cite{hachenberger1994} gave a simpler proof for finite fields. Blessenohl and Johnsen characterised the
completely basic abelian extensions \cite{blessenohl1991}; for finite fields it follows that $\F_{q^n}/\F_q$ is
completely basic if and only if $r\nmid\ord_{(n/r)'}(q)$ for every prime $r\mid n$ (for $n$ a power of a prime $r\ne p$
this is one of the main results of Blessenohl \cite{blessenohl1990abelsche}). Hachenberger gave an alternative proof
of this criterion \cite[Theorem~15.7]{hachenberger1997book} (see also \cite[Theorem~5.4.18]{hachenberger2013handbook}).
Hachenberger developed a detailed theory of completely normal (``completely
free'') elements, based on the module structure of $E$
\cite{hachenberger1994,hachenberger1996explicit,hachenberger1996primepower,hachenberger1996survey,hachenberger1997book,hachenberger2001decomposition,hachenberger2013handbook,hachenberger2020topics};
we compare it with our approach in Section~\ref{sec:remarks}. Explicit constructions of completely normal elements
and polynomials are given in
\cite{hachenberger1996explicit,scheerhorn1996ima,scheerhorn1996dcc,chapman1997,aravena2020}, and completely normal
elements in extensions of number fields are studied, for example, in \cite{koo2013,das2026pv}.

Morgan and Mullen, after tabulating primitive normal polynomials \cite{morgan1994}, computed completely normal primitive
polynomials of minimal weight over $\F_p$ for all primes $p\le97$ and all $n$ with $p^n<10^{50}$ and stated
Conjecture~\ref{conj:mm} \cite{morgan1996}; see \cite[Section~13.11]{hachenberger2020topics} for an overview of results on the
conjecture. The Morgan--Mullen property is known to hold in the following cases.
\begin{itemize}[itemsep=1pt]
\item \emph{Completely basic extensions}: there every primitive normal element is completely normal, so the primitive
normal basis theorem \cite{lenstra1987primitive,cohen2003pnbt} applies.
\item \emph{Regular extensions} in Hachenberger's sense \cite[Definition~20.1]{hachenberger1997book} (see also
\cite[Definition~1.3]{hachenberger2001regular}), that is, the extensions with
$\gcd(n,\ord_r(q))=1$ for every prime $r\mid n'$ (equivalently, $\gcd(n,\ord_{\rad(n')}(q))=1$): by Hachenberger
\cite[Theorem~1.4]{hachenberger2001regular} under the additional condition $q\equiv1\pmod4$ when $q$ is odd and $n$ is even, and
in the remaining case in the preprint \cite[Theorem~1.1]{hachenberger2019exceptional} (both cases are stated in
\cite[Result~13.11.4]{hachenberger2020topics}). See also
\cite[Theorem~2]{hachenberger2010twopower} for $n=2^m$ with $q\equiv3\pmod4$ and $m\ge e+3$, where $2^e$ is the largest power
of $2$ dividing $q^2-1$, and Blessenohl \cite{blessenohl2005mm} for $n=2^m$ with
$q\equiv3\pmod4$ and $2^m\mid q^2-1$.
\item \emph{Degree or its $p$-free part bounded in terms of $q$}: by Hachenberger \cite[Theorem~3]{hachenberger2016asymptotic} if
$q\ge n^{7/2}$ and $n\ge7$, or $q\ge n^3$ and $n\ge37$; by Garefalakis and Kapetanakis if $n\le q$
\cite[Theorem~1.3]{garefalakis2019jpaa} and, more generally, if $n'<q$ \cite[Theorem~1.4]{garefalakis2019jpaa}; and by
the same authors if $n<q^{4/3}$ for odd $n$ and $n<q^{5/4}$ for even $n$ with $(q-1)\nmid n$
\cite[Theorem~1.4]{garefalakis2019dcc}, and, for sufficiently large $q$, if $q\le n\le q^2$ for odd $n$ and
$q\le n\le0.43\,q^2$ for even $n$ with $(q-1)\nmid n$ \cite[Theorem~1.3]{garefalakis2019dcc}.
\item \emph{Partially completely basic extensions} ($n=n_1n_2$ with coprime $n_1,n_2>1$ and $\F_{q^{n_2}}/\F_q$
completely basic) under further conditions, including those with $n_1\in\{2,3\}$: by Garefalakis and Kapetanakis
\cite[Theorem~7.2]{garefalakis2026cmb} (for $q=2$ their computation \cite[Section~7]{garefalakis2026cmb} covers
$n_1=3$ only with $n_2$ even, and not $n_1=2$).
\item \emph{Computations}: by Hachenberger and Hackenberg \cite[Section~1, Computational Results~1
and~2]{hachenberger2019computational}, for $n\le202$ and every $q$ (their computation combined with
\cite[Theorem~1.4]{garefalakis2019jpaa} and the regular case), and for every $n$ with $p^n<10^{80}$ over every prime
field $\F_p$ with $p<10^4$.
\end{itemize}
In the most recent of these works the conjecture is still described as open in general \cite{garefalakis2026cmb}.

\subsection{Results}\label{subsec:main}
\begin{theorem}\label{thm:intro}
For every prime power $q$ and every $n\ge1$, the field $\F_{q^n}$ contains a primitive element that is completely normal over
$\F_q$.
\end{theorem}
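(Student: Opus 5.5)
The plan is to split into $n\le q$ and $n>q$. For $n\le q$ I would invoke Garefalakis--Kapetanakis \cite[Theorem~1.3]{garefalakis2019jpaa} directly. All the work is in $n>q$. There I would combine a lower bound on the density of completely normal elements with the standard character-sum sieve for primitivity, and then finish the finitely many remaining pairs by computation.

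\textbf{Step 1: resolvent decomposition of complete normality.} Write $E=\F_{q^n}$ as an $\F_q[x]$-module via $\sigma$. For each $d\mid n$, normality over $\F_{q^d}$ is equivalent to the nonvanishing of certain partial Lagrange resolvents of $w$. These are images of $w$ under $\F_q$-linear maps attached to the irreducible factors of $x^{n/d}-1$ over $\F_{q^d}$, equivalently to $q^d$-cyclotomic cosets. Grouping these conditions by their underlying $\F_q[x]$-primary part splits the set of CN conditions into independent \emph{components}, analogous to Hachenberger's decomposition. I would then show that:
\begin{itemize}
\item the density of CN elements is a product over components;
\item each factor is at least $1-W$, where the weight $W$ is the sum of the failure probabilities of the component's conditions.
\end{itemize}
The structure theorem I aim for says that each component is determined by a prime $r\mid n'$, the $r$-part of $n$, and the order data of $q$. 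Up to finitely many combinatorial types, its weight is then a computable expression that decreases in the relevant parameters. A finite computation, plus a monotonicity argument for the tail, should give $W\le 3/4$ uniformly in $n$.

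\textbf{Step 2: character sums.} I would use the usual indicator of primitivity (a Vinogradov sum over multiplicative characters of order dividing $q^n-1$). The additive side would be expressed through additive characters on the CN conditions: for each condition, the M\"obius-type sum over $\F_q[x]$-divisors as in Lenstra--Schoof and Cohen--Huczynska. Gauss sum bounds $|\sum\chi(w)\psi(w)|\le q^{n/2}$ then give a criterion of the shape
\[
q^{n/2} > W(q^n-1)\cdot \prod_{\text{components}} \frac{A_c}{1-W_c}.
\]
Here $W(m)=2^{\omega(m)}$ and $A_c$ counts the additive character terms per component, roughly $2^{\#\text{conditions}}$. A Cohen sieve over large primes of $q^n-1$ and over components would weaken this further. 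The exponential saving $q^{n/2}$ has to beat the number of CN conditions, which grows like $\tau(n)$ times the number of cyclotomic factors. The point of grouping into components with bounded weight is that $1/(1-W_c)\le4$ per component, and the number of components is only $O(\omega(n)\log n)$-ish. With the explicit bound of Rosser--Schoenfeld on $\omega$, the criterion then holds for all $n>q$ outside an explicit finite box.

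\textbf{Step 3: finite cases.} Inside the box I would run the sieve numerically with exact factorizations of $q^n-1$ and exact component data. For any pairs where it still fails, I would exhibit explicit primitive CN elements by search, checking complete normality via gcd computations of linearized polynomials over each $\F_{q^d}$.

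I expect the main obstacle to be Step 1. The claim is that the weight of every component stays at most $3/4$ uniformly in $n$, including the wild $p$-part and small $q$ such as $q=2,3$. Without this uniformity the sieve loses a factor that is unbounded in $n$. I would first try to reduce each component to a tower of conditions indexed by $r$-power divisors. I would then bound failure probabilities by $q^{-\deg}$ sums, handling $q=2$ and $r=2$ exceptional towers by direct enumeration.
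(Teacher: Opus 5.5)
Your outline has the same architecture as the paper: Garefalakis--Kapetanakis for $n\le q$; then resolvent coordinates, independent components, a union bound inside each component, the character-sum criterion, a finite range and explicit certificates. However, the two steps that carry the proof are either missing or rest on false claims.

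\textbf{The uniform weight bound.} You rightly call this the crux, but your plan gives no mechanism for it, and the structure you propose is not correct. A component is not attached to one prime $r$ or to a tower of $r$-power levels. An essential level $d$ only has to divide $\ord_e(q)$, so levels with several primes occur: for $(q,n)=(2,2265)$ the type $(15,151)$ is essential, since $\ord_{151}(2)=15$. Grouping conditions by $\F_q[x]$-primary parts does not even give independent pieces, because one fibre can meet several cyclotomic modules (the paper's $(3,10)$ example). Components have to be the classes of the relation ``fibres meet a common $q$-orbit''.

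The idea you are missing is the closure argument of Theorem~\ref{thm:struct}. Let $o_0$ be the minimal codimension of the component. Essentiality gives $d\mid\ord_e(q)$, so the moves $\eta\mapsto\eta^q$ and $\eta\mapsto\eta\varepsilon$ with $\varepsilon\in\mu_{d'}$ never leave $\mu_K$, where $K$ depends only on $q$ and $o_0$. Hence the component has at most $\sigma_1(o)\gcd(q^o-1,D)$ conditions of codimension $o$, with $D=(\ord_K(q))_{p'}$ and $\log_2D=O((\log o_0)^2)$. This gives a tail bound in $o_0$. For the four pairs $(q,o_0)=(2,3),(2,6),(3,2),(3,4)$, where the tail bound is too weak, the same confinement reduces everything to $n_0\mid N^{**}$ (Proposition~\ref{prop:finite}), which makes the check finite. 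Without this, ``finitely many combinatorial types'' and ``monotone tail'' have no basis. There is also no slack: $3/4$ is attained for $q=2$ and $6\mid n$.

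\textbf{Control of the error term in Step~2.} The decisive term is $M$, the number of conditions entering $L=\prod_C2(1-q^{-o(C)})$; roughly, $2^M$ must stay below $q^{n/2}$. You never bound $M$, and doing so needs two things.
\begin{itemize}
\item The essential reduction (Theorem~\ref{thm:essential}). Over all levels the count can already exceed the budget: $(2,63)$ has $40$ conditions against $\frac n2\log_2q=31.5$, and only $25$ of them are essential.
\item The bound $M\le\lambda n$ with $\lambda<\frac12\log_2q$. For $q\ge5$, $M\le n$ (Corollary~\ref{cor:Mn}) suffices. For $q=2,3,4$ one needs $\lambda=0.41,\,0.62,\,0.75$, obtained from the local contributions $\rho(\eta)$ plus computation (Proposition~\ref{prop:Msmall}).
\end{itemize}

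\textbf{The density accounting.} This rests on a false count. The number of components is not $O(\omega(n)\log n)$. Take $q=2$ and $n=2^m-1$ with $m$ prime: every essential level divides $\gcd(n,m)=1$, so the $1+(n-1)/m$ level-one conditions are all singleton components. Charging a factor $4$ to each component therefore costs far more than $O(\log n)$ bits, and for $q\le4$ the available margin is only a small constant times $n$. The paper instead treats codimension-one singletons exactly: their probability $1-\frac1q$ cancels against their factor $2(1-\frac1q)$ in $L$. On the other components it uses $1-x\ge(1-W)^{x/W}$, so the density costs only $c(W)S$ bits with $S=O(\log n)$ (Lemma~\ref{lem:analytic}(b)), independently of the number of components.
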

This is the statement of Conjecture~\ref{conj:mm}; the proof is completed in Section~\ref{sec:main}
(Theorem~\ref{thm:main}). For $n\le q$ it rests on the theorem of
Garefalakis and Kapetanakis \cite[Theorem~1.3]{garefalakis2019jpaa}, whose proof also uses a computer calculation, the
primitive normal basis theorem for completely basic extensions (in particular for $24$ of the $37$ pairs left by that
calculation) and the examples of Morgan and Mullen \cite{morgan1996} for the remaining $13$ pairs; apart from what
enters through this theorem, none of the partial results listed above is used.

We now describe the method. An element $w\in E$ is completely normal if and only if it
satisfies finitely many \emph{conditions} of the form
$w\notin V$, one for each intermediate field $\F_{q^d}$ and each distinct irreducible factor of $x^{n/d}-1$ over
$\F_{q^d}$, where $V$ is a subspace of $E$ determined by the field and the factor (Section~\ref{subsec:idea}). For a random element, a condition fails
with probability $q^{-\operatorname{codim}V}$, where $\operatorname{codim}V$ is the codimension of $V$ over $\F_q$; we also
call it the \emph{codimension} of the condition.
Only some of the conditions, called \emph{essential}, are needed: an element is completely normal as soon as it
satisfies all essential conditions (Theorem~\ref{thm:essential}). These fall into groups, called \emph{resolvent
components}, that are independent of one another for a random element; the \emph{weight} of a component is the sum
of the failure probabilities of its conditions. By this independence, the density of completely normal elements is
at least the product, over the components, of one minus the weight, as long as every weight is less than~$1$
(Theorem~\ref{thm:product}); so what has to be bounded is the weight of each component, not the total weight of all
conditions. Every condition of codimension~$1$ is essential and forms a
component by itself, which is passed with probability exactly $1-\frac1q$ (Theorem~\ref{thm:criterion} and
Lemma~\ref{lem:qorbits}(c)). The main new ingredient is a bound
$W(q)$, uniform in $n$, for the weight of every other component (Theorem~\ref{thm:introW} below), which rests on a
structure theorem for the components (Theorem~\ref{thm:struct}). With this bound, the classical character-sum method
works uniformly in $n$, for regular and non-regular pairs alike. Earlier applications of the method to general pairs
estimate the number of completely normal elements from below by a sum over the intermediate fields, which amounts to
a union bound. For fixed $q$, however, the total weight of all conditions is unbounded in $n$, and the general
results obtained in this way require $n$, or its $p$-free part $n'$, to be at most a small power of $q$
(Section~\ref{subsec:idea}). The relation to Hachenberger's decomposition
theory of completely normal elements is discussed in Section~\ref{sec:remarks}.

The same estimates give information on the number $\CN_q(n)$ of completely normal elements of $E$ over $\F_q$ and the
number $\PCN_q(n)$ of primitive ones among them. First, the weight bounds of Section~\ref{sec:struct} prove
Hachenberger's conjectured lower bound for $\CN_q(n)$ \cite[Conjecture~5.4.55]{hachenberger2013handbook}, together with
the equality case (Corollary~\ref{cor:hachenberger}); both were known for regular pairs
\cite[\mbox{Theorem}~6.1]{hachenberger1997book}.
\begin{theorem}\label{thm:introhach}
For every prime power $q$ and every $n\ge1$, the number $\CN_q(n)$ of completely normal elements of $\F_{q^n}$ over
$\F_q$ satisfies
\[
\CN_q(n)\ \ge\ (q-1)^{n'}q^{(p^a-1)n'},
\]
where $p$ is the characteristic and $n=p^an'$ with $p\nmid n'$; equality holds if and only if $n'\mid q-1$.
\end{theorem}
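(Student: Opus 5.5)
The plan is to write $\CN_q(n)=q^n\prod_{C}(\text{pass probability of }C)$ over the resolvent components. By the independence in Theorem~\ref{thm:product}, and since every component has weight below~$1$, this gives $\CN_q(n)\ge q^n\prod_C(1-w_C)$. We then compare this product with the target
\[
(q-1)^{n'}q^{(p^a-1)n'}=q^{n}\Bigl(1-\tfrac1q\Bigr)^{n'}.
\]
So the inequality reduces to
\[
\prod_C(1-w_C)\ \ge\ \Bigl(1-\tfrac1q\Bigr)^{n'}.
\]
The natural bookkeeping is in terms of $q$-orbits (cyclotomic cosets) of the $n'$-th roots of unity. Each irreducible factor of $x^{n'}-1$ over $\F_q$, of degree $k$, should contribute a factor at least $(1-1/q)^k$. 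For the extension over $\F_q$ itself this is the classical count $\prod_f(1-q^{-\deg f})\ge(1-1/q)^{n'}$, with equality iff all factors are linear, i.e.\ $n'\mid q-1$. The content is that complete normality costs no more than this within each orbit.

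The first step is to invoke the structure theorem (Theorem~\ref{thm:struct}) to assign each component to a set of orbits. The assignment should be disjoint, and the total degree of the orbits assigned to $C$ should be some $k_C$ with $\sum_C k_C\le n'$. Codimension-$1$ components (Lemma~\ref{lem:qorbits}(c)) pass with probability exactly $1-\frac1q$ and use one fixed point of $\sigma$, that is, one linear factor. For every other component the aim is an inequality of the form
\[
1-w_C\ \ge\ \Bigl(1-\tfrac1q\Bigr)^{k_C},
\]
using the paper's bound that the weight is at most $W(q)\le 3/4$ and that all codimensions in $C$ are at least~$2$. Concretely, I expect $k_C\ge 2$ whenever $C$ contains no codimension-$1$ condition. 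We then need $1-w_C\ge(1-1/q)^{2}$, i.e.\ $w_C\le 2/q-1/q^2$. This should be checked against the finer weight bounds of Section~\ref{sec:struct}, which presumably give $w_C\le\sum_{\text{orbits}}q^{-\text{codim}}$ with codimensions growing with the orbit size. If the uniform $3/4$ is too crude for small $q$, I would fall back on the explicit per-component formula and the finite computation already used for $W(q)$.

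The main obstacle is the per-component comparison when $q$ is small, say $q=2,3$, and a component bundles several conditions from different intermediate fields on the same orbit. There the product lower bound $1-w_C$ may genuinely fall below $(1-1/q)^{k_C}$. In that case I would compute the exact pass probability of the component, which is a product over independent resolvent coordinates rather than a union bound, and compare that instead.

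For the equality statement, if $n'\mid q-1$ all orbits are fixed points. Complete normality then reduces to the codimension-$1$ conditions (all essential conditions have codimension $1$), giving exactly $q^n(1-1/q)^{n'}$. Conversely, if $n'\nmid q-1$, some orbit has size $k\ge2$. Its component passes with probability strictly greater than $(1-1/q)^k$, since $1-q^{-k}>(1-1/q)^k$ already holds for the bottom-level condition alone. This makes the inequality strict.
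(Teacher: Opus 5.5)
Your per-component comparison is a different route from the paper's, and it is viable. The sets $Y_{\cC}$ of fibre elements of the components partition $\mu_{n'}$. In a component without codimension-one conditions every $\eta\in Y_{\cC}$ has $o(\eta)\ge2$, because the level-one condition with fibre $\{\eta\}$ lies in $\cC$ and has codimension $o(\eta)$ (Lemma~\ref{lem:qorbits}(a),(c)). So your expected $k_{\cC}=|Y_{\cC}|\ge2$ does hold, and it suffices to show $w_{\cC}\le1-(1-\frac1q)^2=\frac2q-\frac1{q^2}$.

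The paper instead converts $\prod(1-w_{\cC})$ by concavity into $(1-\frac1q)^{M_1}2^{-c(W)S}$ and compares one root of unity at a time, proving $c(W)\,s(\eta)\le\log_2\frac q{q-1}$. There the decay in $q$ sits in $s(\eta)\le2q^{-2}$, so the single constant $W=\frac49$ serves every $q\ge3$. In your route the decay must come from the component weight itself, and you have put the difficulty in the wrong place.

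Small $q$ is harmless: $W(2)=\frac34$ equals $1-(\frac12)^2$, $W(3)=\frac49<\frac59$, and the values of Theorem~\ref{thm:W} suffice up to $q=167$. But $W(q)=0.0118$ for $q\ge32$ exceeds $\frac2q-\frac1{q^2}$ once $q\ge169$, so the uniform bound you plan to use fails there. You need a weight bound of order $q^{-2}$, which steps (v) and (vi) of the proof of Theorem~\ref{thm:W} do supply (for instance $\Wcr(q,2)\le12q^{-2}+28q^{-4}$). Your fallback, computing a component's pass probability as a product over independent coordinates, is not available. The conditions of one component are linked through shared pair orbits, which is exactly why they form one component.

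Your equality argument runs the wrong way. Passing a component requires passing all its conditions, so $\Pr[G_{\cC}]\le1-q^{-k}$, and $1-q^{-k}>(1-\frac1q)^k$ gives no lower bound for $\Pr[G_{\cC}]$. Strictness has to come from the chain $\Pr[G_{\cC}]\ge1-w_{\cC}\ge(1-\frac1q)^{|Y_{\cC}|}$. For $q\ge3$ the second step is strict because the weight bounds are strictly below $\frac2q-\frac1{q^2}$. For $q=2$, however, it is an equality when $w_{\cC}=\frac34$ and $|Y_{\cC}|=2$, and this case occurs. For $(q,n)=(2,6)$ the product bound gives exactly $8=(q-1)^{n'}q^{(p^a-1)n'}$, whereas $\CN_2(6)=12$. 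Here you need the paper's extra argument: such a component has at least three conditions (all of codimension $\ge2$), and their kernels all contain $0$. So the union bound inside the component is strict.
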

Second, the weight bounds, combined with the counting criterion of Section~\ref{sec:counting} and the bounds for the
number of essential conditions of Section~\ref{sec:M}, show that
primitivity and complete normality are asymptotically independent. Let $\theta=\varphi(q^n-1)/(q^n-1)$ denote the
proportion of primitive elements in $E^\times$.
\begin{theorem}\label{thm:intropcn}
The ratio $\PCN_q(n)/(\theta\,\CN_q(n))$ tends to $1$ as $q^n\to\infty$; that is, for every $\varepsilon>0$ there is an $X$ such that
$\bigl|\PCN_q(n)/(\theta\,\CN_q(n))-1\bigr|<\varepsilon$ for all pairs $(q,n)$ with $q^n\ge X$.
\end{theorem}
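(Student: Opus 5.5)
The plan is the standard character-sum decomposition, with the union bound over conditions replaced by the component structure. Write the indicator of primitivity through multiplicative characters (Vinogradov's formula), $\mathbf 1_{\mathrm{prim}}(w)=\theta\sum_{e\mid q^n-1}\frac{\mu(e)}{\varphi(e)}\sum_{\ord\chi=e}\chi(w)$. Write the indicator of complete normality as a product over the essential conditions (Theorem~\ref{thm:essential}), grouped into resolvent components. Each condition $w\notin V$ is expanded as $1-\mathbf 1_V(w)$, and $\mathbf 1_V$ is expanded in additive characters trivial on $V$. Then
\[
\PCN_q(n)=\theta\,\CN_q(n)+\theta\sum_{\chi\ne1}\frac{\mu(\ord\chi)}{\varphi(\ord\chi)}\sum_{w\in E^\times}\chi(w)\,\mathbf 1_{\CN}(w),
\]
and it suffices to show that the error term is $o(\theta\,\CN_q(n))$ uniformly as $q^n\to\infty$.

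\textbf{Step 1: expand the complete-normality indicator.} Expand $\mathbf 1_{\CN}$ as a signed combination of additive characters $\psi$. Each nontrivial $\psi$ contributes at most $q^{n/2}$ against $\chi\ne1$, by the Gauss-sum bound, and the contribution of $\psi$ trivial is zero. This gives
\[
\Bigl|\sum_w\chi(w)\mathbf 1_{\CN}(w)\Bigr|\le q^{n/2}\cdot S,
\]
where $S$ is the total $\ell^1$-mass of the additive expansion. Expanding condition by condition gives $S\le 2^{M}$, with $M$ the number of essential conditions. However, grouping by components should give $S\le\prod_{\text{components}}(1+\text{(number of conditions)})$ or a similar product. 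I would use the counting criterion of Section~\ref{sec:counting}, which presumably packages exactly this as
\[
\PCN_q(n)\ge\theta\bigl(\CN_q(n)-2^{\omega(q^n-1)}2^{M}q^{n/2}\bigr).
\]
The two-sided version holds by the same argument.

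\textbf{Step 2: lower-bound $\CN_q(n)$.} Theorem~\ref{thm:product} together with the uniform weight bound $W\le 3/4$ (Theorem~\ref{thm:introW}) gives
\[
\CN_q(n)/q^n\ \ge\ (1-\tfrac1q)^{c_1}\,(1/4)^{c_2},
\]
where $c_1$ counts the codimension-one components and $c_2$ the other nonempty components. It remains to show that
\[
2^{\omega(q^n-1)}2^{M}q^{n/2}=o\bigl(q^n(1-\tfrac1q)^{c_1}4^{-c_2}\bigr).
\]
Section~\ref{sec:M} bounds $M$, $c_1$ and $c_2$. The number of codimension-one conditions is at most about $n'$, which is at most roughly the number of $q$-orbits of size $1$, and components of codimension at least $k$ number at most about $n/k$. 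So $M$ and $c_2$ are $O(\tau(n)\cdot n/\log_q n)$ or, more crudely, $o(n\log q)$ in the relevant regime. Meanwhile $2^{\omega(q^n-1)}=q^{o(n)}$ by the standard divisor bound.

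\textbf{Main obstacle: the small-$n$, large-$q$ regime.} When $n$ is small and $q$ large, $2^M$ is bounded, $(1-1/q)^{c_1}\to1$, and everything is fine. The hard case is large $n$ with $q$ small, where we need $M\log 2+c_2\log4\le(\tfrac12-\delta)n\log q$. For $q=2$ this requires the count of essential conditions to be a small fraction of $n$, which I expect is exactly what the bounds of Section~\ref{sec:M} provide: most $q$-orbits have size comparable to $\ord_{n'}(q)$, so their number is $O(n/\log n)$. If the crude $2^M$ is too lossy, I would first try replacing it, component by component, by the sharper factor $(1+\text{weight-type mass})$, using independence of components. This turns the error into $q^{n/2}\prod(1+|C|)$ with $|C|$ small.
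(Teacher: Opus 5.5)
Your skeleton is the paper's: the counting criterion with $L\le2^M$, the product formula with the weight bound $W(q)$, and a split into large $n$ and into bounded $n$ with large $q$. But the step you yourself call the main obstacle is not established, and the reason you give for it does not work. $M$ counts essential conditions, not $q$-orbits, and one orbit can carry several of them. For $(q,n)=(2,6)$ all three conditions of codimension $2$ have their fibres in the single orbit of elements of order $3$. For $(2,4599)$ there are $1145$ essential conditions on $293$ orbits (Section~\ref{sec:4599}). So ``most orbits are long, hence there are few of them'' says nothing about $M$. What controls $M$ is the local bound $\rho(\eta)\le p^af(o(\eta))$ with $f(o)=\sum_{d\mid o}\min(\frac1d,\frac do)\le1$ (Lemma~\ref{lem:Mrho}, Proposition~\ref{prop:Mlocal}, Lemma~\ref{lem:divsum}). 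It gives $M\le n$ for all pairs (Corollary~\ref{cor:Mn}), which leaves a linear margin uniform in $q$ once $q\ge5$, since then $\frac12\log_2q>1$. With the computed local maxima it gives $M\le\lambda_qn$ with $\lambda_q=0.41,0.62,0.75<\frac12\log_2q$ for $q=2,3,4$ (Proposition~\ref{prop:Msmall}). Your conclusion $M=o(n)$ for fixed $q$ is in fact true, but through this mechanism and not through orbit counting. The point is that $f(o)\le\tau(o)/\sqrt o\to0$. Meanwhile the $\eta$ with $o(\eta)\le T$ number fewer than $q^{T+1}$, and each has $\rho(\eta)\le\tau(o(\eta))$.

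The second gap is in your density bound. Charging a factor $1-W\ge\frac14$ to every component costs up to $2c_2$ bits. Section~\ref{sec:M} bounds $c_2$ only through $c_2\le M$, which is useless here: for $q=2$ it allows $M+2c_2$ up to $3\cdot0.41\,n>\frac n2$. Your route therefore needs a separate proof that $c_2=o(n)$. The ingredients are available but not in your proposal. Components occupy disjoint sets of $q$-orbits, each of size at least the component's minimal codimension, because for every $\eta$ in its fibres the level-one condition with fibre $\{\eta\}$, of codimension $o(\eta)$, belongs to it (Lemma~\ref{lem:qorbits}(a)). By Lemma~\ref{lem:weightcodim}, only $O_{q,k}(1)$ components have minimal codimension $<k$. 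After that you still need a separate treatment of the finitely many small $q$, and of all large $q$ uniformly in $n$.

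The paper never counts components. By concavity, $1-x\ge(1-W)^{x/W}$ for $0\le x\le W$, so $\prod_{\cC}(1-w_{\cC})\ge2^{-c(W)S}$ and $\delta_n\ge(1-\frac1q)^{M_1}2^{-c(W)S}$ (Theorem~\ref{thm:C}(a)). Here $S\le g_2(\log_2n)=O(\log n)$ uniformly in $q$ (Lemma~\ref{lem:analytic}(b)), and $(1-\frac1q)^{M_1}$ cancels against $L\le2^M(1-\frac1q)^{M_1}$. For $n\ge131200$ this bounds the relative error by $2^{-h(n)}$ with $h(n)=\frac n{100}-\frac83g_2(\log_2n)-O(1)$, for every $q$. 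For bounded $n$ the paper uses $\CN_q(n)\ge q^n2^{-n}$ (Corollary~\ref{cor:hachenberger}) and $2^{\omega}\le C_4q^{n/4}$.
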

This is Corollary~\ref{cor:pcn}(b); part~(a) of that corollary gives an explicit bound for the relative error.
For fixed $n$ and $q\to\infty$ the convergence was known. In this case
$\CN_q(n)/q^n\to1$, as $\CN_q(n)/q^n\ge1-n/q$ by the union bound and Corollary~\ref{cor:Mn} (Remark~\ref{rem:pcnphi}), so the statement is
equivalent to $\PCN_q(n)/\varphi(q^n-1)\to1$, which Hachenberger proved \cite[Theorem~2]{hachenberger2016asymptotic}
(see also \cite[Section~2.7]{hachenberger2019computational}). Theorem~\ref{thm:intropcn} adds the uniformity in $n$.
Remark~\ref{rem:pcnphi} also gives $\PCN_q(n)/\varphi(q^n-1)\to1$ without computation whenever $n/q\to0$. For fixed
$q$, by contrast,
$\PCN_q(n)/\varphi(q^n-1)$ does not tend to $1$ as $n\to\infty$: by Corollary~\ref{cor:pcn} it is asymptotic to
$\CN_q(n)/q^n$, which is at most $1-\frac1q$, since an element of trace $0$ is not normal.

\subsection{Computer-assisted steps and formal verification}
The proofs of Proposition~\ref{prop:Msmall}, Theorem~\ref{thm:W} and
Propositions~\ref{prop:range}--\ref{prop:certificates} involve finite computations by computer. Theorems~\ref{thm:intro}--\ref{thm:introW},
Theorem~\ref{thm:main} and Corollaries~\ref{cor:hachenberger} and~\ref{cor:pcn} rest on these statements; no other
theorem, proposition, lemma or corollary of the paper depends on these computations.
The computations are listed in Section~\ref{sec:comp}, and the programs and data are publicly available (see the end
of Section~\ref{sec:comp}).
In addition, our proof of the case $n>q$ and of Corollaries~\ref{cor:hachenberger} and~\ref{cor:pcn}, including every
computer-assisted step, has been formalised in the proof assistant
Lean~4 \cite{demoura2021lean4} on top of the library Mathlib \cite{mathlib2020}. Besides Lean's foundations, the
formal proof assumes only two cited results, namely the case $n\le q$ \cite[Theorem~1.3]{garefalakis2019jpaa} and the
bound $\log\lcm(1,\dots,\lfloor x\rfloor)<1.03883\,x$ ($x>0$) of Rosser and Schoenfeld \cite[Theorem~12]{rosser1962approximate},
which are stated as axioms. The large finite computations are evaluated outside Lean's kernel, so for them one also
trusts Lean's compiler and interpreter, together with the natively compiled library code that the interpreter calls.
The formalisation is described in Appendix~\ref{app:lean}. For an overview of formal verification in mathematics, see \cite{hales2008notices,avigad2024formalturn}.

\subsection{Notation}\label{subsec:notation}
Throughout, $q$ is a power of the prime $p$, $E=\F_{q^n}$, $\sigma(x)=x^q$ is the Frobenius of $E/\F_q$, and
$n=p^an'$ with $p\nmid n'$. For $j\ge1$ we write $j'$ or $j_{p'}$ for the $p$-free part of $j$ (with parentheses
around compound expressions, as in $(\ord_K(q))_{p'}$). We call $j$ \emph{$p$-free} if $p\nmid j$, i.e.\ if $j=j'$.
We write $v_\ell$ for the $\ell$-adic valuation and $\rad(j)$ for the product of the primes dividing $j$. We fix
an algebraic closure $\overline{\F}_q$; for $p\nmid m$ we write $\mu_m\subseteq\overline{\F}_q^{\,\times}$ for the group of $m$-th
roots of unity, and for $e$ coprime to $p$ we write $\ord_e(q)$ for the multiplicative order of $q$ modulo $e$
($\ord_1(q)=1$); thus $\ord_e(q)$ is the degree over $\F_q$ of any element of order $e$, while $\ord\eta$ denotes the
order of an element $\eta$ of a group. The \emph{$q$-orbit} (or \emph{Frobenius orbit}) of $\eta\in\mu_{n'}$ is
$\{\eta,\eta^q,\eta^{q^2},\dots\}$; it has $\ord_{\ord\eta}(q)$ elements. Further, $\sigma_1(m)$, $\tau(m)$,
$\varphi(m)$, $\mu(m)$ and $\omega(m)$ denote the sum of divisors, the number of divisors, Euler's function, the M\"obius
function and the number of distinct prime factors of $m$, respectively. Moreover, $\Phi_k$ is the $k$-th cyclotomic
polynomial, $1_A$ is the indicator function of a set $A$, and $\log$ is the natural logarithm (also written $\ln$).
The numbers of completely normal and of primitive completely normal elements of $E$ over $\F_q$ are denoted by
$\CN_q(n)$ and $\PCN_q(n)$, and $\delta_n=\CN_q(n)/q^n$. Conditions (Definition~\ref{def:cond}) are denoted by $C$, $C'$, $C_0$,
components (Section~\ref{sec:product}) by $\cC$, and the weight of a component by $w_{\cC}$. We write $M$ for the number of essential conditions, $M_1$ for the number of
those of codimension $1$ and $S$ for the total weight of the others.
Like $\delta_n$, $\omega(q^n-1)$ and the number $L$ of Theorem~\ref{thm:counting}, these numbers depend on the pair $(q,n)$. We
keep the pair fixed and do not show this dependence, except in statements in which $n$ or $q$ varies; there we write
$M(q,n)$ and $S(q,n)$.

\subsection{Idea of the proof}\label{subsec:idea}
The proof follows
the classical character-sum method; the new work lies mainly in the lower bound for the density of completely normal
elements that this method requires.

\paragraph{The difficulty.} By this method
\cite{carlitz1952primitive,carlitz1952sums,davenport1968bases,lenstra1987primitive,cohen2003pnbt,hachenberger2001regular,garefalakis2019jpaa}, a
set of density $\delta$ in $E$ that is the complement of a union of $M$ subspaces contains a primitive element as soon
as, roughly, $\delta\,q^{n/2}\ge2^{\omega(q^n-1)+M}$ (cf.\ Theorem~\ref{thm:counting}(b), whose proof applies to any such
set). The completely normal elements form such a set: an element $w$ is normal over $\F_{q^d}$ if and only if it lies in none of
the subspaces $g(\sigma^d)E$, where $g$ runs through the irreducible factors of $x^{n/d}-1$ over $\F_{q^d}$
(Lemma~\ref{lem:classical}). We call the pairs $(d,g)$ \emph{conditions} (of \emph{level} $d$); a condition $C$ fails for
$w$ if $w$ lies in its subspace $V_C$, which for random $w$ happens with probability $q^{-\operatorname{codim}V_C}$, the
codimension being taken over $\F_q$. Two quantities therefore have to be controlled: the number $M$ of conditions (after
the reduction below, of essential conditions), which must stay below about $\frac n2\log_2q$ (see the paragraph \emph{The criterion}
below), and the density $\delta_n$ of completely normal elements, which must be bounded from below. The second is the
harder of the two. The natural lower bound is the union bound
$\delta_n\ge1-\sum_Cq^{-\operatorname{codim}V_C}$, and this is essentially how earlier approaches proceed: they
bound the number of completely normal elements from below by means of a sum over all intermediate fields, which works
when $n$ or $n'$ is at most a small power of $q$ \cite{hachenberger2016asymptotic,garefalakis2019jpaa,garefalakis2019dcc}, or
by means of a sum over an essential set of them, as in the computation of Hachenberger and Hackenberg for $n\le202$
\cite[Proposition~3.2.5]{hachenberger2019computational}. (Other approaches exploit
the module structure of $E$ for regular pairs \cite{hachenberger2001regular,hachenberger2010twopower,hachenberger2019exceptional},
or use translates, by elements of $\F_q$, of products of completely normal elements of subextensions of coprime
degrees \cite{garefalakis2026cmb}.) For
fixed $q$, however, the total weight $\sum_Cq^{-\operatorname{codim}V_C}$ is unbounded as $n$ varies, even after the
reduction to essential conditions described below. Already the conditions of level $1$ (all of which are
essential), one for each distinct irreducible
factor $g$ of $x^n-1$ over $\F_q$, have total weight $\sum_gq^{-\deg g}$. Since $1-x\ge4^{-x}$ for $0\le x\le\frac12$, the
density $\prod_g(1-q^{-\deg g})$ of normal elements is at least $4^{-\sum_gq^{-\deg g}}$, and this density tends to $0$
along suitable $n$ \cite{frandsen2000density}, so $\sum_gq^{-\deg g}\to\infty$ along these $n$. For $(q,n)=(2,4599)$,
for instance, the essential conditions have total
weight about $3.41$ (Section~\ref{sec:4599}), and the union bound gives no information.

\paragraph{Independent components.} The first idea is to choose coordinates on $E$ in which the conditions are as
independent as possible. If $p\nmid n$, these coordinates are the Lagrange resolvents
\[
F_\eta(w)=\sum_{t\in\Z/n}\eta^{-t}\sigma^t(w),\qquad\eta\in\mu_n
\]
(for $p\mid n$ one uses partial resolvents, Section~\ref{sec:coord}; in this paragraph we assume
$p\nmid n$). Since $F_{\eta^q}(w)=\eta^{-q}F_\eta(w)^q$, the
coordinates along a Frobenius orbit $\{\eta,\eta^q,\eta^{q^2},\dots\}$ of $\mu_n$ are determined by any one of them, and
if $w$ is uniformly distributed in $E$, the coordinates of different orbits are independent and uniformly distributed,
with $q^s$ possible coordinate vectors for an orbit of size $s$ (Lemma~\ref{lem:coord}). In these coordinates, every subspace $g(\sigma^d)E$ is the zero set of a single \emph{fibre sum}
$\sum_{\eta^d=\xi}F_\eta(w)$, where $\xi$ is a root of $g$ (Theorem~\ref{thm:criterion}); we call $\xi$, which is
defined up to $\xi\mapsto\xi^{q^d}$, the \emph{target} and $\{\eta:\eta^d=\xi\}$ the \emph{fibre} of the condition. The
codimension of the condition is $\lcm(d,\ord_e(q))$, where $e=\ord\xi$ and $\ord_e(q)$ is the degree of $\xi$ over
$\F_q$. Only the \emph{essential} conditions, that is, those whose level $d$ divides $\ord_e(q)$, are needed
(Theorem~\ref{thm:essential}); their codimension is $\ord_e(q)$. Two conditions whose fibres meet no common Frobenius
orbit read disjoint sets of coordinates and are therefore independent. We call the classes of the equivalence relation on
the essential conditions generated by ``the fibres meet a common Frobenius orbit'' the \emph{resolvent components} $\cC$
(Section~\ref{sec:product} and Lemma~\ref{lem:qorbits}(a); for $p\mid n$ the same holds with the partial resolvents and
fibres of Section~\ref{sec:coord}). Then
\[
\delta_n=\prod_{\cC}\Pr\bigl[w\text{ passes all conditions of }\cC\bigr]\ \ge\ \prod_{\cC}\bigl(1-w_{\cC}\bigr),\qquad
w_{\cC}=\sum_{C\in\cC}q^{-\operatorname{codim}V_C},
\]
the inequality holding when all $w_{\cC}<1$ (Theorem~\ref{thm:product}). Hence a union bound is needed only inside each
component: what has to be small is not the total weight but the \emph{weight} $w_{\cC}$ of every single component. For
example, for $(q,n)=(2,21)$ there are four components; they read Frobenius orbits of total size $1$, $2$, $9$ and $9$
and have weights $\frac12$, $\frac14$, $\frac{33}{64}$ and $\frac{33}{64}$ (so the total weight $\frac{57}{32}$ exceeds
$1$). Exactly $1$, $3$, $294$ and $294$ of their $2$, $4$, $2^9$ and $2^9$ possible coordinate vectors satisfy
all their conditions. Thus
$\CN_2(21)=1\cdot3\cdot294^2=259308$, i.e.\ $\delta_{21}\approx0.124$, while the product bound gives
$\delta_{21}\ge\frac12\cdot\frac34\cdot(\frac{31}{64})^2\approx0.088$
(Section~\ref{sec:remarks}). Theorems~\ref{thm:criterion} and~\ref{thm:product} are
closely related to Hachenberger's decomposition theory of completely normal elements, which already factorises
$\CN_q(n)$ into independent local factors \cite{hachenberger1997book,hachenberger2001decomposition} (see
\cite[Section~5.3]{hachenberger2019computational}); what the resolvent coordinates add is a description,
by fibre sums, of independent factors, which can be finer than his but can also be coarser (Section~\ref{sec:remarks}), and,
above all, the uniform bound for the weights of the components (Theorem~\ref{thm:introW} below). In the example
$(2,21)$ ours are finer, but Hachenberger's own computation \cite[Example~14.6]{hachenberger1997book} uses the same
splitting. Hachenberger and Hackenberg showed that complete normality needs to be tested only over an essential set
of intermediate fields \cite[Definitions~3.2.1 and~3.2.3, Proposition~3.2.4]{hachenberger2019computational};
Theorem~\ref{thm:essential} refines this reduction to individual targets (see Section~\ref{sec:remarks}).

\paragraph{Components are small.} It remains to bound the weight of a single component, uniformly in $n$; this
is the main new step (Section~\ref{sec:struct}). Starting from one fibre
element, we explore a component by two moves: $\eta\mapsto\eta^q$ (along a Frobenius orbit) and
$\eta\mapsto\eta\varepsilon$ with $\varepsilon\in\mu_{d'}$ (inside the fibre of a condition at level $d$,
which is a coset of $\mu_{d'}$). If $\eta$ lies
in the fibre of an essential condition with target $\xi$, then $e=\ord\xi$ divides $\ord\eta$, so the level $d$, which
divides $\ord_e(q)$, also divides $\ord_{\ord\eta}(q)$: an $\varepsilon$-move from $\eta$ only uses roots of unity whose
order divides $(\ord_{\ord\eta}(q))_{p'}$. Hence the moves cannot produce roots of unity of arbitrary order: both
preserve $\mu_J$ whenever $J$ is $p$-free and $(\ord_J(q))_{p'}\mid J$. The fibre of an essential condition of
codimension $o_0$ lies in $\mu_{K_0}$, where $K_0=o_0'(q^{o_0}-1)$, since its level divides $o_0$ and its target has order
dividing $q^{o_0}-1$. So if $o_0$ is the smallest codimension in the component, all fibre elements of its conditions
lie in $\mu_K$, where the \emph{closure} $K$ is the smallest $p$-free multiple of $K_0$ with $(\ord_K(q))_{p'}\mid K$,
obtained by iterating $J\mapsto\lcm(J,(\ord_J(q))_{p'})$ from $J=K_0$; it depends only on $q$ and $o_0$ (it is the
choice~(B) of $K$ in Section~\ref{sec:struct}). Consequently every level in the component divides $\ord_K(q)$, so both moves preserve the Frobenius orbit of
$\eta^D$, where $D=(\ord_K(q))_{p'}$, and the component has at most $\sigma_1(o)\gcd(q^o-1,D)$ conditions of
codimension $o$ (Theorem~\ref{thm:struct}). Although $K\ge q^{o_0}-1$, the number $D$ is small, as $q$ has order only $o_0$ modulo
the large factor $q^{o_0}-1$ of $K$; in fact $\log_2D=O((\log o_0)^2)$ (Lemma~\ref{lem:closuresize}). So the weight
of the component is at most $\sum_{o\ge o_0}\sigma_1(o)\,D\,q^{-o}$, independently of $n$, and this is small when
$o_0$ is large. (The cruder choice~(A) of Section~\ref{sec:struct}, $K=\lcm(K_0,\lcm(1,\dots,o_0)_{p'})$, gives
$D<o_0\,2.83^{o_0}$ by the
Rosser--Schoenfeld bound for $\lcm(1,\dots,o_0)$ \cite[Theorem~12]{rosser1962approximate}; this suffices for large $o_0$ if
$q\ge3$ (as $2.83<3$), but not if $q=2$, and is used for $q\ge4$ and for large $o_0$ when $q=3$.) For small $o_0$ the bounds
of Corollary~\ref{cor:weight} are evaluated by computer. For the four pairs $(q,o_0)=(2,3),(2,6),(3,2),(3,4)$, where
both bounds (from the closure~(B) and from the cruder choice~(A)) exceed the values
$W(q)$ below,
every component with minimal codimension~$o_0$ already occurs for a pair $(q,n_0)$ with $n_0$ dividing an explicit number depending only on $q$ and $o_0$
(Proposition~\ref{prop:finite}), so that the weights are bounded by a finite computation. The result is the
following theorem.
\begin{theorem}\label{thm:introW}
For every prime power $q$ and every $n\ge1$, every resolvent component for $(q,n)$ without a condition of
codimension~$1$ has weight at most $W(q)$, where
\[
W(2)=\tfrac34,\qquad W(3)=\tfrac49,\qquad W(q)\le0.34\quad(q\ge4);
\]
for $q\ge4$ the values $W(q)$ are given in Theorem~\ref{thm:W}.
\end{theorem}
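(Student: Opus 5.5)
The plan follows the outline in the paragraph \emph{Components are small}, with the structure theorem (Theorem~\ref{thm:struct}) as the main input. Fix $(q,n)$ and a resolvent component $\cC$ without a condition of codimension~$1$. Let $o_0\ge2$ be the smallest codimension occurring in $\cC$. By Theorem~\ref{thm:struct}, all fibre elements of conditions in $\cC$ lie in $\mu_K$, where $K$ is either the closure~(B) or the cruder choice~(A). Hence $\cC$ has at most $\sigma_1(o)\gcd(q^o-1,D)$ conditions of codimension~$o$, with $D=(\ord_K(q))_{p'}$. Since only $o\ge o_0$ occur,
\[
w_{\cC}\ \le\ B(q,o_0):=\sum_{o\ge o_0}\sigma_1(o)\gcd(q^o-1,D)\,q^{-o},
\]
which depends only on $q$ and $o_0$ and not on $n$ (Corollary~\ref{cor:weight}). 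It therefore suffices to show $\sup_{o_0\ge2}B(q,o_0)\le W(q)$, after treating a few exceptional $(q,o_0)$ separately.

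\textbf{Large $o_0$.} With choice~(A), $K=\lcm(K_0,\lcm(1,\dots,o_0)_{p'})$, the Rosser--Schoenfeld bound gives $D<o_0\,2.83^{o_0}$. Using $\sigma_1(o)=O(o\log\log o)$, the tail satisfies
\[
B(q,o_0)\le o_0\,2.83^{o_0}\sum_{o\ge o_0}\sigma_1(o)q^{-o}\ll o_0^2\log\log o_0\,(2.83/q)^{o_0},
\]
which tends to $0$ geometrically when $q\ge3$. For $q\ge4$ this handles all $o_0$ beyond an explicit small threshold. For $q\ge4$ I would also bound $B(q,o_0)$ uniformly in $q$, using $\gcd(q^o-1,D)\le D$ and monotonicity in $q$, so that only finitely many $q$ need checking and the remaining values are below $0.34$. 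For $q=2$ choice~(A) is useless. There I would use the closure~(B) together with $\log_2D=O((\log o_0)^2)$ from Lemma~\ref{lem:closuresize}. This makes $D$ subexponential, so $B(2,o_0)\le 2^{O((\log o_0)^2)}\,o_0^2\,2^{-o_0}\to0$ with an explicit threshold.

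\textbf{Small $o_0$.} Below the threshold, I would compute $K$ (for choice~(B), by iterating $J\mapsto\lcm(J,(\ord_J(q))_{p'})$ from $J=K_0$), then $D$, and then evaluate $B(q,o_0)$ by computer, truncating the sum over $o$ with an explicit tail estimate. Taking the better of the two choices should give values at most $W(q)$ except for $(q,o_0)\in\{(2,3),(2,6),(3,2),(3,4)\}$. For these four pairs I would invoke Proposition~\ref{prop:finite}: every component with minimal codimension $o_0$ already occurs for some $(q,n_0)$ with $n_0$ dividing an explicit number $N(q,o_0)$. So one enumerates the divisors $n_0$, constructs the components exactly from the fibre and Frobenius-orbit description (Lemma~\ref{lem:qorbits}), and computes their weights. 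I expect the maxima $\tfrac34$ for $q=2$ and $\tfrac49$ for $q=3$ to be attained there or in the small-$o_0$ table.

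\textbf{Main obstacle.} The hardest part is $q=2$. The margin is thin there: $2.83$ versus $2$ fails outright, and the closure bound on $D$ must be made explicit enough that the computational range stays finite and feasible. The enumeration for $(2,6)$ may also be heavy if $N(2,6)$ has many divisors. If it is, I would first prune the search using the invariance of the Frobenius orbit of $\eta^D$ to restrict which $n_0$ can produce new components.
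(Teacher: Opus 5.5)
Your overall route is the paper's: reduce via Theorem~\ref{thm:struct} to a bound depending only on $(q,o_0)$, use Rosser--Schoenfeld with choice~(A) for large $o_0$ when $q\ge3$ and Lemma~\ref{lem:closuresize} when $q=2$, evaluate small $o_0$ by computer, and settle $(2,3),(2,6),(3,2),(3,4)$ by Proposition~\ref{prop:finite}. The problem is the bound you propose to evaluate. It is too weak, so your claim that ``taking the better of the two choices should give values at most $W(q)$ except for the four pairs'' fails.

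Your $B(q,o_0)=\sum_{o\ge o_0}\sigma_1(o)\gcd(q^o-1,D)q^{-o}$ is not the bound of Corollary~\ref{cor:weight}. It loses two things. First, it drops the restriction from Theorem~\ref{thm:struct}(b) that every codimension in $\cC$ divides $\ord_K(q)$, so that only $o\mid O_\infty$ (resp.\ $o\mid\Lambda$) occur. Second, it uses the second inequality of Theorem~\ref{thm:struct}(c) instead of the first, $N_{\cC}(o)\le\sum_{d\mid o,\,d'\mid D}d\gcd(q^o-1,D/d')$.

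For small $o_0$ both losses are fatal:
\begin{itemize}
\item At $(q,o_0)=(2,2)$ both choices give $D=1$. Already the terms $o=2,3$ give $B(2,2)\ge\frac34+\frac12>1$, while $\Wcl(2,2)=\frac34$ because $O_\infty=2$. Likewise $B(2,4)\approx0.99$ and $B(2,5)\approx0.82$ exceed $\frac34$.
\item At $(4,3)$ both choices give $D=9$. The single term $o=3$ is $\sigma_1(3)\gcd(63,9)/4^3=\frac{36}{64}>0.34$. The sharper count gives $(9+9)/64=\frac{18}{64}$, which is the main part of $W(4)=0.2946$.
\item At $(5,2)$, with $D=4$, the term $o=2$ is $\frac{12}{25}$ instead of $\frac8{25}$.
\end{itemize}
So for $q=4,5$ you do not even reach $0.34$. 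Moreover, the values $W(q)$ for $4\le q\le31$ in Theorem~\ref{thm:W} are by construction the maxima of $\Wcr(q,o_0)$ rounded up. No coarser bound can reproduce them: for example, $(7,2)$ gives $\frac{12}{49}>W(7)=0.1683$.

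The fix is to evaluate Corollary~\ref{cor:weight} exactly as stated: $\Wcl(q,o_0)$ and $\Wcr(q,o_0)$, with the outer sum over $o\mid O_\infty$ (resp.\ $o\mid\Lambda$) and the inner sum $\sum_{d\mid o,\,d'\mid D}d\gcd(q^o-1,D/d')$. Then exactly your four pairs remain. The rest of your plan goes through as in the proof of Theorem~\ref{thm:W}:
\begin{itemize}
\item for the tail, use $I_D(o)\le o^2D$ and $\sum_{o\ge m}o^2q^{-o}\le3m^2q^{-m}$;
\item for $q\ge32$, get monotonicity in $q$ via $\Lambda^+=o_0\lcm(1,\dots,o_0)$, which is a multiple of $\Lambda_{p'}$ for every $p$. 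Your $D$ depends on $p$, so ``monotonicity in $q$'' needs such a $p$-independent multiple.
\end{itemize}
Alternatively, you could send the extra pairs $(2,2),(2,4),(2,5),(4,3),(5,2),\dots$ to the enumeration of Proposition~\ref{prop:finite}, where $N^{**}$ is small. But you would first have to determine the full, longer exceptional list for your coarser bound.

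Finally, your worry about the enumeration for $(2,6)$ is unnecessary: $N^{**}$ divides $378$, which has only $16$ divisors.
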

For $q=2,3$ these bounds are attained (Section~\ref{sec:remarks}).

\paragraph{The criterion.} These bounds are now inserted into the character-sum method; put $W=W(q)$. Recall that
the essential conditions of codimension~$1$ are components by themselves, each passed with probability $1-\frac1q$
(Lemma~\ref{lem:qorbits}(c)), and that every other component has weight at most $W$. Since $1-x\ge(1-W)^{x/W}$ for
$0\le x\le W<1$ (by concavity of $\log(1-x)$), the product formula gives $\delta_n\ge(1-\frac1q)^{M_1}(1-W)^{S/W}$
(Theorem~\ref{thm:C}(a)), where, as in Section~\ref{subsec:notation}, $M_1$ is the number of essential conditions of
codimension~$1$ and $S$ is the total weight of all other essential conditions. In the rough inequality $\delta\,q^{n/2}\ge2^{\omega(q^n-1)+M}$ of the paragraph \emph{The difficulty}, $2^M$
is an upper bound for the product $L$ of the $M$ factors $2(1-q^{-\operatorname{codim}V_C})\le2$, one for each essential
condition $C$; the precise statement, Theorem~\ref{thm:counting}, contains $L$ itself. For the $M_1$ conditions of
codimension~$1$ this factor equals $2(1-\frac1q)$, so $(1-\frac1q)^{M_1}$ occurs on both sides (in the lower bound for $\delta_n$ and in $L$) and cancels, and we
obtain the following criterion, which is part~(b) of Theorem~\ref{thm:C}.
\begin{theorem}\label{thm:introC}
Let $0<W<1$ be such that every resolvent component without a condition of codimension~$1$ has weight at most $W$, and
put $c(W)=\log_2\!\frac1{1-W}\big/W$. Let $M$ be the number of essential conditions, and let $S$ be the total weight of
those of codimension~$\ge2$. If
\[
\log_2(q^n-1)-\tfrac n2\log_2q\ \ge\ \omega(q^n-1)+M+c(W)\,S, \tag{$*$}
\]
then $(q,n)$ has the Morgan--Mullen property. Upper bounds may be substituted for $\omega(q^n-1)$, $M$ and $S$.
\end{theorem}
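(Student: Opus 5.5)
We combine the product lower bound for $\delta_n$ with the counting criterion of Theorem~\ref{thm:counting}, as sketched in the paragraph \emph{The criterion}. First we bound the density. By Theorem~\ref{thm:product}, $\delta_n\ge\prod_{\cC}(1-w_{\cC})$ whenever every weight is below~$1$. The essential conditions of codimension~$1$ form singleton components, each passed with probability exactly $1-\frac1q$ (Lemma~\ref{lem:qorbits}(c)). They contribute the factor $(1-\frac1q)^{M_1}$. Every other component has weight $x=w_{\cC}\le W<1$ by hypothesis. Since $\log(1-x)$ is concave and vanishes at $0$, its graph lies above the chord from $0$ to $W$, which gives $1-x\ge(1-W)^{x/W}$. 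Multiplying over these components and using that their weights add up to $S$ yields
\[
\delta_n\ \ge\ \Bigl(1-\tfrac1q\Bigr)^{M_1}(1-W)^{S/W}=\Bigl(1-\tfrac1q\Bigr)^{M_1}2^{-c(W)S}.
\]

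Next we apply Theorem~\ref{thm:counting}(b) to the set of completely normal elements. This set is the complement of the union of the $M$ subspaces $V_C$ of the essential conditions, by Theorem~\ref{thm:essential}. The theorem gives a primitive completely normal element once, roughly, $\delta_n\,q^{n/2}$ exceeds $2^{\omega(q^n-1)}L$. Here $L=\prod_C2(1-q^{-\operatorname{codim}V_C})$, and the precise form of the inequality, including the exact role of $q^n-1$ versus $q^n$, is whatever Theorem~\ref{thm:counting} states. I expect it to read $\delta_n\,(q^n-1)\ge 2^{\omega(q^n-1)}L\,q^{n/2}$ or an equivalent rearrangement; the left side of $(*)$, $\log_2(q^n-1)-\frac n2\log_2q$, suggests exactly this form. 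We split $L$ into two parts:
\begin{itemize}
\item the $M_1$ conditions of codimension~$1$ contribute $2^{M_1}(1-\frac1q)^{M_1}$;
\item the remaining $M-M_1$ conditions contribute at most $2^{M-M_1}$.
\end{itemize}
Inserting the density bound, the factor $(1-\frac1q)^{M_1}$ appears on both sides and cancels. Taking $\log_2$, the sufficient condition becomes
\[
\log_2(q^n-1)-\tfrac n2\log_2q\ \ge\ \omega(q^n-1)+M+c(W)S,
\]
which is $(*)$.

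The final sentence of the statement follows from monotonicity. The right side of $(*)$ increases in $\omega(q^n-1)$, $M$ and $S$, because $c(W)>0$. So if $(*)$ holds with upper bounds substituted for these quantities, it holds for the true values.

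The main obstacle is matching the exact form of Theorem~\ref{thm:counting}. One must check that it accepts an arbitrary lower bound for $\delta_n$ and that $L$ is defined over the essential conditions only, not over all conditions. One must also check that the hypothesis on $W$ covers every component other than the codimension-$1$ singletons, so that Theorem~\ref{thm:product} applies with all weights below~$1$. The singletons themselves have weight $\frac1q<1$, so they cause no problem there.
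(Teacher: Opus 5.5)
Your proposal is correct and follows the paper's proof essentially step for step: part~(a) of Theorem~\ref{thm:C} derives $\delta_n\ge(1-\frac1q)^{M_1}(1-W)^{S/W}$ by the same product-formula and chord (concavity) argument, and part~(b) then feeds this $\delta_*$ and the estimate $L\le2^M(1-\frac1q)^{M_1}$ into Theorem~\ref{thm:counting}(b). The points you flagged all hold as you expected: that criterion reads $(q^n-1)\delta_*\ge2^{\omega(q^n-1)}q^{n/2}L$ for any real $0<\delta_*\le\delta_n$, and $L$ there is the product over the essential conditions only.
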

By Theorem~\ref{thm:introW} one may take $W=W(q)$. The left-hand side of $(*)$ is about $\frac n2\log_2q$. On the right,
$\omega(q^n-1)\le\frac nk\log_2q+O_k(1)$ for every $k$ and $S(q,n)=O(\log n)$ (Lemma~\ref{lem:analytic}), so the decisive
term is $M$. Writing $M=\sum_{\eta\in\mu_{n'}}\rho(\eta)$ as a sum of local
contributions (Lemma~\ref{lem:Mrho}), each at most $p^a$ times the divisor sum $f(o)=\sum_{d\mid o}\min(\frac1d,\frac do)
\le1$ with $o=\ord_{\ord\eta}(q)$ (Proposition~\ref{prop:Mlocal}, Lemma~\ref{lem:divsum}), we obtain $M(q,n)\le n$ for all
pairs (Corollary~\ref{cor:Mn}); this suffices for $q\ge5$, where $\frac12\log_2q>1$. For $q=2,3,4$, where
$\frac12\log_2q$ is $0.5$, about $0.79$ and $1$, sharper bounds for the local contributions
(computed exactly for small $o$ when $q=2,3$) give $M(q,n)\le0.41\,n$, $0.62\,n$ and $0.75\,n$, respectively, for all
but finitely many $n$
(Proposition~\ref{prop:Msmall}).
Consequently $(*)$ holds for all $n$ beyond explicit thresholds, which are at most $171$ for $q\ge8$ and $14$ for
$q\ge32$ (Proposition~\ref{prop:range}). For the remaining pairs with $n>q$, we evaluate $(*)$ with the exact values of
$M$ and $S$ and the bound of Lemma~\ref{lem:omega} for $\omega(q^n-1)$ (Proposition~\ref{prop:finiteranges}). It holds
with at least one bit to spare except for $70$ pairs, all with $q\le19$ and $n\le378$; these are settled by explicit
primitive completely normal elements (Proposition~\ref{prop:certificates}). The case $n\le q$ is the theorem of
Garefalakis and Kapetanakis \cite[Theorem~1.3]{garefalakis2019jpaa}.

\paragraph{The corollaries.} Both corollaries rest on the lower bound
$\delta_n\ge(1-\frac1q)^{M_1}(1-W)^{S/W}=(1-\frac1q)^{M_1}2^{-c(W)S}$ of Theorem~\ref{thm:C}(a). For
Corollary~\ref{cor:hachenberger} it is compared, one root of unity at a time, with $(1-\frac1q)^{n'}$, which is
Hachenberger's bound $(q-1)^{n'}q^{(p^a-1)n'}$ divided by $q^n$. For this, the number $S$ splits, like
$M$, into local contributions, $S=\sum_{\eta\in\mu_{n'}}s(\eta)$ (Lemma~\ref{lem:Mrho}); the $M_1=\gcd(n',q-1)$
elements $\eta\in\mu_{n'}$ that lie in $\F_q$ have $s(\eta)=0$ and contribute $(1-\frac1q)^{M_1}$ on both sides, and
every other $\eta$ contributes $2^{-c(W)s(\eta)}\ge1-\frac1q$. (To show that equality holds only if $n'\mid q-1$, a short extra argument is needed for $q=2$, $n'=3$, where the
two bounds can coincide.) For Corollary~\ref{cor:pcn} the character-sum
argument bounds $|\PCN_q(n)-\theta\,\CN_q(n)|$ by $\theta\,2^{\omega(q^n-1)}q^{n/2}L$ (cf.\
Theorem~\ref{thm:counting}(a)); dividing by $\theta\,\CN_q(n)=\theta\,q^n\delta_n$ and inserting
$L\le2^M(1-\frac1q)^{M_1}$ and the lower bound for $\delta_n$ shows that the relative error is less than
$2^{-\Delta}$, where $\Delta$ is the margin of $(*)$, that is, its left-hand side minus its right-hand side. The estimates for
large $n$ above make $\Delta$ grow at least linearly in $n$, uniformly in $q$; for bounded $n$ and large $q$ a cruder
estimate suffices.

\paragraph{Organisation.} Section~\ref{sec:coord} introduces the coordinates, the fibre-sum criterion and the essential
conditions, and Section~\ref{sec:product} the components and the product formula. The results of
Sections~\ref{sec:counting}--\ref{sec:M} do not involve the components: Section~\ref{sec:counting} gives the counting
criterion, Section~\ref{sec:bounds} bounds $\omega(q^n-1)$ and gives $M$ and $S$ exactly, as sums over the
\emph{types} (level, order of the target) of the essential conditions, and Section~\ref{sec:M} bounds $M$. Section~\ref{sec:struct} proves the structure theorem and the weight
bounds. Section~\ref{sec:main} combines everything
and proves the main theorem, Hachenberger's lower bound for $\CN_q(n)$ (Corollary~\ref{cor:hachenberger}) and the
asymptotic independence of primitivity and complete normality (Corollary~\ref{cor:pcn}).
Section~\ref{sec:comp} lists the computations,
Section~\ref{sec:4599} works through the example $(q,n)=(2,4599)$ (it is not part of the proof and can
be read directly after Section~\ref{sec:main}), Section~\ref{sec:checks} lists further consistency
checks, and Section~\ref{sec:remarks} discusses the relation to earlier work, in particular to Hachenberger's
decomposition theory, and further questions. Appendix~\ref{app:lean} describes the Lean formalisation.

\section{Coordinates by partial resolvents}\label{sec:coord}

This section introduces coordinates on $E$ in which an element is normal over an intermediate field $\F_{q^d}$ if and
only if certain sums of its coordinates, one for each distinct irreducible factor of $x^{n/d}-1$ over $\F_{q^d}$, are
non-zero (Theorem~\ref{thm:criterion}). For random $w$, the coordinates of different pair orbits (defined below) are
independent (Lemma~\ref{lem:coord}(c)), and by Theorem~\ref{thm:essential} only the essential conditions are needed.

On a first reading one may assume $p\nmid n$, as in the paragraph \emph{Independent components} of
Section~\ref{subsec:idea}. Then $a=0$ and $n'=n$, and in Definition~\ref{def:cond}, where $d=p^cd'$ with $p\nmid d'$ and
$N=n'/d'$, every divisor $d$ of $n$ has $c=0$, $d'=d$ and $N=n/d$. The coordinates are the Lagrange resolvents $F_\eta(w)(0)=\sum_{t\in\Z/n}\eta^{-t}\sigma^t(w)$,
$\eta\in\mu_n$, and the pair orbits are just the Frobenius orbits ($q$-orbits) $\{\eta,\eta^q,\eta^{q^2},\dots\}$ of
$\mu_n$. A condition at level $d$ asks that the sum of the $F_\eta(w)(0)$ over all $\eta$ with $\eta^d=\xi$ be non-zero,
for a fixed $\xi\in\mu_{n/d}$, its target (defined up to $\xi\mapsto\xi^{q^d}$).

If $p\mid n$, the $n'$ full resolvents $\sum_{t\in\Z/n}\eta^{-t}\sigma^t(w)$, $\eta\in\mu_{n'}$, no longer determine $w$,
since they all vanish on $(\sigma^{n'}-1)E\ne0$. Splitting each of them according to $t\bmod p^a$ gives the partial
resolvents $F_\eta(w)(u)$, $u\in\Z/p^a$, defined below, which do (Lemma~\ref{lem:coord}(a)).

We use the notation of Section~\ref{subsec:notation}, and normality and CN as defined in
Section~\ref{sec:intro} before Conjecture~\ref{conj:mm}. Fix a finite field $\Omega\subseteq\overline{\F}_q$ containing
$E$ and $\mu_{n'}$ (e.g.\ the subfield of degree $\lcm(n,\ord_{n'}(q))$ over $\F_q$) and a generator $\zeta$ of $\mu_{n'}$.

\begin{lemma}[{cf.\ \cite[Theorem~2.39]{lidl1997finite}}]\label{lem:classical}
Let $Q$ be a power of $p$ and let $m\ge1$. Let $\sigma_Q(x)=x^Q$ on $\F_{Q^m}$, let $\Omega'\supseteq\F_{Q^m}$ be a field
containing $\mu_{m'}$, and for $w\in\F_{Q^m}$ and $\xi\in\mu_{m'}$ let
\[
R_\xi(w)=\sum_{s\in\Z/m}\xi^{-s}\sigma_Q^s(w)\in\Omega'
\]
be the \emph{(Lagrange) resolvent} of $w$ at $\xi$. Then $w$ is
normal over $\F_Q$ if and only if $R_\xi(w)\ne0$ for every $\xi\in\mu_{m'}$. Moreover $\{w:R_\xi(w)=0\}=g_\xi(\sigma_Q)\F_{Q^m}$,
where $g_\xi$ is the minimal polynomial of $\xi$ over $\F_Q$.
\end{lemma}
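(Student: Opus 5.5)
The plan is to view $\F_{Q^m}$ as a module over $\F_Q[x]$, with $x$ acting as $\sigma_Q$. The classical fact (normal basis theorem, \cite[Theorem~2.39]{lidl1997finite}) is that $w$ is normal over $\F_Q$ if and only if its $\sigma_Q$-annihilator is all of $x^m-1$. Equivalently, $h(\sigma_Q)w\ne0$ for every proper divisor $h$ of $x^m-1$. It suffices to test the maximal proper divisors $h=(x^m-1)/g$, where $g$ runs through the distinct irreducible factors of $x^m-1$.

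First I would write $x^m-1=(x^{m'}-1)^{p^b}$ with $m=p^bm'$. The distinct irreducible factors are then the minimal polynomials $g_\xi$ of $\xi\in\mu_{m'}$. The key step is to identify the kernel of $R_\xi$. Note that $R_\xi(w)=P_\xi(\sigma_Q)w$, where
\[
P_\xi(x)=\sum_{s=0}^{m-1}\xi^{-s}x^s=\frac{x^m-1}{\xi^{-1}x-1}\cdot\xi^{-m}\quad\text{(up to a unit)},
\]
since $\xi^{-m}=1$. So $P_\xi=c\,(x^m-1)/(x-\xi)$ in $\Omega'[x]$ for a nonzero constant $c$. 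After extending scalars to $\Omega'$, the space $\Omega'\otimes\F_{Q^m}$ is a free $\Omega'[x]/(x^m-1)$-module of rank one, by the normal basis theorem. Hence the kernel of $P_\xi(\sigma_Q)$ on it is $(x-\xi)\cdot(\text{module})$.

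Next I would descend to $\F_Q$. The set $\{w\in\F_{Q^m}:R_\xi(w)=0\}$ is an $\F_Q$-subspace stable under $\sigma_Q$, because $R_\xi(\sigma_Qw)=\xi R_\xi(w)$. It should equal $g_\xi(\sigma_Q)\F_{Q^m}$. For one inclusion, $R_\xi(g_\xi(\sigma_Q)w)=g_\xi(\xi)R_\xi(w)=0$, using the identity $R_\xi(\sigma_Qw)=\xi R_\xi(w)$ repeatedly. For the other, I would compare dimensions. Identify $\F_{Q^m}\cong\F_Q[x]/(x^m-1)$ by choosing a normal element. Then $g_\xi(\sigma_Q)\F_{Q^m}$ has codimension $\deg g_\xi$. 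The kernel of $R_\xi$ is the set of $f$ with $(x^m-1)\mid P_\xi f$ in $\Omega'[x]$, i.e.\ with $(x-\xi)\mid f$, i.e.\ $f(\xi)=0$. For $f\in\F_Q[x]$ this is equivalent to $g_\xi\mid f$. Both inclusions then follow at once, and the dimension count is only a check.

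Finally, the normality criterion follows. If $w$ is not normal, then $w$ lies in some proper submodule, hence in a maximal one. The maximal submodules are exactly the $g_\xi(\sigma_Q)\F_{Q^m}$, under the same identification with $\F_Q[x]/(x^m-1)$, so $R_\xi(w)=0$ for some $\xi$. Conversely, if $R_\xi(w)=0$, then $w\in g_\xi(\sigma_Q)\F_{Q^m}$, a proper submodule, so $w$ is not normal.

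The main obstacle is bookkeeping rather than mathematics. In the inseparable case $p\mid m$, I must check that $P_\xi$ is still $c(x^m-1)/(x-\xi)$ and that the maximal submodules correspond to distinct roots $\xi\in\mu_{m'}$ only, not to multiplicities. I would handle this by working directly with the cyclic-module identification as above, which avoids any diagonalisation.
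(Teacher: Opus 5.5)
Your easy inclusion $g_\xi(\sigma_Q)\F_{Q^m}\subseteq\{w:R_\xi(w)=0\}$ is fine, and so is the direction ``not normal $\Rightarrow$ some $R_\xi(w)=0$''. The reverse inclusion, on which the other direction also rests, has a genuine gap.

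With $w=f(\sigma_Q)w_0$ and $P_\xi f\equiv\sum_{j<m}c_jx^j\pmod{x^m-1}$, you have $R_\xi(w)=\sum_{j<m}c_j\sigma_Q^j(w_0)$. This is an $\Omega'$-linear combination of elements of $\Omega'$, and you assert that it vanishes only if $(x^m-1)\mid P_\xi f$, i.e.\ only if all $c_j=0$. That would require $\sigma_Q^0(w_0),\dots,\sigma_Q^{m-1}(w_0)$ to be linearly independent over $\Omega'$. This fails for $m\ge2$, since they lie in the one-dimensional $\Omega'$-space $\Omega'$. Their independence over $\F_Q$ is not enough, because the $c_j$ lie in $\Omega'$.

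The tensor-product remark does not close this. The kernel of $P_\xi(\sigma_Q)$ on $\Omega'\otimes_{\F_Q}\F_{Q^m}$ is indeed $(x-\xi)$ times the module. But $R_\xi(w)$ is the image of $P_\xi(\sigma_Q)(1\otimes w)$ under the multiplication map $\Omega'\otimes_{\F_Q}\F_{Q^m}\to\Omega'$, which has an $(m-1)$-dimensional kernel. You never show that this map is injective on the image of $P_\xi(\sigma_Q)$.

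Since $(x-\xi)P_\xi=\xi(x^m-1)$, what your computation actually yields is $R_\xi(f(\sigma_Q)w_0)=f(\xi)R_\xi(w_0)$. That identity is consistent with $R_\xi$ vanishing identically on $\F_{Q^m}$. A dimension count cannot rule this out either: $\{w:R_\xi(w)=0\}$ is an $\F_Q[x]$-submodule containing the maximal submodule $g_\xi(\sigma_Q)\F_{Q^m}$, so it is either that submodule or all of $\F_{Q^m}$.

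The missing input, and the only non-formal one, is that $R_\xi\not\equiv0$ on $\F_{Q^m}$, equivalently $R_\xi(w_0)\ne0$ for a normal $w_0$. The paper obtains this from Dedekind's lemma: $R_\xi$ is a non-trivial $\Omega'$-linear combination of the $m$ distinct embeddings $\sigma_Q^s\colon\F_{Q^m}\to\Omega'$. Alternatively, $R_\xi(w)$ is the value at $w$ of the non-zero polynomial $\sum_{s=0}^{m-1}\xi^{-s}X^{Q^s}\in\Omega'[X]$. This polynomial has degree $Q^{m-1}<Q^m$, so it cannot vanish on all of $\F_{Q^m}$.

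With this added, $R_\xi(f(\sigma_Q)w_0)=f(\xi)R_\xi(w_0)$ gives $R_\xi(w)=0\iff f(\xi)=0\iff g_\xi\mid f$. Your argument then coincides in substance with the paper's. The paper avoids the tensor product and the factorisation of $P_\xi$, working directly with $R_\xi(\sigma_Qu)=\xi R_\xi(u)$.
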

\begin{proof}
By the normal basis theorem, $\F_{Q^m}$ is a cyclic $\F_Q[x]$-module ($x$ acting as $\sigma_Q$) with annihilator
$x^m-1$, and the normal elements are its generators. Fix a generator $w_0$; every $w\in\F_{Q^m}$ is $f(\sigma_Q)w_0$
with $f\in\F_Q[x]$, and $w$ is a generator iff $\gcd(f,x^m-1)=1$. Since $x^m-1=(x^{m'}-1)^{m/m'}$ and
$x^{m'}-1$ is separable with root set $\mu_{m'}$, this happens iff $f(\xi)\ne0$ for all
$\xi\in\mu_{m'}$. From $R_\xi(\sigma_Qu)=\xi R_\xi(u)$ (the sum runs over a full period) and
$R_\xi(cu)=cR_\xi(u)$ for $c\in\F_Q$ we get $R_\xi(f(\sigma_Q)w_0)=f(\xi)R_\xi(w_0)$. Finally
$R_\xi(w_0)\ne0$: otherwise $R_\xi$ would vanish on $\F_{Q^m}$, but $R_\xi$ is a non-trivial $\Omega'$-linear
combination of the $m$ distinct field embeddings $\sigma_Q^s\colon\F_{Q^m}\to\Omega'$, which are linearly
independent by Dedekind's lemma. Hence $R_\xi(w)=0\iff f(\xi)=0\iff g_\xi\mid f\iff w\in g_\xi(\sigma_Q)\F_{Q^m}$.
\end{proof}

Lemma~\ref{lem:classical} treats one intermediate field at a time; the following coordinates serve all levels
$d\mid n$ at once.

\begin{definition}
For $w\in E$, $\eta\in\mu_{n'}$ and $u\in\Z/p^a$ let
\[
F_\eta(w)(u)\;=\;\sum_{t\in\Z/n,\ t\equiv u\ (\mathrm{mod}\ p^a)}\eta^{-t}\sigma^t(w)\ \in\Omega .
\]
We call the $F_\eta(w)(u)$ the \emph{partial (Lagrange) resolvents} of $w$; for $p\nmid n$ (so $u=0$) they are the
Lagrange resolvents of $E/\F_q$, and we also write $F_\eta(w)=F_\eta(w)(0)$.
The \emph{pair orbits} are the orbits of the permutation $(\eta,u)\mapsto(\eta^q,u+1)$ of
$\mu_{n'}\times\Z/p^a$.
\end{definition}

\begin{lemma}[cf.\ \cite{mattson1961new} for $p\nmid n$]\label{lem:coord}
\leavevmode
\begin{enumerate}[label=(\alph*),beginpenalty=10000]
\item We have $\sigma^t(w)=\frac1{n'}\sum_{\eta\in\mu_{n'}}\eta^tF_\eta(w)(t\bmod p^a)$ for all $t\in\Z/n$; in
particular the map $w\mapsto(F_\eta(w)(u))_{\eta,u}$ is injective.
\item We have $F_{\eta^{q^j}}(w)(u+j)=\eta^{-jq^j}F_\eta(w)(u)^{q^j}$ for all $j\ge0$.
\item Choose a representative $(\eta_\pi,u_\pi)$ of every pair orbit $\pi$. The map
$w\mapsto\bigl(F_{\eta_\pi}(w)(u_\pi)\bigr)_\pi$ is an $\F_q$-linear isomorphism of $E$ onto
$\prod_\pi X_\pi$, where $X_\pi\subseteq\Omega$ is an $\F_q$-subspace of dimension $|\pi|$. Consequently,
if $w$ is uniformly distributed in $E$, the coordinates of distinct pair orbits are independent and
uniformly distributed in the $X_\pi$.
\end{enumerate}
\end{lemma}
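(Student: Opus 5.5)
The three parts are direct manipulations of the defining sums. Part (c) then follows by a dimension count: (a) and (b) give injectivity, and (b) also confines each orbit coordinate to a small space.

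For (a), fix $t\in\Z/n$ and expand
\[
\sum_{\eta\in\mu_{n'}}\eta^tF_\eta(w)(t\bmod p^a)=\sum_{s\in\Z/n,\ s\equiv t\ (p^a)}\sigma^s(w)\sum_{\eta\in\mu_{n'}}\eta^{t-s}.
\]
Here $\eta^t$ is well defined for $t\in\Z/n$ because $n'\mid n$. The inner sum equals $n'$ if $n'\mid t-s$ and $0$ otherwise, by orthogonality for the cyclic group $\mu_{n'}$. Since also $p^a\mid t-s$ and $\gcd(p^a,n')=1$, only $s=t$ survives. Because $p\nmid n'$, we may divide by $n'$. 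Taking $t=0$ recovers $w$ from its partial resolvents, which gives injectivity.

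For (b), substitute $t=s+j$ with $s\equiv u\pmod{p^a}$ in the sum defining $F_{\eta^{q^j}}(w)(u+j)$. Each term becomes
\[
\eta^{-q^j(s+j)}\sigma^j\bigl(\sigma^s(w)\bigr)=\eta^{-jq^j}\bigl(\eta^{-s}\sigma^s(w)\bigr)^{q^j}.
\]
Since $x\mapsto x^{q^j}$ is additive on $\Omega$, the sum equals $\eta^{-jq^j}F_\eta(w)(u)^{q^j}$. One checks that the shift $t\mapsto t+j$ maps the residue class $u$ bijectively onto the residue class $u+j$ in $\Z/n$.

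For (c), let $\pi$ have representative $(\eta,u)=(\eta_\pi,u_\pi)$ and size $s=|\pi|$. Then $s$ is the least $j>0$ with $\eta^{q^j}=\eta$ and $p^a\mid j$.
\begin{enumerate}[label=(\roman*)]
\item By (b), every coordinate $F_{\eta^{q^j}}(w)(u+j)$ on $\pi$ is determined by $x=F_\eta(w)(u)$. Together with (a), this shows that $w\mapsto(F_{\eta_\pi}(w)(u_\pi))_\pi$ is injective. It is $\F_q$-linear, since each $\sigma^t$ is $\F_q$-linear and the coefficients $\eta^{-t}$ are fixed.
\item Applying (b) with $j=s$ gives $x=\eta^{-sq^s}x^{q^s}=\eta^{-s}x^{q^s}$. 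So we define $X_\pi=\{x\in\Omega:x^{q^s}=\eta^{s}x\}$. This is an $\F_q$-subspace, since $x\mapsto x^{q^s}-\eta^sx$ is $\F_q$-linear. It has at most $q^s$ elements, as the zero set of a nonzero polynomial of degree $q^s$.
\item The orbits partition $\mu_{n'}\times\Z/p^a$, so $\sum_\pi|\pi|=n'p^a=n$.
\end{enumerate}
Hence the injective linear map sends the $q^n$ elements of $E$ into a set of at most $\prod_\pi q^{|\pi|}=q^n$ elements. It is therefore a bijection, and each $X_\pi$ has exactly $q^{|\pi|}$ elements, i.e.\ $\F_q$-dimension $|\pi|$. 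The statement about a uniformly random $w$ follows at once, because a linear isomorphism onto a product carries the uniform distribution to the product of uniform distributions.

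The only delicate point is the counting step in (c). It needs the bound $|X_\pi|\le q^{|\pi|}$, which comes from the relation produced by (b) with $j=|\pi|$. It also needs the orbit length to be computed correctly as the period of $(\eta,u)\mapsto(\eta^q,u+1)$. The rest is routine bookkeeping.
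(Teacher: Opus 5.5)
Your proof is correct and follows the paper's argument essentially step by step. Part (a) uses orthogonality on $\mu_{n'}$ together with the congruence modulo $p^a$, and part (b) uses the Frobenius identity; you handle general $j$ by a direct substitution, where the paper proves $j=1$ and inducts. Part (c) takes injectivity from (a) and (b), the bound $|X_\pi|\le q^{|\pi|}$ from (b) with $j=|\pi|$, and the count $\sum_\pi|\pi|=n'p^a=n$; your count of cardinalities is equivalent to the paper's chain $n\le\sum_\pi\dim X_\pi\le\sum_\pi|\pi|=n$.
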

\begin{proof}
(a) We have $\sum_\eta\eta^tF_\eta(w)(t\bmod p^a)=\sum_{t'\equiv t\,\allowbreak (p^a)}\sigma^{t'}(w)\sum_\eta\eta^{t-t'}$, and
the inner sum is $n'$ if $t'\equiv t\pmod{n'}$ and $0$ otherwise; with $t'\equiv t\pmod{p^a}$ this forces
$t'=t$.

(b) Write $F_\eta(u)$ for $F_\eta(w)(u)$. For $j=1$,
\[
F_\eta(u)^q=\sum_{t\equiv u}\eta^{-tq}\sigma^{t+1}(w)
=\eta^q\sum_{t'\equiv u+1}(\eta^q)^{-t'}\sigma^{t'}(w)=\eta^qF_{\eta^q}(u+1);
\]
induct.

(c) By (b), every coordinate on $\pi$ is determined by the representative's, so the map is injective
by (a). With $s=|\pi|$ we have $(\eta_\pi^{q^s},u_\pi+s)=(\eta_\pi,u_\pi)$, and (b) with $j=s$ gives
$F_{\eta_\pi}(u_\pi)=\eta_\pi^{-s}F_{\eta_\pi}(u_\pi)^{q^s}$, so the
representative's coordinate lies in $X_\pi=\{x\in\Omega:x=\eta_\pi^{-s}x^{q^s}\}$, an $\F_q$-subspace
with at most $q^s$ elements. Thus $n=\dim E\le\sum_\pi\dim X_\pi\le\sum_\pi|\pi|=n'p^a=n$, and all
inequalities are equalities.
\end{proof}

Fix a divisor $d$ of $n$, write $d=p^cd'$ with $p\nmid d'$, and put $N=n'/d'$. Then $[E:\F_{q^d}]=n/d=p^{a-c}N$, the roots of
$x^{n/d}-1=(x^N-1)^{p^{a-c}}$ form the group $\mu_N$, which is generated by $\zeta^d$, and the exponents $ds$ of the
conjugates $\sigma^{ds}(w)$ of $w$ over $\F_{q^d}$ run modulo $p^a$ exactly through $p^c\Z/p^a$. Accordingly, the
resolvent of Lemma~\ref{lem:classical} for $E/\F_{q^d}$ at $\zeta^{dk}$ ($k\in\Z/N$) is, up to the factor $1/d'$, the sum of the
coordinates $F_\eta(w)(u)$ with $\eta^d=\zeta^{dk}$ and $p^c\mid u$ (see the proof of Theorem~\ref{thm:criterion});
this sum is the fibre sum $C_{d,k}(w)$ of the next definition.

\begin{definition}\label{def:cond}
A \emph{condition} is a pair $(d,k)$ with $d\mid n$ and $k\in\Z/N$, where $d=p^cd'$ and $N=n'/d'$; we call $d$ the
\emph{level} of the condition. Put
\[
C_{d,k}(w)=\sum_{t\in\Z/d'}\ \sum_{u\in p^c\Z/p^a}F_{\zeta^{k+tN}}(w)(u),\qquad
e=e(d,k)=\frac{N}{\gcd(N,k)},\qquad o(d,k)=\ord_e(q) .
\]
Conditions $(d,k)$ and $(d,q^dk)$ are identified, so a condition at level $d$ is an orbit of $k\mapsto q^dk$ on $\Z/N$. The
\emph{support} $\Supp(d,k)$ is the set of pair orbits meeting $\{(\zeta^{k+tN},u):t\in\Z/d',\ u\in p^c\Z/p^a\}$. The
condition is \emph{essential} if $d\mid o(d,k)$ (the name follows the $(q,n)$-essential sets of
intermediate fields of Hachenberger and Hackenberg \cite[Definition~3.2.1]{hachenberger2019computational}; see
Section~\ref{sec:remarks}).
\end{definition}
The identification is consistent: since $C_{d,q^dk}=\zeta^{-dq^dk}C_{d,k}^{q^d}$ (Lemma~\ref{lem:coord}(b) with $j=d$,
using $\zeta^{dN}=1$) and $\gcd(N,q^dk)=\gcd(N,k)$, the set $\{w:C_{d,k}(w)=0\}$ and the numbers $e$ and $o(d,k)$
depend only on the orbit, and hence so does essentiality.
Note that $e$ is the order of $\zeta^{d'k}$ and that the $\eta$'s occurring in $C_{d,k}$ are exactly the
solutions of $\eta^{d'}=\zeta^{d'k}$; we call this set the \emph{fibre} of the condition and $\zeta^{d'k}$ its
\emph{target}.
As $k$ runs through $\Z/N$, the target runs through $\mu_N$.
Replacing the representative $k$ by $q^dk$ raises the target and every fibre element to the power $q^d$. So the fibre
of a condition is defined only up to $\eta\mapsto\eta^{q^d}$, while its support and the $q$-orbits of its fibre
elements do not depend on the representative (the pair orbit of $(\eta,u)$ contains $(\eta^{q^d},u+d)$, and
$p^c\mid u+d$). Statements below about fibres hold for the fibre of every representative, and conclusions from
them use only these invariant data; we say that $\eta$ lies in the fibre of $C$ if it lies in the fibre
of some representative of $C$. For a condition $C=(d,k)$ we also write $C(w)=C_{d,k}(w)$
(whether $C(w)=0$ does not depend on the representative), and we use $e(C)$, $o(C)$ and $\Supp(C)$ accordingly. We put
$V_C=V_{d,k}=\{w:C(w)=0\}$ (Theorem~\ref{thm:criterion}).

\begin{theorem}[fibre-sum criterion]\label{thm:criterion}
Let $d\mid n$. Then $w\in E$ is normal over $\F_{q^d}$ if and only if $C_{d,k}(w)\ne0$ for every
$k\in\Z/N$. The kernel $V_{d,k}=\{w:C_{d,k}(w)=0\}$ is an $\F_q$-subspace of codimension
$\lcm(d,o(d,k))$; for an essential condition this is $o(d,k)$.
\end{theorem}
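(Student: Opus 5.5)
The plan is to reduce to Lemma~\ref{lem:classical} applied to the extension $E/\F_{q^d}$, with $Q=q^d$, $m=n/d=p^{a-c}N$ and $m'=N$. The roots are $\xi=\zeta^{dk}$, $k\in\Z/N$, since $\zeta^d$ generates $\mu_N$. The one real computation is to show that the resolvent $R_\xi(w)=\sum_{s\in\Z/m}\xi^{-s}\sigma^{ds}(w)$ equals $C_{d,k}(w)/d'$. Once this is done, the normality criterion and the description $V_{d,k}=g_\xi(\sigma^d)E$ follow at once from Lemma~\ref{lem:classical}.

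\emph{Rewriting the resolvent.} I would insert the inversion formula of Lemma~\ref{lem:coord}(a), $\sigma^{ds}(w)=\frac1{n'}\sum_{\eta\in\mu_{n'}}\eta^{ds}F_\eta(w)(ds\bmod p^a)$. This gives
\[
R_\xi(w)=\frac1{n'}\sum_{\eta\in\mu_{n'}}\ \sum_{s\in\Z/m}(\eta^d\xi^{-1})^s\,F_\eta(w)(ds\bmod p^a).
\]
Since $\eta^{d'}\in\mu_N$, also $\eta^d=(\eta^{d'})^{p^c}\in\mu_N$. So the factor $(\eta^d\xi^{-1})^s$ depends only on $s\bmod N$, while $ds\bmod p^a$ depends only on $s\bmod p^{a-c}$. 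Because $\gcd(N,p)=1$, the Chinese remainder theorem splits $s\in\Z/m$ into independent pairs $(s\bmod N,\ s\bmod p^{a-c})$. Hence the inner sum factors as
\[
\Bigl(\sum_{s\bmod N}(\eta^d\xi^{-1})^s\Bigr)\Bigl(\sum_{u\in p^c\Z/p^a}F_\eta(w)(u)\Bigr).
\]
Here I also use that $ds$ with $s\bmod p^{a-c}$ runs exactly once through $p^c\Z/p^a$, because $d'$ is a unit mod $p$. The first factor equals $N\cdot 1[\eta^d=\xi]$.

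\emph{Identifying the fibre.} The condition $\eta^d=\xi$, that is $(\eta^{d'})^{p^c}=(\zeta^{d'k})^{p^c}$, is equivalent to $\eta^{d'}=\zeta^{d'k}$, because $x\mapsto x^{p^c}$ is injective on $\mu_{n'}$. So the surviving $\eta$ form exactly the fibre of the condition, and $R_\xi(w)=\frac{N}{n'}C_{d,k}(w)=C_{d,k}(w)/d'$. Since $d'\ne0$ in $\F_q$, this proves the first statement. It also shows that $V_{d,k}=g_\xi(\sigma^d)E$, which is an $\F_q$-subspace (indeed an $\F_{q^d}$-subspace).

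\emph{Codimension.} I would use that $E$ is a cyclic $\F_{q^d}[x]$-module isomorphic to $\F_{q^d}[x]/(x^{n/d}-1)$, and that $g_\xi\mid x^{n/d}-1$. Then $g_\xi(\sigma^d)E$ corresponds to the ideal $(g_\xi)/(x^{n/d}-1)$, so its $\F_{q^d}$-codimension is $\deg g_\xi$. This degree is $\ord_e(q^d)$ with $e=\ord\xi=N/\gcd(N,k)$. Moreover $\ord_e(q^d)=o/\gcd(d,o)$ with $o=\ord_e(q)$. Multiplying by $d=[\F_{q^d}:\F_q]$ gives $\F_q$-codimension $d\,o/\gcd(d,o)=\lcm(d,o)$, which equals $o$ when $d\mid o$.

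I expect the only delicate step to be the bookkeeping in the CRT factorisation, namely that $ds\bmod p^a$ covers $p^c\Z/p^a$ exactly once. The remaining steps are routine.
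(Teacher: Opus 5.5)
Your proposal is correct and follows the paper's proof essentially step for step. You apply Lemma~\ref{lem:classical} over $\F_{q^d}$ at $\zeta^{dk}$ (the $p^c$-th power of the target), insert the inversion formula of Lemma~\ref{lem:coord}(a), and split $s$ by the Chinese remainder theorem to get $R=\frac1{d'}C_{d,k}$. You then read off the codimension $\lcm(d,\ord_e(q))$ from $\deg g=\ord_e(q^d)$, where the paper leaves the cyclic-module step you spell out implicit.
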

\begin{proof}
Apply Lemma~\ref{lem:classical} with $Q=q^d$, $m=n/d$ and $\sigma_Q=\sigma^d$; then $m'=N$. Put
$\hat\xi=\zeta^{p^cd'k}=(\zeta^{d'k})^{p^c}$, the $p^c$-th power of the target $\zeta^{d'k}$; as $k$ runs through $\Z/N$, the element $\hat\xi$ runs through $\mu_N$, and
$\ord\hat\xi=e$ because $p^c$-th powers preserve orders in $\mu_{n'}$. Inserting Lemma~\ref{lem:coord}(a) into
$R_{\hat\xi}(w)=\sum_{s\in\Z/m}\hat\xi^{-s}\sigma^{ds}(w)$ gives
\[
R_{\hat\xi}(w)=\frac1{n'}\sum_{\eta}\ \sum_{s\in\Z/m}(\hat\xi^{-1}\eta^d)^s\,F_\eta(w)(ds\bmod p^a).
\]
By the Chinese remainder theorem, $\Z/m\cong\Z/p^{a-c}\times\Z/N$; the factor $(\hat\xi^{-1}\eta^d)^s$
depends only on $s\bmod N$ (as $\hat\xi^{-1}\eta^d\in\mu_N$) and $ds\bmod p^a$ only on $s\bmod p^{a-c}$.
The sum over $s\bmod N$ is $N$ if $\eta^d=\hat\xi$ and $0$ otherwise, and $s\bmod p^{a-c}\mapsto ds\bmod p^a$ is a
bijection from $\Z/p^{a-c}$ onto $p^c\Z/p^a$. Since $\eta\mapsto\eta^{p^c}$ is a bijection of $\mu_{n'}$, we have
$\eta^d=\hat\xi$ iff $\eta^{d'}=\zeta^{d'k}$ iff $\eta=\zeta^{k+tN}$ for some $t\in\Z/d'$. Hence $R_{\hat\xi}(w)=\frac{N}{n'}C_{d,\allowbreak k}(w)=\frac1{d'}C_{d,\allowbreak k}(w)$,
and the first claim follows. By Lemma~\ref{lem:classical}, $V_{d,k}=g_{\hat\xi}(\sigma^d)E$, which has
$\F_{q^d}$-codimension $\deg g_{\hat\xi}=\ord_e(q^d)=\ord_e(q)/\gcd(\ord_e(q),d)$, i.e.\ $\F_q$-codimension
$d\,\ord_e(q)/\gcd(d,\ord_e(q))=\lcm(d,\ord_e(q))$.
\end{proof}

The proof shows that the conditions at level $d$ correspond bijectively to the distinct irreducible factors of $x^{n/d}-1$
over $\F_{q^d}$: as $k$ runs through $\Z/N$, the element $\hat\xi=\zeta^{p^cd'k}$ runs through $\mu_N$, the
identification of $k$ with
$q^dk$ corresponds to the identification of $\hat\xi$ with $\hat\xi^{q^d}$, and the condition $(d,k)$ corresponds to the minimal
polynomial $g_{\hat\xi}$ of $\hat\xi$ over $\F_{q^d}$, an irreducible factor of $x^N-1$ and hence of $x^{n/d}-1=(x^N-1)^{p^{a-c}}$.
Here $\hat\xi$ is the $p^c$-th power of the target $\zeta^{d'k}$ of $(d,k)$ (the target itself if $p\nmid d$); it has
order $e$, and $\{\eta:\eta^d=\hat\xi\}$ is the fibre. For $p\mid d$, however, the target itself need not be a
root of $g_{\hat\xi}$ (see the example at the end of this section).
Thus Theorem~\ref{thm:criterion} is the classical description of normality over $\F_{q^d}$ (an element is normal if and
only if it lies in none of the maximal proper $\sigma^d$-invariant $\F_{q^d}$-subspaces $g(\sigma^d)E$, $g$ an
irreducible factor of $x^{n/d}-1$), written in the coordinates of Lemma~\ref{lem:coord}. What the resolvent form adds is
that each $V_{d,k}$ is the zero set of a single fibre sum, which involves only the coordinates of the pair orbits in
$\Supp(d,k)$; this is what makes the independence arguments of Section~\ref{sec:product} possible.

\begin{theorem}[essential reduction]\label{thm:essential}
An element $w\in E$ is CN if and only if $C_{d,k}(w)\ne0$ for every \emph{essential} condition $(d,k)$.
\end{theorem}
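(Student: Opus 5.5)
The plan is to show that each non-essential condition is implied by an essential condition at a strictly smaller level. Concretely, for every non-essential $(d,k)$ I will find an essential condition $C_1$ with $V_{d,k}\subseteq V_{C_1}$. Then one direction is immediate: a CN element is normal over every $\F_{q^d}$, so by Theorem~\ref{thm:criterion} it passes all conditions, in particular the essential ones. For the converse, if $w$ passes all essential conditions, then it passes every condition, since failing one would put $w$ in some $V_{C_1}$ with $C_1$ essential. Theorem~\ref{thm:criterion} then gives normality over each $\F_{q^d}$.

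\textbf{The containment.} I will not use the fibre sums here but the module description from the proof of Theorem~\ref{thm:criterion}. There $V_{d,k}=g(\sigma^d)E$, where $g=g_{\hat\xi}$ is the minimal polynomial over $\F_{q^d}$ of $\hat\xi$, an element of order $e$. Put $o=\ord_e(q)$ and $d_1=\gcd(d,o)$, and assume $d\nmid o$, so $d_1<d$. The key observation is that $g$ has coefficients in $\F_{q^d}\cap\F_q(\hat\xi)=\F_{q^d}\cap\F_{q^o}=\F_{q^{d_1}}$. Hence, with $r=d/d_1$, the polynomial $G(x)=g(x^r)$ lies in $\F_{q^{d_1}}[x]$. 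It follows that $V_{d,k}=G(\sigma^{d_1})E$ is an $\F_{q^{d_1}}[x]$-submodule of $E$, where $x$ acts as $\sigma^{d_1}$.

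By the normal basis theorem, $E\cong\F_{q^{d_1}}[x]/(x^{n/d_1}-1)$ as such a module. Therefore $G(\sigma^{d_1})E=h(\sigma^{d_1})E$ with $h=\gcd(G,x^{n/d_1}-1)$. This gcd is non-constant, because any $\eta\in\mu_{n'}$ with $\eta^{r}=\hat\xi$ is a common root; such $\eta$ exist because $\hat\xi\in\mu_N$ with $N=n'/d'$, the $d'$-th power map from $\mu_{n'}$ onto $\mu_N$ is surjective, and $p$-power maps are bijective on $\mu_{n'}$. Choosing an irreducible factor $f$ of $h$ over $\F_{q^{d_1}}$ with root $\eta$ as above gives
\[
V_{d,k}\subseteq f(\sigma^{d_1})E.
\]
By the remark after Theorem~\ref{thm:criterion}, $f(\sigma^{d_1})E=V_{C_1}$ for a condition $C_1$ at level $d_1$, namely the one whose $\hat\xi_1$ is a root of $f$, such as $\eta$.

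\textbf{Essentiality of $C_1$.} The order $e_1$ of $\hat\xi_1$ equals $e(C_1)$, since $p^c$-th powers preserve orders. From $\eta^r=\hat\xi$ we get $e\mid e_1$, hence $o=\ord_e(q)$ divides $\ord_{e_1}(q)=o(C_1)$. Since $d_1\mid o$, it follows that $d_1\mid o(C_1)$, so $C_1$ is essential and the proof closes directly, without any induction.

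\textbf{Main obstacle.} I expect the delicate step to be the bookkeeping with $p$-powers. One must check that the subfield of $\F_{q^d}$ generated by the coefficients of $g$ is exactly $\F_{q^{\gcd(d,o)}}$. One must also check that, when $p\mid d$ or $p\mid d_1$, the dictionary between irreducible factors over $\F_{q^{d_1}}$ and conditions $(d_1,k_1)$ runs through $\hat\xi_1$ rather than through the target. Both points should follow from the remarks after Theorem~\ref{thm:criterion}.
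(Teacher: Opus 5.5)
Your proof is correct. Its skeleton is the paper's. Both pass from a non-essential $(d,k)$ to a condition at level $d_1=\gcd(d,\ord_e(q))$ (the paper's $c_0$) whose element $\hat\xi_1$ is a $(d/d_1)$-th root of $\hat\xi$. Both get essentiality from $e\mid e(C_1)$, and neither needs induction.

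The difference is how the inclusion $V_{d,k}\subseteq V_{C_1}$ is proved.
\begin{itemize}
\item The paper works with resolvents as linear functionals. It first shows that $V_{d,k}$ is $\sigma^{c_0}$-invariant, because $\hat\xi^{q^{c_0}}$ lies in the $q^d$-orbit of $\hat\xi$. It then writes the level-$c_0$ resolvent explicitly as $\sum_{r}\hat\xi_1^{-r}R_{\hat\xi}(\sigma^{c_0r}w)$.
\item You work with images of polynomial operators in the $\F_{q^{d_1}}[x]$-module $E$. The invariance step becomes your observation that $g$ has coefficients in $\F_{q^{d_1}}$; equivalently, the $q^d$- and $q^{d_1}$-orbits of $\hat\xi$ coincide. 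The explicit summation identity becomes the divisibility $f\mid g(x^r)$.
\end{itemize}
Your route is shorter and more conceptual. It is in the spirit of the derivation from Hachenberger's Lemma~15.3 that the paper mentions after its proof. The paper's route names the essential condition explicitly as $(c_0,\tilde k\bmod n'/c_0')$ and keeps the inclusion in the fibre-sum coordinates that the later sections rely on.

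Some small points to tidy up.
\begin{itemize}
\item Membership $\eta\in\mu_{n'}$ alone does not make $\eta$ a root of $x^{n/d_1}-1$ when $d_1'>1$. What does is $\eta^{n/d_1}=(\eta^r)^{n/d}=\hat\xi^{n/d}=1$, and you should state this. It also gives $\eta\in\mu_{n'/d_1'}$, so $\eta$ really is the element $\hat\xi_1$ of some condition at level $d_1$.
\item The gcd $h$ is dispensable. The minimal polynomial $f$ of $\eta$ over $\F_{q^{d_1}}$ divides both $G$ and $x^{n/d_1}-1$, and that is all you use.
\item In your ``main obstacle'' paragraph, you only need the coefficient field of $g$ to be contained in $\F_{q^{d_1}}$, not equal to it; you have already proved that containment.
\end{itemize}
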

\begin{proof}
By Theorem~\ref{thm:criterion}, $w$ is CN if and only if it avoids all kernels $V_{d,k}$; so it suffices to show that
for every condition $(d,k)$ that is not essential there is an essential condition $C$ with $V_{d,k}\subseteq V_C$.

Let $(d,k)$ be a condition, write $d=p^cd'$ and $m=n/d$, and let $\hat\xi=\zeta^{p^cd'k}$ and $e=\ord\hat\xi$ be as in the proof of
Theorem~\ref{thm:criterion}, so that $V_{d,k}=\{w:R_{\hat\xi}(w)=0\}$ with $R_{\hat\xi}(w)=\sum_{s\in\Z/m}\hat\xi^{-s}\sigma^{ds}(w)$.
Raising to the power $q^j$ termwise gives $R_{\hat\xi}(w)^{q^j}=R_{\hat\xi^{q^j}}(\sigma^jw)$, and $R_{\hat\xi}(\sigma^du)=\hat\xi R_{\hat\xi}(u)$.
Suppose that $(d,k)$ is not essential, and put $c_0=\gcd(d,\ord_e(q))<d$.
We show in two steps that $V_{d,k}\subseteq V_{C'}$ for an essential condition $C'$ at level $c_0$: first, $V_{d,k}$ is
invariant under $\sigma^{c_0}$, since $\sigma^{c_0}$ maps it onto the kernel of the resolvent at $\hat\xi^{q^{c_0}}$ and
$\hat\xi^{q^{c_0}}$ lies in the $q^d$-orbit of $\hat\xi$; second, the resolvent of $C'$ is a linear combination of the values
$R_{\hat\xi}(\sigma^{c_0r}w)$, $0\le r<d/c_0$.

First, $V_{d,k}$ is invariant under $\sigma^{c_0}$. Given $r\ge0$, choose $b\ge0$ with $\ord_e(q)\mid db+c_0r$ (possible since
$\gcd(d,\ord_e(q))=c_0$) and put $\hat\xi'=\hat\xi^{q^{c_0r}}$, so that $(\hat\xi')^{q^{db}}=\hat\xi$. If $R_{\hat\xi}(w)=0$, then
$R_{\hat\xi'}(\sigma^{c_0r}w)=R_{\hat\xi}(w)^{q^{c_0r}}=0$, hence
$0=R_{\hat\xi'}(\sigma^{c_0r}w)^{q^{db}}=R_{\hat\xi}(\sigma^{db}\sigma^{c_0r}w)=\hat\xi^bR_{\hat\xi}(\sigma^{c_0r}w)$, and $R_{\hat\xi}(\sigma^{c_0r}w)=0$
as $\hat\xi\ne0$.

Second, fix an integer representative $\tilde k$ of $k$, write $c_0=p^{\gamma}c_0'$ with $p\nmid c_0'$, and let $C'$ be the
condition $(c_0,\tilde k\bmod n'/c_0')$ at level $c_0$. The element attached to $C'$ as in the proof of
Theorem~\ref{thm:criterion} is $\hat\xi_1=\zeta^{p^{\gamma}c_0'\tilde k}$, the $p^\gamma$-th power of its target $\zeta^{c_0'\tilde k}$, so that $\hat\xi_1^{d/c_0}=\zeta^{d\tilde k}=\hat\xi$ and $V_{C'}=\{w:R^{(c_0)}_{\hat\xi_1}(w)=0\}$ with
\[
R^{(c_0)}_{\hat\xi_1}(w)=\sum_{t\in\Z/(n/c_0)}\hat\xi_1^{-t}\sigma^{c_0t}(w).
\] Writing
$t\in\Z/(n/c_0)$ as $t=\frac d{c_0}s+r$ with $s\in\Z/m$ and $0\le r<d/c_0$, we obtain
\[
R^{(c_0)}_{\hat\xi_1}(w)=\sum_{0\le r<d/c_0}\hat\xi_1^{-r}\,R_{\hat\xi}(\sigma^{c_0r}w),
\]
which vanishes for $w\in V_{d,k}$ by the first step. Hence $V_{d,k}\subseteq V_{C'}$. Finally, $C'$ is essential:
the order $e(C')$ of its target equals $\ord\hat\xi_1$, and $\hat\xi=\hat\xi_1^{d/c_0}$ gives $e\mid e(C')$, so
$\ord_e(q)\mid\ord_{e(C')}(q)=o(C')$; as $c_0\mid\ord_e(q)$, the level $c_0$ divides $o(C')$.
\end{proof}

For $n>1$ the levels that carry an essential condition, i.e.\ the $d\mid n$ with $d\mid\ord_{(n/d)'}(q)$, are exactly
the elements of the $(q,n)$-essential set $\mathcal D^*$ of Hachenberger and Hackenberg
\cite[Proposition~3.2.4]{hachenberger2019computational}, for which $w$ is CN if and only if it is normal over
$\F_{q^d}$ for every $d\in\mathcal D^*$. Indeed, by \cite[Definition~3.2.3 and Proposition~3.2.4]{hachenberger2019computational}, a divisor
$d<n$ of $n$ lies in $\mathcal D^*$ iff every prime $r\mid d$ divides $\ord_{(n/d)'}(q^{d/r})$, which is equivalent to
$d\mid\ord_{(n/d)'}(q)$. The construction of $\mathcal D^*$ rests on \cite[Theorem~15.5]{hachenberger1997book}, which
Hachenberger derives from his admissibility lemma for the cyclotomic parts of $E$
\cite[Lemma~15.3 and Corollary~15.4]{hachenberger1997book}.
Theorem~\ref{thm:essential} refines this reduction to individual targets: at level $d$ only the targets of order $e$
with $d\mid\ord_e(q)$ are needed. Its statement can also be derived from \cite[Lemma~15.3]{hachenberger1997book},
applied over $\F_{q^{c_0}}$ with $c_0=\gcd(d,\ord_e(q))$ (as $\gcd(d/c_0,\ord_e(q^{c_0}))=1$); the proof above gives
the inclusion of kernels directly in the resolvent coordinates.

From Section~\ref{sec:product} on, only essential conditions are used. Their defining property $d\mid o(d,k)$, that the
level divides the degree $\ord_e(q)$ of the target over $\F_q$ (which is then the codimension), is also the key to the
structure theorem of Section~\ref{sec:struct}.

\paragraph{An example with $p\mid n$.} Let $(q,n)=(2,6)$, so $p=2$, $a=1$, $n'=3$ and
$\mu_3=\{1,\zeta,\zeta^2\}\subseteq\F_4$. The pair orbits are $\{(1,0),(1,1)\}$, $\{(\zeta,0),(\zeta^2,1)\}$ and
$\{(\zeta,1),(\zeta^2,0)\}$. At level $1$ ($c=0$, $N=3$, $k\sim2k$) there are two conditions, $(1,0)$ and
$(1,1)=(1,2)$, with fibre sums $F_1(w)(0)+F_1(w)(1)$ and $F_\zeta(w)(0)+F_\zeta(w)(1)$ and codimensions $1$ and $2$,
respectively. At
level $2$ ($c=1$, $d'=1$, $N=3$, \mbox{$q^2\equiv1\pmod3$}) there are three, $C_{2,k}(w)=F_{\zeta^k}(w)(0)$ for $k=0,1,2$, each
of codimension~$2$; for example, $(2,1)$ has target $\zeta$ and kernel $(\sigma^2-\zeta^2)E$, so it corresponds to the
factor $x-\zeta^2$ of $x^3-1$ over $\F_4$, of which its target is not a root. At levels $3$ and $6$ ($N=1$) there is one
condition each, of codimension $3$ and $6$, respectively. Exactly $(1,0)$, $(1,1)$, $(2,1)$ and $(2,2)$ are
essential; for instance, $V_{2,0}\subseteq V_{1,0}$, since $F_1(w)(1)=F_1(w)(0)^2$ by Lemma~\ref{lem:coord}(b). There are $\CN_2(6)=12$
completely normal elements.

\section{Components and the product formula}\label{sec:product}

This section groups the essential conditions, which by Theorem~\ref{thm:essential} are all that is needed, into
\emph{components} such that distinct components involve disjoint sets of pair orbits; by Lemma~\ref{lem:coord}(c) the
density $\delta_n$ is then the product of the probabilities that a uniformly distributed $w$ passes the individual
components (Theorem~\ref{thm:product}). Lemma~\ref{lem:qorbits} describes the components in terms of the $q$-orbits of
$\mu_{n'}$ met by the fibres, without the residues $u$; in this form they are studied in Section~\ref{sec:struct}.

\begin{definition}
Let $\cE$ be the set of all essential conditions. Two conditions of $\cE$ are \emph{adjacent} if their supports share a
pair orbit. The classes of the generated equivalence relation are the \emph{components} (or \emph{resolvent
components}) of $\cE$. The support of a component is the union of the supports of its members. The \emph{weight} of a
set $\cC\subseteq\cE$ is $w_{\cC}=\sum_{C\in\cC}q^{-o(C)}$, the sum of the failure probabilities $\Pr[C(w)=0]$ of its
members for $w$ uniformly distributed in $E$ (Theorem~\ref{thm:criterion}).
\end{definition}

\begin{theorem}[product formula]\label{thm:product}
Let $\cC_1,\dots,\cC_s$ be the components of $\cE$ and let $w$ be uniformly distributed in $E$. The events
$G_i=\{C(w)\ne0\text{ for all }C\in\cC_i\}$ are independent, and
\[
\delta_n=\frac{\CN_q(n)}{q^n}=\prod_{i=1}^s\Pr[G_i]
\ \ge\ \prod_{i=1}^s\bigl(1-w_{\cC_i}\bigr)\quad\text{if $w_{\cC_i}<1$ for all $i$.}
\]
\end{theorem}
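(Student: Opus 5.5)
The proof has three steps: independence of the events $G_i$, the product formula for $\delta_n$, and the union bound inside each component.

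\textbf{Independence.} I will use the coordinates of Lemma~\ref{lem:coord}(c). Fix representatives $(\eta_\pi,u_\pi)$ of the pair orbits. By Lemma~\ref{lem:coord}(b), every partial resolvent $F_\eta(w)(u)$ with $(\eta,u)\in\pi$ is a fixed function of the single coordinate $x_\pi=F_{\eta_\pi}(w)(u_\pi)$: it is $\eta^{-j q^j}$ times a power $x_\pi^{q^j}$. Hence each fibre sum $C(w)$ is a function of the coordinates $(x_\pi)_{\pi\in\Supp(C)}$ only. This holds for every representative of $C$, since the support does not depend on the representative.

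By definition of components, distinct components have disjoint supports. Otherwise two conditions from different components would share a pair orbit, would therefore be adjacent, and would lie in the same class. So $G_i$ is an event determined by the coordinates $(x_\pi)_{\pi\in\Supp(\cC_i)}$, and these index sets are pairwise disjoint. By Lemma~\ref{lem:coord}(c), for uniform $w$ the vector $(x_\pi)_\pi$ is uniform on $\prod_\pi X_\pi$. Thus the families of coordinates indexed by disjoint sets of pair orbits are independent, and functions of disjoint families are independent. This gives the independence of $G_1,\dots,G_s$.

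One point needs care: pair orbits outside every support. They play no role, but they are part of the product space, so I will treat them as one extra independent block and not drop them.

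\textbf{Product formula.} By Theorem~\ref{thm:essential}, $w$ is CN exactly when it passes every essential condition, i.e.\ when $w\in\bigcap_i G_i$. Since the components partition $\cE$, independence gives
\[
\delta_n=\frac{\CN_q(n)}{q^n}=\Pr\Bigl[\,\bigcap_i G_i\Bigr]=\prod_i\Pr[G_i].
\]

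\textbf{Lower bound.} For each component apply the union bound:
\[
\Pr[G_i]\ge 1-\sum_{C\in\cC_i}\Pr[C(w)=0].
\]
By Theorem~\ref{thm:criterion}, $V_C$ has $\F_q$-codimension $o(C)$ for essential $C$, so $\Pr[C(w)=0]=q^{-o(C)}$. Hence $\Pr[G_i]\ge1-w_{\cC_i}$. When every $w_{\cC_i}<1$, all these lower bounds are positive, so multiplying the inequalities preserves their direction, which gives the stated product bound.

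The only delicate step is the first one: checking that $C(w)$ really factors through the coordinates of its support alone, including the well-definedness under the identification $k\sim q^dk$. This is exactly what Lemma~\ref{lem:coord}(b) and the invariance of $\Supp(C)$ noted after Definition~\ref{def:cond} provide.
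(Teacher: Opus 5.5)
Your proposal is correct and follows the paper's proof essentially step for step. Each fibre sum depends only on the representative coordinates of the pair orbits in its support (Lemma~\ref{lem:coord}(b)); supports of distinct components are disjoint by the definition of adjacency; Lemma~\ref{lem:coord}(c) gives independence; Theorem~\ref{thm:essential} gives the product formula; and the union bound with $\Pr[C(w)=0]=q^{-o(C)}$ from Theorem~\ref{thm:criterion} gives the inequality. The extra care you take (pair orbits outside every support, nonnegativity of the factors $1-w_{\cC_i}$ before multiplying) is sound and only makes explicit what the paper leaves implicit.
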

\begin{proof}
The fibre sum $C_{d,k}(w)$ is a sum of coordinates $F_\eta(w)(u)$ with $(\eta,u)$ in pair orbits of $\Supp(d,k)$, and by
Lemma~\ref{lem:coord}(b) each of them is a function of the representative coordinate of its pair
orbit. So $G_i$ is determined by the coordinates of the orbits in the support of $\cC_i$; the supports of
$\cC_1,\dots,\cC_s$ are pairwise disjoint (a pair orbit in the supports of $C\in\cC_i$ and $C'\in\cC_j$ makes $C$ and $C'$ adjacent, so
$i=j$), and by Lemma~\ref{lem:coord}(c) the corresponding coordinate vectors are
independent. By Theorem~\ref{thm:essential}, $w$ is CN if and only if it lies in $G_1\cap\dots\cap G_s$, which with the
independence gives the formula for $\delta_n$; the inequality is the union
bound inside each component, since $\Pr[C(w)=0]=q^{-o(C)}$ by Theorem~\ref{thm:criterion}.
\end{proof}

Together with Theorem~\ref{thm:criterion}, Theorem~\ref{thm:product} is closely related to Hachenberger's decomposition
theory; see Section~\ref{sec:remarks}.

The product formula is exact, and the only loss in the lower bound is the union bound inside each component, which is
an equality for a component consisting of a single condition. What has to be bounded is therefore not the total weight
of $\cE$, which for fixed $q$ is unbounded in $n$ (Section~\ref{subsec:idea}), but the weight of each component,
uniformly in $n$. By Theorem~\ref{thm:introW},
which is proved in Section~\ref{sec:struct}, every component without a condition of codimension~$1$ has weight
at most a constant $W(q)<1$. Section~\ref{sec:main} turns this into a lower bound for
$\delta_n$ and combines it with the counting criterion of Section~\ref{sec:counting} (Theorem~\ref{thm:C}).

\begin{lemma}\label{lem:qorbits}
\leavevmode
\begin{enumerate}[label=(\alph*),beginpenalty=10000]
\item Let $C,C'\in\cE$. If $C$ and $C'$ are adjacent, their fibres contain elements of a common $q$-orbit. If the
fibre of $C$ meets the $q$-orbit $O$ of $\eta\in\mu_{n'}$, then $C$ is adjacent to the level-one condition with fibre
$\{\eta\}$; this condition is essential, and its support consists of all pair orbits contained in $O\times\Z/p^a$.
Consequently, two conditions of $\cE$ lie in the same component if and only if they are connected by a chain of
conditions of $\cE$ in which consecutive fibres contain elements of a common $q$-orbit. The elements $\eta$ in the fibres of
the members of a component form a union of $q$-orbits $O$, and the support of the component consists of the pair
orbits contained in these sets $O\times\Z/p^a$.
\item If $\eta$ lies in the fibre of a representative of a condition at level $d$, this fibre is the coset
$\eta\,\mu_{d'}$.
\item An essential condition $C$ with $o(C)=1$, i.e.\ of codimension $1$, is necessarily of level $1$ and forms a component
by itself (a \emph{codimension-one singleton}); every other essential condition has codimension~$\ge2$, and its
component contains no condition of codimension $1$. So the components other than the codimension-one singletons
contain exactly the essential conditions of codimension~$\ge2$, and their weights add up to $S$.
\end{enumerate}
\end{lemma}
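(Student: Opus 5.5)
I will prove the three parts in the order (b), (a), (c), because the coset description in (b) makes the others transparent.

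For (b), take a representative $(d,k)$. Its fibre is the solution set of $\eta^{d'}=\zeta^{d'k}$ in the cyclic group $\mu_{n'}$. Since $d'\mid n'$, this set is nonempty; it is a coset of the kernel of $\eta\mapsto\eta^{d'}$, and that kernel is $\mu_{d'}$. So the fibre equals $\eta\,\mu_{d'}$ for any of its elements $\eta$.

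For (a), the first claim comes from the definitions. A shared pair orbit $\pi$ contains $(\eta,u)$ with $\eta$ in the fibre of a representative of $C$, and also $(\eta_1,u_1)$ with $\eta_1$ in the fibre of a representative of $C'$. The first components of the points of $\pi$ form a single $q$-orbit, so $\eta$ and $\eta_1$ lie in a common $q$-orbit.

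Next, fix $\eta\in\mu_{n'}$ and consider the level-one condition $(1,k)$ with $\zeta^k=\eta$. Here $c=0$, $d'=1$ and $N=n'$, so its fibre is $\{\eta\}$. It is essential because $1\mid o$ trivially. Its support consists of the pair orbits meeting $\{\eta\}\times\Z/p^a$. These are exactly the pair orbits contained in $O\times\Z/p^a$, since every pair orbit inside $O\times\Z/p^a$ passes through first coordinate $\eta$.

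Now suppose the fibre of $C$ contains some $\eta^{q^j}$ with $\eta^{q^j}\in O$, for a representative with $p^c\mid u$. The pair $(\eta^{q^j},0)$ lies in $\Supp(C)$, and its pair orbit lies in $O\times\Z/p^a$. So $C$ is adjacent to the level-one condition with fibre $\{\eta\}$, and adjacency of conditions whose fibres meet a common $q$-orbit follows.

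The chain characterisation follows from the two directions just shown. The description of the elements in the fibres and of the support is also immediate. Applying $\eta\mapsto\eta^q$, i.e.\ changing the representative of the level-one condition, shows that the set of fibre elements is a union of full $q$-orbits. Conversely, any pair orbit in a condition's support lies in $O\times\Z/p^a$ for an orbit $O$ met by its fibre.

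For (c), the essential condition gives $d\mid o(C)=1$, so $d=1$. Its fibre is then a single $\eta$ with $\ord_{\ord\eta}(q)=1$, i.e.\ $\eta\in\F_q$, and the $q$-orbit of $\eta$ is $\{\eta\}$.

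Suppose some essential condition $C'$ has a fibre element $\eta_1$ in this orbit, so $\eta_1=\eta$. By (b), the fibre of $C'$ is $\eta\mu_{d'}$, where $d$ is the level of $C'$. Its target is $\eta^{d'}$, which also lies in $\F_q$, so $o(C')=1$. Essentiality then forces $d=1$, and $C'$ has fibre $\{\eta\}$, so $C'=C$. Hence $C$ is adjacent only to itself, and by (a) it forms a component.

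The same argument shows that a component containing a codimension-one condition is that singleton. Since codimension is $o(C)\ge1$ (Theorem~\ref{thm:criterion}), every other component consists of essential conditions of codimension $\ge2$. Their weights therefore sum to $S$.

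The main care point is (a): handling the residues $u$ and the representative ambiguity correctly. Pair orbits mix the coordinates $u$ across $\Z/p^a$, but as shown every pair orbit within $O\times\Z/p^a$ passes through first coordinate $\eta$, which takes care of this.
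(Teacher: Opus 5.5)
Your proof is correct and follows the paper's argument: the level-one condition with fibre $\{\eta\}$ serves as the hub for (a), (b) is the kernel-coset observation, and (c) is the same target-order argument. One overstatement needs fixing. When $p\mid n$, two conditions whose fibres meet a common $q$-orbit need not be adjacent (for $(q,n)=(2,6)$ the conditions $(2,1)$ and $(2,2)$ are not), so conclude only that both are adjacent to the level-one condition and hence lie in the same component, which is all the chain characterisation requires.
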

\begin{proof}
(a) For every $\eta$ the level-one condition with fibre $\{\eta\}$ is essential and its support consists of the pair
orbits through $(\eta,u)$, $u\in\Z/p^a$, which together cover $O\times\Z/p^a$ ($O$ the $q$-orbit of $\eta$; the pair
orbit of $(\eta,u-j)$ contains $(\eta^{q^j},u)$). Any condition of $\cE$ whose support contains a
pair orbit in $O\times\Z/p^a$, in particular any condition of $\cE$ whose fibre meets $O$, is adjacent to it. Hence two
conditions of $\cE$ whose fibres meet a common $q$-orbit lie in the
same component. Conversely, if two conditions are adjacent, a pair orbit $\pi$ lies in both supports, so $\pi$ contains
$(\eta_1,u_1)$ and $(\eta_2,u_2)$ with $\eta_1$, $\eta_2$ in the two fibres; as $\pi$ is an orbit of
$(\eta,u)\mapsto(\eta^q,u+1)$, we have $\eta_2=\eta_1^{q^j}$ for some $j$, so the two fibres meet a common $q$-orbit. Thus the
component relation is the equivalence relation on $\cE$ generated by ``the fibres meet a common $q$-orbit''. If $\eta$
lies in the fibre of a member of a component, the level-one condition with fibre $\{\eta\}$ belongs to the component;
the fibres of its representatives are the sets $\{\eta^{q^j}\}$, and its support consists of all pair orbits in
$O\times\Z/p^a$, $O$ the $q$-orbit of $\eta$. As every pair orbit in the support of a member passes through a point
$(\eta,u)$ with $\eta$ in the fibre of that member, this proves the description of the $\eta$'s and of the support of a
component.

(b) An element $x$ satisfies $x^{d'}=\eta^{d'}$ if and only if $(x/\eta)^{d'}=1$.

(c) If $o(C)=\ord_e(q)=1$, then $d\mid\ord_e(q)$ forces $d=1$, and the fibre is $\{\eta\}$ for some $\eta$ with
$\ord_{\ord\eta}(q)=1$, so $\eta^q=\eta$.
By (a), any essential condition adjacent to it, say of level $\tilde d=p^{\tilde c}\tilde d'$, has a fibre meeting the
$q$-orbit $\{\eta\}$, i.e.\ containing $\eta$; its target is $\eta^{\tilde d'}$, again with $o=1$, so it has level $1$ and is
the same condition. Hence the component consists of this condition alone. So a component containing a condition of
codimension~$\ge2$ contains none of codimension $1$, and as the components partition $\cE$, the weights of the
components other than the codimension-one singletons add up to $S$.
\end{proof}

If $p\mid n$, two conditions whose fibres meet a common $q$-orbit need not be adjacent, although by Lemma~\ref{lem:qorbits}(a) they
lie in the same component. In the example $(q,n)=(2,6)$ at the end of
Section~\ref{sec:coord}, the two essential conditions $(2,1)$ and $(2,2)$ at level $2$ have the fibres $\{\zeta\}$ and
$\{\zeta^2\}=\{\zeta^q\}$, which lie in a common $q$-orbit; but as $C_{2,k}(w)=F_{\zeta^k}(w)(0)$, their supports are
the different single pair orbits $\{(\zeta,0),(\zeta^2,1)\}$ and $\{(\zeta,1),(\zeta^2,0)\}$, so the two conditions are
not adjacent.

\section{The counting criterion}\label{sec:counting}

In this section we estimate $\PCN_q(n)$ from above and below in terms of the density $\delta_n$
(Theorem~\ref{thm:counting}(a), which is also the starting point of Corollary~\ref{cor:pcn}), and we show that a lower bound $\delta_*$
for $\delta_n$ yields a sufficient criterion for the Morgan--Mullen property (Theorem~\ref{thm:counting}(b)). In
Section~\ref{sec:main}, $\delta_*$ is obtained from the product formula (Theorem~\ref{thm:product}) and the weight bound
of Theorem~\ref{thm:introW}; the other quantities in this criterion, $\omega(q^n-1)$ and, through $L$, the number $M$
of essential conditions, are treated in Sections~\ref{sec:bounds} and~\ref{sec:M}.

The argument of this section is the classical character-sum method
\cite{carlitz1952primitive,carlitz1952sums,davenport1968bases,lenstra1987primitive,cohen2003pnbt}, in the form used for completely
normal elements in \cite{garefalakis2019jpaa,garefalakis2019dcc}, whose main term is the number of completely normal
elements; see also \cite{ostafe2013applications}. This form was inspired by Hachenberger's estimate of this number
in \cite{hachenberger2016asymptotic} (see \cite{garefalakis2026cmb}). For regular extensions, Hachenberger combined the method with the module
structure of $E$ \cite{hachenberger2001regular,hachenberger2010twopower,hachenberger2019exceptional}; in
\cite[Proposition~5.1 and (5.10)--(5.15)]{hachenberger2001regular} and
\cite[Propositions~5.2 and~7.1]{hachenberger2019exceptional} the main term is likewise the exact number of
completely normal elements, written as a product over the cyclotomic modules.

In the criterion of Garefalakis and Kapetanakis \cite[Theorem~3.1]{garefalakis2019jpaa}, the error term contains a
factor $2(1-q^{-\operatorname{codim}V_C})$ for every condition $C$ of every level $d<n$; Hachenberger and Hackenberg
\cite[Proposition~3.2.6]{hachenberger2019computational} restrict these factors to the conditions at the levels of an
essential set of intermediate fields, and in Hachenberger's criterion for regular pairs
\cite[Theorem~5.3]{hachenberger2001regular} (with $q\equiv1\pmod4$ if $q$ is odd and $n$ is even) they occur
only for the conditions over one intermediate field per cyclotomic module. By Theorem~\ref{thm:essential}, only the essential conditions occur in
Theorem~\ref{thm:counting}; their factors make up the number $L$.

Let $\psi$ be a non-trivial additive character of $E$, and for $b\in E$ put $\psi_b(w)=\psi(bw)$. Let $P\subseteq
E^\times$ be the set of primitive elements, and put $\hat P(b)=\sum_{w\in P}\psi_b(w)$. Finally, let
$\theta=\varphi(q^n-1)/(q^n-1)$ and $\omega=\omega(q^n-1)$.

\begin{lemma}\label{lem:Gamma}
If $b\ne0$, then $|\hat P(b)|\le\theta\bigl(1+(2^\omega-1)q^{n/2}\bigr)$.
\end{lemma}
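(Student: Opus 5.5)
We express the indicator function of $P$ through multiplicative characters (Vinogradov's formula) and reduce $\hat P(b)$ to Gauss sums. For $w\in E^\times$,
\[
1_P(w)=\theta\sum_{e\mid q^n-1}\frac{\mu(e)}{\varphi(e)}\sum_{\ord\chi=e}\chi(w),
\]
where $\chi$ runs over multiplicative characters of $E^\times$. Only squarefree $e$ contribute, since $\mu(e)=0$ otherwise, so $e$ in effect runs over divisors of $\rad(q^n-1)$. This is the standard identity: the inner sum over characters of order exactly $e$ is a Ramanujan-type sum. Summed with the weights $\mu(e)/\varphi(e)$, it detects whether $w$ is an $r$-th power for some prime $r\mid q^n-1$. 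We will not reprove it; it follows from Möbius inversion over the divisors of $q^n-1$.

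We multiply this identity by $\psi_b(w)$ and sum over $w\in E^\times$. This gives
\[
\hat P(b)=\theta\sum_{e\mid q^n-1}\frac{\mu(e)}{\varphi(e)}\sum_{\ord\chi=e}G_b(\chi),
\qquad G_b(\chi)=\sum_{w\in E^\times}\chi(w)\psi_b(w).
\]
The term $e=1$ (trivial $\chi$) has $G_b(\chi_0)=\sum_{w\ne0}\psi(bw)=-1$, because $b\ne0$ makes $\psi_b$ a non-trivial additive character. It therefore contributes at most $\theta$ in absolute value.

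For non-trivial $\chi$, $G_b(\chi)$ is a Gauss sum over $E$ with a non-trivial additive character. The classical evaluation gives $|G_b(\chi)|=q^{n/2}$; equivalently, substitute $w\mapsto b^{-1}w$ to get $\bar\chi(b)G_1(\chi)$ and use $|G_1(\chi)|=q^{n/2}$. For each squarefree $e>1$ there are exactly $\varphi(e)$ characters of order $e$, and $|\mu(e)|/\varphi(e)\le1/\varphi(e)$. So each such $e$ contributes at most $q^{n/2}$. The number of squarefree divisors $e>1$ of $q^n-1$ is $2^\omega-1$. Adding the $e=1$ term, we obtain $|\hat P(b)|\le\theta(1+(2^\omega-1)q^{n/2})$.

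No step is a real obstacle. The only points needing care are that $\hat P(b)$ sums over $P\subseteq E^\times$, so $w=0$ is excluded and the trivial character gives $-1$ rather than $0$, and the normalisation of the indicator formula.
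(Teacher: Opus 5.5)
Your proposal is correct and follows the paper's proof essentially step by step. Both use Vinogradov's formula to write $\hat P(b)$ as a weighted sum of Gauss sums $G(\chi,\psi_b)$, take the value $-1$ for the trivial character and $|G(\chi,\psi_b)|=q^{n/2}$ otherwise, and count $2^\omega-1$ squarefree $D>1$, each with $\varphi(D)$ characters of order $D$; your substitution $G_b(\chi)=\bar\chi(b)G_1(\chi)$ is just a harmless way of reducing to the standard Gauss sum.
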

\begin{proof}
By Vinogradov's formula (see e.g.\ \cite[(2.1)]{lenstra1987primitive}, \cite{cohen2003pnbt},
\cite[(3.1)]{cohen2010strong}, \cite[Exercise~5.14]{lidl1997finite} or \cite[Proposition~10.2.5]{hachenberger2020topics})
\[
1_P(w)=\theta\sum_{D\mid q^n-1}\frac{\mu(D)}{\varphi(D)}\sum_{\ord\chi=D}\chi(w)\qquad(w\in E^\times),
\]
where $\chi$ runs through the multiplicative characters of $E$. Hence
\[
\hat P(b)=\theta\sum_{D\mid q^n-1}\frac{\mu(D)}{\varphi(D)}\sum_{\ord\chi=D}G(\chi,\psi_b),\qquad
G(\chi,\psi_b)=\sum_{w\in E^\times}\chi(w)\psi_b(w).
\] As $b\ne0$ and $\psi$ is non-trivial, $\psi_b$ is non-trivial. Hence
$G(\chi,\psi_b)=\sum_{w\ne0}\psi_b(w)=-1$ for trivial $\chi$ \cite[Proposition~10.3.2]{hachenberger2020topics}, and
otherwise $|G(\chi,\psi_b)|=q^{n/2}$ \cite[Theorem~5.11]{lidl1997finite}, \cite[Corollary~10.3.3]{hachenberger2020topics}
(see also \cite{ireland1990classical,berndt1998gauss,evans2013gauss}).
Only squarefree $D$ contribute, there are $2^\omega$ of them, and there are $\varphi(D)$ characters of order $D$.
So $D=1$ contributes $-\theta$, and each of the $2^\omega-1$ squarefree $D>1$ contributes at most $\theta q^{n/2}$ in
absolute value.
\end{proof}

\begin{theorem}[counting criterion]\label{thm:counting}
Let $M$ be the number of essential conditions and $M_1$ the number of those of codimension~$1$. Let
$L=\prod_{C}2\bigl(1-q^{-o(C)}\bigr)$, the product over all essential conditions, and let $\beta=1+(2^\omega-1)q^{n/2}$.
Then $L\le2^M(1-\frac1q)^{M_1}$, and:
\begin{enumerate}[label=(\alph*)]
\item the number $\PCN_q(n)$ satisfies $\bigl|\PCN_q(n)-\theta(q^n-1)\,\delta_n\bigr|\le\theta\beta\,(L-\delta_n)$;
\item for every real $\delta_*$ with $0<\delta_*\le\delta_n$,
\[
\PCN_q(n)\ \ge\ \theta\bigl[(q^n-1)\,\delta_*-\beta\,(L-\delta_*)\bigr];
\]
in particular, if $(q^n-1)\,\delta_*\ge 2^\omega\,q^{n/2}\,L$, then $(q,n)$ has the Morgan--Mullen property.
\end{enumerate}
\end{theorem}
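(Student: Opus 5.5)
The bound $L\le2^M(1-\frac1q)^{M_1}$ is immediate from the definition of $L$. Each factor $2(1-q^{-o(C)})$ is at most $2$, and for the $M_1$ conditions of codimension~$1$ it equals $2(1-\frac1q)$.

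For (a) and (b) I will express the indicator of complete normality as a combination of additive characters. By Theorem~\ref{thm:essential}, $w$ is CN iff $w\notin V_C$ for every essential $C$, where each $V_C$ is an $\F_q$-subspace of codimension $o(C)$ (Theorem~\ref{thm:criterion}). For a subspace $V$, let $V^\perp=\{b\in E:\psi_b|_V\equiv1\}$, which has $q^{o(C)}$ elements. Then
\[
1_{V}(w)=q^{-o(C)}\sum_{b\in V^\perp}\psi_b(w).
\]
This gives
\[
1_{E\setminus V_C}=(1-q^{-o(C)})\psi_0-q^{-o(C)}\sum_{0\ne b\in V_C^\perp}\psi_b.
\]
Multiplying these over all essential $C$ and expanding, I obtain $1_{\CN}(w)=\sum_b c_b\psi_b(w)$. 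Its coefficients satisfy $c_0=\CN_q(n)/q^n=\delta_n$, since averaging over $w\in E$ kills every nontrivial character. I will also check that $\sum_b|c_b|\le\prod_C\bigl((1-q^{-o(C)})+q^{-o(C)}(q^{o(C)}-1)\bigr)=\prod_C2(1-q^{-o(C)})=L$. This holds because the absolute coefficients of a product of character expansions are dominated coefficientwise by the product of the absolute coefficient sums; products of additive characters are again additive characters, so terms can merge, but that only helps via the triangle inequality. Hence $\sum_{b\ne0}|c_b|\le L-\delta_n$.

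Then
\[
\PCN_q(n)=\sum_{w\in P}1_{\CN}(w)=c_0|P|+\sum_{b\ne0}c_b\hat P(b).
\]
Here $|P|=\varphi(q^n-1)=\theta(q^n-1)$, and Lemma~\ref{lem:Gamma} bounds $|\hat P(b)|\le\theta\beta$ for $b\ne0$. Together these give (a).

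For (b), start from (a): $\PCN_q(n)\ge\theta[(q^n-1)\delta_n-\beta(L-\delta_n)]$. The right side is increasing in $\delta_n$, with derivative $\theta(q^n-1+\beta)>0$, so replacing $\delta_n$ by any $\delta_*\le\delta_n$ keeps the inequality. For the final claim, positivity follows from $(q^n-1)\delta_*>\beta(L-\delta_*)$. Since $\beta<2^\omega q^{n/2}$ (as $1<q^{n/2}$), the hypothesis gives $(q^n-1)\delta_*\ge2^\omega q^{n/2}L>\beta L\ge\beta(L-\delta_*)$, so $\PCN_q(n)>0$.

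The main point requiring care is the coefficient bound $\sum_{b\ne0}|c_b|\le L-\delta_n$. It uses exactly $c_0=\delta_n$ (not merely $c_0\le\delta_n$) together with the total absolute mass bound $L$, and both facts come from the expansion above.
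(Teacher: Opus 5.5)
Your proof is correct and follows the paper's argument essentially step by step: you expand $1_{\CN}$ in additive characters as a product over the essential conditions with $\ell^1$-norm at most $L$, identify $c_0=\delta_n$, apply Lemma~\ref{lem:Gamma} for $b\ne0$, and use monotonicity in $\delta_n$ for (b). The only cosmetic difference is in the final claim, where you use the strict inequality $\beta<2^\omega q^{n/2}$ (with $L>0$ left implicit), whereas the paper uses $\beta\le2^\omega q^{n/2}$ to bound the bracket below by $\beta\delta_*>0$.
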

\begin{proof}
Write $1_{\CN}=\sum_bc(b)\psi_b$ (the expansion of the indicator function of the CN elements in additive
characters) and $\|c\|_1=\sum_b|c(b)|$. We first show $\|c\|_1\le L$. By Theorem~\ref{thm:essential},
$1_{\CN}=\prod_C(1-1_{V_C})$ over the essential conditions, so $c$ is the convolution of the coefficient functions of
the factors and $\|c\|_1$ is at most the product of their $\ell^1$-norms. For $V=V_C$ and $o=o(C)$, let
$V^\perp=\{a:\psi(aV)=1\}$. As $C$ is essential, $V$ has codimension $o$ (Theorem~\ref{thm:criterion}), so
$|V^\perp|=q^n/|V|=q^{o}$ and $1_V=q^{-o}\sum_{a\in V^\perp}\psi_a$. Hence
$1-1_V=(1-q^{-o})\psi_0-q^{-o}\sum_{a\in V^\perp\setminus\{0\}}\psi_a$, whose $\ell^1$-norm is
$(1-q^{-o})+(q^{o}-1)q^{-o}=2(1-q^{-o})$. Moreover $L\le2^M(1-\frac1q)^{M_1}$, since every factor of $L$ is at most
$2$ and the $M_1$ factors with $o(C)=1$ equal $2(1-\frac1q)$.

(a) We have $c(0)=q^{-n}\sum_w1_{\CN}(w)=\delta_n$ and $\hat P(0)=|P|=\theta(q^n-1)$, so
\[
\PCN_q(n)=\sum_bc(b)\hat P(b)=\theta(q^n-1)\,\delta_n+\sum_{b\ne0}c(b)\hat P(b).
\]
By Lemma~\ref{lem:Gamma}, $|\hat P(b)|\le\theta\beta$ for $b\ne0$, and $\sum_{b\ne0}|c(b)|=\|c\|_1-\delta_n\le
L-\delta_n$; so the last sum has absolute value at most $\theta\beta(L-\delta_n)$.

(b) By (a), $\PCN_q(n)\ge\theta\bigl[(q^n-1)\delta_n-\beta(L-\delta_n)\bigr]=\theta\bigl[(q^n-1+\beta)\delta_n-\beta L\bigr]$.
This increases with $\delta_n$, so replacing $\delta_n$ by $\delta_*\le\delta_n$ gives the asserted inequality.
If $(q^n-1)\delta_*\ge2^\omega q^{n/2}L$ then, as $\beta\le2^\omega q^{n/2}$ and $L>0$, the bracket
$(q^n-1)\delta_*-\beta L+\beta\delta_*$ is at least $\beta\delta_*>0$, so $\PCN_q(n)>0$.
\end{proof}

\section{Prime factors of \texorpdfstring{$q^n-1$}{q\^{}n-1} and essential types}\label{sec:bounds}
Besides the weight bound $W$, the criterion $(*)$ of Theorem~\ref{thm:introC} involves three quantities:
the number $\omega(q^n-1)$ of distinct prime factors of $q^n-1$, which enters through the character-sum estimate of Lemma~\ref{lem:Gamma}; the number $M$ of essential conditions,
which enters through $L\le2^M(1-\frac1q)^{M_1}$ (Theorem~\ref{thm:counting}); and the total weight $S$ of the essential
conditions of codimension~$\ge2$, which enters through the lower bound $\delta_*$ for $\delta_n$
(Theorem~\ref{thm:C}(a)). This section provides the tools for
evaluating these quantities. Lemma~\ref{lem:omega} bounds $\omega(q^n-1)$ without factoring $q^n-1$; it is used for the finitely many
pairs treated by computation in Section~\ref{sec:main} (for large $n$ a cruder bound suffices there).
Lemma~\ref{lem:weightcodim} gives $M$ and $S$ exactly as finite sums over the
types (level, order of the target) of the essential conditions and bounds the number of essential conditions of each
codimension (used for the analytic bound for $S$ in Section~\ref{sec:main}); it is also the starting point of
Section~\ref{sec:M}.

\begin{lemma}\label{lem:omega}
The number of distinct prime factors of $q^n-1$ satisfies
\[
\omega(q^n-1)\le\omega(q-1)+\sum_{k\mid n,\,k>1}t_k,
\]
where $t_k$ is the largest $t$ such that the
product of the $t$ smallest primes $\equiv1\pmod{\lcm(2,k)}$ is at most $\Phi_k(q)$. Also
$t_k\le\bigl\lfloor(\varphi(k)\log_2q+c_q)/\log_2(\lcm(2,k)+1)\bigr\rfloor$ with $c_q=-\sum_{j\ge1}\log_2(1-q^{-j})$.
\end{lemma}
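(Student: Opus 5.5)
We use the factorisation $q^n-1=\prod_{k\mid n}\Phi_k(q)$ and bound, for each $k>1$, the number of primes dividing $\Phi_k(q)$ that do not already divide $q-1$ or some $\Phi_j(q)$ with $j<k$. The classical fact we invoke is this: if a prime $r\nmid q$ divides $\Phi_k(q)$, then either $k=\ord_r(q)$, in which case $k\mid r-1$, or $k=\ord_r(q)\,r^i$ with $i\ge1$, in which case $r$ divides $\Phi_{\ord_r(q)}(q)$ with $\ord_r(q)$ a proper divisor of $k$.

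\textbf{Assigning primes to divisors.} Every prime $r\mid q^n-1$ is assigned to $k=\ord_r(q)$, which divides $n$. The primes with $k=1$ are exactly those dividing $q-1$, giving $\omega(q-1)$. A prime with $k>1$ is a primitive prime divisor of $\Phi_k(q)$. Hence $r\mid\Phi_k(q)$ and $r\equiv1\pmod k$. For $k>1$ we have $r\ne2$: since $2\mid q-1$ whenever $q$ is odd, $\ord_2(q)=1$. So $r$ is odd and $r\equiv1\pmod{\lcm(2,k)}$.

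\textbf{Bounding the count for each $k$.} Let $s$ be the number of primes assigned to $k$. These are $s$ distinct primes $\equiv1\pmod{\lcm(2,k)}$, and their product divides $\Phi_k(q)$. Note that $\Phi_k(q)>0$, and indeed $\Phi_k(q)\ge1$ for $q\ge2$. The product of any $s$ distinct primes in this progression is at least the product of the $s$ smallest such primes. By the definition of $t_k$ this gives $s\le t_k$. Summing over $k\mid n$ gives the first inequality.

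\textbf{The explicit bound on $t_k$.} Each prime $\equiv1\pmod{\lcm(2,k)}$ is at least $\lcm(2,k)+1$. So the product of the $t_k$ smallest such primes is at least $(\lcm(2,k)+1)^{t_k}$, and therefore $t_k\log_2(\lcm(2,k)+1)\le\log_2\Phi_k(q)$. It remains to show
\[
\log_2\Phi_k(q)\le\varphi(k)\log_2q+c_q .
\]
We write
\[
\Phi_k(q)=q^{\varphi(k)}\prod_{d\mid k}(1-q^{-d})^{\mu(k/d)},
\]
which follows from $\Phi_k(x)=\prod_{d\mid k}(x^d-1)^{\mu(k/d)}$ together with $\sum_{d\mid k}\mu(k/d)\,d=\varphi(k)$ and $\sum_{d\mid k}\mu(k/d)=0$ for $k>1$. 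We then drop the factors with $\mu(k/d)=1$, since each is $<1$, and the factors with $\mu(k/d)=0$, since each equals $1$. The remaining factors $(1-q^{-d})^{-1}$ run over distinct values of $d$, so their product is at most $\prod_{j\ge1}(1-q^{-j})^{-1}=2^{c_q}$. Taking logarithms and the integer part gives the stated bound.

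\textbf{Main obstacle.} The only delicate point is the justification that primitive prime divisors satisfy $r\equiv1\pmod k$ and are odd, so that the modulus $\lcm(2,k)$ is legitimate. Everything else is the bookkeeping of the product formula for $\Phi_k$.
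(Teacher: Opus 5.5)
Your proof is correct and follows the paper's argument essentially step for step: you assign each prime $r\mid q^n-1$ to $k=\ord_r(q)$, bound the number of primitive prime divisors of $q^k-1$ by $t_k$ via the product of the smallest primes $\equiv1\pmod{\lcm(2,k)}$, and bound $\Phi_k(q)$ by the M\"obius product formula after dropping the factors with $\mu(k/d)=1$. Two small points. Your oddness argument only covers odd $q$, but for even $q$ it is immediate because $q^n-1$ is odd. The opening ``classical fact'' about $k=\ord_r(q)\,r^i$ is never used.
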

\begin{proof}
Every prime $\ell\mid q^n-1$ has $k=\ord_\ell(q)\mid n$, and the primes with $k=1$ divide $q-1$. For $k\ge2$ (these
$\ell$ are the primitive prime divisors of $q^k-1$, cf.\ \cite{bang1886,zsigmondy1892}), $\ell\equiv1\pmod k$, $\ell$ is
odd ($q^n-1$ is odd for even $q$, and $\ord_2(q)=1$ for odd $q$), and $\ell\mid\Phi_k(q)$; the product of these distinct $\ell$ divides $\Phi_k(q)$. If
$t$ primes $\ell$ have $\ord_\ell(q)=k$, their product is therefore at least the product of the $t$ smallest primes
$\equiv1\pmod{\lcm(2,k)}$ and at most $\Phi_k(q)$, so $t\le t_k$. Finally
$\Phi_k(q)=q^{\varphi(k)}\prod_{j\mid k}(1-q^{-j})^{\mu(k/j)}\le q^{\varphi(k)}\prod_{j\ge1}(1-q^{-j})^{-1}$ (drop the factors
with $\mu(k/j)=1$), and $(\lcm(2,k)+1)^{t_k}\le\Phi_k(q)$ gives the bound for $t_k$, as $t_k$ is an integer.
\end{proof}

By Definition~\ref{def:cond} and the paragraph after it, the targets of the conditions at level $d$ are the elements of
$\mu_{n'/d'}$. The codimension $\lcm(d,\ord_e(q))$ of a condition (Theorem~\ref{thm:criterion}) and whether it is
essential depend only on its level $d$ and the order $e$ of its target; we call $(d,e)$ the \emph{type} of the
condition. Thus the types are the pairs $(d,e)$ with $d\mid n$ and $e\mid n'/d'$ (equivalently, $de\mid n$ and
$p\nmid e$), and a type is \emph{essential} if $d\mid\ord_e(q)$, i.e.\ if its conditions are essential.

\begin{lemma}\label{lem:weightcodim}
For every essential type $(d,e)$ there are exactly $\nu_{d,e}=d\varphi(e)/\ord_e(q)$ conditions of type $(d,e)$,
namely the orbits of $\xi\mapsto\xi^{q^d}$ on the $\varphi(e)$ elements of order $e$ of $\mu_{n'/d'}$ (a condition being
identified with the orbit of its target); each of them consists of $\ord_e(q^d)=\ord_e(q)/d$ targets and has
codimension $\ord_e(q)$. Hence, with both sums over the essential types, the number $M$ of essential
conditions and the total weight $S$ of the essential
conditions of codimension~$\ge2$ are
\[
M=\sum_{(d,e)}\nu_{d,e},\qquad S=\sum_{(d,e):\,\ord_e(q)\ge2}\nu_{d,e}\,q^{-\ord_e(q)}.
\]
The number of essential conditions of codimension $o$ is at most
\[
\frac{\sigma_1(o)}{o}\cdot\#\{\xi\in\mu_{n'}:\ \xi\text{ has degree }o\text{ over }\F_q\}
\ \le\ \frac{\sigma_1(o)}{o}\min(n',q^o-1).
\]
\end{lemma}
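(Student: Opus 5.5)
The plan has three steps: count the conditions of one essential type, sum over types to get $M$ and $S$, and then group the types by codimension for the final bound.

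\emph{Conditions of a fixed type.} Fix an essential type $(d,e)$, so $d\mid\ord_e(q)$. By Definition~\ref{def:cond}, a condition at level $d$ is an orbit of $k\mapsto q^dk$ on $\Z/N$, and its target $\zeta^{d'k}$ runs bijectively through $\mu_N$ as $k$ runs through $\Z/N$. Hence conditions at level $d$ correspond to orbits of $\xi\mapsto\xi^{q^d}$ on $\mu_N$, and those of type $(d,e)$ correspond to the orbits on the $\varphi(e)$ elements of order $e$. Each such orbit has size $\ord_e(q^d)=\ord_e(q)/\gcd(d,\ord_e(q))=\ord_e(q)/d$, using $d\mid\ord_e(q)$. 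So the number of conditions of type $(d,e)$ is $\nu_{d,e}=d\varphi(e)/\ord_e(q)$. By Theorem~\ref{thm:criterion} each has codimension $\lcm(d,\ord_e(q))=\ord_e(q)$, hence failure probability $q^{-\ord_e(q)}$.

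\emph{Formulas for $M$ and $S$.} Summing $\nu_{d,e}$ over all essential types gives $M$. Summing $\nu_{d,e}\,q^{-\ord_e(q)}$ over the essential types with $\ord_e(q)\ge2$ gives $S$. Both follow immediately, since the types partition the conditions.

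\emph{Bound for codimension $o$.} The essential conditions of codimension $o$ are those of essential types $(d,e)$ with $\ord_e(q)=o$ and $d\mid o$. For a fixed $d\mid o$ their number is
\[
\sum_{e:\,\ord_e(q)=o,\ e\mid n'/d'}\frac{d\,\varphi(e)}{o}\ \le\ \frac do\,\#\{\xi\in\mu_{n'}:\ \xi\text{ has degree }o\},
\]
because elements of order $e$ with $\ord_e(q)=o$ are exactly the roots of unity of degree $o$ over $\F_q$, and $\mu_{n'/d'}\subseteq\mu_{n'}$. I drop the condition $de\mid n$, which only enlarges the count. Summing over $d\mid o$ gives the factor $\sigma_1(o)/o$. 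Finally, elements of degree $o$ in $\mu_{n'}$ lie in both $\mu_{n'}$ and $\F_{q^o}^\times$, so there are at most $\min(n',q^o-1)$ of them.

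No step is a real obstacle. The only care needed is the bijection between conditions and orbits of targets, including the case $p\mid d$. There the identification $k\sim q^dk$ raises the target itself (not $\hat\xi$) to the power $q^d$, as noted after Definition~\ref{def:cond}, so orbits of targets are indeed the right objects.
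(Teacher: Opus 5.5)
Your proposal is correct and follows the paper's proof: the same count of $\nu_{d,e}$ as orbits of $\xi\mapsto\xi^{q^d}$ on the targets of order $e$, and the same codimension computation via Theorem~\ref{thm:criterion}. For the final bound you fix a level $d\mid o$ and sum the exact counts $\nu_{d,e}$ first, while the paper fixes $e$ and counts at most $d$ conditions per $q$-orbit at each level; these are the same count summed in a different order.
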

\begin{proof}
The targets of level-$d$ conditions are the elements of $\mu_{n'/d'}$, and conditions are the orbits of
$\xi\mapsto\xi^{q^d}$ on them; the $\varphi(e)$ targets of order $e$ fall into orbits of size $\ord_e(q^d)=\ord_e(q)/d$
(as $d\mid \ord_e(q)$), which gives $\nu_{d,e}$; by Theorem~\ref{thm:criterion} each of these conditions has codimension
$\lcm(d,\ord_e(q))=\ord_e(q)$. To prove the last assertion, fix $o$. The elements of $\mu_{n'}$ of degree $o$ over
$\F_q$, i.e.\ those whose order $e$ satisfies $\ord_e(q)=o$, lie in $\mu_{n'}\cap\F_{q^o}^\times=\mu_{\gcd(n',q^o-1)}$;
this gives the second inequality. For the first, recall from the first part that an essential condition has
codimension $o$ if and only if its target has an order $e$ with $\ord_e(q)=o$. For such an $e$, the levels of
essential conditions with a target of order $e$ divide $o$, and each level $d\mid o$ gives at most $d$ such
conditions per $q$-orbit of elements of order $e$ (such a $q$-orbit has $o$ elements and splits into $d$ orbits of
$\xi\mapsto\xi^{q^d}$; if $(d,e)$ is not a type, level $d$ gives none); there are
$\varphi(e)/o$ such $q$-orbits, so at most $\frac{\varphi(e)}o\sigma_1(o)$ conditions; sum over $e$.
\end{proof}

For the example $(q,n)=(2,6)$ of Section~\ref{sec:coord} ($n'=3$), the six types $(d,e)$ are $(1,1)$, $(1,3)$, $(2,1)$,
$(2,3)$, $(3,1)$, $(6,1)$; the essential ones ($\ord_1(2)=1$, $\ord_3(2)=2$) are $(1,1)$, $(1,3)$, $(2,3)$, with
$\nu_{d,e}=1,1,2$ (the conditions $(d,k)=(1,0)$; $(1,1)$; $(2,1)$, $(2,2)$ there). So $M=4$, $M_1=1$ and
$S=3\cdot2^{-2}=\frac34$; the three conditions of codimension $2$ form one component (Lemma~\ref{lem:qorbits}(a)),
of weight $\frac34=W(2)$ (Theorem~\ref{thm:introW}).

\section{The number of essential conditions}\label{sec:M}
In the criterion $(*)$ of Theorem~\ref{thm:introC}, the decisive term is the number $M$ of essential conditions
(Section~\ref{subsec:idea}). We write $M$ as a sum of local contributions $\rho(\eta)$, $\eta\in\mu_{n'}$
(Lemma~\ref{lem:Mrho}), and bound $\rho(\eta)/p^a$ in terms of $q$, $a=v_p(n)$ and the degree $\ord_{\ord\eta}(q)$ of
$\eta$ only (Proposition~\ref{prop:Mlocal}); this gives $M(q,n)\le n$ for all pairs (Corollary~\ref{cor:Mn}) and, for
$q\le4$, the sharper bounds of Proposition~\ref{prop:Msmall}.

For $\eta\in\mu_{n'}$ of order $k$ and a divisor $d=p^cd'$ of $n$ put $e_d(\eta)=\ord(\eta^{d'})=k/\gcd(k,d')$ and
$o_d(\eta)=\ord_{e_d(\eta)}(q)$; in particular $o(\eta):=o_1(\eta)=\ord_k(q)$ is the degree of $\eta$ over $\F_q$, that is,
the size of its $q$-orbit. As $e_d(\eta)\mid k$, we have $o_d(\eta)\mid o(\eta)$. At each level $d\mid n$ the element $\eta$ lies in the
fibre of exactly one condition $C$, namely the one with target $\eta^{d'}$ (up to $\xi\mapsto\xi^{q^d}$); this target
has order $e_d(\eta)$, so $o(C)=o_d(\eta)$. The level $d$ is \emph{valid at $\eta$} if $(d,e_d(\eta))$ is an essential
type, i.e.\ $d\mid o_d(\eta)$ (equivalently, if $C$ is essential; its codimension is then $o_d(\eta)$). We put
\[
\rho(\eta)=\sum_{d\ \text{valid at}\ \eta}\frac{p^{c}}{o_d(\eta)},\qquad
s(\eta)=\sum_{\substack{d\ \text{valid at}\ \eta\\ o_d(\eta)\ge2}}\frac{p^{c}\,q^{-o_d(\eta)}}{o_d(\eta)} .
\]

\begin{lemma}\label{lem:Mrho}
The number $M$ of essential conditions and the total weight $S$ of those of codimension~$\ge2$ are
$M=\sum_{\eta\in\mu_{n'}}\rho(\eta)$ and $S=\sum_{\eta\in\mu_{n'}}s(\eta)$.
\end{lemma}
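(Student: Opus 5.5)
This is a double-counting argument: I would count pairs (fibre element, essential condition) and check that every essential condition is counted with total weight exactly one. For each level $d=p^cd'$ and each $\eta\in\mu_{n'}$, $\eta$ lies in the fibre of exactly one level-$d$ condition $C$, the one with target $\eta^{d'}$, and $C$ is essential iff $d$ is valid at $\eta$. So
\[
\sum_{\eta\in\mu_{n'}}\rho(\eta)=\sum_{d\mid n}\ \sum_{\substack{C\text{ essential}\\ \text{level }d}}\ \sum_{\eta\in\mathcal F(C)}\frac{p^c}{o(C)},
\]
where $\mathcal F(C)$ is the set of $\eta\in\mu_{n'}$ lying in the fibre of some representative of $C$. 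The same rearrangement applies to $s(\eta)$, with weight $p^cq^{-o(C)}/o(C)$, restricted to $o(C)\ge2$. It therefore suffices to show that, for an essential condition $C$ of level $d=p^cd'$, we have $|\mathcal F(C)|=o(C)/p^c$. Substituting this gives $M=\sum_C1$ and $S=\sum_{o(C)\ge2}q^{-o(C)}$, as claimed.

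To compute $|\mathcal F(C)|$, I note that $\mathcal F(C)$ is the set of $\eta$ with $\eta^{d'}$ in the target set of $C$. This target set is the orbit of $\xi=\zeta^{d'k}$ under $x\mapsto x^{q^d}$. By Lemma~\ref{lem:qorbits}(b), the map $\eta\mapsto\eta^{d'}$ is $d'$-to-one onto $\mu_{n'/d'}$, each fibre being a coset of $\mu_{d'}$. Hence $|\mathcal F(C)|=d'\cdot|\text{target set}|$.

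The target set needs some care here. Lemma~\ref{lem:weightcodim} says that a condition consists of $\ord_e(q)/d$ elements; but that count is phrased via $\hat\xi=\xi^{p^c}$. Since $x\mapsto x^{p^c}$ is a bijection commuting with the $q^d$-power map, the orbit of $\xi$ also has $\ord_e(q^d)=\ord_e(q)/d$ elements, using $d\mid\ord_e(q)$ by essentiality. So $|\mathcal F(C)|=d'\,o(C)/d=o(C)/p^c$, and this is exactly what is needed.

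The only obstacle is bookkeeping. One must check that the fibres of different representatives of $C$ are exactly the cosets over the targets in the orbit, so that $\mathcal F(C)$ is a disjoint union of $o(C)/d$ cosets of $\mu_{d'}$. One must also check that the codimension entering $\rho$ and $s$, namely $o_d(\eta)=\ord_{e_d(\eta)}(q)$, equals $o(C)$, which holds because $e_d(\eta)=\ord(\eta^{d'})=e(C)$. Both follow directly from the definitions after Definition~\ref{def:cond}.
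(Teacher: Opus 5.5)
Your argument is correct and is essentially the paper's double count. The paper distributes the $\nu_{d,e}=d\varphi(e)/\ord_e(q)$ conditions of each essential type $(d,e)$ evenly over the $d'\varphi(e)$ elements $\eta$ with $\ord(\eta^{d'})=e$; you distribute each essential condition $C$ separately over its $o(C)/p^c$ fibre elements, which is the same computation at a finer granularity (summing $|\mathcal F(C)|=o(C)/p^c$ over the conditions of a type gives exactly $d'\varphi(e)$), so you only need the orbit size $\ord_e(q^d)=\ord_e(q)/d$ of a single target rather than the count $\nu_{d,e}$ from Lemma~\ref{lem:weightcodim}.
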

\begin{proof}
There are $\nu_{d,e}=d\varphi(e)/\ord_e(q)$ essential conditions of type $(d,e)$ (Lemma~\ref{lem:weightcodim}), and exactly
$d'\varphi(e)$ elements $\eta\in\mu_{n'}$ with $\ord(\eta^{d'})=e$ (the $d'$-th power map $\mu_{n'}\to\mu_{n'/d'}$ is
$d'$-to-one and onto). Distributing each type evenly over these elements gives $p^c/\ord_e(q)$ per element. For fixed
$\eta$ and $d\mid n$, the element $\eta$ receives a share only from the type $(d,e_d(\eta))$, and only if this type is essential,
i.e.\ if $d$ is valid at $\eta$; summing over $d$ gives $\rho(\eta)$. For $S$, distribute in the same way the total
weight $\nu_{d,e}q^{-\ord_e(q)}$ of the conditions of each essential type $(d,e)$ with $\ord_e(q)\ge2$: this gives
$p^cq^{-\ord_e(q)}/\ord_e(q)$ per element, and $\eta$ receives such a share exactly from the levels $d$ valid at $\eta$
with $o_d(\eta)\ge2$; summing over $d$ gives $s(\eta)$.
\end{proof}

Since $|\mu_{n'}|=n/p^a$, a bound $\rho(\eta)\le\lambda p^a$ for all $\eta$ gives $M\le\lambda n$
(Lemma~\ref{lem:Mrho}), so we bound $\rho(\eta)/p^a$. Let $\eta$ have order $k$ and degree $o=\ord_k(q)$ over $\F_q$.
The levels valid at $\eta$ divide $o$, as $d\mid o_d(\eta)\mid o(\eta)=o$. The quantity $R_a(k)$ defined below is the sum defining
$\rho(\eta)/p^a$ with $d\mid n$ replaced by $d\mid o$, $v_p(d)\le a$. It equals $\rho(\eta)/p^a$ if $o'\mid n$, since
then every $d\mid o$ with $v_p(d)\le a$ divides $n$; so it is the worst case over all $n$ with $v_p(n)=a$. Replacing each
term by $p^{v_p(d)-a}\min(\frac1d,\frac do)$ and summing over all $d\mid o$ with $v_p(d)\le a$ gives the divisor sum
$f_a(o)$, which depends only on $p$, $a$ and $o$.
For integers $o\ge1$, $a\ge0$ define (here $a$ is a free parameter; in Proposition~\ref{prop:Mlocal} it will be
$a=v_p(n)$)
\[
f_a(o)=\sum_{\substack{d\mid o\\ v_p(d)\le a}}p^{\,v_p(d)-a}\min\Bigl(\frac1d,\frac d{o}\Bigr),\qquad
f(o)=\sum_{d\mid o}\min\Bigl(\frac1d,\frac d{o}\Bigr)
\]
(so $f_0$ only involves $p$-free $d$), and for $k\mid q^o-1$ with $\ord_k(q)=o$
\[
R_a(k)=\sum_{\substack{d\mid o,\ v_p(d)\le a\\ d\,\mid\, \ord_{k/\gcd(k,d')}(q)}}\frac{p^{\,v_p(d)-a}}{\ord_{k/\gcd(k,d')}(q)},\qquad
R^*_a(o)=\max\{R_a(k):\ k\mid q^o-1,\ \ord_k(q)=o\}.
\]

\begin{lemma}\label{lem:divsum}
For every $o\ge1$: $\sum_{d\mid o}\min(d,o/d)\le o$, i.e.\ $f(o)\le1$.
\end{lemma}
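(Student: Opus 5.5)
We need $\sum_{d\mid o}\min(d,o/d)\le o$; dividing by $o$ gives $f(o)\le1$, since $\min(\frac1d,\frac do)=\frac1o\min(\frac od,d)$ and $d\mapsto o/d$ permutes the divisors of $o$.

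\textbf{Reduction to small divisors.} Pair each divisor $d$ with $o/d$. Every term $\min(d,o/d)$ equals the smaller element of its pair, so
\[
\sum_{d\mid o}\min(d,o/d)\ \le\ 2\sum_{d\mid o,\ d<\sqrt o}d\;+\;\sqrt o\cdot 1_{[\sqrt o\in\Z]}.
\]
It therefore suffices to bound the sum of the divisors of $o$ below $\sqrt o$.

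\textbf{The key step.} I would show $\sum_{d\mid o,\,d\le\sqrt o}d\le o/2$ (roughly), again by pairing. List the divisors $d_1<d_2<\dots<d_r$ of $o$ that are at most $\sqrt o$.

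A first attempt: the divisor $d_i$ is at most $\sqrt o$, and there are at most $\sqrt o$ of them, which only gives $O(o)$ with the wrong constant. A cleaner approach is to compare with the large complementary divisors: $\sum_{d\le\sqrt o}d=\sum_{d\le\sqrt o}o/(o/d)$, with distinct integers $o/d\ge\sqrt o$. That is still not sharp enough.

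The argument I would actually use is the crude bound
\[
\sum_{d\mid o,\ d<\sqrt o}d\ \le\ \sum_{j=1}^{\lceil\sqrt o\rceil-1}j\ \approx\ o/2 .
\]
Doubling this gives about $o-\sqrt o$, which is too large by lower-order terms, so the bound needs sharpening. The plan is to use the fact that the divisors below $\sqrt o$ are sparse: for $o>1$, consecutive divisors are not all consecutive integers.

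\textbf{Small cases.} For small $o$ I would verify directly:
\begin{itemize}
\item $o=1$: the sum is $1$.
\item $o=2$: the sum is $1+1=2$.
\item $o=6$: the sum is $1+2+2+1=6$, attaining equality.
\item $o=12$: the sum is $1+2+3+3+2+1=12$, also attaining equality.
\end{itemize}
The bound is therefore tight and needs a structural argument.

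\textbf{Structural argument.} Write $S(o)=\sum_{d\mid o}\min(d,o/d)$. The cleanest route is induction on $o$ via the largest divisor $D\le\sqrt o$, writing $o=D\cdot E$ with $E\ge D$. Each divisor $d\le\sqrt o$ satisfies $d\le o/d$. So
\[
S(o)\le 2\sum_{d\mid o,\ d\le\sqrt o}d,
\]
and the divisors $d\le\sqrt o$ are distinct divisors of $o$ that are at most $D$.

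Alternatively, use $\sum_{d\le\sqrt o,\,d\mid o}d\le\sum_{d\mid o}d\cdot\frac{\sqrt o}{o/d}$, which does not obviously close either. I expect this tight inequality, with equality at $6$ and $12$, to be the main obstacle.

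What I would try first is the bound
\[
\sum_{d\mid o,\ d\le\sqrt o}d\ \le\ \sum_{d\mid o,\ d\le\sqrt o}\frac od\cdot\frac{d^2}{o},
\]
controlling $d^2/o\le 1$ together with the fact that $\sum_{d\mid o}1/d=\sigma_1(o)/o$. If that fails to close, I would prove the inequality for $o\le$ some bound by direct computation and, for large $o$, use $\tau(o)\le 2\sqrt o$ together with the sharper estimate $\sum_{d\mid o,\,d\le\sqrt o}d\le\sqrt o\,\tau(o)/2\le o\cdot\tau(o)/(2\sqrt o)$. That suffices once $\tau(o)\le\sqrt o$, which holds for $o\ge 1260$ or so; the remaining range would be checked by computer.
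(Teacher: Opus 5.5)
Your proposal does not reach a proof. The reduction $\sum_{d\mid o}\min(d,o/d)=2\sum_{d\mid o,\,d<\sqrt o}d+\sqrt o\cdot 1_{[\sqrt o\in\Z]}$ is correct; it is in fact an equality. But you discard the crude bound on a miscalculation, and what follows is a list of attempts that do not close.

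Put $s=\lfloor\sqrt o\rfloor$. The divisors below $\sqrt o$ lie in $\{1,\dots,s\}$, or in $\{1,\dots,s-1\}$ if $o=s^2$. The crude bound therefore gives exactly $s(s-1)+s=o$ when $o=s^2$. When $o$ is not a square it gives $s(s+1)$, not ``about $o-\sqrt o$'' (which would already suffice). So the crude bound fails only when $s^2<o<s^2+s$.

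The missing idea is to split according to whether $s\mid o$:
\begin{itemize}
\item If $s\mid o$ and $o\ne s^2$, then $o=st$ with $t\ge s+1$. Hence $o\ge s(s+1)$, and the crude bound works.
\item If $s\nmid o$, then the divisors below $\sqrt o$ lie in $\{1,\dots,s-1\}$. The sum is then at most $s(s-1)<o$.
\end{itemize}
The bad range $s^2<o<s^2+s$ contains no multiple of $s$, so it always falls into the second case. This three-case argument is the paper's entire proof. No sparsity, induction or computation is needed.

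Your fallback is not a proof as it stands either. The estimate $\sum_{d\mid o}\min(d,o/d)\le\tau(o)\sqrt o$ is fine, since each of the $\tau(o)$ terms is at most $\sqrt o$. Your half-sum version $\sum_{d\mid o,\,d\le\sqrt o}d\le\sqrt o\,\tau(o)/2$, however, fails for $o=1$.

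The real gap is that you assert $\tau(o)\le\sqrt o$ for ``$o\ge1260$ or so'' without proof. The classical argument with exponent $\frac12$ only gives $\tau(o)\le\sqrt{3o}$. To make this route rigorous you would need, for example, $\tau(m)\le\kappa_3 m^{1/3}$ with $\kappa_3=(1536/35)^{1/3}$, as in the paper's tail estimate. That yields $\tau(o)\le\sqrt o$ for $o\ge\kappa_3^6\approx1926$, and you would then have to check every smaller $o$ by computer. This is a valid but heavy, computer-assisted route to an inequality that has a three-line elementary proof.

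Finally, your displayed bound $\sum d\le\sum\frac od\cdot\frac{d^2}{o}$ is an identity, not an estimate.
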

\begin{proof}
Let $s=\lfloor\sqrt o\rfloor$. The sum is $2\sum_{d\mid o,\,d<\sqrt o}d+[o=s^2]\,s$. If $o=s^2$, the divisors
below $\sqrt o$ are among $1,\dots,s-1$, and the sum is at most $s(s-1)+s=o$. If $o$ is not a square and
$s\mid o$, then $o=st$ with $t>s$ (as $o>s^2$), so $o\ge s(s+1)=2(1+\dots+s)$, which is at least the sum, as the divisors
below $\sqrt o$ are among $1,\dots,s$. If $s\nmid o$, the sum is at most
$2(1+\dots+(s-1))=s(s-1)<o$.
\end{proof}
The left-hand side of Lemma~\ref{lem:divsum} is the sequence A117004 of \cite{oeis}.

\begin{proposition}\label{prop:Mlocal}
Let $M$ be the number of essential conditions and let $a=v_p(n)$. Let $\eta\in\mu_{n'}$ have order $k$ and degree
$o=\ord_k(q)$ over $\F_q$. Then
\[
\rho(\eta)/p^a\ \le\ R_a(k)\ \le\ \min\bigl(R^*_a(o),\,f_a(o)\bigr),\qquad f_a(o)\le f(o)\le1 .
\]
Consequently, by Lemma~\ref{lem:Mrho},
\[
M\ \le\ p^a\sum_{\eta\in\mu_{n'}}\min\bigl(R^*_a(\ord_{\ord\eta}(q)),\,f_a(\ord_{\ord\eta}(q))\bigr)\ \le\ n.
\]
Moreover, let $T$ be a finite set of positive integers and let $\ell_T=\lcm(T)$ ($\ell_\emptyset=1$). If
$\vartheta\le\Theta$ are real numbers such that
$\min(R^*_a(o),f_a(o))\le\vartheta$ for $o\notin T$ and $\le\Theta$ for $o\in T$, then
\[
M\ \le\ \vartheta\,n+(\Theta-\vartheta)\,p^a\gcd(n',q^{\ell_T}-1).
\]
\end{proposition}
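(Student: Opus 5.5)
The plan is to prove the chain of inequalities term by term, then sum with Lemma~\ref{lem:Mrho}.

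\emph{First inequality.} Write $k=\ord\eta$ and $o=\ord_k(q)$. Every level $d$ valid at $\eta$ satisfies $d\mid o_d(\eta)\mid o$, and $v_p(d)=c\le a$ because $d\mid n$. Moreover $e_d(\eta)=k/\gcd(k,d')$ and $o_d(\eta)=\ord_{e_d(\eta)}(q)$, so validity is exactly the divisibility condition in the definition of $R_a(k)$. Dividing $\rho(\eta)$ by $p^a$ turns each term $p^c/o_d(\eta)$ into $p^{v_p(d)-a}/\ord_{k/\gcd(k,d')}(q)$. Thus $\rho(\eta)/p^a$ is a subsum of $R_a(k)$ with nonnegative terms, which gives $\rho(\eta)/p^a\le R_a(k)$.

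\emph{Bounds on $R_a(k)$.} Since $k\mid q^o-1$ and $\ord_k(q)=o$, the inequality $R_a(k)\le R^*_a(o)$ holds by definition of $R^*_a$. For $R_a(k)\le f_a(o)$, compare termwise. Put $o_d=\ord_{k/\gcd(k,d')}(q)$; in every term present we have $d\mid o_d$, so $o_d\ge d$ and $1/o_d\le 1/d$.

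For the other half of the minimum, note that $(k/\gcd(k,d'))^{d'}$ is a multiple of $k$, since $k\mid \gcd(k,d')\cdot(k/\gcd(k,d'))$ and $\gcd(k,d')\mid d'$. It suffices to show $q^{o_dd'}\equiv 1\pmod k$, because then $o\mid o_dd'$ and $o_d\ge o/d'\ge o/d$, so $1/o_d\le d/o$. For this, write $e=k/\gcd(k,d')$ and let $\ell^{\alpha}\parallel k$ be a prime power. If $\ell^{\beta}\parallel e$, then $q^{o_d}\equiv1\pmod{\ell^\beta}$, and raising to the power $\ell^{\alpha-\beta}$, which divides $\gcd(k,d')$ and hence $d'$, gives $q^{o_d\ell^{\alpha-\beta}}\equiv1\pmod{\ell^\alpha}$. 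The lifting step $x\equiv1\pmod{\ell^j}\Rightarrow x^\ell\equiv1\pmod{\ell^{j+1}}$ is valid for $j\ge1$. If $\beta=0$, we instead need $q^{o_dd'}\equiv1\pmod{\ell^\alpha}$ directly; here $\ell^\alpha\mid d'$, and this is the delicate case.

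I expect this lifting step, and in particular the case $\beta=0$, to be the main obstacle. A cleaner route is to use $o\mid \lcm(o_d,\ord_{\gcd(k,d')}(q))$ and handle it via multiplicativity of order with respect to $\lcm(e,\gcd(k,d'))=k$. If that fails, I would fall back on the weaker bound $o\le o_d\cdot d'$ obtained from the above argument whenever it is available.

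Once both halves are in place, each term of $R_a(k)$ is at most $p^{v_p(d)-a}\min(1/d,d/o)$. The index set of $R_a(k)$ is contained in that of $f_a(o)$, so $R_a(k)\le f_a(o)$. Next, $f_a(o)\le f(o)$ termwise, since $p^{v_p(d)-a}\le1$ and $f$ runs over all divisors of $o$. Finally $f(o)\le1$ by Lemma~\ref{lem:divsum}, because $\min(1/d,d/o)=\min(d,o/d)/o$.

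\emph{Summing.} Summing over $\eta\in\mu_{n'}$ with Lemma~\ref{lem:Mrho} gives the displayed bound, and $|\mu_{n'}|p^a=n$ gives $M\le n$. For the refined bound, set $g(\eta)=\min(R^*_a,f_a)(\ord_{\ord\eta}(q))$. Then $g(\eta)\le\vartheta+(\Theta-\vartheta)1_{o(\eta)\in T}$, so it suffices to count the $\eta$ with $o(\eta)\in T$. Each such $\eta$ has degree dividing $\ell_T$, hence lies in $\mu_{n'}\cap\F_{q^{\ell_T}}^\times=\mu_{\gcd(n',q^{\ell_T}-1)}$. Using $\Theta-\vartheta\ge0$ and multiplying by $p^a$ yields the stated bound.
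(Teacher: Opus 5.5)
There is a genuine gap in the second half of the termwise comparison, $1/o_d\le d/o$.

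\textbf{Your main route fails.} It tries to prove the divisibility $o\mid o_dd'$, i.e.\ $q^{o_dd'}\equiv1\pmod k$. This is false in general, exactly in the case $\beta=0$ that you flagged. Take $q=2$, $k=21$ (so $o=\ord_{21}(2)=6$) and $d=d'=3$. Then $e=7$ and $o_d=\ord_7(2)=3$, so $d\mid o_d$ and $d\mid o$. This is a genuine term of $R_a(21)$; it comes from the conditions of type $(3,7)$ for $n=21$. But $2^{o_dd'}=2^9\equiv2\pmod 3$, so $6\nmid9$. For a prime $\ell\mid k$ with $\ell\nmid e$, nothing forces $\ord_\ell(q)$ to divide $o_dd'$.

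\textbf{Your alternative route also fails.} The identity $\lcm(e,\gcd(k,d'))=k$ holds only when $e$ and $\gcd(k,d')$ are coprime. For $q=2$, $k=27$, $d=3$ (again a valid term, as $e=9$ and $\ord_9(2)=6$) one has $\lcm(9,3)=9\ne27$, and $o=18\nmid\lcm(\ord_9(2),\ord_3(2))=6$.

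\textbf{The fallback is not an argument.} Your lifting step yields nothing for the primes with $\beta=0$, so the bound $o\le o_dd'$ is never actually obtained there.

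The missing observation is that only an inequality is needed, not a divisibility. The paper argues by counting. The element $q^{o_d}$ lies in the kernel of the reduction map $(\Z/k)^\times\to(\Z/e)^\times$, which has at most $k/e=\gcd(k,d')$ elements, so $\ord_k(q^{o_d})\le\gcd(k,d')$. Since $o_d\mid o$, this gives $o=o_d\,\ord_k(q^{o_d})\le o_d\gcd(k,d')\le o_dd$.

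Your prime-by-prime approach can also be repaired. Write $k=k_1k_2$, with $k_1$ the part of $k$ supported on the primes of $e$. Your lifting step gives $\ord_{k_1}(q)\mid o_d\,k_1/e$. The coprime part $k_2$ divides $\gcd(k,d')$, and for it the trivial bound $\ord_{k_2}(q)\le\varphi(k_2)\le k_2$ suffices. Hence $o\le\ord_{k_1}(q)\,\ord_{k_2}(q)\le o_d\,k/e=o_d\gcd(k,d')$.

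Everything else in your proposal is correct and matches the paper: the first inequality, $R_a(k)\le R^*_a(o)$, $1/o_d\le1/d$, $f_a\le f\le1$, and the final summation with the count of the $\eta$ whose degree lies in $T$.
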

\begin{proof}
Write $e_d=e_d(\eta)$.
A valid level $d=p^cd'$ divides $n$ (so $c\le a$) and divides $\ord_{e_d}(q)$, which divides $\ord_k(q)$ because $e_d\mid k$.
Hence the valid levels are among the $d$ in the definition of $R_a(k)$, with the same terms $p^{c-a}/\ord_{e_d}(q)$
after division by $p^a$; so $\rho(\eta)/p^a\le R_a(k)\le R^*_a(o)$. For each term, $1/\ord_{e_d}(q)\le1/d$ (as
$d\mid \ord_{e_d}(q)$) and $1/\ord_{e_d}(q)\le\gcd(k,d')/\ord_k(q)\le d/o$: the residue of $q^{\ord_{e_d}(q)}$ modulo $k$
is $\equiv1\pmod{e_d}$, hence so are all its powers, and only $k/e_d=\gcd(k,d')$ residues modulo $k$ are
$\equiv1\pmod{e_d}$; so $q^{\ord_{e_d}(q)}$ has order at most $\gcd(k,d')$ modulo $k$, i.e.\
$\ord_k(q)\le\ord_{e_d}(q)\gcd(k,d')$. With $p^{c-a}\le1$
this gives $R_a(k)\le f_a(o)\le f(o)$, and $f(o)\le1$ is Lemma~\ref{lem:divsum}. Summing over $\eta$ and using Lemma~\ref{lem:Mrho} gives the bounds for
$M$; for the last one note that the $\eta$ with $\ord_{\ord\eta}(q)\in T$ satisfy $\eta^{q^{\ell_T}-1}=1$, so there are at most
$\gcd(n',q^{\ell_T}-1)$ of them, and that this count enters with the coefficient $p^a(\Theta-\vartheta)\ge0$.
\end{proof}

\begin{corollary}\label{cor:Mn}
The number $M(q,n)$ of essential conditions satisfies $M(q,n)\le n$ for every pair $(q,n)$
(Proposition~\ref{prop:Mlocal}).
\end{corollary}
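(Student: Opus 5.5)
The corollary is a direct consequence of Proposition~\ref{prop:Mlocal}, so the plan is simply to assemble the chain of inequalities proved there and check that nothing else is needed. By Lemma~\ref{lem:Mrho}, $M=\sum_{\eta\in\mu_{n'}}\rho(\eta)$. For each $\eta$ of order $k$ and degree $o=\ord_k(q)$, Proposition~\ref{prop:Mlocal} gives
\[
\rho(\eta)/p^a\le R_a(k)\le f_a(o)\le f(o)\le 1 ,
\]
where $a=v_p(n)$. The last inequality is Lemma~\ref{lem:divsum}.

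Summing over the $|\mu_{n'}|=n'$ elements then yields $M\le p^a n'=n$. The only points to verify are the following.
\begin{itemize}
\item The terms of $\rho(\eta)/p^a$ are indexed by levels $d$ dividing both $n$ and $o$. This holds because a valid level satisfies $d\mid o_d(\eta)\mid o(\eta)$.
\item Each such term is bounded by $p^{v_p(d)-a}\min(1/d,d/o)\le\min(1/d,d/o)$.
\end{itemize}
Both facts are already established in the proof of Proposition~\ref{prop:Mlocal}, via $d\mid\ord_{e_d}(q)$ and $\ord_k(q)\le\ord_{e_d}(q)\gcd(k,d')\le\ord_{e_d}(q)\,d$.

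There is no real obstacle. The substantive inputs were Lemma~\ref{lem:divsum} and the comparison $1/\ord_{e_d}(q)\le d/o$, and both are already proved. The proof therefore amounts to citing Proposition~\ref{prop:Mlocal} with its final inequality $M\le n$.
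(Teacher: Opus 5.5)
Your proposal is correct and matches the paper's argument. The corollary is exactly the last inequality of Proposition~\ref{prop:Mlocal}: sum the local bound $\rho(\eta)/p^a\le R_a(k)\le f_a(o)\le f(o)\le1$ (Lemma~\ref{lem:divsum}) over the $n'$ elements of $\mu_{n'}$, using Lemma~\ref{lem:Mrho}, to get $M\le p^an'=n$.
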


For $q\le4$ the bound $M(q,n)\le n$ is not enough: in the criterion $(*)$ of Theorem~\ref{thm:introC} the term $M$ must be smaller
than about $\frac n2\log_2q\le n$. We therefore show $M(q,n)\le\lambda n$ with $\lambda=0.41$, $0.62$, $0.75$ for $q=2,3,4$
(outside explicit finite sets of $n$), by applying Proposition~\ref{prop:Mlocal} with bounds for $\min(R^*_a(o),f_a(o))$
in three ranges of $o$: for small $o$ and $q=2,3$, exact local maxima $R^*_a(o)$ (over all $k\mid q^o-1$ with
$\ord_k(q)=o$); exact values of $f_a(o)$ on a finite range; and the tail estimate below for large $o$. For $q=4$ and odd
$n$ the bound is proved by hand.

By Proposition~\ref{prop:Mlocal}, $R_a(k)\le f_a(o)$ for every $k$ with $\ord_k(q)=o$, so $R^*_a(o)\le f_a(o)$; we
keep the minimum because $R^*_a(o)$ is computed only for small $o$, where the divisor-sum bound is weak
(Lemma~\ref{lem:divsum} is sharp: $f(o)=1$ for $o=1,2,4,6,12$). For a few $o$, even $R^*_a(o)$ is large (for instance
$R^*_0(1)=1$ for every $q$); these $o$ form the set $T$ of Proposition~\ref{prop:Mlocal}. The corresponding $\eta$ lie
in $\mu_{q^{\ell_T}-1}$, so they add at most the term $(\Theta-\vartheta)\,p^a\gcd(n',q^{\ell_T}-1)$, which in
Proposition~\ref{prop:Msmall} is bounded independently of $n$, as there $T\ne\emptyset$ only for $a\le2$.

\paragraph{Tail estimate.} We use $f_a(o)\le f(o)\le\tau(o)/\sqrt o\le\kappa_s\,\allowbreak o^{1/s-1/2}$. The second inequality
holds as each of the $\tau(o)$ terms of $f(o)$ satisfies $\min(\frac1d,\frac do)\le(\frac1d\cdot\frac do)^{1/2}=o^{-1/2}$,
and the third as $\tau(m)\le\kappa_sm^{1/s}$ with $\kappa_3=(1536/35)^{1/3}\approx3.5273$ and
$\kappa_4=(127401984/25025)^{1/4}\approx8.4470$. The bound for $\tau$ follows from the classical argument, cf.\
\cite[Theorem~315]{hardy2008introduction}: for a prime $\ell\ge2^s$ and $\alpha=v_\ell(m)$, the factor $(\alpha+1)\ell^{-\alpha/s}$ of
$\tau(m)m^{-1/s}$ is at most $1$ (as $\alpha+1\le2^\alpha$), and the finitely many others are maximised directly, so
$\kappa_s^s=\prod_{\ell<2^s}\max_{\alpha\ge0}(\alpha+1)^s\ell^{-\alpha}$. For $s=3,4$ the maximum of $\tau(m)m^{-1/s}$
is attained at $m=2520=2^3\cdot3^2\cdot5\cdot7$ and $m=21621600=2^5\cdot3^3\cdot5^2\cdot7\cdot11\cdot13$, respectively,
both superior highly composite numbers in the sense of \cite[\S32]{ramanujan1915highly}. (Explicit bounds of a different shape,
$\tau(m)\le2^{c\ln m/\ln\ln m}$, are given in \cite{nicolas1983majorations}.)

\paragraph{Finitely many $a$.} For $a\ge v_p(o)$ the restriction $v_p(d)\le a$ in $f_a(o)$ and $R_a(k)$ is vacuous, so $f_a(o)=p^{v_p(o)-a}f_{v_p(o)}(o)$ and
$R_a(k)=p^{v_p(o)-a}R_{v_p(o)}(k)$ decrease in $a$; the maxima over $a\ge a_0$ are therefore attained at some
$a\le\max(a_0,v_p(o))$.

\begin{proposition}\label{prop:Msmall}
The number $M(q,n)$ of essential conditions satisfies the following bounds.
\begin{enumerate}[label=(\alph*),itemsep=1pt]
\item For $q=4$: $M(4,n)\le\frac34n$ if $n$ is even, and $M(4,n)\le\frac23n+1$ if $n$ is odd. Hence $M(4,n)\le\frac34n$ for
all $n\ge12$.
\item For $q=3$: $M(3,n)\le0.5983\,n$ if $3\mid n$; if $3\nmid n$ then $M(3,n)\le0.6\,n+0.4\gcd(n,3^8-1)$, hence
$M(3,n)\le0.62\,n$ for $n\ge131200$.
\item For $q=2$: $M(2,n)\le0.41\,n$ for every $n\ge3780$.
\end{enumerate}
\end{proposition}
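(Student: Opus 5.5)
Everything goes through Proposition~\ref{prop:Mlocal}, in its final form with an exceptional set $T$: for each $q\in\{2,3,4\}$ and each relevant $a=v_p(n)$ we need a constant $\vartheta$ with $\min(R^*_a(o),f_a(o))\le\vartheta$ for all $o\notin T$, and a bound $\Theta$ on $T$. By the paragraph \emph{Finitely many $a$}, only $a\le\max(a_0,v_p(o))$ has to be examined, and the factor $p^{v_p(o)-a}$ makes large $a$ harmless. Each $o$ falls into one of three ranges. For large $o$ (say $o\ge o_1$) the tail estimate $f_a(o)\le\kappa_s o^{1/s-1/2}$ with $s=4$ (or $s=3$) is below $\vartheta$ as soon as $o$ exceeds an explicit bound, which is about $(\kappa_4/0.41)^4$ for $q=2$. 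To avoid an enormous range we would instead use $\tau(o)/\sqrt o\le\vartheta$, which fails only for finitely many $o$; these can be listed by the highly composite argument. For intermediate $o$ we evaluate $f_a(o)$ exactly by computer. For small $o$, and only for $q=2,3$, we compute $R^*_a(o)$ exactly by enumerating all $k\mid q^o-1$ with $\ord_k(q)=o$. This is feasible for $o$ up to roughly $30$ or $40$, since it needs only the divisors of $q^o-1$.

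For $q=4$ ($p=2$) the argument is by hand. If $n$ is odd, then $a=0$ and only odd $d$ occur, so $f_0(o)=\sum_{d\mid o,\,d\text{ odd}}\min(1/d,d/o)$. The bound $\frac23$ should follow as in Lemma~\ref{lem:divsum} by pairing $d$ with $o/d$ among the odd divisors, except for $o=1$, where $R^*_0(1)=1$. Those $\eta$ lie in $\F_4^\times\cap\mu_{n'}$, a set of at most $3$ elements and, $n$ being odd, in practice a single one, $\eta=1$; they contribute the extra $+1$. Then $\frac23n+1\le\frac34n$ for $n\ge12$. If $n$ is even ($a\ge1$), the weights $p^{v_p(d)-a}\le\frac12$ on odd $d$ give $f_a(o)\le\frac34$, with the extremes attained at small $o$ such as $o=2$ with $a=1$. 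We would check this via the decomposition $o=2^bo'$, splitting the divisor sum over the $2$-part.

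For $q=3$ with $3\mid n$ we expect no exceptional $o$ at all: every $o$ has $\min(R^*_a(o),f_a(o))\le0.5983$ for $a\ge1$, because the factor $1/3$ kills the $p$-free-parameter maximum at $o=1$. For $3\nmid n$ ($a=0$) we take $\vartheta=0.6$, $\Theta=1$ and $T\subseteq\{1,2,4,8\}$, so that $\ell_T=8$; this gives the stated bound. Since $\gcd(n,3^8-1)\le6560$, we get $M\le0.62n$ once $0.02n\ge0.4\cdot6560$, i.e. for $n\ge131200$. For $q=2$ we choose $T$ among the small $o$ with large $R^*$ (likely $o\in\{1,2,3,4,6,12,\dots\}$), for both $a=0$ and $a\ge1$. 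The threshold $3780$ then comes from absorbing $(\Theta-\vartheta)p^a\gcd(n',2^{\ell_T}-1)$ into the slack between $\vartheta$ and $0.41$. This uses that $T\ne\emptyset$ only for $a\le2$, so that $p^a\le4$.

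I expect the main obstacle to be $q=2$. There $0.41$ is close to the true worst local averages, the tail bound becomes effective only for very large $o$, and the exact $R^*_a(o)$ must be computed up to fairly large $o$, where factoring $2^o-1$ becomes a real cost. The choice of $T$, $\vartheta$ and $\Theta$ has to be tuned so that the exceptional term yields exactly the threshold $3780$. I would first run the computation of $R^*_a(o)$ for $o\le 60$ and see where $f_a(o)$ alone drops below $0.41$.
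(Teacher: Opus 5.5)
Your overall strategy matches the paper's. You use Proposition~\ref{prop:Mlocal} with an exceptional set $T$, exact $R^*_a(o)$ for small $o$ when $q=2,3$, exact $f_a(o)$ on an intermediate range, and the tail bound $\kappa_so^{1/s-1/2}$. For $q=3$ your choices ($T=\emptyset$ if $3\mid n$; $T=\{1,2,4,8\}$, $\vartheta=0.6$, $\Theta=1$ otherwise) and the arithmetic giving $131200$ are the paper's. Several specifics are wrong, though.

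\textbf{$q=4$, even $n$.} Your justification does not give $\frac34$. Let $A$ and $B$ be the parts of $f(o)$ from odd and even $d$. The weights $\le\frac12$ on odd $d$ only yield $f_a(o)\le B+\frac A2\le1-\frac A2$, and $A$ can be tiny ($A=2^{-b}$ for $o=2^b$). You would also have to use the factors $2^{v_2(d)-a}$ for $1\le v_2(d)<a$ and the cut-off $v_2(d)\le a$. The paper does not argue by hand here: it evaluates $\max_{a\ge1}f_a(o)$ exactly for $o\le12000$ and uses $\kappa_3o^{-1/6}\le\frac34$ for $o\ge10822$.

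\textbf{$q=4$, odd $n$.} ``Pairing $d$ with $o/d$ among the odd divisors'' is not the right mechanism.
\begin{itemize}
\item For even $o$, the map $d\mapsto o/d$ sends the odd divisors injectively to even ones with equal terms, so $f_0(o)\le\frac12$.
\item For odd $o$, all divisors are odd, so pairing only reproduces Lemma~\ref{lem:divsum} ($\le1$). The bound $\frac23$ comes from the divisors below $\sqrt o$ being odd, $2\sum_{d\mid o,\,d<\sqrt o}d\le2(1+3+5+\cdots)$, together with a check of the odd $o<36$ ($o=3$ gives equality).
\end{itemize}
Also, $\mu_n\cap\F_4^\times=\mu_{\gcd(n,3)}$ has three elements whenever $3\mid n$, regardless of the parity of $n$. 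Your $+1$ survives only because each such element contributes $\Theta-\vartheta=\frac13$, so the extra term is $\frac13\gcd(n,3)\le1$.

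\textbf{$q=2$.} The set $T$ must be chosen separately for each $a$, with small $\ell_T$. The paper uses:
\begin{itemize}
\item $a=0$: $T=\{1,2,3,6\}$, $\vartheta=0.4$, $\Theta=1$;
\item $a=1$: $T=\{1,2,6\}$, $\vartheta=0.375$, $\Theta=\frac34$;
\item $a=2$: $T=\{4\}$, $\vartheta=0.375$, $\Theta=\frac7{16}$;
\item $a\ge3$: $T=\emptyset$, $\vartheta=0.35$.
\end{itemize}
The threshold $3780=60\cdot63$ comes from $a=0$, via $0.6\gcd(n,63)\le0.01\,n$. Your candidates $4$ and $12$ are not needed at $a=0$, since $f_0(4)=\frac14$ and $f_0(12)=\frac13$. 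Putting $12$ into $T$ would give $\ell_T=12$ and a threshold near $0.6\cdot4095/0.01$.

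The set of $o$ where $f_a$ alone suffices is also not a tail interval. For example $f_1(42)=\frac37>0.41$, so computing $R^*$ only up to $30$--$40$ (your first estimate) is not enough; your later plan of $o\le60$ covers this. Beyond $60$ there is an isolated exception, $f_1(90)=\frac{13}{30}>0.41$, so $R^*_1(90)$ must be computed separately; the paper finds $R^*_1(90)\le\frac2{15}$.

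With these corrections, your plan becomes the paper's proof.
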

\begin{proof}
In each case we bound $\min(R^*_a(o),f_a(o))$ by a constant $\vartheta$ for all $o$ outside a finite set $T$ (possibly
empty) and by a constant $\Theta$ for $o\in T$, using exact values for small $o$ and the tail estimate for large $o$
(for $q=4$ and odd $n$, an estimate by hand for all $o\ge2$), and then apply the last part of Proposition~\ref{prop:Mlocal}; we write $M$ for $M(q,n)$.
Throughout, Proposition~\ref{prop:Mlocal} is applied with $a=v_p(n)$. Since the statements concern all $n$ in the
given ranges, every $a\ge0$ occurs (every $a\ge1$ in the cases with $p\mid n$); by the paragraph \emph{Finitely many $a$}
above, only finitely many values of $a$ need to be computed. The local maxima $R^*_a(o)$ require the
divisors $k$ of $q^o-1$; the Python program factors $q^o-1$ with \texttt{sympy} \cite{sympy2017}.

(a) Here $p=2$. For $n$ even ($a\ge1$) exact evaluation gives $\max_{a\ge1}f_a(o)\le\frac34$ for $o\le12000$, and
$f_a(o)\le f(o)\le\kappa_3o^{-1/6}\le\frac34$ for $o\ge10822$; Proposition~\ref{prop:Mlocal} with $T=\emptyset$
($\vartheta=\Theta=\frac34$) gives $M\le\frac34n$.
For $n$ odd ($a=0$) only odd $d$ occur in $f_0(o)=\sum_{d\mid o,\ d\text{ odd}}\min(\frac1d,\frac do)$, and we show
by hand that $f_0(o)\le\frac23$ for all $o\ge2$. If $o$ is even, the map $d\mapsto o/d$ sends the odd divisors of $o$ injectively to
even divisors, with the same value of $\min(d,o/d)$, so $\sum_{d\mid o,\ d\text{ odd}}\min(d,o/d)\le\frac12\sum_{d\mid o}\min(d,o/d)
\le\frac o2$ by Lemma~\ref{lem:divsum}. If $o\ge3$ is odd, let $s=\lfloor\sqrt o\rfloor$; the divisors below $\sqrt o$ are
odd and at most $s$, and $\sum_{d\mid o}\min(d,o/d)=2\sum_{d\mid o,\,d<\sqrt o}d+[o=s^2]\,s$. For $o=s^2$ ($s\ge3$ odd) this is at
most $2(1+3+\dots+(s-2))+s=\frac{s^2+1}2\le\frac23o$. Otherwise it is at most $2(1+3+\dots+(2\lceil s/2\rceil-1))
=2\lceil s/2\rceil^2\le\frac{(s+1)^2}2$, which is $\le\frac23(s^2+1)\le\frac23o$ for $s\ge6$; for $o<36$ the odd non-squares are
the primes (value $2$) and $15,21,27,33,35$ (values $8,8,8,8,12$), all $\le\frac23o$. Since $f_0(1)=1$,
Proposition~\ref{prop:Mlocal} with $T=\{1\}$, $\ell_T=1$, $\vartheta=\frac23$, $\Theta=1$ gives
$M\le\frac23n+\frac13\gcd(n,3)\le\frac23n+1$. Finally $\frac23n+1\le\frac34n$ for $n\ge12$.

(b) Exact local maxima for $o\le36$ and $0\le a\le4$: $R^*_0(o)\le0.4375$ for $o\notin\{1,2,4,8\}$, and
$R^*_a(o)\le\frac49$ for all $a\ge1$. For $37\le o\le42000$: $f_0(o)\le0.6$ (with equality at $o=40$) and $\max_{a\ge1}f_a(o)\le0.4667$;
for $o>42000$: $f_a(o)\le\kappa_3o^{-1/6}<0.5983$ (this tail bound, not the computed values, determines
$\vartheta$ for $3\mid n$). If $3\mid n$ ($a\ge1$), Proposition~\ref{prop:Mlocal} with $T=\emptyset$
and $\vartheta=\Theta=0.5983$ gives $M\le0.5983\,n$. If $3\nmid n$ ($a=0$, $n'=n$), it applies with $T=\{1,2,4,8\}$,
$\ell_T=8$, $\vartheta=0.6$, $\Theta=1$ (as $f_0(o)\le f(o)\le1$). Finally, for $3\nmid n\ge131200$ we have $\gcd(n,6560)\le6560\le n/20$, so $M\le0.62\,n$.

(c) Exact local maxima for $o\le60$ and $0\le a\le6$ (and $(o,a)=(90,1)$): $R^*_0(o)\le0.4$ for
$o\notin\{1,2,3,6\}$; $R^*_1(o)\le0.375$ for $o\notin\{1,2,6\}$, $R^*_1(o)\le\frac34$ for $o\in\{1,2,6\}$; $R^*_2(o)\le0.375$ for $o\ne4$,
$R^*_2(4)\le\frac7{16}$; $R^*_a(o)\le\frac{15}{64}$ for $a\ge3$. For $61\le o\le340000$: $f_0(o)\le\frac{22}{63}$,
$f_a(o)\le\frac{12}{35}$ for all $a\ge1$ except $(o,a)=(90,1)$ (where $R^*_1(90)\le\frac2{15}$), $\max_{a\ge2}f_a(o)\le\frac{121}{360}$,
$\max_{a\ge3}f_a(o)\le\frac5{16}$; for $o>340000$, $f(o)\le\kappa_4o^{-1/4}<0.35$. The $\eta$ with $\ord_{\ord\eta}(2)\in T$
lie in $\mu_{2^{\ell_T}-1}$, i.e.\ in $\mu_{63}$ ($a\le1$, $\ell_T=6$) and in $\mu_{15}$ ($a=2$, $\ell_T=4$),
and for $n\ge3780=60\cdot63$ we have $p^a\gcd(n',63)\le63\le n/60$ if $a=0$,
$\le126\le n/30$ if $a=1$, and $p^a\gcd(n',15)\le60\le n/63$ if $a=2$. Proposition~\ref{prop:Mlocal} then gives the following bounds, where the last
column is $\vartheta+(\Theta-\vartheta)\,p^a\gcd(n',q^{\ell_T}-1)/n$ with the bound for $p^a\gcd(n',q^{\ell_T}-1)$ just stated:
\begin{center}\small
\begin{tabular}{llllllll}
\toprule
$a$ & $T$ & \begin{tabular}[b]{@{}l@{}}$o\le60$,\\ $o\notin T$\end{tabular} &
\begin{tabular}[b]{@{}l@{}}$61\le o$\\ ${}\le340000$\end{tabular} & $\vartheta$ & $\Theta$ &
\begin{tabular}[b]{@{}l@{}}$p^a\gcd(n',$\\ $q^{\ell_T}-1)$\end{tabular} & bound for $M/n$\\
\midrule
$0$ & $\{1,2,3,6\}$ & $0.4$ & $22/63$ & $0.4$ & $1$ & $\le n/60$ & $0.4+0.6/60=0.41$\\
$1$ & $\{1,2,6\}$ & $0.375$ & $12/35$ & $0.375$ & $3/4$ & $\le n/30$ & $0.375+0.375/30=0.3875$\\
$2$ & $\{4\}$ & $0.375$ & $121/360$ & $0.375$ & $7/16$ & $\le n/63$ & $0.375+0.0625/63<0.3760$\\
$\ge3$ & $\emptyset$ & $15/64$ & $5/16$ & $0.35$ & -- & -- & $0.35$\\
\bottomrule
\end{tabular}
\end{center}
Here $\vartheta$ bounds $\min(R^*_a(o),f_a(o))$ for $o\notin T$ (by the values listed for $o\le60$, the bounds for
$61\le o\le340000$, and $f(o)<0.35$ beyond), and $\Theta$ bounds it for $o\in T$ (by the values listed for $o\le60$; for
$a=0$, $\Theta=1$ because $f_0(o)\le f(o)\le1$). Hence $M\le0.41\,n$.
\end{proof}

\section{Structure of components}\label{sec:struct}
The criterion of Theorem~\ref{thm:introC} needs a bound $W<1$, uniform in $n$, for the weight of every component
without a condition of codimension~$1$; this section proves one (Theorem~\ref{thm:W}, stated in a slightly weaker form in the introduction as
Theorem~\ref{thm:introW}). The key fact is that the fibres of the conditions of a
component with minimal codimension~$o_0$ lie in $\mu_K$ for a $K$ depending only on $q$ and $o_0$
(Theorem~\ref{thm:struct}). This bounds the weight (Corollary~\ref{cor:weight}) and shows that the component already
occurs for a pair $(q,n_0)$ with $n_0$ dividing an explicit number (Proposition~\ref{prop:finite}); for the four pairs $(q,o_0)$ where
both bounds of Corollary~\ref{cor:weight} are too weak, the weights can therefore be bounded by a finite computation.

From now on we write a condition $(d,k)$ as $(d,\xi)$, where $\xi=\zeta^{d'k}$ is its target (defined up to
$\xi\mapsto\xi^{q^d}$; see Definition~\ref{def:cond} and the paragraph after it).
Throughout this section, $C_0=(d_0,\xi_0)$ is an essential condition of codimension $o_0$, and $e_0=\ord\xi_0$ (so
$e_0\mid q^{o_0}-1$; moreover $d_0\mid o_0$, hence $d_0'\mid o_0'$). Let $\cC$ be the component containing $C_0$ (if $o_0\ge2$,
then $\cC$ contains no condition of codimension $1$, by Lemma~\ref{lem:qorbits}(c)). Let
$Y\subseteq\mu_{n'}$ be the set of $\eta$ occurring in the fibres of the conditions of $\cC$ (a union of $q$-orbits, by
Lemma~\ref{lem:qorbits}(a)). Let $D_{\cC}$ be the least common multiple of the $p$-free parts $d'$ of the levels of the
conditions of $\cC$, and let $N_{\cC}(o)$ be the number of conditions of $\cC$ of codimension $o$.

The hypotheses of Theorem~\ref{thm:struct} below ensure that $\mu_K$ contains the fibre of $C_0$ (which lies in
$\mu_{d_0'e_0}$) and is stable under the two moves $\eta\mapsto\eta^q$ and $\eta\mapsto\eta\varepsilon$ by which a
component is explored (Section~\ref{subsec:idea}). Part~(b) of the theorem also uses the following elementary lemma; it
gives the divisibility $d'\ord\xi\mid K$ on which Proposition~\ref{prop:finite} rests.

\begin{lemma}\label{lem:coset}
In a finite cyclic group, a coset $\eta H$ of the subgroup $H$ of order $d$ contains an element of order
$\lcm(\ord\eta,d)$. In particular, the fibre $\{\eta:\eta^{d'}=\xi\}\subseteq\mu_{n'}$ of a condition contains an
element of order $d'\ord\xi$.
\end{lemma}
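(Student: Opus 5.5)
We reduce the first claim to the structure of a cyclic group by working prime by prime. Let the cyclic group be $G$ of order $m$, let $H\le G$ be the unique subgroup of order $d$ (so $d\mid m$), and put $L=\lcm(\ord\eta,d)$. Every element of $\eta H$ has order dividing $L$, since $\ord(\eta h)\mid\lcm(\ord\eta,\ord h)$ and $\ord h\mid d$. It therefore suffices to find $h\in H$ such that, for every prime $\ell$, $v_\ell(\ord(\eta h))=v_\ell(L)$.

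Decompose $G$ into its $\ell$-primary parts, $G=\prod_\ell G_\ell$. Then $H=\prod_\ell H_\ell$, where $H_\ell$ is the subgroup of $G_\ell$ of order $\ell^{v_\ell(d)}$. Write $\eta=\prod_\ell\eta_\ell$, and note that the order of a product of components is the product of their orders. So it suffices to treat one prime at a time, in the cyclic $\ell$-group $G_\ell$.

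\begin{itemize}
\item If $\ord\eta_\ell\ge|H_\ell|$, take $h_\ell=1$.
\item Otherwise $\eta_\ell$ lies in $H_\ell$, because subgroups of a cyclic $\ell$-group are totally ordered. Then $\eta_\ell H_\ell=H_\ell$, and we take $h_\ell=\eta_\ell^{-1}g$ with $g$ a generator of $H_\ell$.
\end{itemize}

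In both cases $\ord(\eta_\ell h_\ell)=\max(\ord\eta_\ell,|H_\ell|)=\ell^{v_\ell(L)}$. Setting $h=\prod_\ell h_\ell\in H$ then gives $\ord(\eta h)=L$.

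For the second claim, apply this in $G=\mu_{n'}$. Take any $\eta$ in the fibre $\{\eta:\eta^{d'}=\xi\}$; by Lemma~\ref{lem:qorbits}(b) the fibre is exactly the coset $\eta\mu_{d'}$, and $|\mu_{d'}|=d'$ because $d'\mid n'$ and $p\nmid d'$. By the first part, the fibre contains an element $\eta_1$ of order $\lcm(\ord\eta,d')$. It remains to check that $\lcm(\ord\eta,d')=d'\ord\xi$. This holds for every element of the fibre, in particular for $\eta_1$: from $\eta_1^{d'}=\xi$ we get $\ord\xi=\ord\eta_1/\gcd(\ord\eta_1,d')$. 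Multiplying by $d'$ gives
\[
d'\ord\xi=\frac{d'\,\ord\eta_1}{\gcd(\ord\eta_1,d')}=\lcm(\ord\eta_1,d').
\]
Since $d'\mid\ord\eta_1$ (as $\ord\eta_1=\lcm(\ord\eta,d')$), this equals $\ord\eta_1$, which is the claim.

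No step is a real obstacle. The only point needing care is the case distinction in the $\ell$-primary decomposition, where we use that the subgroups of a cyclic $\ell$-group form a chain.
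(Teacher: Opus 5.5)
Your proof is correct. At each prime it makes the same case distinction as the paper, but it is organised differently.

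\textbf{The paper's route.} The paper first passes to the subgroup $\langle\eta\rangle H$, which is cyclic of order exactly $m=\lcm(\ord\eta,d)$. The task then becomes finding a generator of this group inside $\eta H$. Identifying the group with $\Z/m$ (so $\eta=x$ and $H=(m/d)\Z/m$), the paper writes down a single element $h=t\,m/d$, where $t$ is the product of the primes $\ell\mid m$ not dividing $x$. It then checks prime by prime that $x+h$ is a unit.

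\textbf{Comparison.} For $\ell\mid m$ one has $\ell\nmid x$ if and only if $v_\ell(\ord\eta)\ge v_\ell(d)$. So the paper's two cases, $\ell\nmid x$ and $\ell\mid x$, are exactly your cases $\ord\eta_\ell\ge|H_\ell|$ and $\ord\eta_\ell<|H_\ell|$. What differs is the machinery:
\begin{itemize}
\item you use the $\ell$-primary decomposition and the fact that the subgroups of a cyclic $\ell$-group form a chain, and then assemble $h=\prod_\ell h_\ell$, which is a Chinese remainder step;
\item the paper needs only divisibility in $\Z/m$ and gives $h$ by one explicit formula.
\end{itemize}
Your version makes the local picture transparent: at each prime you either keep $\eta_\ell$ or replace it by a generator of $H_\ell$. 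The paper's version is shorter and fully explicit.

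\textbf{Second part.} Your detour through $\eta_1$ is correct but not needed. For the element $\eta$ you started with, $\lcm(\ord\eta,d')=d'\ord\eta/\gcd(\ord\eta,d')=d'\ord(\eta^{d'})=d'\ord\xi$. This is how the paper concludes.
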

\begin{proof}
The subgroup $\langle\eta\rangle H$ is cyclic of order $m=\lcm(\ord\eta,d)$ and contains $\eta H$; identify it with
$\Z/m$ (orders of elements do not change), so that $\eta=x$ ($0\le x<m$) and $H=(m/d)\Z/m$; we need $h\in H$ such that $x+h$
is a unit. Let $t$ be the product of the primes $\ell\mid m$ with $\ell\nmid x$, and put $h=t\,m/d$. Let $\ell$ be a prime
dividing $m$. If $\ell\nmid x$, then $\ell\mid h$, so $\ell\nmid x+h$. If $\ell\mid x$, then $\ell\nmid t$, and
$\ell\nmid m/d$, since otherwise $v_\ell(\ord x)=v_\ell(m)$ and hence $\ell\nmid x$; so $\ell\nmid h$ and again
$\ell\nmid x+h$. Hence $x+h$ is a unit. For the fibre take $\eta$ with $\eta^{d'}=\xi$ and $H=\mu_{d'}$; then
$\lcm(\ord\eta,d')=d'\cdot\ord\eta/\gcd(\ord\eta,d')=d'\ord\xi$.
\end{proof}

\begin{theorem}[structure of components]\label{thm:struct}
Let $K$ be a $p$-free integer with
\[
\text{(i)}\ \ d_0'e_0\mid K\qquad\text{and}\qquad\text{(ii)}\ \ (\ord_K(q))_{p'}\mid K .
\]
Then:
\begin{enumerate}[label=(\alph*),itemsep=1pt]
\item every $\eta\in Y$ has order dividing $K$;
\item every condition $(d,\xi)$ of $\cC$ satisfies $\ord\xi\mid K$, $d\mid \ord_K(q)$ and $d'\ord\xi\mid K$; in particular
all codimensions occurring in $\cC$ divide $\ord_K(q)$, and $D_{\cC}\mid(\ord_K(q))_{p'}$;
\item for every $o$, with $D=D_{\cC}$ or any multiple of it,
\[
N_{\cC}(o)\ \le\ \sum_{\substack{d\mid o\\ d'\mid D}}d\cdot\gcd\bigl(q^o-1,\,D/d'\bigr)\ \le\ \sigma_1(o)\gcd(q^o-1,D).
\]
\end{enumerate}
\end{theorem}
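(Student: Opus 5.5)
Part~(a) is proved by showing that $\mu_K\cap\mu_{n'}$ is closed under the two moves that generate $Y$, and then using the connectivity of the component. Part~(b) is read off from (a), and part~(c) is a count of conditions inside $\mu_K$.

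For (a), note first that the fibre of $C_0$ is a coset $\eta_0\mu_{d_0'}$ with $\eta_0^{d_0'}=\xi_0$ (Lemma~\ref{lem:qorbits}(b)). Then $\ord\eta_0\mid d_0'e_0$, so by (i) the whole fibre lies in $\mu_K$. Next I show that $\mu_K$ is stable under the moves. Stability under $\eta\mapsto\eta^q$ is trivial. For the coset move, suppose $\eta\in\mu_K$ lies in the fibre of an essential condition $(d,\xi)$ of $\cC$. Then $\xi=\eta^{d'}$ has order $e\mid\ord\eta\mid K$. Essentiality gives $d\mid\ord_e(q)\mid\ord_K(q)$, hence $d'\mid(\ord_K(q))_{p'}\mid K$ by (ii). So the fibre $\eta\mu_{d'}\subseteq\mu_K$.

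By Lemma~\ref{lem:qorbits}(a), every condition of $\cC$ is reached from $C_0$ by a chain of conditions whose consecutive fibres meet a common $q$-orbit. I induct along the chain. If the fibre of $C_i$ lies in $\mu_K$ and $C_{i+1}$'s fibre contains $\eta^{q^j}$ with $\eta$ in $C_i$'s fibre, then $\eta^{q^j}\in\mu_K$. Since $C_{i+1}$ is essential, the previous paragraph puts its whole fibre in $\mu_K$. Hence $Y\subseteq\mu_K$.

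For (b), take a condition $(d,\xi)$ of $\cC$. Its fibre lies in $\mu_K$, so $\ord\xi\mid K$, and essentiality gives $d\mid\ord_{\ord\xi}(q)\mid\ord_K(q)$. Lemma~\ref{lem:coset} supplies a fibre element of order $d'\ord\xi$, which must divide $K$. The codimensions are $\ord_{\ord\xi}(q)\mid\ord_K(q)$, and $D_\cC\mid(\ord_K(q))_{p'}$ since each $d'$ divides it.

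For (c), fix $o$. A condition of codimension $o$ in $\cC$ has level $d\mid o$ with $d'\mid D$, and its target $\xi$ has degree $o$, so $\xi\in\F_{q^o}^\times$. I need to constrain $\xi$ further; this is the main obstacle, since (b) only gives $d'\ord\xi\mid K$ with $K$ possibly large, while (c) asks for a bound in terms of $D/d'$.

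The idea is to use the structure of $Y$. Any fibre element $\eta$ has $\ord\eta\mid K$ and degree dividing $\ord_K(q)$. I expect that all conditions of $\cC$ with a given level $d$ have targets lying in a single coset of $\mu_{D/d'}$, up to Frobenius. The reason is that the coset moves only multiply by $\mu_{d''}$ with $d''\mid D$, and raising to the $d'$-th power sends $\mu_D$ to $\mu_{D/d'}$. What stays fixed along the chain is the $q$-orbit of $\eta^D$, as indicated in Section~\ref{subsec:idea}: both moves preserve it, because $\varepsilon^D=1$ and $(\eta^q)^D=(\eta^D)^q$. So every $\eta\in Y$ satisfies $\eta^D\in\{\beta^{q^j}\}$ for a fixed $\beta$.

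For a target $\xi=\eta^{d'}$ this gives $\xi^{D/d'}=\eta^D$, a Frobenius conjugate of $\beta$. Thus $\xi$ lies in a union of cosets $\gamma_j\mu_{D/d'}$. Intersecting with $\F_{q^o}^\times$ leaves at most $\gcd(q^o-1,D/d')$ elements for each $j$, provided the coset meets $\F_{q^o}$.

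The conjugates $\beta^{q^j}$ are many, so I would count conditions as orbits of $\xi\mapsto\xi^{q^d}$ and let Frobenius absorb $j$. Concretely, I would map each condition to the $q^d$-orbit of $\xi$, which contains a $\xi$ with $\xi^{D/d'}=\beta^{q^j}$. The conjugates $\beta^{q^j}$ have at most $d$ classes modulo $q^d$-powers, because $\beta^{D/d'}$-type elements of degree dividing $o$ have $o$ conjugates grouped into orbits of length $o/d$. This yields at most $d\cdot\gcd(q^o-1,D/d')$ conditions per level, which I would check carefully. Summing over $d\mid o$ with $d'\mid D$ gives the first bound. The second follows from $\gcd(q^o-1,D/d')\le\gcd(q^o-1,D)$ and $\sum_{d\mid o}d=\sigma_1(o)$. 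Replacing $D_\cC$ by a multiple $D$ keeps the invariant argument valid, since $\eta^D$ is still preserved.
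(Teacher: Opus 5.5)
Your proposal is correct and follows the paper's proof. It uses the same chain induction, with $\mu_K$ stable under both moves, for (a), and the same appeal to Lemma~\ref{lem:coset} for (b). For (c) it uses the same invariant, the $q$-orbit of $\eta^D$. There, you pick one target per condition whose $D/d'$-th power is one of at most $d$ fixed representatives of the orbits of $x\mapsto x^{q^d}$ on $\{\beta^{q^j}\}$; the paper instead counts all targets, at most $o\gcd(q^o-1,D/d')$, and divides by the orbit length $o/d$, which gives the same bound. One justification needs repair: $\{\beta^{q^j}\}$ need not have $o$ elements, nor split into orbits of length $o/d$. The correct reason for ``at most $d$ classes'' is that a $q$-orbit of any size $s$ splits into exactly $\gcd(s,d)\le d$ orbits under $x\mapsto x^{q^d}$.
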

\begin{proof}
We first prove (a) by following a chain of conditions of $\cC$; (b) then follows from (a), essentiality and
Lemma~\ref{lem:coset}, and (c) counts, for each codimension $o$, the possible targets by means of an invariant that is
preserved by the two moves that explore $\cC$.

(a) Order the conditions of $\cC$ so that the fibre of each one meets a $q$-orbit met by the fibre of an earlier one,
starting with $C_0$
(possible since $\cC$ is connected, Lemma~\ref{lem:qorbits}(a)), and let $Y_j$ be the union of the $q$-orbits of the
fibres of the first $j$ conditions. The fibre of $C_0$ consists of $\eta$ with $\eta^{d_0'}=\xi_0$, of order dividing
$d_0'e_0\mid K$, and $\eta\mapsto\eta^q$ preserves $\mu_K$. Suppose $Y_j\subseteq\mu_K$ and let $(d,\xi)$ be the next
condition. Its fibre contains an element $\eta$ of a $q$-orbit contained in $Y_j$, so $\eta\in Y_j$ and the fibre is
$\eta\mu_{d'}$ with $e:=\ord\xi=\ord(\eta^{d'})\mid\ord\eta\mid K$. Essentiality gives $d'\mid d\mid \ord_e(q)\mid \ord_K(q)$, hence
$d'\mid(\ord_K(q))_{p'}\mid K$ by (ii), and $\eta\mu_{d'}\subseteq\mu_K$.

(b) The target is $\xi=\eta^{d'}$ with $\eta\in Y$, so $e=\ord\xi\mid K$ and $d\mid \ord_e(q)\mid \ord_K(q)$. By
Lemma~\ref{lem:coset} the fibre contains an element of order $d'e$, which lies in $Y$, so $d'e\mid K$.

(c) Fix $\eta_0$ in the fibre of $C_0$. Every element of $Y$ is reached from $\eta_0$ by the moves $\eta\mapsto\eta^q$
and $\eta\mapsto\eta\varepsilon$ with $\varepsilon\in\mu_{d'}$, $d'$ the $p$-free part of a level of $\cC$. Indeed, list the
conditions of $\cC$ as in (a), so that each one has a fibre meeting the $q$-orbit of a fibre element of an earlier one
(Lemma~\ref{lem:qorbits}(a)). Inside a fibre $\eta\mu_{d'}$ any two elements differ by some
$\varepsilon\in\mu_{d'}$ (Lemma~\ref{lem:qorbits}(b)); from a fibre element of an earlier condition, $q$-th powers lead to an
element of the next fibre, and $\varepsilon$-moves then lead to all of it; finally $q$-th powers reach the whole $q$-orbits,
whose union is $Y$. Both moves
preserve the $q$-orbit of $\eta^{D}$, since $\varepsilon^{D}=1$ (as $d'\mid D_{\cC}\mid D$). Put $y=\eta_0^{D}$ and
$[y]=\{y^{q^i}\}$; then $\eta^{D}\in[y]$ for all $\eta\in Y$.

Now fix $o$ and a level $d=p^cd'$ with $d\mid o$ and $d'\mid D$ (by essentiality and the definition of $D_{\cC}$, every level of a
condition of $\cC$ of codimension $o$ is of this form). A condition of $\cC$ of codimension $o$ at level $d$ has a target
$x=\eta^{d'}$ with $\eta\in Y$, hence
\[
x\in G_o:=\mu_{n'}\cap\mu_{q^o-1}\quad(\text{as }\ord_{\ord x}(q)=o)\qquad\text{and}\qquad x^{D/d'}=\eta^{D}\in[y].
\]
The kernel of $x\mapsto x^{D/d'}$ on the cyclic group $G_o$ has order $\gcd(|G_o|,D/d')\le\gcd(q^o-1,D/d')$. If there
is at least one such target $x$, then $y':=x^{D/d'}\in[y]$ satisfies $\ord y'\mid\ord x$, so $|[y]|=\ord_{\ord y'}(q)$ divides
$\ord_{\ord x}(q)=o$. Therefore the number of targets of conditions of $\cC$ of codimension $o$ at level $d$ is at
most
\begin{align*}
\#\{x\in G_o:\ x^{D/d'}\in[y]\}\ &\le\ \sum_{y'\in[y]}\#\{x\in G_o:\ x^{D/d'}=y'\}\\
&\le\ |[y]|\,\gcd(q^o-1,D/d')\ \le\ o\,\gcd(q^o-1,D/d').
\end{align*}
By Lemma~\ref{lem:weightcodim} each condition at level $d$ with codimension $o$ is an orbit of exactly $\ord_{\ord x}(q^d)=o/d$
targets, so there are at most $\frac do\cdot o\gcd(q^o-1,D/d')=d\gcd(q^o-1,D/d')$ such conditions. Summing over the
levels gives the first inequality; the second follows from $\gcd(q^o-1,D/d')\le\gcd(q^o-1,D)$ and
$\sum_{d\mid o}d=\sigma_1(o)$.
\end{proof}

\begin{remark}
Theorem~\ref{thm:struct} does not use that $o_0$ is the \emph{minimal} codimension in $\cC$: the conclusions hold for the
component of any essential condition, with $K$ built from that condition. Minimality is used only
in Corollary~\ref{cor:weight} (to restrict the sum to $o\ge o_0$). Part~(c) does not involve $K$; in Corollary~\ref{cor:weight},
$K$ enters only through~(b), which bounds the codimensions and $D_{\cC}$. The hypothesis that $K$ is $p$-free only ensures
that $\ord_K(q)$ is defined. Section~\ref{sec:4599} works out the components with minimal
codimension $9$ for $(q,n)=(2,4599)$; for them both bounds in (c), with $D=D_{\cC}=9$, are attained for $o=9$.
\end{remark}

\paragraph{Admissible $K$.} Put $K_0=o_0'(q^{o_0}-1)$, a $p$-free multiple of $d_0'e_0$ that depends only on $q$ and $o_0$.
We call a $p$-free multiple $K$ of $K_0$ \emph{admissible} if it satisfies condition~(ii) of Theorem~\ref{thm:struct}; then
Theorem~\ref{thm:struct} applies with this $K$ to every essential condition of codimension $o_0$, for every $n$. Two
admissible choices are useful: (A) is explicit and is used in Theorem~\ref{thm:W} for $q\ge4$ and for $q=3$ with $o_0\ge375$;
(B) is the least admissible $K$ (see below) and is used for $q=2$ (where the bounds obtained from (A) are too large, e.g.\
larger than $1$ for $o_0=4$), for $q=3$ with $o_0\le374$, and in Proposition~\ref{prop:finite}.
\begin{enumerate}[label=(\Alph*),itemsep=1pt]
\item \emph{The crude choice.} Let $L_0=\lcm(1,\dots,o_0)_{p'}$. Then $K=\lcm(K_0,L_0)$ satisfies (i), and by
Lemma~\ref{lem:crude} below it satisfies (ii) and $\ord_K(q)\mid o_0'\lcm(1,\dots,o_0)$.
\item \emph{The closure.} Let
\[
O_0=\lcm\bigl(o_0,\ \ord_{\ell^{v_\ell(K_0)}}(q):\ \ell\mid o_0'\bigr),\qquad\text{where }v_\ell(K_0)=v_\ell(o_0')+v_\ell(q^{o_0}-1),
\]
so that $\ord_{K_0}(q)\mid O_0$. Put $D_0=(O_0)_{p'}$ and $D_{i+1}=\lcm(D_i,(\ord_{D_i}(q))_{p'})$. This sequence
stabilises (see the paragraph after Lemma~\ref{lem:closuresize} below); let $D_\infty$ be its eventual value. Then $K=\lcm(K_0,D_\infty)$ satisfies (i) and (ii), with
$(\ord_K(q))_{p'}\mid D_\infty$ and $\ord_K(q)\mid O_\infty:=\lcm(O_0,\ord_{D_\infty}(q))$. Computing $K$ needs no
factorisation of $q^{o_0}-1$, only $v_\ell(q^{o_0}-1)$ and $\ord_{\ell^{v_\ell(K_0)}}(q)$ for $\ell\mid o_0'$, and orders of
$q$ modulo the numbers $D_i$, for which $\log_2D_i=O((\log o_0)^2)$ by Lemma~\ref{lem:closuresize}.
\end{enumerate}
\begin{lemma}\label{lem:crude}
For $K=\lcm(K_0,L_0)$ as in (A): $(\ord_K(q))_{p'}\mid K$ and $\ord_K(q)\mid o_0'\lcm(1,\dots,o_0)$.
\end{lemma}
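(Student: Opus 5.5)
Since the multiplicative order is multiplicative over coprime factors, I will bound $\ord_K(q)$ prime power by prime power: $\ord_K(q)=\lcm_\ell\ord_{\ell^{v_\ell(K)}}(q)$ over the primes $\ell\mid K$, all $\ell\ne p$ because $K$ is $p$-free. Both claims then follow from one divisibility. Put $B=o_0'\lcm(1,\dots,o_0)$. I aim to show that every $\ord_{\ell^{v_\ell(K)}}(q)$ divides $B$, and that $(B)_{p'}\mid K$. The second is immediate: $(B)_{p'}=o_0'\,L_0$ up to $\ell$-adic exponents, and more precisely for each prime $\ell\ne p$ we have $v_\ell(B)=v_\ell(o_0')+v_\ell(\lcm(1,\dots,o_0))\le v_\ell(K_0)+v_\ell(L_0)$, which is not yet $\le\max$. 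So I should instead compare $v_\ell(\ord_{\ell^{v_\ell(K)}}(q))$ directly with $v_\ell(K)$, and this is cleaner anyway.

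There are two types of primes. First, consider a prime $\ell\mid K$ with $\ell\nmid q^{o_0}-1$. Then $v_\ell(K_0)=v_\ell(o_0')$ and $v_\ell(K)=\max(v_\ell(o_0'),v_\ell(L_0))=v_\ell(L_0)$, since $v_\ell(o_0)\le v_\ell(\lcm(1,\dots,o_0))$. In particular $\ell\le o_0$. Write $\ord_{\ell^{s}}(q)=\ord_\ell(q)\,\ell^{j}$ with $j\le s-1$ and $\ord_\ell(q)\mid \ell-1$. Since $\ell-1<o_0$, we have $\ord_\ell(q)\mid\lcm(1,\dots,o_0)$. Coprimality then gives $\ord_{\ell^s}(q)\mid\lcm(1,\dots,o_0)\,\ell^{s-1}$. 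Its $p'$-part has $\ell$-part at most $\ell^{s-1}\mid K$, and its other prime parts come from $\ell-1$, whose $p$-free prime powers are $\le o_0$ and hence divide $L_0\mid K$.

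Second, consider a prime $\ell\mid q^{o_0}-1$. Then $\ord_\ell(q)\mid o_0$. The lifting-the-exponent lemma gives $v_\ell(q^{o_0 m}-1)=v_\ell(q^{o_0}-1)+v_\ell(m)$ for odd $\ell$. For $\ell=2$ I will first square to $q^{2o_0}$ and use the $2$-adic version; $\ell=2$ only occurs when $q$ is odd. With $s=v_\ell(K)=v_\ell(q^{o_0}-1)+t$, where $t=\max(v_\ell(o_0'),v_\ell(L_0))-\ldots$, it suffices to take $m=\ell^{t}$ for the appropriate excess $t\le\max(v_\ell(o_0'),v_\ell(L_0))$. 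So $\ord_{\ell^s}(q)\mid o_0\,\ell^{t}$ (times $2$ for $\ell=2$), which divides $o_0'\lcm(1,\dots,o_0)$ after absorbing $p$-parts of $o_0$ into $\lcm(1,\dots,o_0)$. Its $p'$-part has $\ell$-part $\ell^{v_\ell(o_0)+t}$. We need this to divide $K$. Since $v_\ell(K)\ge v_\ell(K_0)=v_\ell(o_0')+v_\ell(q^{o_0}-1)$ and $v_\ell(q^{o_0}-1)\ge1$, the extra factor $\ell^{t}$ is covered when $t\le v_\ell(q^{o_0}-1)$. The other prime parts of $o_0$ divide $K_0$.

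The main obstacle is the exponent bookkeeping in this lifting step. That means choosing $t$ correctly so that $\ell^{s}\mid q^{o_0\ell^t}-1$, handling $\ell=2$ with $q\equiv3\pmod4$, and verifying both that $o_0\ell^t\mid B$ and that the $\ell$-part of $o_0\ell^{t}$ is at most $v_\ell(K)$. I would write $s-v_\ell(q^{o_0}-1)=:t$ and check $t\le v_\ell(L_0)$ or $t\le v_\ell(o_0')$, so that $\ell^{t}\mid\lcm(1,\dots,o_0)$. I would also check $v_\ell(o_0)+t\le v_\ell(o_0')+v_\ell(q^{o_0}-1)+t'$. That last inequality is the delicate point, and it is where the extra factor $o_0'$ in $K_0$ and in the bound $B$ is used. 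Combining both types of primes via the lcm gives $\ord_K(q)\mid B$ and $(\ord_K(q))_{p'}\mid K$.
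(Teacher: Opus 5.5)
Your framework is the paper's: write $\ord_K(q)$ as the lcm of the $\ord_{\ell^s}(q)$ over $\ell^s\,\|\,K$, split by whether $\ell\mid q^{o_0}-1$, and use lifting in one case and size bounds in the other. Your first case is essentially fine. The proof is incomplete, however, at the point you flag yourself. For $\ell\mid q^{o_0}-1$ you need $v_\ell(o_0)+t\le s$, but you only verify this when $t\le v_\ell(q^{o_0}-1)$, and that can fail. For $q=4$, $\ell=3$, $o_0=28$ one has $v_3(q^{o_0}-1)=1$, $s=v_3(K)=v_3(L_0)=3$ and $t=2$. The remaining inequality, stated with an undefined $t'$, is left open. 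It is true: it reduces to $v_\ell(o_0)\le v_\ell(q^{o_0}-1)$, which needs the lower-bound direction of the lifting-the-exponent lemma. But you do not need it. Since $\ord_{\ell^s}(q)\mid\ell^{s-1}(\ell-1)$, the $\ell$-part of $\ord_{\ell^s}(q)$ is at most $\ell^{s-1}$ in every case, exactly as you used in your first case. This is how the paper proves the first claim, uniformly. For $r^\alpha\,\|\,K$ and a prime $\ell\ne p$, one has $v_\ell(\ord_{r^\alpha}(q))\le\alpha-1$ if $r=\ell$. If $r\ne\ell$, then $v_\ell(\ord_{r^\alpha}(q))=v_\ell(\ord_r(q))\le v_\ell(L_0)$, because $\ord_r(q)$ either divides $o_0$ or is less than $r\le o_0$. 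Lifting is then needed only for the second claim.

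Your treatment of $\ell=2$ is a second real problem. Squaring and accepting an extra factor $2$ breaks $\ord_K(q)\mid o_0'\lcm(1,\dots,o_0)$. For $q=3$, $o_0=4$ you have $K=320$, $s=6$, $t=2$, and your bound $2\,o_0\,2^t=32$ does not divide $o_0'\lcm(1,\dots,4)=48$. The true order $\ord_{64}(3)=16$ does divide it. No special case is needed. The one-step lifting lemma ($q^m\equiv1\pmod{\ell^v}$ with $v\ge1$ implies $q^{m\ell}\equiv1\pmod{\ell^{v+1}}$) holds for $\ell=2$ too, since $(1+2^vx)^2=1+2^{v+1}x+2^{2v}x^2$ and $2v\ge v+1$. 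Applied $t$ times, it gives $\ord_{\ell^s}(q)\mid o_0\ell^t$ for every $\ell$. Together with $t\le v_\ell(L_0)$, which you observed, this gives $o_0\ell^t\mid o_0'\lcm(1,\dots,o_0)$. (The paper lifts only when $s=v_\ell(o_0')+v_\ell(q^{o_0}-1)$; otherwise $s=v_\ell(L_0)$ and $\ord_{\ell^s}(q)<\ell^s\le o_0$. Your uniform lifting also works.) Finally, in your first case conclude $\ord_{\ell^s}(q)\mid\lcm(1,\dots,o_0)$, using that $\ord_\ell(q)<o_0$ and $\ell^{s-1}\mid\lcm(1,\dots,o_0)$ are coprime. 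The weaker $\ord_{\ell^s}(q)\mid\lcm(1,\dots,o_0)\,\ell^{s-1}$ that you state does not give divisibility by $o_0'\lcm(1,\dots,o_0)$.
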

\begin{proof}
Recall that for a prime $r\nmid q$ and $\alpha\ge1$ the order $\ord_{r^\alpha}(q)$ divides $|(\Z/r^\alpha)^\times|$ and
$\ord_{r^\alpha}(q)/\ord_r(q)$ is a power of $r$; recall also the lifting lemma: if $q^m\equiv1\pmod{r^v}$ with $v\ge1$ then
$q^{mr}\equiv1\pmod{r^{v+1}}$ (expand $(1+r^vt)^r$ by the binomial theorem). Every prime $r\mid K$ divides $q^{o_0}-1$ (then $\ord_r(q)\mid o_0$) or satisfies $r\le o_0$
(then $\ord_r(q)\le r-1<o_0$); in both cases every prime power dividing $\ord_r(q)$ is at most $o_0$.

\emph{First claim.} Let $\ell\ne p$ and $r^\alpha\,\|\,K$. If $r\ne\ell$ then
\[
v_\ell(\ord_{r^\alpha}(q))=v_\ell(\ord_r(q))\le v_\ell(\lcm(1,\dots,o_0))=v_\ell(L_0)\le v_\ell(K);
\]
if $r=\ell$ then
$v_\ell(\ord_{\ell^\alpha}(q))\le\alpha-1<v_\ell(K)$. So $v_\ell(\ord_K(q))\le v_\ell(K)$.

\emph{Second claim.} Let $\ell^v\,\|\,K$, so $v=\max(v_\ell(o_0')+v_\ell(q^{o_0}-1),\allowbreak \,\allowbreak v_\ell(L_0))$. If
$v_\ell(q^{o_0}-1)\ge1$ and $v=v_\ell(o_0')+v_\ell(q^{o_0}-1)$, the lifting lemma gives
$\ord_{\ell^v}(q)\mid\ord_{\ell^{v_\ell(q^{o_0}-1)}}(q)\,\allowbreak \ell^{v_\ell(o_0')}\mid o_0\,\allowbreak \ell^{v_\ell(o_0')}$, which divides
$o_0'\lcm(1,\dots,o_0)$. Otherwise $v=v_\ell(L_0)$ (note $v_\ell(o_0')\le v_\ell(L_0)$), so $\ord_{\ell^v}(q)<\ell^v\le o_0$, and hence
$\ord_{\ell^v}(q)$ divides $\lcm(1,\dots,o_0)$. Since $\ord_K(q)$ is the least common multiple of these orders,
$\ord_K(q)\mid o_0'\lcm(1,\dots,o_0)$.
\end{proof}

The following lemma shows that the closure (B) terminates; its bounds are also used in step~(ii) of the proof of
Theorem~\ref{thm:W} ($q=2$, $o_0\ge4096$).

\begin{lemma}\label{lem:closuresize}
Let $P$ be the smallest set of primes that contains the prime divisors of $D_0$ and, with every $r\in P$, the prime
divisors of $r-1$. Then $D_0\le o_0^3$, $2^{|P|}\le D_0^2$ and $D_i\le D_0^{|P|+1}$ for every $i$; in particular
$\log_2D_i\le\log_2D_0+2(\log_2D_0)^2$ for every $i$. All these bounds hold for $D_\infty$.
\end{lemma}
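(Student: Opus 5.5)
The plan is to prove the three bounds in turn. The first comes from the definition of $O_0$ and the lifting lemma. The second comes from a tree argument on $P$. The third comes from controlling, prime by prime, the exponents that can appear in the $D_i$. The final logarithmic bound is then formal: $\log_2D_i\le(|P|+1)\log_2D_0\le(2\log_2D_0+1)\log_2D_0$. Since the $D_i$ form a divisibility chain bounded uniformly in $i$, the sequence stabilises, so all bounds pass to $D_\infty$.

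\emph{$D_0\le o_0^3$.} Fix a prime $\ell\mid o_0'$ and put $v=v_\ell(K_0)=v_\ell(o_0')+v_\ell(q^{o_0}-1)$.
\begin{itemize}
\item If $\ell\mid q^{o_0}-1$, the lifting lemma (as in the proof of Lemma~\ref{lem:crude}) gives $\ord_{\ell^v}(q)\mid o_0\,\ell^{v_\ell(o_0')}$.
\item If $\ell\nmid q^{o_0}-1$, then $v=v_\ell(o_0')$ and $\ord_{\ell^v}(q)\mid(\ell-1)\ell^{v_\ell(o_0')-1}$.
\end{itemize}
Hence $O_0$ divides $o_0\cdot o_0'\cdot\prod_{\ell\mid o_0'}(\ell-1)$. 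The last product is at most $\prod_{\ell\mid o_0'}\ell\le o_0$. This gives $D_0\le O_0\le o_0^3$.

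\emph{$2^{|P|}\le D_0^2$.} I organise $P$ as a forest. The roots are the primes dividing $D_0$. Every other $r\in P$ is attached to one chosen parent $s\in P$ with $r\mid s-1$, so $r<s$; choose the parents so that the assignment is acyclic, e.g.\ by adding primes in the order in which the closure produces them. In particular every prime of $P$ is at most the largest prime factor of $D_0$.

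Let $t(s)$ be the size of the subtree at $s$. I claim $2^{t(s)}\le s^2$, and prove it by induction on $s$.
\begin{itemize}
\item For $s=2$ there are no children, so $2^{t(2)}=2\le4$.
\item For odd $s$, the children are distinct primes dividing $s-1$. At most one of them is $2$, and it contributes $t=1$. The odd children have product at most $(s-1)/2$. By induction,
\[
2^{t(s)}\le2\cdot2\cdot\Bigl(\frac{s-1}2\Bigr)^2=(s-1)^2<s^2 .
\]
\end{itemize}
Multiplying over the roots, whose product is at most $D_0$, gives $2^{|P|}\le D_0^2$.

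\emph{$D_i\le D_0^{|P|+1}$.} First I show by induction on $i$ that every prime factor of $D_i$ lies in $P$. This holds because $\ord_{D_i}(q)$ divides $\lcm_{r^\alpha\|D_i}(r-1)r^{\alpha-1}$, whose primes are primes of $D_i$ or primes dividing $r-1$ for some $r\in P$.

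The same divisibility controls exponents. For each $r\in P$,
\[
v_r(\ord_{D_i}(q))\le\max\bigl(v_r(D_i)-1,\ \max_{s\in P}v_r(s-1)\bigr).
\]
So by induction $v_r(D_i)\le\max\bigl(v_r(D_0),\max_{s\in P}v_r(s-1)\bigr)$. The prime power $r^{v_r(D_i)}$ is therefore at most $\max(D_0,\max P)\le D_0$, since every $s\in P$ is at most the largest prime of $D_0$. Taking the product over $r\in P$ gives $D_i\le D_0^{|P|}\le D_0^{|P|+1}$.

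I expect the main obstacle to be the second bound, and in particular the bookkeeping of the forest: a prime may divide $s-1$ for several $s$, so each prime must be assigned to exactly one parent, and the induction must use the factor $(s-1)/2$ coming from the child $2$.
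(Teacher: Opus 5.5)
Your proof is correct and follows essentially the same route as the paper: the same lifting-lemma case split for $O_0$, the same Pratt-tree induction for $|P|$ (using the child $2$ and odd children with product at most $(s-1)/2$), and the same prime-by-prime exponent control for the $D_i$. The differences are minor. The paper bounds $|P|$ by summing the sizes of the Pratt-tree prime sets of the primes of $D_0$, so subadditivity of unions makes your unique-parent bookkeeping unnecessary. For the last bound the paper uses $D_i\mid D_0\prod_{r\in P}(r-1)$, whereas you bound each prime power $r^{v_r(D_i)}$ by $D_0$, which gives the slightly sharper $D_i\le D_0^{|P|}$.
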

\begin{proof}
We first bound $O_0$ and hence $D_0$, and then the primes and the exponents that can occur in the numbers $D_i$.
For $\ell\mid o_0'$: if $\ord_\ell(q)\mid o_0$ then $\ord_{\ell^{v_\ell(K_0)}}(q)\mid o_0\ell^{v_\ell(o_0')}$ by the lifting lemma
(recalled in the proof of Lemma~\ref{lem:crude}), otherwise $v_\ell(q^{o_0}-1)=0$ and
$\ord_{\ell^{v_\ell(K_0)}}(q)\mid\varphi(\ell^{v_\ell(o_0')})$. Hence
$O_0$ divides $o_0\,o_0'\,\varphi(o_0')$, so $O_0\le o_0^3$. Every prime of $D_{i+1}$ divides $D_i$ or $\ell-1$ for a prime $\ell\mid D_i$
(since $\ord_{D_i}(q)$ divides $\lcm_{\ell^\alpha\|D_i}\ell^{\alpha-1}(\ell-1)$), so the primes of every $D_i$ lie in $P$.
For a single prime $r$ the corresponding set (the primes of the Pratt tree of $r$) has at
most $2\log_2r$ elements; this follows from the bound $2\log_2r-1$ for the number of nodes of the Pratt tree
\cite[(1.5)]{ford2010pratt}, and also by induction on $r$: true for $r=2,3$, and for odd $r$ it is at most
$1+1+\sum_{\ell\mid r-1,\allowbreak \ \ell\ \rm odd}2\log_2\ell\le2+2\log_2\frac{r-1}2<2\log_2r$. Hence $|P|\le2\log_2D_0$,
i.e.\ $2^{|P|}\le D_0^2$, and all elements of $P$ are at most $D_0$. For the exponents: $v_\ell(\ord_{D_i}(q))\le\max(v_\ell(D_i)-1,\ \max_{r\mid D_i}v_\ell(r-1))$,
so by induction on $i$ every prime of $D_i$ lies in $P$ and $v_\ell(D_i)\le\max(v_\ell(D_0),\max_{r\in P}v_\ell(r-1))$;
hence $D_i$ divides $D_0\prod_{r\in P}(r-1)$ (bound the maximum by the sum), so $D_i\le D_0\prod_{r\in P}r\le D_0^{|P|+1}$
for every $i$, and $|P|\le2\log_2D_0$ gives the bound for $\log_2D_i$.
\end{proof}

\emph{Proof of the claims in (B).} First, $\ord_K(q)=\lcm(\ord_{K_0}(q),\ord_{D_\infty}(q))\mid O_\infty$ and $(\ord_K(q))_{p'}\mid\lcm(D_0,\allowbreak (\ord_{D_\infty}(q))_{p'})\mid D_\infty\mid K$
(as $D_0\mid D_\infty$, and $(\ord_{D_\infty}(q))_{p'}\mid D_\infty$ since the sequence is stable at $D_\infty$).
Here $\ord_{K_0}(q)\mid O_0$ because $\ord_{K_0}(q)$ is the least common multiple of the orders of $q$ modulo the prime powers
$\ell^{v_\ell(K_0)}\,\|\,K_0$: for $\ell\mid o_0'$ this order is one of the
terms of $O_0$; for $\ell\nmid o_0'$ the prime power divides $q^{o_0}-1$, so the order divides $o_0$. The sequence $D_i$ is
increasing with respect to divisibility and bounded (Lemma~\ref{lem:closuresize}), so it stabilises and $D_\infty$ is well defined.
More precisely, each strict step at least doubles $D_i$, so by Lemma~\ref{lem:closuresize} the sequence is constant
for $i\ge2(\lfloor\log_2D_0\rfloor+1)^2+1$.

In fact $O_0=\ord_{K_0}(q)$, $O_\infty=\ord_K(q)$ and $D_\infty=(\ord_K(q))_{p'}$, and $K$ is the least admissible $K$ (it
divides every admissible $K$); this is not needed for the proof, but it identifies $K$ with the
number described in Section~\ref{subsec:idea}. Indeed, $o_0=\ord_{q^{o_0}-1}(q)$ and the other terms of $O_0$ are orders of
$q$ modulo divisors of $K_0$, so $O_0\mid\ord_{K_0}(q)$; hence $O_0=\ord_{K_0}(q)$ and
$\ord_K(q)=\lcm(\ord_{K_0}(q),\ord_{D_\infty}(q))=O_\infty$. As $K_0$ and every $D_i$ divide $K$, induction on $i$ gives $D_i\mid(\ord_K(q))_{p'}$, so
$D_\infty=(\ord_K(q))_{p'}$. If $K'$ is admissible, then $D_0=(\ord_{K_0}(q))_{p'}$ divides $(\ord_{K'}(q))_{p'}\mid K'$,
and $D_i\mid K'$ implies $(\ord_{D_i}(q))_{p'}\mid(\ord_{K'}(q))_{p'}\mid K'$; so every $D_i$ divides $K'$, and $K\mid K'$.
Similarly, the iteration $J\mapsto\lcm(J,(\ord_J(q))_{p'})$ started at $J_0=K_0$ satisfies $J_{i+1}=\lcm(K_0,D_i)$ (by
induction, as $\ord_{\lcm(K_0,D_i)}(q)=\lcm(O_0,\ord_{D_i}(q))$ and $D_0\mid D_i$), so it becomes stationary at $K$.

\begin{corollary}\label{cor:weight}
Let $\cC$ be a component without codimension-one conditions and let $o_0$ be its \emph{minimal} codimension. With
$D_\infty$ and $O_\infty$ from (B),
\[
w_{\cC}=\sum_{C'\in\cC}q^{-o(C')}\ \le\ \Wcl(q,o_0):=\sum_{\substack{o\mid O_\infty\\ o\ge o_0}}q^{-o}\!\!\sum_{\substack{d\mid o\\ d'\mid D_\infty}}d\gcd(q^o-1,D_\infty/d').
\]
Using (A) instead of (B) one obtains $w_{\cC}\le\Wcr(q,o_0)$, where $\Wcr$ is defined by the same expression with
$O_\infty$ and $D_\infty$ replaced by $\Lambda=o_0'\lcm(1,\dots,o_0)$ and $\Lambda_{p'}$ (the subscripts stand for
\emph{closure} and \emph{crude}).
\end{corollary}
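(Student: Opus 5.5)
The plan is to apply Theorem~\ref{thm:struct} to the condition $C_0$ of minimal codimension $o_0$ in $\cC$, with the closure $K=\lcm(K_0,D_\infty)$ from (B). This $K$ is admissible, so both hypotheses hold. Hypothesis (i) holds because $d_0'e_0\mid o_0'(q^{o_0}-1)=K_0$: we have $d_0'\mid o_0'$ and $e_0\mid q^{o_0}-1$. Hypothesis (ii) holds by the claims proved after Lemma~\ref{lem:closuresize}.

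First, group the weight by codimension: $w_{\cC}=\sum_o N_{\cC}(o)\,q^{-o}$. By Theorem~\ref{thm:struct}(b), every codimension $o$ occurring in $\cC$ divides $\ord_K(q)$, and $\ord_K(q)\mid O_\infty$ by (B). By minimality of $o_0$, only $o\ge o_0$ occur. So the sum runs over $o\mid O_\infty$ with $o\ge o_0$.

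Second, bound $N_{\cC}(o)$ by Theorem~\ref{thm:struct}(c). For this we need a multiple $D$ of $D_{\cC}$. Part~(b) gives $D_{\cC}\mid(\ord_K(q))_{p'}$, and (B) gives $(\ord_K(q))_{p'}\mid D_\infty$, so $D=D_\infty$ is allowed. The first inequality of (c) then yields
\[
N_{\cC}(o)\le\sum_{d\mid o,\ d'\mid D_\infty}d\gcd(q^o-1,D_\infty/d').
\]
Substituting this into the grouped sum gives exactly $\Wcl(q,o_0)$.

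For the crude version, use $K=\lcm(K_0,L_0)$ from (A) instead. Condition (i) holds for the same reason as before, since $K_0\mid K$. Lemma~\ref{lem:crude} gives (ii) and $\ord_K(q)\mid\Lambda$, hence also $(\ord_K(q))_{p'}\mid\Lambda_{p'}$. Repeating the two steps with $O_\infty$ and $D_\infty$ replaced by $\Lambda$ and $\Lambda_{p'}$ gives $\Wcr(q,o_0)$.

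The only point needing care is that $D$ in (c) must be a multiple of $D_{\cC}$. This is supplied in each case by the divisibility chain $D_{\cC}\mid(\ord_K(q))_{p'}\mid D_\infty$, respectively $\mid\Lambda_{p'}$. All terms in the sums are nonnegative, so enlarging the range of summation and the parameter $D$ only weakens the bound. No real obstacle remains beyond these bookkeeping checks.
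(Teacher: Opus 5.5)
Your proposal is correct and follows the paper's proof essentially verbatim. Apply Theorem~\ref{thm:struct} to a condition of codimension $o_0$, with $K$ from (B), resp.\ (A). Part~(b) restricts the codimensions to divisors of $\ord_K(q)\mid O_\infty$ (resp.\ $\Lambda$) that are $\ge o_0$ and gives $D_{\cC}\mid(\ord_K(q))_{p'}\mid D_\infty$ (resp.\ $\Lambda_{p'}$); part~(c) then bounds $N_{\cC}(o)$, and your checks of hypotheses (i) and (ii) are exactly the admissibility facts the paper records for (A) and (B).
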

\begin{proof}
Apply Theorem~\ref{thm:struct} with $C_0$ a condition of $\cC$ of codimension $o_0$: every codimension $o$ in $\cC$ divides
$\ord_K(q)\mid O_\infty$ (resp.\ $\Lambda$) and is $\ge o_0$, and $D_{\cC}\mid(\ord_K(q))_{p'}\mid D_\infty$ (resp.\ $\Lambda_{p'}$); use (c).
\end{proof}

For four pairs $(q,o_0)$ both bounds of Corollary~\ref{cor:weight} exceed the value $W(q)$ of Theorem~\ref{thm:W} (for
instance $\Wcl(3,2)=\Wcr(3,2)=\frac{28}{27}$), and the weights are computed instead (step~(i) of the proof of
Theorem~\ref{thm:W}). This is a finite computation because of the following reduction, which rests on
Theorem~\ref{thm:struct}(b) for the closure (B).

\begin{proposition}\label{prop:finite}
Let $o_0\ge2$, and put $K=\lcm(K_0,D_\infty)$ and $N^{**}=\lcm(K,O_\infty)$. For every $n$, the components with minimal
codimension $o_0$ for $(q,n)$ coincide, as sets of conditions $(d,\xi)$ with their codimensions, with those for
$(q,\gcd(n,N^{**}))$. Hence every component for $(q,n)$ with minimal codimension $o_0$ has the same conditions and
weight as a component for $(q,n_0)$ with $n_0\mid N^{**}$.
\end{proposition}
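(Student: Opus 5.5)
Put $n_0=\gcd(n,N^{**})$. The plan is to compare the essential conditions lying in components of minimal codimension $o_0$ for $(q,n)$ with those for $(q,n_0)$. We show that the set of such conditions, together with the adjacency data that build the components, depends on $n$ only through $n_0$.

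First we identify each condition $(d,\xi)$ intrinsically. A condition is a level $d$ together with a target $\xi\in\mu_{n'/d'}$, up to $\xi\mapsto\xi^{q^d}$. Its fibre $\{\eta\in\overline{\F}_q:\eta^{d'}=\xi\}$ is a subset of $\overline{\F}_q$ that does not depend on $n$ once it lies in $\mu_{n'}$. Its codimension is $\ord_{\ord\xi}(q)$, and essentiality means $d\mid\ord_{\ord\xi}(q)$; neither depends on $n$. So a pair $(d,\xi)$ is an essential condition for $(q,n)$ exactly when four things hold: $d\mid n$, $d'\ord\xi\mid n'$ (the fibre lies in $\mu_{n'}$; by Lemma~\ref{lem:coset} this is equivalent to $\xi\in\mu_{n'/d'}$), $p\nmid\ord\xi$, and $d\mid\ord_{\ord\xi}(q)$. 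By Lemma~\ref{lem:qorbits}(a), components are the classes of the relation ``fibres meet a common $q$-orbit''. This relation, too, is intrinsic to the fibres as subsets of $\overline{\F}_q$.

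Second, let $\cC$ be a component for $(q,n)$ with minimal codimension $o_0$, and let $(d,\xi)\in\cC$. By Theorem~\ref{thm:struct}(b) with the closure $K$ of (B), we get $d'\ord\xi\mid K$ and $d\mid\ord_K(q)=O_\infty$. Hence $d\mid\gcd(n,O_\infty)\mid n_0$. Moreover $d'\ord\xi\mid\gcd(n',K)$, and since $K$ is $p$-free this divides $n_0'$. So every condition of $\cC$ is an essential condition for $(q,n_0)$. Since $n_0\mid n$, every essential condition for $(q,n_0)$ is also one for $(q,n)$ by the characterisation above. Thus $\cE(q,n_0)\subseteq\cE(q,n)$, with the same fibres and codimensions.

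Third, we compare the components. $\cC$ is connected in $\cE(q,n)$ through a chain of conditions of $\cC$, all of which lie in $\cE(q,n_0)$, so $\cC$ lies inside one component $\cC_0$ for $(q,n_0)$. Conversely, $\cC_0$ is connected in $\cE(q,n_0)\subseteq\cE(q,n)$, so $\cC_0\subseteq\cC$, and therefore $\cC=\cC_0$. Its minimal codimension is still $o_0$. In the other direction, take a component $\cC_0$ for $(q,n_0)$ with minimal codimension $o_0$. It lies in a component $\cC$ for $(q,n)$, and we must rule out that $\cC$ has a smaller minimal codimension. Suppose $\cC$ has minimal codimension $o_1<o_0$. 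Apply Theorem~\ref{thm:struct} to $\cC$ with a condition of codimension $o_1$, using the closure for $o_1$, whose $N^{**}$ we call $N^{**}_1$. The same argument gives $\cC=\cC_1$ for $(q,\gcd(n,N^{**}_1))$, but this does not directly involve $n_0$.

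A cleaner route is to use the remark after Theorem~\ref{thm:struct}: the theorem applies with $K$ built from \emph{any} essential condition of the component, in particular from one of codimension $o_0$ in $\cC_0$. Then every condition of $\cC$ satisfies $d\mid O_\infty$ and $d'\ord\xi\mid K$, so $\cC\subseteq\cE(q,n_0)$. Hence $\cC=\cC_0$, and the minimal codimensions agree. This argument also handles the first direction uniformly, so I would present it once, for a component containing some condition of codimension $o_0$. The weights then coincide because the codimensions do. The main obstacle is justifying the intrinsic description of conditions: the identification of $(d,k)$ with $(d,\xi)$ and of fibres across the two values of $n$, including the $p$-parts through $d\mid n_0$ and the residues $u$. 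Lemma~\ref{lem:qorbits}(a) removes the residues from the adjacency relation, but the bookkeeping has to be checked carefully.
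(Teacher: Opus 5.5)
Your final argument is correct and is essentially the paper's proof. You apply Theorem~\ref{thm:struct}(b) with the closure $K$ to an arbitrary condition $C_0$ of codimension $o_0$ (minimality is not needed), conclude that every condition of its component for $n$ satisfies $d\mid n_0$ and $d'\ord\xi\mid n_0'$, and then identify the components of $C_0$ for $n$ and for $n_0$ via the description of conditions by level and target together with Lemma~\ref{lem:qorbits}(a). The false start with $o_1<o_0$ can simply be dropped, and only the divisibility $\ord_K(q)\mid O_\infty$ is needed, not the equality.
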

\begin{proof}
The idea is that, by Theorem~\ref{thm:struct}(b), every condition of a component with minimal codimension $o_0$ has a
level $d$ and a target $\xi$ with $d\mid N^{**}$ and $d'\ord\xi\mid N^{**}$, so that it is already a condition for
$\gcd(n,N^{**})$.
For all exponents we use the roots of unity in the fixed algebraic closure $\overline{\F}_q$, so that
$\mu_{n_0'}\subseteq\mu_{n'}$ for $n_0\mid n$, and we describe a condition by its level and its target, $(d,\xi)$ with
$\xi\in\mu_{n'/d'}$, up to $\xi\mapsto\xi^{q^d}$; by Theorem~\ref{thm:criterion}, Definition~\ref{def:cond} and the
paragraph after it, this
description, the fibre $\{\eta:\eta^{d'}=\xi\}$ (up to $\eta\mapsto\eta^{q^d}$) and the codimension $\lcm(d,\ord_{\ord\xi}(q))$
do not depend on the exponent. Write $n_0=\gcd(n,N^{**})$. A pair $(d,\xi)$ is a condition for $n$ iff $d\mid n$ and $d'\ord\xi\mid n'$ (then
$\xi\in\mu_{n'/d'}$), and whether it is essential does not depend on $n$; its fibre lies in $\mu_{d'\ord\xi}$, so the fibre is
the same subset of $\overline{\F}_q$ for $n$ and $n_0$ whenever the pair is a condition for both. Every condition for $n_0$ is one
for $n$. Let $C_0$ be an essential condition for $n_0$ of codimension $o_0$ and let $\cC$ and $\cC^{(0)}$ be its components for $n$ and for $n_0$, respectively.
By Theorem~\ref{thm:struct}(b), applied to $C_0$ for the exponent $n$, every $(d,\xi)\in\cC$ satisfies $d\mid \ord_K(q)\mid N^{**}$ and
$d'\ord\xi\mid K\mid N^{**}$, hence $d\mid n_0$ and $d'\ord\xi\mid n_0'$, so $\cC$ consists of essential conditions for $n_0$.
Since the component relation is read off from the fibres and their $q$-orbits in the same way for $n$ and for $n_0$
(Lemma~\ref{lem:qorbits}(a), applied to both exponents; a chain for $n$ between members of $\cC$ stays in $\cC$), the set $\cC$ is
connected for $n_0$, so $\cC\subseteq\cC^{(0)}$; and $\cC^{(0)}\subseteq\cC$ because every essential condition for $n_0$ is
an essential condition for $n$ with the same fibre. So $\cC=\cC^{(0)}$.
A component for $n$ with minimal codimension $o_0$ contains a condition $C_0$ of codimension $o_0$, which by the same
argument (Theorem~\ref{thm:struct}(b) for the pair $(d_0,\xi_0)$) is a condition for $n_0$; conversely a component for $n_0$
with minimal codimension $o_0$ contains such a $C_0$. In both cases the two components of $C_0$ coincide.
\end{proof}

\begin{theorem}[weights]\label{thm:W}
Every component without a condition of codimension~$1$ has weight at most $W(q)$, where
\begin{gather*}
W(2)=\tfrac34,\qquad W(3)=\tfrac49,\qquad W(4)=0.2946,\qquad W(5)=0.3392,\qquad W(7)=0.1683,\\
W(q)\le0.1006\ \ (8\le q\le31),\qquad W(q)=0.0118\ \ (q\ge32),
\end{gather*}
and, individually for $8\le q\le31$,
\begin{alignat*}{4}
W(8)&=0.0469,&\qquad W(9)&=0.1006,&\qquad W(11)&=0.0670,&\qquad W(13)&=0.0478,\\
W(16)&=0.0118,&\qquad W(17)&=0.0279,&\qquad W(19)&=0.0223,&\qquad W(23)&=0.0152,\\
W(25)&=0.0129,&\qquad W(27)&=0.0110,&\qquad W(29)&=0.0096,&\qquad W(31)&=0.0084.
\end{alignat*}
(As $W(q)\le0.3392$ for $q\ge4$, this contains Theorem~\ref{thm:introW}.)
\end{theorem}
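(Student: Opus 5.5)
The plan is to reduce everything to a bound depending only on $q$ and the minimal codimension $o_0\ge2$ of the component. By Corollary~\ref{cor:weight}, every component $\cC$ without codimension-one conditions and with minimal codimension $o_0$ satisfies $w_{\cC}\le\min(\Wcl(q,o_0),\Wcr(q,o_0))$. So it suffices to show
\[
\sup_{o_0\ge2}\min\bigl(\Wcl(q,o_0),\Wcr(q,o_0)\bigr)\le W(q),
\]
except at the four pairs $(q,o_0)\in\{(2,3),(2,6),(3,2),(3,4)\}$, which are treated separately. I would split the proof into three steps: (i) the exceptional pairs, (ii) a tail estimate for large $o_0$, and (iii) a finite computation for the remaining $o_0$.

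\emph{Step (i): the exceptional pairs.} For these four pairs I use Proposition~\ref{prop:finite}. It shows that every component with minimal codimension $o_0$ already occurs, with the same conditions and weight, for some $(q,n_0)$ with $n_0\mid N^{**}$. The plan is:
\begin{itemize}
\item compute $K$, $O_\infty$ and $N^{**}$ for each of the four pairs;
\item enumerate all divisors $n_0$ of $N^{**}$;
\item for each $n_0$, build the essential conditions via the types of Lemma~\ref{lem:weightcodim};
\item form the components using the $q$-orbit criterion of Lemma~\ref{lem:qorbits}(a);
\item record the largest weight among the components with minimal codimension $o_0$.
\end{itemize}
The expected maxima are $\frac34$ for $q=2$ and $\frac49$ for $q=3$, attained for instance at $(2,6)$ as in the example after Lemma~\ref{lem:weightcodim}. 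One could reduce the divisor lattice by noting that only $n_0$ whose components actually reach codimension $o_0$ matter. This is the one step that cannot be done by a closed-form estimate.

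\emph{Step (ii): large $o_0$.} Here I need a bound that decays in $o_0$. For $q\ge4$ (and for $q=3$ with large $o_0$) I would use the crude bound $\Wcr$. With $\Lambda_{p'}\le o_0\lcm(1,\dots,o_0)<o_0\,e^{1.03883\,o_0}$ by Rosser--Schoenfeld, we get $\gcd(q^o-1,\Lambda_{p'})\le\Lambda_{p'}$ and $\sigma_1(o)\le o(1+\ln o)$. Then
\[
\Wcr(q,o_0)\le\sum_{o\ge o_0}\sigma_1(o)\,o_0\,2.826^{o_0}q^{-o},
\]
which is a geometric tail in $(2.826/q)^{o_0}$ times polynomial factors. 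This gives an explicit threshold $o_1(q)$ beyond which the bound is below $W(q)$. For $q\ge32$ this threshold should be tiny, and the bound should be uniform in $q$ because the ratio only improves as $q$ grows.

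For $q=2,3$ I would use the closure instead. Lemma~\ref{lem:closuresize} gives $\log_2 D_\infty\le\log_2D_0+2(\log_2D_0)^2$ with $D_0\le o_0^3$, so $D_\infty\le 2^{O((\log o_0)^2)}$. Hence $\Wcl(q,o_0)\le D_\infty\sum_{o\ge o_0}\sigma_1(o)q^{-o}$, which tends to $0$. Making this explicit should give a threshold of about $o_0\ge4096$ for $q=2$ and a few hundred for $q=3$.

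\emph{Step (iii): the finite range.} For every $o_0\ge2$ below the threshold, and every $q$ below the point where the uniform estimate of step (ii) applies (say $q\le31$ individually), I compute $\Wcl$ exactly from (B). This needs only $v_\ell(q^{o_0}-1)$ and multiplicative orders; I compute $\Wcr$ where it is cheaper, and take the minimum. For $q\ge32$ I would prove monotonicity in $q$ of a majorant, so that the worst case is $o_0=2$, $q=32$, giving the stated $0.0118$. For the prime powers $q\ge32$, the leading term at $o_0=2$ is about $\sigma_1(2)\gcd(q^2-1,\cdot)\,q^{-2}$; this must be controlled by a bound such as $\gcd\le 6$, uniformly in $q$.

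I expect the main obstacle to be step (ii) for $q=2$. There the crude choice fails because $2.83>2$, and the closure bound $2^{2(\log_2 o_0^3)^2}$ decays against $2^{-o_0}$ only for quite large $o_0$. This leaves a long but feasible computational range for (iii), in which the sums over $o\mid O_\infty$ must be truncated with a rigorous tail such as $\sum_{o\ge O}\sigma_1(o)2^{-o}$.
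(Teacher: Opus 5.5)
Your plan is essentially the paper's proof. It has the same parts: the reduction to $(q,o_0)$ via Corollary~\ref{cor:weight}; Proposition~\ref{prop:finite} for the four exceptional pairs; the closure tail from Lemma~\ref{lem:closuresize} for $q=2$, $o_0\ge4096$; the Rosser--Schoenfeld tail of $\Wcr$ for $q\ge3$ (for $q=3$ from $o_0=375$, since the closure tail via Lemma~\ref{lem:closuresize} only takes over beyond roughly a thousand, not ``a few hundred''); exact evaluation below the thresholds; and a $p$-independent majorant, decreasing in $q$, for $q\ge32$.

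Two numerical details need fixing:
\begin{itemize}
\item For $q\ge32$, $o_0=2$ the majorant must replace the gcd by $\Lambda^+=o_0\lcm(1,\dots,o_0)=4$. This gives $12q^{-2}+28q^{-4}\approx0.01175$ at $q=32$, whereas your suggested bound $6$ gives about $0.0176>0.0118$.
\item The weight-$\frac34$ component at $(2,6)$ has $o_0=2$. So it is covered by $\Wcl(2,2)=\frac34$, not by step~(i), where the maxima for $q=2$ are only $\frac{35}{64}$ and $\frac9{32}$.
\end{itemize}
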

\begin{proof}
Let $\cC$ be a component without a condition of codimension~$1$, and let $o_0\ge2$ be its minimal codimension. By
Corollary~\ref{cor:weight}, $w_{\cC}\le\Wcl(q,o_0)$ and $w_{\cC}\le\Wcr(q,o_0)$, bounds that depend only on $q$ and $o_0$.
So it suffices to show that one of them is at most $W(q)$, for every $q$ and every $o_0\ge2$, except for the four pairs
$(q,o_0)$ of step~(i) below, where both are larger and the weights are computed using Proposition~\ref{prop:finite}.
Steps (ii), (iii), (iv) and (vi) treat the remaining $o_0$ for $q=2$, $q=3$, $4\le q\le31$ and $q\ge32$, respectively, and
step~(v) treats large $o_0$ for all $q\ge3$. For $4\le q\le31$ and for $q\ge32$, the value $W(q)$ is the largest bound of step~(iv) and of step~(vi), respectively,
rounded up to four decimal places.

\emph{Evaluation of $\Wcl$ and $\Wcr$.} Both have the form
$\Sigma(O,D)=\sum_{o\mid O,\,o\ge o_0}q^{-o}I_D(o)$ with $I_D(o)=\sum_{d\mid o,\,d'\mid D}d\gcd(q^o-1,D/d')$, where
$(O,D)=(O_\infty,D_\infty)$ resp.\ $(\Lambda,\Lambda_{p'})$. Since $I_D(o)$ increases when $D$ is replaced by a multiple,
$\Sigma(O,D)$ increases when $O$ and $D$ are replaced by multiples. Moreover $I_D(o)\le\sigma_1(o)D\le o^2D$, and for
$m\ge7$ the ratio of consecutive terms of $\sum_{o\ge m}o^2q^{-o}$ is at most $(8/7)^2/2<\frac23$, so
$\sum_{o\ge m}o^2q^{-o}\le3m^2q^{-m}$. With the cut-off $m=o_0+41$ we therefore use
\[
\Sigma(O,D)\ \le\ \sum_{\substack{o\mid O\\ o_0\le o\le o_0+40}}q^{-o}I_D(o)\ +\ 3m^2D\,q^{-m}
\]
(the last term may be dropped if $O<m$), evaluated in exact rational arithmetic.

(i) \emph{The pairs $(q,o_0)\in\{(2,3),(2,6),(3,2),(3,4)\}$.} Here $\Wcl(q,o_0)=25/32$,
$227921/2^{18}\approx0.8694$, $28/27$, $25929152/3^{16}\approx0.6023$ are larger than $W(q)$ (these values of $\Wcl$, like
those quoted in steps (ii) and (iii), are given for information; they are not needed for the proof), and we compute exactly. One finds $N^{**}=42$, $378$, $16$, $320$ (for the argument it suffices
that $N^{**}$ divides these numbers). By
Proposition~\ref{prop:finite} it suffices to consider
$n_0\mid N^{**}$; for each such $n_0$ the essential conditions are listed by Lemma~\ref{lem:weightcodim}. Call a map on
$\mu_{n_0'}$ a \emph{labelling} if it is constant on $q$-orbits and on the fibre of every essential condition. By
Lemma~\ref{lem:qorbits}(a), the fibres of all conditions of a component then carry the same label, so the weight of a
component without a condition of codimension $1$ is at most the total weight of the essential conditions of
codimension~$\ge2$ whose fibres carry its label. The components themselves give a labelling (by
Lemma~\ref{lem:qorbits}(a)), for which these totals are the weights; the Python program uses it. For every
$n_0\mid378$ ($q=2$; this covers $N^{**}=42$ and $378$, as $42\mid378$), resp.\ every $n_0\mid320$ ($q=3$; this covers
$N^{**}=16$ and $320$), a labelling is found for which all these totals are at most $\frac34$, resp.\ $\frac49$; hence every
component with minimal codimension $o_0$, for the four pairs $(q,o_0)$ of this step, has weight at most $W(q)$.
(For information, the exact maxima for the four minimal codimensions are $35/64$,
$9/32$, $4/9$ and $4/27$, attained at $n_0=42,126,8,20$, and the same computation, with $n_0$ running through the
divisors of $N^{**}=6,60,620,5334,2040$, resp.\ $78,2420,4368$, gives the
maxima $3/4$, $7/16$, $196615/2^{20}\approx0.18751$, $140668567555/2^{41}\approx0.06397$,
$15/256$ for $q=2$, $o_0=2,4,5,7,8$, and $4/27$, $57513730/3^{19}\approx0.04948$, $16/729$ for
$q=3$, $o_0=3,5,6$; in particular, $W(2)=3/4$ and $W(3)=4/9$ are attained, for $(q,n)=(2,6)$ and $(3,8)$. None
of these exact values is needed for the proof.)

(ii) \emph{The case $q=2$.} For $2\le o_0\le4095$, $o_0\ne3,6$: $\Wcl(2,o_0)\le\frac34$, and $\Wcl(2,o_0)\le0.032$ for $o_0\ge9$,
by the evaluation above; for instance $\Wcl(2,2)=\frac34$ and $\Wcl(2,4)=\frac7{16}$. For
$o_0\ge4096$ put $s=\lfloor\log_2o_0\rfloor\ge12$. By Lemma~\ref{lem:closuresize}, $D_0\le o_0^3<2^{3(s+1)}$ and
$2^{|P|}\le D_0^2<2^{6(s+1)}$, so $|P|+1\le6(s+1)$ and $D_\infty\le D_0^{|P|+1}\le2^{18(s+1)^2}$. The evaluation with
$m=o_0$ (no exact terms) gives $\Wcl(2,o_0)\le3o_0^2D_\infty2^{-o_0}$, and
$128\,o_0^2D_\infty\le2^{18(s+1)^2+2(s+1)+7}\le2^{2^s}\le2^{o_0}$, because $18(s+1)^2+2(s+1)+7\le2^s$ for $s\ge12$ (by
induction on $s$); hence $\Wcl(2,o_0)\le\frac3{128}<0.032$.

(iii) \emph{The case $q=3$.} For $3\le o_0\le374$, $o_0\ne4$: $\Wcl(3,o_0)\le\frac49$, and $\Wcl(3,o_0)\le0.051$ for $o_0\ge7$, by the
evaluation above (for instance $\Wcl(3,3)=\frac4{27}$). For
$o_0\ge375$: $\Wcr(3,o_0)\le0.051$ by (v).

(iv) \emph{The case $4\le q\le31$.} For $2\le o_0<o_1(q)$: $\Wcr(q,o_0)\le W(q)$ by the evaluation above, where $o_1(q)=39$, $20$,
$12$, $11$, $9$, $8$ for $q=4$, $5$, $7$, $8$, $9$, $11$ and $o_1(q)=7$ for $q\ge13$. For $o_0\ge o_1(q)$: (v).

(v) \emph{Large $o_0$.} By the bound $\log\lcm(1,\dots,\lfloor x\rfloor)<1.03883\,x$ ($x>0$) of Rosser and Schoenfeld
\cite[Theorem~12]{rosser1962approximate} (the left-hand side is Chebyshev's function) and $\exp(1.03883)<2.83$, we have
$\Lambda_{p'}\le\Lambda\le o_0\lcm(1,\dots,o_0)\le o_0\,2.83^{o_0}$,
and the evaluation with $m=o_0\ge7$ gives
\[
\Wcr(q,o_0)\ \le\ 3o_0^2\Lambda_{p'}q^{-o_0}\ \le\ T_q(o_0):=3o_0^3\,(2.83/q)^{o_0}.
\]
Since $T_q(o+1)/T_q(o)=(1+\frac1o)^3\,2.83/q$ decreases in $o$, we have $T_q(o_0)\le W$ for all $o_0\ge o_1$ as soon as
$T_q(o_1)\le W$ and $(1+\frac1{o_1})^3\,2.83\le q$. This is checked for $(q,o_1,W)=(q,o_1(q),W(q))$ with $4\le q\le31$, for
$(3,375,0.051)$ and for $(32,7,0.0118)$; as $T_q$ decreases in $q$, the last case covers all $q\ge32$.

(vi) \emph{The case $q\ge32$ and $2\le o_0\le6$.} With $\Lambda^+=o_0\lcm(1,\dots,o_0)$, a multiple of $\Lambda$ and of $\Lambda_{p'}$
for every $p$, we have $\Wcr(q,o_0)\le\sum_{o\mid\Lambda^+,\,o\ge o_0}\sigma_1(o)\Lambda^+q^{-o}$, which decreases in $q$; at
$q=32$ it is evaluated as above (cut-off $o_0+41$) and is at most $0.0118$. For $o_0\ge7$: (v).

In summary, all bounds for $q=2$ with $o_0\ge9$ are at most $0.032$ and all those for $q=3$ with $o_0\ge7$ at most
$0.051$; these sharper bounds are not needed for the theorem, but they are also proved in the formalisation
(Appendix~\ref{app:lean}). For $4\le q\le31$ the largest bounds occur at
$o_0\le3$ (for information).
\end{proof}

\section{The main theorem}\label{sec:main}
This section completes the proof. Theorem~\ref{thm:C} combines the product formula (Theorem~\ref{thm:product}) with a
weight bound $W$ (as given by Theorem~\ref{thm:W}) to obtain an explicit lower bound $\delta_*$ for the density
$\delta_n$; inserted into the counting criterion (Theorem~\ref{thm:counting}(b)), this bound gives one explicit
inequality in $\omega(q^n-1)$, $M$ and $S$, which no longer involves $\delta_n$: the criterion $(*)$ of
Theorem~\ref{thm:introC}. We verify $(*)$ analytically for large $n$
(Proposition~\ref{prop:range}) and by computation, with the exact $M$ and $S$, for the remaining pairs with $q<n$
(Proposition~\ref{prop:finiteranges}); the $70$ pairs at which this computation leaves less than one bit to spare are
settled by explicit elements (Proposition~\ref{prop:certificates}). The section ends with two consequences for the
numbers $\CN_q(n)$ and $\PCN_q(n)$ (Corollaries~\ref{cor:hachenberger} and~\ref{cor:pcn}).

\begin{theorem}[unconditional criterion]\label{thm:C}
Let $0<W<1$ be such that every component without a condition of codimension $1$ has weight at most $W$, and put
$c(W)=\log_2\!\frac1{1-W}\big/W$. Let $M$ be the number of essential conditions, let $M_1$ be the number of those of
codimension~$1$, and let $S$ be the total weight of those of codimension~$\ge2$.
\begin{enumerate}[label=(\alph*)]
\item The density of completely normal elements satisfies
\[
\delta_n\ \ge\ \delta_*:=\Bigl(1-\frac1q\Bigr)^{M_1}(1-W)^{S/W}=\Bigl(1-\frac1q\Bigr)^{M_1}2^{-c(W)S}.
\]
\item If
\[
\log_2(q^n-1)-\tfrac n2\log_2q\ \ge\ \omega(q^n-1)+M+c(W)\,S, \tag{$*$}
\]
then $(q,n)$ has the Morgan--Mullen property. Upper bounds may be substituted for $\omega(q^n-1)$, $M$ and $S$.
\end{enumerate}
(Part~(b) is Theorem~\ref{thm:introC}.)
\end{theorem}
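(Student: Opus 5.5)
For part~(a) we start from the product formula, Theorem~\ref{thm:product}, which gives $\delta_n=\prod_i\Pr[G_i]$ over all components. By Lemma~\ref{lem:qorbits}(c) the components split into two kinds. The first kind are the $M_1$ codimension-one singletons; each consists of a single condition $C$ with $o(C)=1$, so it is passed with probability exactly $1-q^{-1}$ by Theorem~\ref{thm:criterion}. The second kind are the remaining components. These contain exactly the essential conditions of codimension $\ge2$, and their weights $w_{\cC}$ add up to $S$. By hypothesis each such weight satisfies $w_{\cC}\le W<1$, so the union bound inside a component, which is the inequality of Theorem~\ref{thm:product}, gives $\Pr[G]\ge1-w_{\cC}$. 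Hence
\[
\delta_n\ge\Bigl(1-\frac1q\Bigr)^{M_1}\prod_{\cC}(1-w_{\cC}).
\]

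The only genuine estimate is the inequality $1-x\ge(1-W)^{x/W}$ for $0\le x\le W$. It holds because $x\mapsto\log(1-x)$ is concave. The chord from $(0,0)$ to $(W,\log(1-W))$ therefore lies below the graph on $[0,W]$, which means $\log(1-x)\ge\frac xW\log(1-W)$. We apply this to each $w_{\cC}$ and multiply. The exponents add up to $\sum w_{\cC}/W=S/W$, so the product is at least $(1-W)^{S/W}$. Finally, $(1-W)^{S/W}=2^{-c(W)S}$ follows directly from the definition of $c(W)$. This proves~(a).

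For part~(b) we insert $\delta_*$ into Theorem~\ref{thm:counting}(b). It suffices to show $(q^n-1)\delta_*\ge2^{\omega}q^{n/2}L$. We bound $L\le2^M(1-\frac1q)^{M_1}$, also by Theorem~\ref{thm:counting}, so the factor $(1-\frac1q)^{M_1}$ cancels on both sides. What remains is $(q^n-1)2^{-c(W)S}\ge2^{\omega+M}q^{n/2}$. Taking $\log_2$ turns this into exactly $(*)$. Since $\delta_*>0$, Theorem~\ref{thm:counting}(b) applies.

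The statement that upper bounds may be substituted needs a monotonicity check. The right-hand side of $(*)$ increases in $\omega$, $M$ and $S$, because $c(W)>0$ for $0<W<1$. So if $(*)$ holds with upper bounds in place of these quantities, it holds with the true values. I do not expect a real obstacle here. The one point to watch is that every component other than the codimension-one singletons has weight at most $W$, so that the concavity bound applies componentwise. This is exactly the hypothesis, combined with Lemma~\ref{lem:qorbits}(c).
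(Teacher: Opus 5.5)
Your proposal is correct and follows essentially the same route as the paper. It splits off the $M_1$ codimension-one singletons in the product formula, uses the union bound in the other components, and applies concavity of $\log(1-x)$ to get $(1-W)^{S/W}$. It then inserts $\delta_*$ into Theorem~\ref{thm:counting}(b) with $L\le2^M(1-\frac1q)^{M_1}$, so that $(1-\frac1q)^{M_1}$ cancels; your explicit monotonicity remark for substituting upper bounds is a small step the paper leaves implicit.
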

\begin{proof}
(a) By Lemma~\ref{lem:qorbits}(c) each essential condition of codimension $1$ is a component by itself (a
codimension-one singleton), passed with probability $1-\frac1q$. By Theorem~\ref{thm:product} and the union bound,
$\delta_n\ge(1-\frac1q)^{M_1}\prod_{\cC}(1-w_{\cC})$, the product over the other components, each of which has weight $w_{\cC}\le W$ by hypothesis. Since
$\log(1-x)$ is concave, $1-x\ge(1-W)^{x/W}$ for $0\le x\le W$, and $\sum_{\cC}w_{\cC}=S$ (Lemma~\ref{lem:qorbits}(c));
hence $\delta_n\ge\delta_*$.

(b) We have $0<\delta_*\le\delta_n$ by (a), and $L\le2^M(1-\frac1q)^{M_1}$ by Theorem~\ref{thm:counting}. As
$(1-W)^{S/W}=2^{-c(W)S}$, taking $\log_2$ shows that $(*)$ says
$(q^n-1)\delta_*\ge2^{\omega(q^n-1)}q^{n/2}2^M(1-\frac1q)^{M_1}$, and the right-hand side is at least
$2^{\omega(q^n-1)}q^{n/2}L$. This is the sufficient condition of Theorem~\ref{thm:counting}(b).
\end{proof}

As explained in Section~\ref{subsec:idea}, $(*)$ holds for all large $n$: its left-hand side is at least
$\frac n2\log_2q-1$, while $M(q,n)\le\lambda n$ with $\lambda<\frac12\log_2q$ (Corollary~\ref{cor:Mn} for $q\ge5$,
Proposition~\ref{prop:Msmall} for $q\le4$), and the next lemma bounds $\omega(q^n-1)$ and $S(q,n)$ explicitly.

\begin{lemma}\label{lem:analytic}
\leavevmode
\begin{enumerate}[label=(\alph*),beginpenalty=10000]
\item For all integers $m\ge1$ and $k\ge1$,
\[
2^{\omega(m)}\le C_k\,m^{1/k},\qquad C_k=\prod_{\ell<2^k,\ \ell\textup{ prime}}2\ell^{-1/k}.
\]
\item The total weight $S(q,n)$ of the essential conditions of codimension~$\ge2$ satisfies
\begin{align*}
S(q,n)&\ \le\ \sum_{o\ge2}\frac{\sigma_1(o)}o\min\Bigl(\frac n{q^o},1\Bigr)\ \le\ g_q(\log_qn),\qquad\text{where}\\
g_q(x)&\ =\ \frac{\pi^2}6\,x-1+\bigl(1+\ln(x+1)\bigr)\frac q{q-1}+\frac q{(q-1)^2}.
\end{align*}
\end{enumerate}
\end{lemma}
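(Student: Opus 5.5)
For part~(a) I will use the classical multiplicative argument of \cite[Theorem~315]{hardy2008introduction}, as in the tail estimate for $\tau$ in Section~\ref{sec:M}. Write $2^{\omega(m)}m^{-1/k}=\prod_{\ell\mid m}2\,\ell^{-v_\ell(m)/k}$. For a prime $\ell\ge2^k$ the factor is at most $2\ell^{-1/k}\le1$, so it can be dropped. For a prime $\ell<2^k$ the factor is at most $2\ell^{-1/k}$, since $v_\ell(m)\ge1$. Primes not dividing $m$ contribute the factor $1\le2\ell^{-1/k}$, because $\ell<2^k$ gives $\ell^{1/k}<2$. Hence the product is at most $C_k$. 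This step is routine.

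For part~(b) the first inequality comes from the last assertion of Lemma~\ref{lem:weightcodim}. An essential condition of codimension $o$ contributes $q^{-o}$ to $S$, and there are at most $\frac{\sigma_1(o)}o\min(n',q^o-1)$ of them. Using $n'\le n$ and $q^o-1<q^o$, the contribution of codimension $o$ is at most $\frac{\sigma_1(o)}o\min(n/q^o,1)$. Summing over $o\ge2$ gives the first bound.

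For the second inequality, put $x=\log_qn$, so that $n/q^o=q^{x-o}$, and split the sum at $o\le x$ versus $o>x$. For $2\le o\le x$ the term is $\sigma_1(o)/o=\sum_{d\mid o}1/d$, so
\[
\sum_{2\le o\le x}\frac{\sigma_1(o)}o=\sum_{d\le x}\frac1d\Bigl\lfloor\frac xd\Bigr\rfloor-1\le x\sum_{d}\frac1{d^2}-1=\frac{\pi^2}6x-1,
\]
where the $-1$ removes the term $o=1$. This accounts for the part $\frac{\pi^2}6x-1$ of $g_q$. For $o>x$, write $o=\lfloor x\rfloor+1+j$ with $j\ge0$. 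Then $q^{x-o}\le q^{-j}$, and
\[
\frac{\sigma_1(o)}o=\sum_{d\mid o}\frac1d\le1+\ln o\le1+\ln(x+1)+\ln\Bigl(1+\frac j{x+1}\Bigr)\le1+\ln(x+1)+j .
\]
Summing against $q^{-j}$ gives $(1+\ln(x+1))\sum_jq^{-j}+\sum_jjq^{-j}$, that is, $(1+\ln(x+1))\frac q{q-1}+\frac q{(q-1)^2}$. These are exactly the remaining terms of $g_q$.

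The main obstacle is bookkeeping in this last step. The bound $\sum_{d\mid o}1/d\le H_o\le1+\ln o$ must be valid for every $o$. The cut between the two ranges must be placed correctly when $x<2$, where the first range is empty; since $g_q$ is then still nonnegative, the bound continues to hold. If the cruder estimate $\ln(1+j/(x+1))\le j$ turns out wasteful in some edge case, I would instead use $\ln(1+t)\le t$ with $t=j/(x+1)\le j$, which gives the same final constant.
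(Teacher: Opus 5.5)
Your argument is the paper's: the same multiplicative estimate in (a), the first inequality of (b) from Lemma~\ref{lem:weightcodim}, and the same split of the sum at $\lfloor\log_qn\rfloor$, with the divisor-sum identity below the cut and $\sigma_1(o)/o\le1+\ln o$ above it. Working with the real $x=\log_qn$ rather than $J=\lfloor x\rfloor$ even saves the paper's last step $g_q(J)\le g_q(\log_qn)$.

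One step fails as written, however: the case $x<1$, i.e.\ $n<q$, which the lemma also covers.
\begin{itemize}
\item Your identity $\sum_{2\le o\le x}\sigma_1(o)/o=\sum_{d\le x}\frac1d\lfloor x/d\rfloor-1$ requires $x\ge1$. For $x<1$ its left side is $0$ and its right side is $-1$. So the bound $\frac{\pi^2}6x-1$ you claim for the first range is false when $x<6/\pi^2$.
\item Your tail, parametrised by $o=\lfloor x\rfloor+1+j$ with $j\ge0$, then starts at $o=1$, which is not part of the sum.
\end{itemize}
If you use the true value $0$ for the empty first range and your tail over all $j\ge0$, you get only $(1+\ln(x+1))\frac q{q-1}+\frac q{(q-1)^2}=g_q(x)+1-\frac{\pi^2}6x$. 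This exceeds $g_q(x)$, and the nonnegativity of $g_q$ does not help.

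The repair is the one the paper makes for $J=0$. For $x<1$ the sum over $o\ge2$ is the tail over $j\ge1$ only. Dropping the term $j=0$, whose bound $1+\ln(x+1)$ is at least $1$, absorbs the $-1$ and leaves $g_q(x)-\frac{\pi^2}6x-\ln(x+1)\le g_q(x)$.

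For $1\le x<2$ your argument is fine as it stands: both sides of the identity are $0$, and $\frac{\pi^2}6x-1>0$.
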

\begin{proof}
(a) This is a classical estimate (see e.g.\ \cite[Lemma~3.2]{garefalakis2019jpaa}, applied there to the squarefree part
of $q^n-1$, whose number of divisors is $2^{\omega(q^n-1)}$); for
sharper explicit bounds on $\omega(m)$ see \cite{robin1983estimation}. We have
$2^{\omega(m)}m^{-1/k}\le\prod_{\ell\mid m}2\ell^{-1/k}$; the factors with $\ell\ge2^k$ are at most~$1$, and those
with $\ell<2^k$ are at least~$1$, so the product is at most~$C_k$.

(b) The first inequality follows from Lemma~\ref{lem:weightcodim}, as a condition of
codimension $o$ has weight $q^{-o}$ and $\min(n',q^o-1)\,q^{-o}\le\min(n/q^o,1)$. Let $J=\lfloor\log_qn\rfloor$. If
$J\ge1$, the terms $2\le o\le J$ contribute $\sum_{o=2}^J\sigma_1(o)/o=\sum_{e\le J}\lfloor J/e\rfloor/e-1\le\frac{\pi^2}6J-1$.
For $o=J+1+j$ use $\min(n/q^o,1)\le q^{-j}$ and $\sigma_1(o)/o\le1+\ln o\le1+\ln(J+1)+j$, and sum the two geometric
series. If $J=0$, the same tail estimate covers all $o\ge1$, and dropping its term $o=1$ (whose bound is $1$) gives the
same final expression with $J=0$. In both cases the bound is $g_q(J)$, and $g_q(J)\le g_q(\log_qn)$ since $g_q$ is
increasing and $J\le\log_qn$.
\end{proof}

With Lemma~\ref{lem:analytic}, $\omega(q^n-1)\le\log_2C_k+\frac nk\log_2q$ and $\log_2(q^n-1)\ge n\log_2q-1$, so $(*)$
holds as soon as $M(q,n)\le\lambda n$ and
\[
F_k(n):=n\Bigl[\bigl(\tfrac12-\tfrac1k\bigr)\log_2q-\lambda\Bigr]-\log_2C_k-c(W)\,g_q(\log_qn)-1\ \ge\ 0 . \tag{$**$}
\]
The following reduces $(**)$ for all $n$ beyond a point $n_*$ to two inequalities at $n_*$.
Let $a_k=\bigl(\frac12-\frac1k\bigr)\log_2q-\lambda$ (the first bracket in $(**)$), and let $n_*\ge1$. For $x\ge n_*$, the inequality $\ln(x/n_*)\le x/n_*-1$ gives
$0\le\log_qx-\log_qn_*\le\frac{x-n_*}{n_*\ln q}$, and $g_q(y+z)\le g_q(y)+z\bigl(\frac{\pi^2}6+\frac q{q-1}\bigr)$
for $y,z\ge0$ (as $\ln(y+z+1)\le\ln(y+1)+z$); since $g_q$ is increasing, it follows that
\[
F_k(x)\ \ge\ F_k(n_*)+(x-n_*)\Bigl[a_k-\frac{c(W)\,(\pi^2/6+q/(q-1))}{n_*\ln q}\Bigr].
\]
So $(**)$ holds for all $n\ge n_*$ as soon as $F_k(n_*)\ge0$ and $a_kn_*\ln q\ge c(W)\bigl(\frac{\pi^2}6+\frac q{q-1}\bigr)$.

\begin{proposition}\label{prop:range}
Let $(q,n)$ be such that $n$ lies in the range given in a row of the following table for $q$, and $n\ge n_*$, the
threshold of that row (for $8\le q\le31$ the individual threshold $n_*(q)$ listed below). Then $(q,n)$ has the
Morgan--Mullen property.
\begin{center}\small\upshape
\begin{tabular}{llccccr}
\toprule
$q$ & range of $n$ & $W$ & $c(W)$ & $\lambda$ & $k$ & threshold $n_*$\\
\midrule
$2$ & all $n$ & $3/4$ & $8/3$ & $0.41$ & $13$ & $16674$\\
$3$ & $3\mid n$ & $4/9$ & $1.908$ & $0.5983$ & $11$ & $1575$\\
$3$ & $3\nmid n$, $n\ge131200$ & $4/9$ & $1.908$ & $0.62$ & $12$ & $2769$\rlap{$^{\dagger}$}\\
$4$ & all $n$ & $0.2946$ & $1.710$ & $0.75$ & $10$ & $1010$\\
$5$ & all $n$ & $0.3392$ & $1.763$ & $1$ & $16$ & $43618$\\
$7$ & all $n$ & $0.1683$ & $1.580$ & $1$ & $9$ & $336$\\
$8,\dots,31$ & all $n$ & $\le0.1006$ & $\le1.521$ & $1$ & $\le9$ & $\le171$\\
$\ge32$ & all $n$ & $\le0.0118$ & $\le1.452$ & $1$ & $6$ & $14$\\
\bottomrule
\end{tabular}
\end{center}
$^{\dagger}$The inequality $(**)$ for this row holds from $n=2769$ on, but the row is applied only for $n\ge131200$, where
the bound $M(3,n)\le0.62\,n$ of Proposition~\ref{prop:Msmall}(b) is available.

Here $\lambda$ is the constant in the bound $M(q,n)\le\lambda n$ of Corollary~\ref{cor:Mn} ($q\ge5$) or
Proposition~\ref{prop:Msmall} ($q\le4$), whose side conditions $n\ge3780$ ($q=2$) and $n\ge12$ ($q=4$) follow from
$n\ge n_*$; $k$ is the exponent in Lemma~\ref{lem:analytic}(a); and $c(W)$ is rounded up (exact for $q=2$). For
$8\le q\le31$ the individual thresholds and exponents (with $W=W(q)$ from Theorem~\ref{thm:W}) are
\begin{center}\small
\begin{tabular}{l*{12}{r}}
\toprule
$q$ & $8$ & $9$ & $11$ & $13$ & $16$ & $17$ & $19$ & $23$ & $25$ & $27$ & $29$ & $31$\\
\midrule
$n_*(q)$ & $171$ & $111$ & $65$ & $45$ & $32$ & $29$ & $25$ & $19$ & $18$ & $16$ & $15$ & $14$\\
$k$ & $9$ & $8$ & $7$ & $7$ & $7$ & $6$ & $6$ & $6$ & $6$ & $6$ & $6$ & $6$\\
\bottomrule
\end{tabular}
\end{center}
\end{proposition}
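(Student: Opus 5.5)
The plan is to reduce Proposition~\ref{prop:range}, row by row, to the two inequalities at $n_*$ obtained just before its statement, and to check these by exact or interval arithmetic. Fix a row, with its $q$ (or $q$-range), $W$, $\lambda$, $k$ and $n_*$. Theorem~\ref{thm:W} shows that every component without a codimension-one condition has weight at most $W$, so Theorem~\ref{thm:C}(b) applies with this $W$. Since $c(W)=\log_2\frac1{1-W}/W$ is increasing in $W$, the rounded-up values of $c(W)$ in the table may be used. Next, substitute the upper bounds allowed in $(*)$:
\begin{itemize}
\item $\omega(q^n-1)\le\log_2C_k+\frac nk\log_2q$, from Lemma~\ref{lem:analytic}(a) with $m=q^n-1<q^n$;
\item $M\le\lambda n$, from Corollary~\ref{cor:Mn} for $q\ge5$ and from Proposition~\ref{prop:Msmall} for $q\le4$; its side conditions $n\ge3780$, $n\ge12$ and $n\ge131200$ follow from $n\ge n_*$ or from the range column;
\item $S\le g_q(\log_qn)$, from Lemma~\ref{lem:analytic}(b);
\item $\log_2(q^n-1)\ge n\log_2q-1$ on the left-hand side.
\end{itemize}
With these substitutions, $(*)$ is implied by $(**)$, that is, by $F_k(n)\ge0$.

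By the monotonicity argument preceding the proposition, $F_k(n)\ge0$ holds for all $n\ge n_*$ once two conditions are met:
\[
F_k(n_*)\ge0\qquad\text{and}\qquad a_kn_*\ln q\ge c(W)\bigl(\tfrac{\pi^2}6+\tfrac q{q-1}\bigr),
\]
where $a_k=(\frac12-\frac1k)\log_2q-\lambda$. First check that $a_k>0$ in every row (for instance $q=2$: $\frac12-\frac1{13}-0.41>0$). Then, for each single $q\le31$, compute $C_k$ as the explicit product over primes $\ell<2^k$ and evaluate both inequalities at the tabulated $n_*(q)$, rounding conservatively. For the $q=3$ row marked $\dagger$, carry out the verification at $2769$; it is applied only for $n\ge131200$.

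The infinite family $q\ge32$ (with $k=6$, $\lambda=1$, $W\le0.0118$, $n_*=14$) needs a uniformity argument. Take $W=0.0118$ for all such $q$, which is legitimate because $c$ is increasing and we only need an upper bound. I would then show that both inequalities improve as $q$ grows. In the second inequality, $a_6\ln q=(\frac13\log_2q-1)\ln q$ increases while $\frac q{q-1}$ decreases. For the first, write $F_6(14)$ as $14(\frac13\log_2q-1)-\log_2C_6-c\,g_q(\log_q14)-1$. Here $\log_q14$ decreases in $q$, and $g_q$ is increasing in its argument, with the factors $\frac q{q-1}$ and $\frac q{(q-1)^2}$ decreasing in $q$, so $g_q(\log_q14)$ decreases. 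Hence it suffices to check $q=32$. One caveat: $q\ge32$ is not contiguous over prime powers, but the monotone bound covers all real $q\ge32$ anyway.

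I expect the main obstacle to be the numerical tightness of the thresholds rather than any conceptual step. For $q=2$ the margin $a_{13}$ is small, so $F_{13}(16674)\ge0$ requires accurate values of $\log_2C_{13}$ (a product over the primes below $8192$) and of $g_2(\log_2 16674)$. I would compute these with rigorous rational upper and lower bounds for the logarithms, as in the paper's other computer-assisted steps, and check similarly that $n_*$ is essentially minimal for each chosen $k$. A slightly smaller $k$ or $n_*$ could fail, so the tabulated parameters must be verified exactly rather than estimated.
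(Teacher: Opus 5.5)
Your proposal is correct and follows essentially the same route as the paper: apply Theorem~\ref{thm:C}(b) with the tabulated $W$ and $\lambda$, and reduce $(*)$ to $(**)$ via Lemma~\ref{lem:analytic}. Then verify $F_k(n_*)\ge0$ and $a_kn_*\ln q\ge c(W)(\frac{\pi^2}6+\frac q{q-1})$ numerically for each $q\le31$, and cover all $q\ge32$ by one check at $q=32$. Your monotonicity-in-$q$ argument spells out the paper's remark that the uniform bounds $\log_2q\ge5$, $\frac q{q-1}\le\frac{32}{31}$ and $\frac q{(q-1)^2}\le\frac{32}{961}$ are the values at $q=32$.
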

\begin{proof}
We apply Theorem~\ref{thm:C}(b) with $W$ from Theorem~\ref{thm:W}, $\lambda$ from Corollary~\ref{cor:Mn} ($q\ge5$) and
Proposition~\ref{prop:Msmall} ($q\le4$), and $(**)$: for the listed $k$, the inequalities $F_k(n_*)\ge0$ and
$a_kn_*\ln q\ge c(W)(\frac{\pi^2}6+\frac q{q-1})$ hold, with the values of $\lambda$ and $W$ rounded up as printed ($c(W)$ increases
with $W$); the Python program evaluates these with 50 digits.
For $8\le q\le31$ each prime power is checked separately, with its own $W(q)$, $k$ and $n_*(q)$. For $q\ge32$ the check uses the uniform
bounds $W\le0.0118$, $\log_2q\ge5$ (so $\log_qn\le\log_{32}n$), $\frac q{q-1}\le\frac{32}{31}$ and
$\frac q{(q-1)^2}\le\frac{32}{961}$; these are the values at $q=32$, so the check at $q=32$ covers all $q\ge32$.
\end{proof}

\begin{proposition}\label{prop:finiteranges}
Let $q<32$ and $n>q$ be such that $(q,n)$ is not covered by Proposition~\ref{prop:range}; in particular $q\le19$ and
\begin{gather*}
n<16674\ (q=2),\qquad n<131200\ (q=3),\qquad n<1010\ (q=4),\\
n<43618\ (q=5),\qquad n<336\ (q=7),
\end{gather*}
and $n<n_*(q)$ from Proposition~\ref{prop:range} for $8\le q\le19$ (for $q\ge23$ there is no such $n$). Then $(*)$ holds with
$W=W(q)$ from Theorem~\ref{thm:W}, the exact $M$ and $S$ of Lemma~\ref{lem:weightcodim}, and $\omega(q^n-1)$ replaced by the
bound of Lemma~\ref{lem:omega} (with $t_k$ or an upper bound for $t_k$, as described in the proof), with a margin (left-hand side minus right-hand side) of at least one bit, except for the
$70$ pairs of
Proposition~\ref{prop:certificates} below.
\end{proposition}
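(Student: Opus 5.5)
The statement is a finite computation, and the proof is a description of how to organise it so that it is feasible and verifiable. I would first fix the finite list of pairs. The cases follow from Proposition~\ref{prop:range}: $q\in\{2,3,4,5,7\}$ with the listed bounds on $n$; $8\le q\le19$ with $q<n<n_*(q)$; and no pairs for $23\le q\le31$, because $n_*(q)\le q+1$ there ($n_*(23)=19$, $n_*(25)=18$, and so on). The ranges for $q=3$ need care. If $3\mid n$, the threshold is $1575$. If $3\nmid n$, the row is used only for $n\ge131200$, so all $3\nmid n<131200$ must be checked; this is the largest range. For each pair $(q,n)$ I evaluate the margin
\[
\Delta=\log_2(q^n-1)-\tfrac n2\log_2q-\Omega-M-c(W(q))\,S,
\]
where $\Omega$ is the bound of Lemma~\ref{lem:omega}. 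Every term is computed as an upper bound, and the left-hand side as a lower bound, in exact or rigorously rounded arithmetic. This gives a guaranteed lower bound for the true margin.

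\emph{Computing $M$ and $S$.} By Lemma~\ref{lem:weightcodim}, $M=\sum\nu_{d,e}$ and $S=\sum\nu_{d,e}q^{-\ord_e(q)}$ over the essential types. To enumerate the types I loop over $d\mid n$ and $e\mid n'/d'$, compute $\ord_e(q)$, and keep the pair if $d\mid\ord_e(q)$. The orders $\ord_e(q)$ should be obtained from $\ord_{\ell^\alpha}(q)$ for the prime powers $\ell^\alpha\,\|\,e$, taking least common multiples. Since $n<131200$, factoring $n'$ is trivial, and each divisor of $n'$ is handled once per relevant $d$. So $M$ and $S$ are exact rationals. Rather than computing $c(W)$ exactly, I round it up as in Proposition~\ref{prop:range}.

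\emph{Bounding $\omega(q^n-1)$.} Lemma~\ref{lem:omega} gives $\Omega=\omega(q-1)+\sum_{k\mid n,\,k>1}t_k$. Here $t_k$ is computed exactly when $\Phi_k(q)$ is of moderate size: compare $\log\Phi_k(q)$ with cumulative sums of $\log\ell$ over primes $\ell\equiv1\pmod{\lcm(2,k)}$, found by sieving an arithmetic progression. For large $k$ (large $\varphi(k)\log_2 q$), use the closed-form upper bound $\lfloor(\varphi(k)\log_2q+c_q)/\log_2(\lcm(2,k)+1)\rfloor$ instead. Then $\Delta$ is computed for each pair, and I record the pairs with $\Delta<1$. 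The claim is that exactly the $70$ pairs of Proposition~\ref{prop:certificates} remain, all with $q\le19$ and $n\le378$.

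\emph{Main obstacle.} The hardest part is not the logic but making the computation both feasible and trustworthy. Two places need care. First, the range $q=3$, $3\nmid n<131200$ has about $87000$ values of $n$. For each of these, Lemma~\ref{lem:omega} needs $t_k$ for all divisors $k$ of $n$, and $\Phi_k(3)$ can have tens of thousands of digits. I would therefore use the closed-form bound for $t_k$ whenever $k$ exceeds a cutoff, and fall back to exact $t_k$ only if $\Delta$ is close to $1$. Second, rounding must be directed correctly, since $\log_2(q^n-1)$ and $c(W)$ are irrational. I would use interval arithmetic with $50$ digits, as in Proposition~\ref{prop:range}, and cross-check $M$ against the local formula of Lemma~\ref{lem:Mrho} on a sample of pairs. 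For large $n$ I expect a comfortable margin, because $\Delta$ grows roughly like $n(\frac12\log_2q-\lambda)$. So the near-failures should cluster at small $n$, consistent with the $70$ exceptions.
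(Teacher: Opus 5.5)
Your plan is essentially the paper's proof: exact $M$ and $S$ from the essential types of Lemma~\ref{lem:weightcodim}, $\omega(q^n-1)$ replaced by the bound of Lemma~\ref{lem:omega} with either the closed-form bound or the exact $t_k$, and the margin of $(*)$ evaluated pair by pair. The paper runs over all $n<131200$ for $q=3$ and uses double precision, on the grounds that rounding errors far below $10^{-6}$ bits are irrelevant against a one-bit margin; your directed rounding is a stricter version of the same step. Two corrections are needed.

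\emph{The formula for $S$.} $S$ is the total weight of the essential conditions of codimension $\ge2$, so the sum must run only over essential types with $\ord_e(q)\ge2$. As written, your formula also counts the $\gcd(n',q-1)$ conditions of codimension~$1$, whose factor $(1-\frac1q)^{M_1}$ already cancels in $(*)$. The result is still a valid upper bound, but it lowers every margin by $c(W)\gcd(n',q-1)/q$ bits. That is $\frac43$ for $q=2$ and up to about $1.4$ for the other $q$, more than the one-bit slack you are certifying. So your computation could leave pairs outside the list of $70$ unaccepted.

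\emph{The list of $70$ pairs.} This list is an artifact of the paper's rule. That rule uses the closed-form bound for $t_k$ by default and the exact $t_k$ only where $(*)$ fails outright. With your rule (exact $t_k$ by default, fallback when the margin is near $1$) and the correct $S$, you would find only some of these $70$, not exactly them. Thirteen of them, e.g.\ $(2,27)$ and $(3,7)$, have a margin of at least one bit once the exact $t_k$ is used. This is harmless, since the proposition only asserts that every pair outside the list passes.
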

\begin{proof}
For each pair, $M$ and $S$ are computed exactly from the essential types (Lemma~\ref{lem:weightcodim}), and the margin of
$(*)$ is evaluated with $W=W(q)$ as printed in Theorem~\ref{thm:W}. For $q=3$ the Python program checks all $n<131200$,
a superset of the remaining range.
Either $t_k$ itself ($k\mid n$) or its upper bound in Lemma~\ref{lem:omega} may be used, as Theorem~\ref{thm:C}(b) allows upper bounds:
the Python program uses the second bound and, where $(*)$ then fails, the exact $t_k$.
The Python program computes the margins in
double-precision floating point; since every accepted pair has a margin of at least one bit, rounding errors (far below
$10^{-6}$ bits here) are irrelevant.
\end{proof}
For information: in the Python program the smallest margin of an accepted
pair is $1.03$ bits, at $(q,n)=(7,10)$; for $n\ge203$, i.e.\ beyond the range $n\le202$ of
\cite{hachenberger2019computational}, the smallest margins for $q=2,3,4,5,7$ are $2.56$, $19.8$, $36.7$, $10.6$ and
$31.5$ bits, at the pairs $(2,210)$, $(3,328)$, $(4,210)$, $(5,208)$ and $(7,240)$. The only certified pair with $n\ge203$
is $(2,378)$, with margin $0.83$ bits.

\begin{proposition}\label{prop:certificates}
For each of the following $70$ pairs $(q,n)$ the file \textup{\texttt{pcn\_certificates.txt}} gives a monic $f\in\F_p[x]$ of
degree $[E:\F_p]$ and an element $w\in\F_p[x]/(f)$ which has multiplicative order $q^n-1$ (so $f$ is irreducible and $w$ is
primitive) and is normal over $\F_{q^d}$ for every $d\mid n$ (normality is checked directly from the definition, by an
$\F_p$-rank computation):
\begin{center}\small\upshape
\begin{tabular}{rl}
\toprule
$q$ & $n$\\
\midrule
2 & 3--10, 12, 14, 15, 18, 20, 21, 24, 27, 30, 36, 40, 42, 54, 60, 63, 126, 378\\
3 & 4--8, 10, 12, 13, 16, 20, 24, 26, 32, 39, 40, 80, 160\\
4 & 5, 6, 7, 9, 10, 15, 21, 30\\
5 & 6, 8, 12, 16, 18, 24, 32, 48, 104\\
7 & 8, 12, 16, 18, 24, 48\\
9 & 10, 20\\
11 & 12\\
13 & 20\\
19 & 24\\
\bottomrule
\end{tabular}
\end{center}
Hence all these pairs have the Morgan--Mullen property.
\end{proposition}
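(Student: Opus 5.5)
The statement is a finite verification, so the plan is to produce and check certificates rather than to argue abstractly. For each of the $70$ listed pairs I would put $N=[E:\F_p]=n\log_pq$ and build a model of $E=\F_{q^n}$ as $\F_p[x]/(f)$. Here $f$ is taken to be a monic primitive polynomial of degree $N$ over $\F_p$, either from standard tables or found by random search. I would then search for a suitable $w$ among elements of small weight, such as translates $x+c$ and short sums of powers of $x$, or among random elements. The subfield $\F_q$ and all intermediate fields $\F_{q^d}$ sit inside this model as fixed fields of $\sigma^d$, where $\sigma(y)=y^q$ is computed by repeated $p$-th powering modulo $f$. Because the density of completely normal elements is at least about $0.1$ for these pairs, and a positive fraction of them is primitive, random search should succeed quickly. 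The file \texttt{pcn\_certificates.txt} simply records the pair $(f,w)$ that was found.

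The verification of each certificate has two independent parts.

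\emph{Primitivity.} I would factor $q^n-1$; the largest case is $2^{378}-1$, whose factorization is known through its cyclotomic factors $\Phi_k(2)$, $k\mid378$. The check is then that $w^{q^n-1}=1$ and that $w^{(q^n-1)/\ell}\ne1$ for every prime $\ell\mid q^n-1$, all computed modulo $f$. This shows that $w$ has order exactly $q^n-1$. Since $\F_p[x]/(f)$ is then a ring with $p^N$ elements containing an element of order $p^N-1$, every nonzero element is a unit. Hence the ring is a field, $f$ is irreducible, and $w$ is primitive.

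\emph{Complete normality.} For each $d\mid n$ I would form the conjugates $\sigma^{dt}(w)$ for $0\le t<n/d$. Normality of $w$ over $\F_{q^d}$ means that these conjugates span $E$ over $\F_{q^d}$. Rather than working with $\F_{q^d}$-linear algebra directly, I would use an equivalent $\F_p$-rank test: the products $\beta_i\sigma^{dt}(w)$, with $\beta_i$ running over an $\F_p$-basis of $\F_{q^d}$, must span $E$ over $\F_p$, that is, have rank $N$. I would compute this rank by Gaussian elimination over $\F_p$ on their coordinate vectors in the basis $1,x,\dots,x^{N-1}$. To obtain a basis of $\F_{q^d}$, I would take the powers $g^j$ for $0\le j<d\log_pq$, where $g=w^{(q^n-1)/(q^d-1)}$. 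Since $w$ is primitive, $g$ generates $\F_{q^d}^\times$, so its powers span $\F_{q^d}$, and the rank test then ranges over all of $\F_{q^d}\cdot\{\sigma^{dt}(w)\}$. By Theorem~\ref{thm:essential}, it would suffice to check only the levels that carry essential conditions, but checking all $d\mid n$ follows the definition directly and is cheap here. Once both parts pass, the last sentence of the statement is immediate from the definition of the Morgan--Mullen property.

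The main obstacle is computational rather than conceptual, and it comes from the largest cases, $(2,378)$ and $(3,160)$. There the fields have $\F_p$-dimension $378$ and $160\cdot1=160$ (that is, $N=n$ when $q$ is prime), and for $(5,104)$ the prime-field exponent is also $104$. Primitivity needs the full factorization of $q^n-1$. For $2^{378}-1$ I would obtain it from the algebraic factorization into $\Phi_k(2)$ together with known factor tables, and I would certify the primality of the large factors, for example with Pratt or ECPP certificates, if the proof is to be independent of the software. The rank computations are at most $378\times378$ over $\F_2$ and are trivial. A secondary point is that a candidate $w$ must actually exist and be found; I expect random search to succeed, but the exceptional pairs are exactly those where the margin of $(*)$ is small, so the search may need more trials there.
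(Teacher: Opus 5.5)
Your proposal is correct and follows the paper's proof essentially step by step. The paper also runs the order test $w^{q^n-1}=1$, $w^{(q^n-1)/\ell}\ne1$ over a list of primes $\ell$ that is checked to be complete, which forces $\F_p[x]/(f)$ to be a field and $w$ to be primitive. It then applies the $\F_p$-rank test to the $m=rn$ elements $\gamma^j\sigma^{dt}(w)$, where $\gamma=w^{(q^n-1)/(q^d-1)}$ and $0\le j<rd$; these powers of $\gamma$ form a basis of $\F_{q^d}$ because $\F_p(\gamma)=\F_{q^d}$ has degree $rd$. Finally, it certifies the one large prime factor of $2^{378}-1$ by Lucas' test with the factorisation of $\ell-1$, which is a Pratt-type certificate as you suggest. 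Your remarks on how $f$ and $w$ are found, including the rough density estimate, are search heuristics and play no role in the verification.
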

\begin{proof}
Each check is elementary. Let $r=[\F_q:\F_p]$ and $m=rn$ (so $q^n=p^m$), and let $R=\F_p[x]/(f)$, a ring with
$p^m$ elements (the Python program checks that $f$ is monic of degree $m$).
The Python program uses a list of primes, obtained by factoring the values $\Phi_k(q)$, $k\mid n$, and checks that dividing
$q^n-1$ by these primes as often as possible leaves $1$, so that the list contains every prime divisor of $q^n-1$. It
then verifies $w^{q^n-1}=1$ and
$w^{(q^n-1)/\ell}\ne1$ for every prime $\ell$ in the list (the classical order test, cf.\
\cite{lucas1878theorie,lehmer1927tests}). Hence $w$ is a unit of
order $q^n-1$, so $q^n-1$ divides $|R^\times|\le p^m-1=q^n-1$: every non-zero element of $R$ is a unit, $R$ is a field (so $f$ is
irreducible), and $w$ generates $R^\times$, i.e.\ is primitive. For $d\mid n$ the element $\gamma=w^{(q^n-1)/(q^d-1)}$ has order
$q^d-1$, so $\F_{q^d}=\F_p(\gamma)$ and $1,\gamma,\dots,\gamma^{rd-1}$ is an $\F_p$-basis of $\F_{q^d}$. A family
$(b_t)_{0\le t<n/d}$ is an $\F_{q^d}$-basis of $R$ if and only if $(\gamma^jb_t)_{j,t}$ is an $\F_p$-basis; so $w$ is normal
over $\F_{q^d}$ if and only if the $m$ elements $\gamma^j\sigma^{dt}(w)$ ($0\le j<rd$, $0\le t<n/d$) have $\F_p$-rank $m$,
and this rank is computed. In the Python program the
primality of the listed factors is proved with \texttt{sympy} \cite{sympy2017}, whose test is deterministic below $2^{64}$,
and, for the one larger factor $\ell$ (of $88$ bits, dividing $2^{378}-1$), by Lucas' test with the complete factorisation of
$\ell-1$ \cite{lucas1878theorie,lehmer1927tests}.
\end{proof}
The $70$ pairs are determined by the rule of the Python program for Proposition~\ref{prop:finiteranges}, which uses
the second bound for $t_k$ in Lemma~\ref{lem:omega} and the exact $t_k$ only where $(*)$ then fails. At $50$ of them,
all with $n\le160$, the criterion $(*)$ fails even with the exact $t_k$; at the other $20$ it holds, under this rule,
with a margin between $0$ and $1$ bit. Thirteen of these $20$, namely $(2,27)$, $(2,40)$, $(3,7)$, $(3,13)$, $(3,24)$, $(3,26)$, $(3,39)$, $(5,18)$,
$(5,32)$, $(5,104)$, $(7,18)$, $(13,20)$ and $(19,24)$, would satisfy $(*)$ with at least one bit to spare if the exact $t_k$ were used;
they are certified only because of this rule, which does not affect the proof. With the exact values of $M$, $S$ and
$\omega(q^n-1)$, the criterion $(*)$ still fails at $41$ of the $70$ pairs, so the certificates are needed not only because of the
bounds used for $\omega(q^n-1)$.

\begin{theorem}[Main Theorem]\label{thm:main}
For every prime power $q$ and every $n\ge1$ the field $\F_{q^n}$ contains a primitive completely normal element over
$\F_q$. (This is Theorem~\ref{thm:intro}.)
\end{theorem}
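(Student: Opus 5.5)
The proof is an assembly of the preceding results, organised as a case split on $n$ relative to $q$. If $n\le q$, we invoke the theorem of Garefalakis and Kapetanakis \cite[Theorem~1.3]{garefalakis2019jpaa} directly; it also covers $n=1$, where complete normality reduces to $w\ne0$. From now on we assume $n>q$.

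For $n>q$ we take $W=W(q)$ from Theorem~\ref{thm:W} and apply the criterion $(*)$ of Theorem~\ref{thm:C}(b); its hypothesis on the component weights is exactly what Theorem~\ref{thm:W} provides. We then split according to whether $(q,n)$ falls under Proposition~\ref{prop:range}. If $n$ lies in the range and above the threshold of the relevant row, then $(*)$ holds there, via $(**)$ and the bounds $M(q,n)\le\lambda n$ of Corollary~\ref{cor:Mn} and Proposition~\ref{prop:Msmall}. For $q\ge32$ the threshold is $14$, so we must check that $n>q\ge32$ already exceeds it. For $23\le q\le31$ each threshold $n_*(q)\le19$ is smaller than $q$, so every $n>q$ is covered. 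For $q=3$ and $3\nmid n$ the row only applies when $n\ge131200$. All remaining pairs have $q\le19$ and lie in the bounded ranges listed in Proposition~\ref{prop:finiteranges}. That proposition gives $(*)$ with a margin of at least one bit, using the exact $M$ and $S$ and the bound of Lemma~\ref{lem:omega} for $\omega(q^n-1)$, and Theorem~\ref{thm:C}(b) explicitly allows such upper bounds to be substituted. The $70$ exceptional pairs are handled by the explicit elements of Proposition~\ref{prop:certificates}.

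The main point requiring care is that these cases are exhaustive, so that no pair $(q,n)$ with $n>q$ escapes every case. The first thing to check is that Proposition~\ref{prop:finiteranges} is stated precisely for the complement, within $n>q$, of what Proposition~\ref{prop:range} covers. The second is that the list of prime powers $8\le q\le31$ in Theorem~\ref{thm:W} (namely $8,9,11,13,16,17,19,23,25,27,29,31$) really contains all prime powers in that interval, and that the cases $q=2,3,4,5,7$ are covered individually. Once exhaustiveness is confirmed, every pair has the Morgan--Mullen property, which is Theorem~\ref{thm:intro}.
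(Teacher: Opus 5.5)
Your proposal is correct and is essentially the paper's proof. It uses Garefalakis--Kapetanakis for $n\le q$, Proposition~\ref{prop:range} above the thresholds, Proposition~\ref{prop:finiteranges} together with Theorem~\ref{thm:W} and Theorem~\ref{thm:C}(b) below them, and Proposition~\ref{prop:certificates} for the $70$ exceptional pairs; your exhaustiveness check is the same one the paper makes (for $q\ge32$ one has $n>q\ge32>14$, so every remaining pair has $q<32$).
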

\begin{proof}
If $n\le q$, the pair $(q,n)$ has the Morgan--Mullen property by \cite[Theorem~1.3]{garefalakis2019jpaa}. Let $n>q$. If $(q,n)$
satisfies the hypotheses of Proposition~\ref{prop:range}, it has the property by that proposition. Otherwise $q<32$: for $q\ge32$ we would have $n>q\ge32>14$, and
Proposition~\ref{prop:range} would apply. So $(q,n)$ is one of the pairs of Proposition~\ref{prop:finiteranges}. If it is
one of the $70$ pairs of Proposition~\ref{prop:certificates}, it has the property by that proposition. Otherwise it satisfies $(*)$
with $W=W(q)$, the exact $M$ and $S$, and an upper bound for $\omega(q^n-1)$; as every component without a condition of
codimension~$1$ has weight at most $W(q)$ (Theorem~\ref{thm:W}), it has the property by Theorem~\ref{thm:C}(b).
\end{proof}
For orientation, the pairs with $n>q$ are divided as follows.
\begin{center}\small
\begin{tabular}{lll}
\toprule
$q$ & Proposition~\ref{prop:range} & Propositions~\ref{prop:finiteranges} and~\ref{prop:certificates}\\
\midrule
$2$ & $n\ge16674$ & $2<n<16674$\\
$3$ & $3\mid n$, $n\ge1575$; $3\nmid n$, $n\ge131200$ & the other $n>3$\\
$4$ & $n\ge1010$ & $4<n<1010$\\
$5$ & $n\ge43618$ & $5<n<43618$\\
$7$ & $n\ge336$ & $7<n<336$\\
$8\le q\le19$ & $n\ge n_*(q)$ & $q<n<n_*(q)$\\
$q\ge23$ & all $n>q$ & --\\
\bottomrule
\end{tabular}
\end{center}
For $q\ge23$, all $n>q$ are covered since $n_*(q)\le19<q$ for $23\le q\le31$ and $n_*=14<q$ for $q\ge32$. The $70$ pairs of
Proposition~\ref{prop:certificates} all have $q\le19$ and $n\le378$.

\begin{remark}
The computations of Hachenberger and Hackenberg \cite{hachenberger2019computational} are not used in the proof. Those of
Morgan and Mullen \cite{morgan1996} enter only through the case $n\le q$: the proof of
\cite[Theorem~1.3]{garefalakis2019jpaa} uses their examples for $13$ of the $37$ pairs left by its computer
calculation (the other $24$ are completely basic extensions). Both computations are consistent with
Theorem~\ref{thm:main}.
The first, combined with \cite[Theorem~1.4]{garefalakis2019jpaa} and the regular case, confirms it for $n\le202$ by a
different route; for $q<n\le202$ this is independent of Sections~\ref{sec:coord}--\ref{sec:main}, while for $n\le q$
both routes rest on \cite{garefalakis2019jpaa}.
\end{remark}

\subsection*{The numbers \texorpdfstring{$\CN_q(n)$ and $\PCN_q(n)$}{CN and PCN}}
The weight bounds also give information on the numbers $\CN_q(n)$ and $\PCN_q(n)$: a lower bound for the first,
conjectured by Hachenberger (Corollary~\ref{cor:hachenberger}), and an asymptotic formula for the second
in terms of the first (Corollary~\ref{cor:pcn}). Neither is needed for Theorem~\ref{thm:main}; both rest on Theorem~\ref{thm:W}
(Corollary~\ref{cor:pcn} also on Proposition~\ref{prop:Msmall}) and hence on computations.

Hachenberger conjectured that $\CN_q(n)\ge(q-1)^{n'}q^{(p^a-1)n'}$ for all pairs $(q,n)$
\cite[Conjecture~5.4.55]{hachenberger2013handbook}; in \cite[p.~621]{hachenberger2020topics} the conjecture is
attributed to his monograph \cite{hachenberger1997book}. For
regular pairs he had proved this bound, with equality if and only if $n'\mid q-1$
\cite[Theorem~6.1]{hachenberger1997book} (see also \cite[Result~12.9.7]{hachenberger2020topics} and
\cite[Section~2.6]{hachenberger2019computational}, where $\CN_q(n)$ is written $\CN_n(q)$), and for many other pairs the
bound follows from explicit lower bounds for $\CN_q(n)$ such as those of \cite{garefalakis2019dcc}; as far as we know,
it had not been proved for all pairs. The proof of Corollary~\ref{cor:hachenberger} below compares, $\eta$~by~$\eta$, the bound $\delta_*$ of Theorem~\ref{thm:C}(a) with
$(1-\frac1q)^{n'}$, using $S=\sum_\eta s(\eta)$ (Lemma~\ref{lem:Mrho}, with the notation of Section~\ref{sec:M}).

\begin{corollary}[Hachenberger's lower bound]\label{cor:hachenberger}
For every prime power $q$ and every $n\ge1$,
\[
\CN_q(n)\ \ge\ (q-1)^{n'}q^{(p^a-1)n'},\qquad\text{i.e.}\qquad\delta_n\ \ge\ \Bigl(1-\frac1q\Bigr)^{n'},
\]
with equality if and only if $n'\mid q-1$. (This is Theorem~\ref{thm:introhach}.)
\end{corollary}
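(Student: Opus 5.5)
The plan is to compare the lower bound $\delta_*=(1-\frac1q)^{M_1}2^{-c(W)S}$ of Theorem~\ref{thm:C}(a), with $W=W(q)$ from Theorem~\ref{thm:W}, with $(1-\frac1q)^{n'}$ one root of unity at a time. By Lemma~\ref{lem:Mrho}, $S=\sum_{\eta\in\mu_{n'}}s(\eta)$. The $\eta\in\mu_{n'}$ with $\eta^q=\eta$ number $\gcd(n',q-1)=M_1$; each has $o(\eta)=1$, so every valid level has $o_d(\eta)=1$ and $s(\eta)=0$. This leaves $n'-M_1$ elements $\eta$ of degree $o=o(\eta)\ge2$, and it suffices to show $2^{-c(W)s(\eta)}\ge1-\frac1q$ for each of them, i.e.
\[
c(W)\,s(\eta)\ \le\ \log_2\tfrac{q}{q-1}.
\]
Multiplying over $\eta$ then gives $\delta_n\ge\delta_*\ge(1-\frac1q)^{n'}$.

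\textbf{Bounding $s(\eta)$.} Every valid level $d=p^cd'$ divides $o_d(\eta)$, which divides $o$, and we have $p^c\le p^a$. Since $o_d(\eta)\ge d$ and $o_d(\eta)\ge2$, each term satisfies $p^cq^{-o_d}/o_d\le p^cq^{-o_d}/2$. The factor $p^c$ is a nuisance, because $s(\eta)$ is not divided by $p^a$. I would bound instead
\[
s(\eta)\le\sum_{d\mid o,\ o_d(\eta)\ge\max(2,d)}\frac{p^{v_p(d)}q^{-o_d}}{o_d}.
\]
Using $d\mid o_d$ and $p^{v_p(d)}\le d\le o_d$, this is at most $\sum_{m\ge2}\tau(m)q^{-m}$, grouping terms by the value $m=o_d$ and counting at most $\tau(m)$ levels $d\mid m$ per value. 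Each $\eta$ has a single target per level, but several levels can share the same $o_d$, which is where the $\tau(m)$ comes from.

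\textbf{Checking the inequality.} For $q\ge4$ we have $W\le0.34$, so $c(W)<1.8$. We also have $\sum_{m\ge2}\tau(m)q^{-m}\le 2q^{-2}+O(q^{-3})$, which is well below $\log_2\frac q{q-1}\approx 1.44/q$. This should be a routine numeric check, with some care at $q=4,5$.

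For $q=2$ ($c=8/3$, target $1$) and $q=3$ ($c\approx1.908$, target $\approx0.585$) the crude bound is probably too weak. There I would compute $s(\eta)$ more precisely via the local maximum over $k$ with $\ord_k(q)=o$, in the spirit of $R^*_a(o)$. For small $o$ I would use the actual validity condition $d\mid\ord_{k/\gcd(k,d')}(q)$, which prevents many levels from sharing a small $o_d$. For large $o$ I would use the tail $\tau(o)\sum_{m\ge \text{small}}$.

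I expect this small-$q$ verification to be the main obstacle. The extremal case is $q=2$ with $o=2$ ($n'=3$). There $s(\eta)$ can reach about $p^c/8$ summed over levels $1,2$, namely $\frac18+\frac{2}{8}\cdot$(if $2\mid n$), which gives exactly $3/8$. Then $2^{-8/3\cdot 3/8}=\frac12=1-\frac1q$, with equality. This is the ``$q=2$, $n'=3$'' coincidence flagged in Section~\ref{subsec:idea}.

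\textbf{Equality.} If $n'\mid q-1$, every $\eta$ lies in $\F_q$. Then all essential conditions have codimension $1$ and are singletons, so $\delta_n=(1-\frac1q)^{n'}$ exactly by Theorem~\ref{thm:product}. Conversely, if $n'\nmid q-1$ some $\eta$ has degree $\ge2$, and I would aim to show that the inequality for that $\eta$ is strict, or that some component bound is strict: the union bound is strict as soon as two conditions in a component can fail simultaneously. The exceptional case $q=2$, $n'=3$ would be settled directly. The components there are the codimension-one singleton and one component of the three codimension-$2$ conditions (as in the $(2,6)$ example), whose exact passing probability exceeds $1-\frac34$. For instance, for $(2,6)$ we have $\CN_2(6)=12>(1)^3\cdot2^{3}=8$. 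For general $a$ I would count this component's passing vectors in the $F_\eta(u)$ coordinates directly, to get strict inequality.
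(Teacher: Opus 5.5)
Your reduction is the paper's: you compare $\delta_*$ with $(1-\frac1q)^{n'}$ one $\eta$ at a time, so everything rests on the inequality $c(W)\,s(\eta)\le\log_2\frac q{q-1}$ for $o(\eta)\ge2$. Your argument for $q\ge4$ is fine. The gap is that you do not prove this inequality for $q=2,3$, which are the only delicate cases.

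\textbf{Why your bound fails for $q=3$.} Your bound $s(\eta)\le\sum_{m\ge2}\tau(m)q^{-m}$ groups the levels by the value $m=o_d(\eta)$. This charges each level $d$ once for every multiple $m$ of $d$, although $d$ occurs only once in $s(\eta)$. For $q=3$ it gives $c(\frac49)\sum_{m\ge2}\tau(m)3^{-m}\approx1.908\cdot0.349\approx0.666$, which exceeds $\log_2\frac32\approx0.585$. The remedy you sketch (local maxima over $k$ in the spirit of $R^*_a(o)$, plus an unspecified tail) is not a proof as it stands.

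\textbf{The missing idea for $q\ge3$.} Index by the level instead. Each valid level $d=p^cd'$ contributes one term, and $p^c\le d\le o_d(\eta)$ gives $p^cq^{-o_d}/o_d\le q^{-o_d}\le q^{-d}$. The term of $d=1$ is at most $q^{-2}/2$, because $o_1(\eta)=o(\eta)\ge2$. Hence $s(\eta)\le q^{-2}(\frac12+\frac q{q-1})\le2q^{-2}$. With $W=\frac49$ this gives $c(W)\,s(\eta)<3.82\,q^{-2}<\frac1{q\ln2}<\log_2\frac q{q-1}$, with strict inequality, for every $q\ge3$. No computation is needed.

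\textbf{The missing idea for $q=2$.} The geometric bound above gives only $\frac18+\frac12=\frac58>\frac38$, so you need a structural observation. Since $p=2$, the levels $1$ and $2$ both have $d'=1$, so $e_1(\eta)=e_2(\eta)=\ord\eta$ and $o_1(\eta)=o_2(\eta)=o(\eta)$.
\begin{itemize}
\item If $o(\eta)\ge3$, no valid level has codimension $2$. Using $d\mid o_d(\eta)$, the terms of $d=1,2,3,4$ are at most $\frac1{24},\frac1{32},\frac1{24},\frac1{16}$, and the levels $d\ge5$ add at most $\frac1{16}$. So $s(\eta)\le\frac{23}{96}<\frac38$, strictly.
\item If $o(\eta)=2$, then $\ord\eta=3$; note that $n'$ need not be $3$ here. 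In this case $s(\eta)$ equals $\frac18$ or $\frac38$ according as $2\nmid n$ or $2\mid n$.
\end{itemize}

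\textbf{The equality case.} The strictness for $o(\eta)\ge3$ above is exactly what your equality argument needs when $q=2$. It reduces the non-strict case to $n'=3$ and $2\mid n$. There you need not count coordinate vectors for general $a$; your own principle suffices. The element $w=0$ lies in all three kernels of the weight-$\frac34$ component, so the union bound is strict. Hence $\Pr[G_{\cC}]>\frac14$ and $\delta_n=\frac12\Pr[G_{\cC}]>\frac18$.
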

\begin{proof}
The two forms agree, since $(q-1)^{n'}q^{(p^a-1)n'}=q^n(1-\frac1q)^{n'}$. Put $W=\frac34$ if $q=2$ and $W=\frac49$ if
$q\ge3$. By Theorem~\ref{thm:W} (where $W(q)\le\frac49$ for $q\ge3$) every component without a condition of
codimension $1$ has weight at most $W$, so
Theorem~\ref{thm:C}(a) gives
\[
\delta_n\ \ge\ \Bigl(1-\frac1q\Bigr)^{M_1}(1-W)^{S/W}=\Bigl(1-\frac1q\Bigr)^{M_1}2^{-c(W)S}.
\]
By Lemma~\ref{lem:weightcodim} the essential conditions of codimension $1$ are those of the essential types $(d,e)$ with
$\ord_e(q)=1$, i.e.\ $d=1$ and $e\mid\gcd(n',q-1)$, with $\nu_{1,e}=\varphi(e)$ conditions each; so $M_1=\gcd(n',q-1)$,
the number of $\eta\in\mu_{n'}$ with $o(\eta)=1$. By Lemma~\ref{lem:Mrho}, $S=\sum_\eta s(\eta)$, and $s(\eta)=0$ if
$o(\eta)=1$, as $o_d(\eta)\mid o(\eta)$. Since $(1-\frac1q)^{n'}=(1-\frac1q)^{M_1}2^{-(n'-M_1)\log_2\frac q{q-1}}$,
it suffices to show
\[
c(W)\,s(\eta)\ \le\ \log_2\frac q{q-1}\qquad\text{for every }\eta\in\mu_{n'}\text{ with }o(\eta)\ge2. \tag{$\dagger$}
\]
Let $o(\eta)\ge2$ and let $d=p^cd'$ be valid at $\eta$. Then $d\mid o_d(\eta)$, and $o_1(\eta)=o(\eta)\ge2$. Hence
$p^c\le d\le o_d(\eta)$, the term of $d$ in $s(\eta)$ is at most $q^{-o_d(\eta)}\le q^{-d}$, and the term of $d=1$ is at
most $q^{-2}/2$.

For $q\ge3$ this gives $s(\eta)\le\frac{q^{-2}}2+\sum_{d\ge2}q^{-d}=q^{-2}\bigl(\frac12+\frac q{q-1}\bigr)\le2q^{-2}$.
As $c(\frac49)=\frac94\log_2\frac95<1.91$, we get
$c(W)s(\eta)<3.82\,q^{-2}<\frac1{q\ln2}<\log_2\frac q{q-1}$, because $q\ge3>3.82\ln2$.

For $q=2$ we have $c(\frac34)=\frac83$, and $(\dagger)$ reads $s(\eta)\le\frac38$. If $o(\eta)=2$, then $\ord\eta=3$, and
the valid levels are among $d=1,2$ (as $d\mid o_d(\eta)\mid2$), with $e_1(\eta)=e_2(\eta)=3$ and $o_1(\eta)=o_2(\eta)=2$;
so $s(\eta)=\frac18$ if $2\nmid n$ and $s(\eta)=\frac18+\frac14=\frac38$ if $2\mid n$. If $o(\eta)\ge3$, then
$o_d(\eta)\ge3$ for every valid $d$: $o_d(\eta)\le2$ would force $d\in\{1,2\}$, but $e_1(\eta)=e_2(\eta)=\ord\eta$ (as $p=2$),
so $o_1(\eta)=o_2(\eta)=o(\eta)$. Using $d\mid o_d(\eta)$, the terms of $d=1,2,3,4$ are at most
$\frac{2^{-3}}3,\frac{2\cdot2^{-4}}4,\frac{2^{-3}}3,\frac{4\cdot2^{-4}}4$, that is $\frac1{24},\frac1{32},\frac1{24},\frac1{16}$,
and those of $d\ge5$ add up to at most $\sum_{d\ge5}2^{-d}=\frac1{16}$; so $s(\eta)\le\frac{23}{96}<\frac38$. This proves
$(\dagger)$ and the inequality.

If $n'\mid q-1$, then $o(\eta)=1$ for every $\eta\in\mu_{n'}$, so $M_1=n'$ and $S=0$: the essential conditions are
$n'$ codimension-one singletons, and Theorems~\ref{thm:product} and~\ref{thm:criterion} give $\delta_n=(1-\frac1q)^{n'}$. Otherwise some $\eta\in\mu_{n'}$ has
$o(\eta)\ge2$. If $q\ge3$, or if $q=2$ and some $\eta$ has $o(\eta)\ge3$, then $(\dagger)$ is strict for this $\eta$ and
$\delta_n>(1-\frac1q)^{n'}$. In the remaining case, $q=2$ and every $\eta\in\mu_{n'}$ has order $1$ or $3$, so $n'=3$. If
$2\nmid n$, then $s(\eta)=\frac18$ and $(\dagger)$ is strict. If $2\mid n$, the two elements of order $3$ form one $q$-orbit;
the essential conditions whose fibres meet it are the level-one condition of type $(1,3)$ and the two conditions of
type $(2,3)$
(Lemma~\ref{lem:weightcodim}; at the other levels $d$ the type $(d,e_d(\eta))$ is not essential), and they form a
component $\cC$ of weight $\frac34$ (Lemma~\ref{lem:qorbits}(a)). The kernels of these three conditions all contain $0$, so
the union bound is strict, $\Pr[G_{\cC}]\ge1-w_{\cC}+2\cdot2^{-n}$, and, as the only other component is the
codimension-one singleton with fibre $\{1\}$, Theorem~\ref{thm:product} gives
$\delta_n=\frac12\Pr[G_{\cC}]>\frac12\cdot\frac14=(1-\frac12)^{3}$.
\end{proof}

We turn to primitive completely normal elements. Since completely normal elements are non-zero and a proportion $\theta=\varphi(q^n-1)/(q^n-1)$ of the elements of $E^\times$ is primitive, $\theta\,\CN_q(n)$ is the
number of primitive completely normal elements one would expect if the two properties were independent.
Theorem~\ref{thm:counting}(a), combined with the lower bound $\delta_*$ for $\delta_n$ of Theorem~\ref{thm:C}(a) and
the bounds for $M$ of Section~\ref{sec:M}, shows that this is asymptotically correct, with a relative error controlled by the same quantities $\omega(q^n-1)$, $M$ and $S$
as the criterion $(*)$.

\begin{corollary}\label{cor:pcn}
Let $\theta$ and $\omega=\omega(q^n-1)$ be as in Section~\ref{sec:counting}, and let $M$ and $L$ be as in
Theorem~\ref{thm:counting}. Let $W=W(q)$ be as in Theorem~\ref{thm:W}, and let $c(W)$ and $S$ be as in
Theorem~\ref{thm:C}.
\begin{enumerate}[label=(\alph*)]
\item For every pair $(q,n)$,
\[
\bigl|\PCN_q(n)-\theta\,\CN_q(n)\bigr|\ \le\ \theta\,2^{\omega}q^{n/2}L\ \le\
\frac{2^{\omega+M+c(W)S}}{q^{n/2}}\;\theta\,\CN_q(n).
\]
\item The ratio $\PCN_q(n)/(\theta\,\CN_q(n))$ tends to $1$ as $q^n\to\infty$; that is, for every $\varepsilon>0$ there is an $X$ such
that $\bigl|\PCN_q(n)/(\theta\,\CN_q(n))-1\bigr|<\varepsilon$ for all pairs $(q,n)$ with $q^n\ge X$.
\end{enumerate}
(Part~(b) is Theorem~\ref{thm:intropcn}.)
\end{corollary}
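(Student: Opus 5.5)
For part~(a) I would start from Theorem~\ref{thm:counting}(a). The main term there is $\theta(q^n-1)\delta_n$, while $\theta\,\CN_q(n)=\theta q^n\delta_n$, so the two differ by $\theta\delta_n$. Hence
\[
\bigl|\PCN_q(n)-\theta\,\CN_q(n)\bigr|\le\theta\beta(L-\delta_n)+\theta\delta_n .
\]
To obtain the first inequality it suffices to show $\beta(L-\delta_n)+\delta_n\le2^\omega q^{n/2}L$. Write $\beta=1+(2^\omega-1)q^{n/2}$. Then
\[
\beta(L-\delta_n)+\delta_n=\beta L-(\beta-1)\delta_n\le\beta L\le2^\omega q^{n/2}L,
\]
using $\beta\ge1$, $\delta_n\ge0$ and $\beta\le2^\omega q^{n/2}$.

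For the second inequality in (a) I would insert $L\le2^M(1-\frac1q)^{M_1}$ from Theorem~\ref{thm:counting}. I would also use the lower bound of Theorem~\ref{thm:C}(a), $\CN_q(n)=q^n\delta_n\ge q^n(1-\frac1q)^{M_1}2^{-c(W)S}$, which is valid with $W=W(q)$ by Theorem~\ref{thm:W}. The factor $(1-\frac1q)^{M_1}$ then cancels, giving
\[
\theta2^\omega q^{n/2}L\le\theta\,2^{\omega+M+c(W)S}q^{-n/2}\,\CN_q(n).
\]

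For part~(b) it suffices to show that the margin $\Delta=\frac n2\log_2q-\omega-M-c(W)S\to\infty$ as $q^n\to\infty$, uniformly in the pair. I would split into two regimes.

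The first regime is $n\ge n_1$ for a suitable fixed $n_1$. Here I would use $M\le\lambda n$ with $\lambda<\frac12\log_2q$: take $\lambda=0.41,0.62,0.75$ for $q=2,3,4$ from Proposition~\ref{prop:Msmall}, and $\lambda=1$ for $q\ge5$ from Corollary~\ref{cor:Mn}. For $q=3$ and $3\nmid n$ the bound $M\le0.6n+0.4\gcd(n,3^8-1)$ suffices for large $n$. I would bound $\omega\le\log_2C_k+\frac nk\log_2q$ by Lemma~\ref{lem:analytic}(a), and $S\le g_q(\log_qn)=O(\log n)$ by Lemma~\ref{lem:analytic}(b), with $c(W)\le8/3$ uniformly. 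This should give $\Delta\ge F_k(n)+1$ with $F_k$ from $(**)$. The bracket $a_k=(\frac12-\frac1k)\log_2q-\lambda$ is bounded below by a positive constant uniformly in $q$ once $k$ is large; for $q\ge5$ it is at least about $0.16-\frac1k\log_2q$, so I would take $k$ depending on $q$, or simply note that for large $q$ the bracket grows like $\log_2 q$. Consequently $\Delta\ge cn\log_2 q-O(\log n)-O_k(1)$, which tends to infinity with $n$, uniformly. Some care is needed so that the constant $\log_2C_k$ does not depend on $q$; I would fix $k=13$ for $q\le4$, and for $q\ge5$ use $\lambda=1$ with $k$ fixed so that $(\frac12-\frac1k)\log_2 q-1\ge\epsilon\log_2q$.

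The second regime is bounded $n<n_1$ with $q\to\infty$, where a cruder estimate suffices. I would use $M\le n$, $S\le g_q(\log_q n)$ (which is bounded), and $2^\omega\le C_k(q^n)^{1/k}$ with $k=4$. Then $\Delta\ge\frac n4\log_2q-n-O(1)\to\infty$ as $q\to\infty$, uniformly in $n\ge1$.

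I expect the main obstacle to be making these bounds uniform in $q$ in the first regime, in particular for the smallest $q$, where $\lambda$ is close to $\frac12\log_2q$ and the margin grows only slowly.
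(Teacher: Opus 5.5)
Your proposal is correct and follows the paper's proof. Part~(a) is the same computation. Part~(b) uses the same split into two regimes. For large $n$ it uses Proposition~\ref{prop:Msmall}, Corollary~\ref{cor:Mn}, Lemma~\ref{lem:analytic} and $c(W)\le\frac83$; the paper takes the single exponent $k=16$ for all $q$, since $(\frac12-\frac1{16})\log_2q-\lambda_q\ge\frac1{100}$ for every $q$. The other regime is bounded $n$ with $q\to\infty$.

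The only deviation is in the bounded-$n$ regime. There the paper uses the first inequality of~(a) together with $L\le2^n$, $2^\omega\le C_4q^{n/4}$ and $\CN_q(n)\ge q^n2^{-n}$ from Corollary~\ref{cor:hachenberger}. Your use of the second inequality with $M\le n$ and bounded $S$ works just as well.
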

\begin{proof}
(a) As $\theta\,\CN_q(n)=\theta q^n\delta_n=\theta(q^n-1)\delta_n+\theta\delta_n$, Theorem~\ref{thm:counting}(a) gives
\[
\bigl|\PCN_q(n)-\theta\,\CN_q(n)\bigr|\ \le\ \theta\beta(L-\delta_n)+\theta\delta_n\ \le\ \theta\beta L,
\]
as $\beta\ge1$; since $\beta\le2^\omega q^{n/2}$, this gives the first inequality of (a). For the second inequality, $L\le2^M(1-\frac1q)^{M_1}$
(Theorem~\ref{thm:counting}) and $(1-\frac1q)^{M_1}\le2^{c(W)S}\delta_n$ (Theorems~\ref{thm:W} and~\ref{thm:C}(a)), so
$q^{n/2}L\le2^{M+c(W)S}q^{-n/2}\,\CN_q(n)$.

(b) Let $\varepsilon>0$. As $\CN_q(n)>0$ (Corollary~\ref{cor:hachenberger}), the relative error
$|\PCN_q(n)/(\theta\,\CN_q(n))-1|$ is at most the bounds of (a) divided by $\theta\,\CN_q(n)$. We use the second bound for large $n$ and the first for bounded $n$ and large $q$.

\emph{Large $n$.} Let $n\ge131200$, so that Proposition~\ref{prop:Msmall} applies for $q\le4$. Then $M(q,n)\le\lambda_qn$ with $\lambda_q=0.41$, $0.62$, $0.75$ for $q=2,3,4$
(Proposition~\ref{prop:Msmall}; for $q=3$ and $3\mid n$, even $M(3,n)\le0.5983\,n$) and $\lambda_q=1$ for $q\ge5$ (Corollary~\ref{cor:Mn}), and
$(\frac12-\frac1{16})\log_2q-\lambda_q\ge\frac1{100}$ for every $q$. By Lemma~\ref{lem:analytic},
$\omega\le\log_2C_{16}+\frac n{16}\log_2q$ and $S(q,n)\le g_q(\log_qn)\le g_2(\log_2n)$ (each summand of $g_q(\log_qn)$
is non-increasing in $q$), and $c(W)\le c(\frac34)=\frac83$, as $W(q)\le\frac34$ (Theorem~\ref{thm:W}) and $c$ is increasing.
With $\log_2(q^n-1)\ge n\log_2q-1$ this gives, for every $q$,
\begin{align*}
\Delta\ &\ge\ n\Bigl[\bigl(\tfrac12-\tfrac1{16}\bigr)\log_2q-\lambda_q\Bigr]-\log_2C_{16}-\tfrac83\,g_2(\log_2n)-1\\
&\ge\ h(n):=\frac n{100}-\log_2C_{16}-\frac83\,g_2(\log_2n)-1,
\end{align*}
where $\Delta$ is the margin of $(*)$ with $W=W(q)$ and the exact values of $\omega$, $M$ and $S$. Since
$2^{\omega+M+c(W)S}q^{-n/2}=(1-q^{-n})\,2^{-\Delta}$, the second inequality of (a) shows that the relative error is
less than $2^{-h(n)}$. As $h(n)\to\infty$, there is an $n_\varepsilon\ge131200$ such that
the relative error is less than $\varepsilon$ for all $q$ and all $n\ge n_\varepsilon$.

\emph{Bounded $n$.} Let $n<n_\varepsilon$. By $L\le2^{M(q,n)}\le2^n$ (Theorem~\ref{thm:counting}, Corollary~\ref{cor:Mn}),
$\CN_q(n)\ge q^n(1-\frac1q)^{n'}\ge q^n2^{-n}$ (Corollary~\ref{cor:hachenberger}) and $2^{\omega}\le C_4q^{n/4}$
(Lemma~\ref{lem:analytic}(a), as $q^n-1<q^n$), the first inequality in (a) bounds the relative error by
$C_4\,4^nq^{-n/4}\le C_4\,4^{n_\varepsilon}q^{-1/4}$, which is less than $\varepsilon$ for
$q\ge q_\varepsilon:=\lfloor(C_4\,4^{n_\varepsilon}/\varepsilon)^4\rfloor+1$.

Finally, if $q^n\ge q_\varepsilon^{n_\varepsilon}$, then $n\ge n_\varepsilon$ or $q\ge q_\varepsilon$; so
$X=q_\varepsilon^{n_\varepsilon}$ has the property required in (b).
\end{proof}

Thus, by Corollary~\ref{cor:pcn}(b), primitivity and complete normality are asymptotically independent, as
$n\to\infty$ uniformly in $q$ and as
$q\to\infty$ uniformly in $n$. For fixed $n$ and $q\to\infty$ this was known: it is
equivalent to Hachenberger's result that $\PCN_q(n)/\varphi(q^n-1)\to1$ \cite[Theorem~2]{hachenberger2016asymptotic} (see
Section~\ref{subsec:main}); for regular pairs, estimates of the form of~(a) are contained in Hachenberger's criteria (see
Section~\ref{sec:counting}); a bound for $|\PCN_q(n)/\varphi(q^n-1)-\delta_n|$ that is uniform in $n$ and needs no
computation is given in Remark~\ref{rem:pcnphi}. The first inequality in Corollary~\ref{cor:pcn}(a) is Theorem~\ref{thm:counting}(a) in a simpler form and uses
no computation; it is always the sharper of the two, and it gives an explicit bound for the relative error when $L$
and a lower bound for $\delta_n$ are known, as in
Section~\ref{sec:4599}.

The second inequality in Corollary~\ref{cor:pcn}(a) expresses the error bound in terms of the quantities of $(*)$. As $\CN_q(n)>0$
(Corollary~\ref{cor:hachenberger}), it bounds the relative error $|\PCN_q(n)/(\theta\,\CN_q(n))-1|$ by
$2^{\omega+M+c(W)S}q^{-n/2}=(1-q^{-n})\,2^{-\Delta}$, where $\Delta$ is the margin of $(*)$ with $W=W(q)$ and the exact $\omega$, $M$
and $S$, i.e.\ the left-hand side of $(*)$ minus its right-hand side. Substituting upper bounds for $\omega$, $M$ and $S$ can
only decrease the margin, so wherever $(*)$ has been verified with a margin $\Delta_0$, the relative error is less than
$2^{-\Delta_0}$. For instance, it is less than $\frac12$ for the pairs of Proposition~\ref{prop:finiteranges} other than
the $70$ of Proposition~\ref{prop:certificates}. With $\Delta_0=0$ this gives $\PCN_q(n)>0$ whenever $(*)$ holds with
$W=W(q)$; in this sense (a) is a quantitative form of Theorem~\ref{thm:C}(b).

\begin{remark}\label{rem:pcnphi}
Dividing by the number $\varphi(q^n-1)$ of primitive elements instead of by $\theta\,\CN_q(n)$, one obtains from
Theorem~\ref{thm:counting}(a) a bound that needs no lower bound for $\CN_q(n)$, and hence neither
Theorem~\ref{thm:W} nor Corollary~\ref{cor:hachenberger} nor any computation. As $\theta(q^n-1)=\varphi(q^n-1)$ and
$\delta_n\ge0$, Theorem~\ref{thm:counting}(a) gives \mbox{$\bigl|\PCN_q(n)/\varphi(q^n-1)-\delta_n\bigr|\le\beta L/(q^n-1)$},
and with $\beta\le2^\omega q^{n/2}$, $2^\omega\le C_4q^{n/4}$ (Lemma~\ref{lem:analytic}(a), as $q^n-1<q^n$),
$L\le2^{M(q,n)}\le2^n$ (Theorem~\ref{thm:counting} and Corollary~\ref{cor:Mn}) and $q^n-1\ge q^n/2$, we obtain, for all
pairs $(q,n)$,
\[
\biggl|\frac{\PCN_q(n)}{\varphi(q^n-1)}-\delta_n\biggr|\ \le\ 2C_4\biggl(\frac{16}q\biggr)^{n/4},\qquad
C_4=\frac{64}{30030^{1/4}}<4.87 .
\]
For $q\ge16$ the right-hand side is at most $4C_4q^{-1/4}<20\,q^{-1/4}$, and for $q<16$ the bound $20\,q^{-1/4}$ exceeds
$1$ (both $\PCN_q(n)/\varphi(q^n-1)$ and $\delta_n$ lie in $[0,1]$); so the proportion of completely normal elements among the primitive elements of $\F_{q^n}$ differs from their
proportion $\delta_n$ among all elements by less than $20\,q^{-1/4}$, for every $n$. By the union bound and
Corollary~\ref{cor:Mn}, $\delta_n\ge1-M(q,n)/q\ge1-n/q$, so $\PCN_q(n)/\varphi(q^n-1)\to1$ as $q\to\infty$ with $n/q\to0$.
Hachenberger proved this for fixed $n$ \cite[Theorem~2]{hachenberger2016asymptotic}, and for $n\ge2$ his estimate
\cite[(7)]{hachenberger2016asymptotic} in fact gives it whenever $\sigma_1(n)\,n\log q=o(q)$. Some condition on $n$ is
needed: for $n=q-1$, every $\eta\in\mu_n$ lies in $\F_q$, so all essential conditions have codimension~$1$ and $\delta_n=(1-\frac1q)^{q-1}$, as in the case
$n'\mid q-1$ of the proof of Corollary~\ref{cor:hachenberger} (this case needs neither Theorem~\ref{thm:W} nor any
computation), and by the bound above $\PCN_q(n)/\varphi(q^n-1)\to1/e$ as $q\to\infty$.
\end{remark}

\section{Computations}\label{sec:comp}
Table~\ref{tab:comp} lists, for each statement whose proof involves a computation, what is computed and by which
program; its last two rows list the statements that rest on these (Theorems~\ref{thm:intro}--\ref{thm:introW} restate
some of the statements in the table). The
programs are short, self-contained Python~3 programs (using \texttt{sympy} \cite{sympy2017}, \texttt{numpy}
\cite{numpy2020} and \texttt{mpmath} \cite{mpmath}; \texttt{bruteforce\_small\_fields.py} and \texttt{examples.py}, which are
not part of the proof, also use \texttt{galois} \cite{galois2020}), and the run times were measured on one core of a
laptop with an
Intel Core Ultra 7 155H processor.
\begin{table}[!htbp]\centering\small
\begin{tabular}{>{\raggedright\arraybackslash}p{3.6cm}>{\raggedright\arraybackslash}p{5.8cm}>{\raggedright\arraybackslash}p{4.4cm}}
\toprule
statement & what is computed & program (run time)\\
\midrule
Prop.~\ref{prop:Msmall} & local maxima $R^*_a(o)$ (all $k\mid q^o-1$ with $\ord_k(q)=o$), maxima of $f_a$ on the
finite ranges, $\kappa_3$, $\kappa_4$ &
\texttt{bounds\_for\_M.py} (15\,s)\\
Thm.~\ref{thm:W}, step (i) & $N^{**}$ by the closure (B); all components for all $n_0\mid N^{**}$ &
\texttt{closure.py}, \texttt{components.py}, \texttt{struct\_and\_exactmax.py} (30\,s)\\
Thm.~\ref{thm:W}, steps (ii)--(vi) & $\Wcl$, $\Wcr$ with cut-off $o_0+41$ (in (ii)--(iv), (vi)); the tail
$o_0\ge4096$ of (ii) and the tail bound of (v) & \texttt{weights.py} (1\,s)\\
Prop.~\ref{prop:range} & $F_k(n_*)\ge0$ and $a_kn_*\ln q\ge c(W)(\frac{\pi^2}6+\frac q{q-1})$, with 50 digits &
\texttt{thresholds.py} (2\,s)\\
Prop.~\ref{prop:finiteranges} & $(*)$ with exact $M$, $S$ and the $t_k$-bound for $\omega$ & \texttt{finite\_ranges.py} (1\,min)\\
Prop.~\ref{prop:certificates} & primitive CN elements; re-check of the stored certificates & \texttt{pcn\_certificates.py} (3\,min; re-check 6\,s)\\
Thm.~\ref{thm:main} (= Thm.~\ref{thm:intro}) & no computation of its own (it rests on Theorem~\ref{thm:W} and
Propositions~\ref{prop:range}--\ref{prop:certificates}) & --\\
Cor.~\ref{cor:hachenberger}, \ref{cor:pcn} & no computation of their own (they rest on Theorem~\ref{thm:W},
Corollary~\ref{cor:pcn} also on Proposition~\ref{prop:Msmall}); tested on $30$ small fields, see (C1) &
\texttt{bruteforce\_small\_}\allowbreak\texttt{fields.py} (2\,min)\\
\bottomrule
\end{tabular}
\caption{The computations and the programs that carry them out.}\label{tab:comp}
\end{table}
The program \texttt{struct\_and\_exactmax.py} also computes the exact maxima quoted for information in the proof of
Theorem~\ref{thm:W} and carries out the check~(C2) of Section~\ref{sec:checks}; the run times in Table~\ref{tab:comp} are those
of the whole programs. The program \texttt{examples.py} reproduces the numbers of Section~\ref{sec:4599} and of the examples $(2,21)$ and
$(3,10)$ of Section~\ref{sec:remarks}; the README of the programs describes the helper modules, and the README of the
formalisation lists the Lean files that redo each computation.
The exact values of $M$ and $S$ only need the list of essential types $(d,e)$ (Lemma~\ref{lem:weightcodim}); no component
computation is needed outside Theorem~\ref{thm:W}. Combining explicit
analytic estimates with large but finite verified computations is common in analytic number theory; see, for example,
\cite{helfgott2013numerical,helfgott2015ternary,platt2021rh}.

\subsection*{Formal verification}
Appendix~\ref{app:lean} describes the formalisation in Lean~4 with Mathlib of our proof of the case $n>q$ and of
Corollaries~\ref{cor:hachenberger} and~\ref{cor:pcn}, including every computer-assisted step; it gives the formal statements,
the trusted base and the differences from the proofs in the paper.

\subsection*{Code and data availability}
The Lean formalisation (built with Lean's build tool Lake \cite{leanref_lake}, with continuous integration via
\texttt{lean-action} \cite{leanaction}) is available at \url{https://github.com/florian-hellwig/morgan-mullen}; the
Python programs (those of Table~\ref{tab:comp} and \texttt{examples.py}), with the versions of the libraries used, and
the file of the $70$ certificates are in the folder \texttt{verification\_code} of this repository. The repository is archived at Zenodo,
\url{https://doi.org/10.5281/zenodo.23661993} (release \texttt{v1.0}, commit \texttt{09291f9dbd175ffdd8c22cddcbaff7152c516272}).

\section{An example: the pair \texorpdfstring{$(2,4599)$}{(2,4599)}}\label{sec:4599}

This section illustrates Sections~\ref{sec:product}--\ref{sec:main} with one pair; it is not part of the proof (which
settles this pair in Proposition~\ref{prop:finiteranges}, as $4599<16674$). For $(q,n)=(2,4599)$ the union bound gives
no information: the $M=1145$ essential conditions have total weight $\frac12+S\approx3.41$, where $\frac12$ is the weight of
the only condition of codimension $1$ (cf.\ Section~\ref{subsec:idea}). But they fall into $63$ components, none of
weight more than $\frac{33}{64}$. We list the types and the components, check the last two rows of the table by hand
and compare them with Section~\ref{sec:struct}, and evaluate the criteria of Theorems~\ref{thm:counting}(b)
and~\ref{thm:C}(b) and the error bound of Corollary~\ref{cor:pcn}(a).

Here $q=2$ and $n=4599=3^2\cdot7\cdot73$, so $a=0$, $n'=n$ and the coordinates are the resolvents
$F_\eta(w)=\sum_{t\in\Z/n}\eta^{-t}w^{2^t}$, $\eta\in\mu_{4599}$. The divisors $e$ of $n$ and their orders
$\ord_e(2)$ are
\[
\begin{array}{c|cccccccccccc}
e&1&3&7&9&21&63&73&219&511&657&1533&4599\\\hline
\ord_e(2)&1&2&3&6&6&6&9&18&9&18&18&18
\end{array}
\]
and the essential types $(d,e)$ of Lemma~\ref{lem:weightcodim} (here $d'=d$ and $n'=n$, so these are the pairs with
$de\mid n$ and $d\mid\ord_e(2)$), with their numbers $\nu_{d,\allowbreak e}=d\varphi(e)/\ord_e(2)$ as exponents, are
\[
\begin{array}{l}
(1,1)^1,\ (1,3)^1,\ (1,9)^1,\ (1,7)^2,\ (3,7)^6,\ (1,21)^2,\ (3,21)^6,\ (1,63)^6,\\
(1,73)^8,\ (3,73)^{24},\ (9,73)^{72},\ (1,219)^8,\ (3,219)^{24},\ (1,657)^{24},\\
(1,511)^{48},\ (3,511)^{144},\ (9,511)^{432},\ (1,1533)^{48},\ (3,1533)^{144},\ (1,4599)^{144},
\end{array}
\]
in total $M=1145$ essential conditions; $\mu_{4599}$ splits into $293$ $q$-orbits.
The conditions form $63$ components (Lemma~\ref{lem:qorbits}(a)). In the table, ``number'' is the number of
components of the shape given by the row, ``conditions'' the number of conditions of one of them, ``dim'' the total
size of the $q$-orbits it reads, i.e.\ the dimension of its coordinate space (Lemma~\ref{lem:coord}(c)), and
$(d,e)^m$ means that one component contains $m$ conditions of type $(d,e)$ (no exponent: one). So ``number'' times
$m$ is $\nu_{d,e}$, and the dimensions of all $63$ components add up to $n=4599$:
\begin{center}\small
\begin{tabular}{rrrll}
\toprule
number & conditions & dim & weight $w_{\cC}$ & types in one component\\
\midrule
1 & 1 & 1 & $1/2$ & $(1,1)$\\
1 & 1 & 2 & $1/4$ & $(1,3)$\\
1 & 1 & 6 & $1/64$ & $(1,9)$\\
2 & 5 & 9 & $33/64$ & $(1,7),(1,21),(3,7)^3$\\
2 & 6 & 18 & $3/32$ & $(1,63)^3,(3,21)^3$\\
8 & 20 & 81 & $6663/2^{18}$ & $(1,\allowbreak 73),\allowbreak (3,\allowbreak 73)^3,\allowbreak (9,\allowbreak 73)^9,\allowbreak (1,\allowbreak 219),\allowbreak (3,\allowbreak 219)^3,\allowbreak (1,\allowbreak 657)^3$\\
48 & 20 & 81 & $6663/2^{18}$ & $(1,\allowbreak 511),\allowbreak (3,\allowbreak 511)^3,\allowbreak (9,\allowbreak 511)^9,\allowbreak (1,\allowbreak 1533),\allowbreak (3,\allowbreak 1533)^3,\allowbreak (1,\allowbreak 4599)^3$\\
\bottomrule
\end{tabular}
\end{center}
The table is computed by the program \texttt{examples.py} (available with the programs of Section~\ref{sec:comp}),
which also gives $M$, $S$, the bounds for $\delta_{4599}$, $L$ and $\omega$ and the margins below.

The shape of the last row can be checked by hand. Let $\eta_0$ have order $511=2^9-1$; its $q$-orbit $O$
has $9$ elements. Since $n$ is odd and the level of an essential condition divides some $\ord_e(2)\in\{1,2,3,6,9,18\}$, all
essential conditions have level $1$, $3$ or $9$, so the fibre at level $d$ through $x$ is $x\mu_d\subseteq x\mu_9$ (Lemma~\ref{lem:qorbits}(b)). Hence
$Y=O\mu_9=\{\eta\varepsilon:\eta\in O,\ \varepsilon\in\mu_9\}$, a union of $q$-orbits, contains every fibre that meets
it; and $Y$ is the union of the fibres $\eta\mu_9$ ($\eta\in O$) of the level-$9$ conditions with targets $\eta^9$,
which meet $O$. By Lemma~\ref{lem:qorbits}(a), $Y$ is therefore the set of fibre elements of the component of the
level-one condition with target $\eta_0$. As $\xi\mapsto\xi^9$ is a bijection of $\mu_{511}$, the $9$ cosets
$\eta\mu_9$ are disjoint, and $Y$ consists of the $9$ elements of $O$, $18$ elements of order $1533$ (one $q$-orbit) and
$54$ of order $4599$ (three $q$-orbits). As $Y\cap\mu_{511}=O$, the $432/9=48$ $q$-orbits of elements of order $511$ give $48$
different components. The conditions of the component are: at level $1$ the five $q$-orbits of $Y$
($1+1+3$ conditions, codimensions $9,18,18$); at level $3$ the classes under $\xi\mapsto\xi^8$ of the targets $x^3$,
$x\in Y$, namely of the $9$ targets $\eta^3$, $\eta\in O$, of order $511$ ($3$ conditions, codimension $9$) and of
$18$ targets of order $1533$ ($3$ conditions, codimension $18$); at level $9$ the
$9$ targets $\eta^9$, $\eta\in O$, of order $511$ ($9$ conditions, codimension $9$). The weight is
$(1+3+9)2^{-9}+(1+3+3)2^{-18}=6663/2^{18}\approx0.0254$. The same argument with $73$, $219$, $657$ in place of $511$,
$1533$, $4599$ gives the sixth row ($72/9=8$ components).

These components illustrate Section~\ref{sec:struct}. Their minimal codimension is $o_0=9$, and
$K=K_0=9\cdot511=4599$ (also the choice~(B) of $K$) satisfies (i) and (ii) of Theorem~\ref{thm:struct} for every
condition $C_0$ of codimension $9$, as $d_0\mid9$, $e_0\mid511$ and $\ord_{4599}(2)=18$. So for \emph{every} $n$ the
fibres of the component of such a condition lie in $\mu_{4599}$, and its levels and codimensions divide $18$ (for
$n=4599$ itself, part~(a) says nothing, as $K=n$).
With $D=D_{\cC}=9$ both bounds of Theorem~\ref{thm:struct}(c) are attained for $o=9$ ($1+3+9=13$ conditions); for
$o=18$ the first is $81$ (the second $351$), against the actual $7$, and $54$ of the $81$ come from the levels $2$, $6$, $18$, which need $2\mid n$.
Accordingly $w_{\cC}=6663/2^{18}$ lies just below $\Wcl(2,9)=6737/2^{18}$ (Corollary~\ref{cor:weight}).

By Theorem~\ref{thm:product},
$\delta_{4599}\ge\tfrac12\cdot\tfrac34\cdot\tfrac{63}{64}\cdot\bigl(\tfrac{31}{64}\bigr)^2\bigl(\tfrac{29}{32}\bigr)^2
\bigl(1-\tfrac{6663}{2^{18}}\bigr)^{56}\approx2^{-5.893}$, whereas the union bound over all $1145$ conditions gives only
$\delta_{4599}\ge1-3.41<0$. Further, $L=\prod_C2(1-2^{-o(C)})\approx2^{1139.65}$ (the product over all $1145$ essential conditions);
Lemma~\ref{lem:omega} with the exact $t_k$ gives $\omega(2^{4599}-1)\le253$ (its second bound gives $390$); and the
sufficient condition of Theorem~\ref{thm:counting}(b), with the lower bound for $\delta_{4599}$ just given, holds with
a margin of about $901$ bits.
The cruder criterion $(*)$ of Theorem~\ref{thm:C}(b), by which Proposition~\ref{prop:finiteranges} settles this pair,
holds for $W=W(2)=\frac34$ (Theorem~\ref{thm:W}), $M=1145$ and $S=95281/2^{15}\approx2.908$ with a margin of about
$894$ bits ($757$ with $\omega\le390$, the bound used for this pair by the program for Proposition~\ref{prop:finiteranges}). In both forms the lower bound for
the density costs only a few bits (about $5.9$ in Theorem~\ref{thm:counting}(b), and $c(W)S\approx7.75$ in $(*)$);
the decisive comparison is $\omega(2^{4599}-1)+M\le1398$ against $\frac n2\log_2q=2299.5$. By the first inequality of
Corollary~\ref{cor:pcn}(a), with the bounds for $\delta_{4599}$, $L$ and $\omega$ above, $\PCN_2(4599)$ equals
$\theta\,\CN_2(4599)$ up to a relative error of less than $2^{-900}$ (the exponent is the margin found above for
Theorem~\ref{thm:counting}(b)).

\section{Further checks}\label{sec:checks}
The following checks are not used in the proof. A failure would reveal a misstated definition or hypothesis, a
wrong printed value or a programming error. The checks (C1), (C2) and (C4) are tests on examples.
\begin{enumerate}[label=(C\arabic*),leftmargin=3em]
\item \emph{Sections~\ref{sec:coord}--\ref{sec:product} against the definition} (\texttt{bruteforce\_small\_fields.py},
which also uses the Python library \texttt{galois} \cite{galois2020}).
The program uses $30$ selected pairs (listed in it) with $q^n\le65536$ and $\mu_{n'}\subseteq E$, so that $\Omega=E$
in Section~\ref{sec:coord}; they include pairs with $p\mid n$ and pairs that are not completely basic, such as
$(2,12)$, $(3,8)$ and $(5,6)$, the first and the last of which are not even regular. For each of these pairs and every
$w\in E$, complete normality is decided by the definition, tested as in the proof of
Proposition~\ref{prop:certificates}: $w$ is normal over $\F_{q^d}$ if and only if the $rn$ elements
$\gamma_d^{\,j}\sigma^{dt}(w)$ ($0\le j<rd$, $0\le t<n/d$) have $\F_p$-rank $rn$, where $r=[\F_q:\F_p]$ and $\gamma_d$
is a fixed generator of $\F_{q^d}^\times$ (in that proof $w$ is primitive and $\gamma_d=w^{(q^n-1)/(q^d-1)}$). The result
agrees with the test whether all essential fibre sums $C_{d,k}(w)$ are non-zero (Theorem~\ref{thm:essential}) and with
the test whether all fibre sums are non-zero (Theorem~\ref{thm:criterion}). Moreover, the program checks that
every kernel $V_{d,k}$ has exactly $q^{n-\lcm(d,o(d,k))}$ elements (Theorem~\ref{thm:criterion}); every essential
condition of codimension~$1$ forms a component by itself (Lemma~\ref{lem:qorbits}(c));
the identity $\CN_q(n)\cdot|E|^{s-1}=\prod_{\cC}\#\{w\in E:\ C(w)\ne0\text{ for all }C\in\cC\}$ holds, with the product
over the $s$ components, formed by
shared pair orbits as in Section~\ref{sec:product} (this is Theorem~\ref{thm:product}, multiplied by $|E|^s$); and
the bound $\CN_q(n)\ge(q-1)^{n'}q^{(p^a-1)n'}$ holds, with equality if and only if $n'\mid q-1$ (Corollary~\ref{cor:hachenberger}).
Finally, the primitive completely normal elements are counted (primitivity being decided by discrete logarithms), and
the two-sided estimate of Theorem~\ref{thm:counting}(a) and both inequalities of Corollary~\ref{cor:pcn}(a) are
checked.
\item \emph{Structure theorem} (\texttt{struct\_and\_exactmax.py}, part~(2)). For $q=2$, $3$, $4$, $5$, $7$, $8$, $9$,
$11$, $13$, $16$, $25$ and $27$, all $n\ge2$ up to a bound depending on $q$ (from $3000$ for $q=2$ to $300$ for $q\ge16$), and $40$ random $n$ between
this bound and $20$ times it, all components are computed, from the $q$-orbits as in Lemma~\ref{lem:qorbits}(a).
A component containing a condition of codimension~$1$ is checked to consist of this condition alone. For each of the
other components (about $1.1\cdot10^5$ in total), with $o_0$ its minimal codimension and $K$, $O_\infty$, $D_\infty$ as
in~(B), the program checks (a) and (b) of Theorem~\ref{thm:struct} and the first inequality of~(c) with $D=D_{\cC}$.
In~(b) it tests $d\mid\ord_K(q)$ and $D_{\cC}\mid(\ord_K(q))_{p'}$ as $d\mid O_\infty$ and $D_{\cC}\mid D_\infty$, which
is the same, since $O_\infty=\ord_K(q)$ and $D_\infty=(\ord_K(q))_{p'}$ (Section~\ref{sec:struct}, after~(B)). No
violation occurs.
\item \emph{Minimality of the thresholds} (\texttt{thresholds.py}). For $q\le7$ each threshold $n_*$ of
Proposition~\ref{prop:range} is the least $n$ for which $F_k(n)\ge0$ and
$a_kn\ln q\ge c(W)(\frac{\pi^2}6+\frac q{q-1})$ hold with the printed $k$, $\lambda$ and $W$; for $q\ge8$ (with
$\lambda=1$ and $W=W(q)$, and for $q\ge32$ with the values at $q=32$) each threshold is the least $n$ for which both hold for some $k$ with
$4\le k\le11$, and the printed $k$ attains it.
\item \emph{Certificates} (\texttt{pcn\_certificates.py -{}-selftest}). For $(q,n)=(2,6)$, $(3,4)$ and $(2,12)$ the order
test of the proof of Proposition~\ref{prop:certificates} is applied to every element of $E$, and the rank test to the
elements it accepts. The order test accepts
exactly $\varphi(q^n-1)$ elements (the program prints this number next to the count), and the rank test accepts $6$,
$16$ and $360$ of them; these are the values of $\PCN_2(6)$, $\PCN_3(4)$ and $\PCN_2(12)$ found in (C1) by a
different program.
\end{enumerate}

\section{Remarks}\label{sec:remarks}
\paragraph{What the proof uses.} Besides elementary algebra and classical facts about finite fields (the normal basis
theorem and Dedekind's lemma in Lemma~\ref{lem:classical}; Vinogradov's formula and the evaluation
$|G(\chi,\psi)|=q^{n/2}$ of Gauss sums \cite[Theorem~5.11]{lidl1997finite}, \cite{ireland1990classical} in
Lemma~\ref{lem:Gamma}), the proof uses two external results, namely the case $n\le q$ of the conjecture
\cite[Theorem~1.3]{garefalakis2019jpaa} and the bound $\log\lcm(1,\dots,\lfloor x\rfloor)<1.03883\,x$ ($x>0$) of Rosser and
Schoenfeld \cite[Theorem~12]{rosser1962approximate} (the left-hand side is Chebyshev's function). In addition, the proof
uses the computations of Section~\ref{sec:comp}. The computations are carried out in exact integer or
rational arithmetic, except that the Python programs use floating point (with $50$ digits in
Proposition~\ref{prop:range} and in double precision elsewhere) in three places: for the real-valued quantities in $(*)$
and in Proposition~\ref{prop:range} (logarithms, $\log_2C_k$, $c(W)$, $g_q$, $\pi^2/6$); for two checks in the program
for Theorem~\ref{thm:W} ($e^{1.03883}<2.83$ and the signs of two derivatives by which it verifies, for real
$y=\log_2o_0\ge12$, the variant $18y^2+5y+7<2^y$ of the inequality of step~(ii)); and for the second bound for $t_k$ in Lemma~\ref{lem:omega}. In the first two places the margins are many
orders of magnitude larger than the rounding errors; in the third, the bound is computed with a safety margin of
$10^{-9}$, so that the computed value is never smaller than the exact bound.

\paragraph{Sharpness.} The weight bounds are attained for $q=2,3$: for $q=2$ and $6\mid n$, the three essential conditions
with a target of order~$3$ (one of type $(1,3)$ and two of type $(2,3)$ in the notation of
Lemma~\ref{lem:weightcodim}) form a component of weight $3\cdot2^{-2}=\frac34$, and for $q=3$ and $8\mid n$ the two
conditions of type $(1,8)$ and the two of type $(2,4)$ form a component of weight $4\cdot3^{-2}=\frac49$.
The uniformity concerns the weights of the
components, not the density $\delta_n$, which for fixed $q$ is not bounded away from $0$: since completely normal
elements are normal, $\delta_n$ is at most the density of normal elements, and the latter is smaller than
$0.63/\sqrt{\log_qn}$ for infinitely many $n$ \cite{frandsen2000density}. In the other direction, $\delta_n$ is bounded below by a negative power of $n$: for fixed $q$,
Theorem~\ref{thm:C}(a) with $W=W(q)$, $M_1=\gcd(n',q-1)\le q-1$ (see the proof of Corollary~\ref{cor:hachenberger})
and Lemma~\ref{lem:analytic}(b) give
\[
\delta_n\ \ge\ \Bigl(1-\frac1q\Bigr)^{q-1}2^{-c(W(q))\,g_q(\log_qn)}\ =\ n^{-\pi^2c(W(q))/(6\log_2q)+o(1)}
\qquad(n\to\infty)
\]
(the exponent is about $-4.39$ for $q=2$; we have not tried to optimise it); for $p\nmid n$ and $n$ large, this is much
stronger than Corollary~\ref{cor:hachenberger}. The bound $M(q,n)\le n$ is attained, e.g.\ for $n\mid q-1$ (every
$\eta\in\mu_n$ is then fixed by $\sigma$, so only level $1$ is valid at $\eta$ and $\rho(\eta)=1$, whence $M(q,n)=n$ by
Lemma~\ref{lem:Mrho}; all $n$ conditions are codimension-one singletons) and, with $n>q$, for $n=q+1$ with
$q\equiv1\pmod4$.

\paragraph{Relation to earlier work.} Lemma~\ref{lem:classical} is classical: it is equivalent to the usual
criterion that $w$ is normal over $\F_Q$ if and only if $\gcd\bigl(x^m-1,\sum_{i<m}\sigma_Q^i(w)x^{m-1-i}\bigr)=1$
\cite[Theorem~2.39]{lidl1997finite}, rewritten in terms of the resolvents. For $m$ a power of $p$, where $\mu_{m'}=\{1\}$ and
$R_1(w)$ is the trace of $w$, it reduces to Perlis' criterion that $w$ is normal if and only if its trace is non-zero
\cite{perlis1942normal}. Resolvent criteria for normality are also used in characteristic~$0$ (see e.g.\
\cite[Proposition~2.3]{jung2012normal}), and for complete normality in \cite{koo2013,das2026pv}.

Hachenberger's theory of completely free elements
\cite{hachenberger1994,hachenberger1996survey,hachenberger1997book,hachenberger2001decomposition,hachenberger2013handbook,hachenberger2020topics}
decomposes $E$ as the direct sum of the cyclotomic modules $\ker\Phi_k(\sigma)^{p^a}$, $k\mid n'$; in the coordinates of
Section~\ref{sec:coord} the module for $k$ consists of the $w$ with $F_\eta(w)(u)=0$ for all $u$ whenever $\ord\eta\ne k$.
An element of such a module is a \emph{complete generator} if it generates the module simultaneously with respect to
all its module structures over intermediate fields \cite[Section~1]{hachenberger2001decomposition},
\cite[Section~5.1]{hachenberger2019computational}. The projections of a completely normal element to these modules are
complete generators of the modules \cite[Proposition~18.7 and the remark after Definition~18.4]{hachenberger1997book}
(see also \cite[Section~2]{hachenberger2019exceptional}, \cite[Section~5.2]{hachenberger2019computational}). The converse (every sum of
complete generators, one for each module, is completely normal) holds if and only if $\gcd(n',\ord_{\rad(n')}(q))=1$
\cite[Theorem~19.11]{hachenberger1997book}, \cite[Theorem~5.4.45]{hachenberger2013handbook} (see also
\cite[Theorem~4.2]{hachenberger2001regular}, \cite[Section~2]{hachenberger2019exceptional}), in particular for regular
pairs, but also, e.g., for the non-regular pair $(2,6)$. For regular pairs, $\CN_q(n)$ is therefore the product of
the numbers of complete generators of the modules. Moreover, complete generation of a non-exceptional module
$\ker\Phi_k(\sigma)^{p^a}$ reduces to generation over the single intermediate field $\F_{q^{\tau_k}}$, and that of an
\emph{exceptional} module \cite[Definition~20.2]{hachenberger1997book} to simultaneous generation over
$\F_{q^{\tau_k}}$ and $\F_{q^{2\tau_k}}$ \cite[Theorem~20.3]{hachenberger1997book},
\cite[Proposition~4.2]{hachenberger2019exceptional}; here $\tau_k$ is the largest integer whose square divides
$\ord_k(q)/\ord_{\rad(k)}(q)$ (the number $\tau(q,k)$ of \cite[(20.2)]{hachenberger1997book}, called the
\emph{central index} in \cite[Definition~4.1]{hachenberger2019exceptional}). The numbers of complete
generators are given explicitly for the non-exceptional modules \cite[Proposition~21.1]{hachenberger1997book}; for the
exceptional ones his monograph gives a lower bound \cite[Proposition~21.2]{hachenberger1997book}, shows in its proof that
this bound is exact if $p\nmid n$ (the case $\pi=1$ in his notation), and conjectures that it is always exact \cite[p.~125]{hachenberger1997book} (not to be confused with his conjectured lower bound,
Corollary~\ref{cor:hachenberger}). Hachenberger later proved this
conjecture and thereby obtained an explicit formula for $\CN_q(n)$ for every regular pair
\cite[Remark~6.1, Section~13 and Theorem~13.1]{hachenberger2019exceptional}. If
$\gcd(n',\ord_{\rad(n')}(q))=1$ (in particular for regular pairs), our components are compatible with this decomposition.
Indeed, if $(d,\xi)$ is essential, every prime $r$ dividing $d'$ divides $\ord\xi$: $r$ divides $n'$ and $\ord_{\ord\xi}(q)$,
every prime of $\ord_{\ord\xi}(q)$ divides $\ord\xi$ or $\ord_{\rad(n')}(q)$, and $r\nmid\ord_{\rad(n')}(q)$ by the
hypothesis. So all
elements of the fibre have order $d'\ord\xi$, and every component lies in a single cyclotomic module. Otherwise
the fibre of a single condition can contain elements of different orders; for $(q,n)=(3,10)$, each of the two essential
conditions at level~$2$ with a target of order~$5$ has a fibre consisting of one element of order~$5$ and one of
order~$10$, so it reads coordinates of both $\ker\Phi_5(\sigma)$ and
$\ker\Phi_{10}(\sigma)$. 

For arbitrary pairs, Hachenberger's Complete Decomposition Theorem
\cite[Sections~2 and~5]{hachenberger2001decomposition} (its sufficiency part is the Decomposition Theorem of
\cite[Section~19]{hachenberger1997book}) decides, for a
generalised cyclotomic module $\ker\Phi_k(\sigma^t)$ (a cyclotomic module in the terminology of
\cite{hachenberger2001decomposition}) and a prime $r\ne p$ dividing $t$ but not $k$, when its splitting into
$\ker\Phi_k(\sigma^{t/r})$ and $\ker\Phi_{kr}(\sigma^{t/r})$ is \emph{agreeable} \cite[Definition~19.1]{hachenberger1997book},
\cite[Definition~2.2]{hachenberger2001decomposition}, i.e.\ every sum of complete generators of the two parts is a
complete generator of the module. Splitting repeatedly in this way, starting from $E=\ker(\sigma^n-1)$ and as long as
some splitting is agreeable, decomposes $E$ into generalised cyclotomic modules such that every sum of complete
generators, one for each summand, is completely normal. By his Uniqueness Theorem the result does not depend on the
order of the steps; following him, we call it the finest agreeable decomposition
\cite[Section~6]{hachenberger2001decomposition}. The number $\CN_q(n)$ is the product of the numbers of complete
generators of its summands \cite[Section~2]{hachenberger2001decomposition},
\cite[Section~5.3, (5.2)]{hachenberger2019computational}. For summands that are not regular in the sense of
\cite[Definition~20.1]{hachenberger1997book}, such as $\ker\Phi_7(\sigma^3)$ for $(2,21)$ below, these numbers can be
computed by the enumeration of \cite[Section~5.3]{hachenberger2019computational}, but we are not aware of a closed
formula for them. Hachenberger and
Hackenberg also showed
that complete normality needs to be tested only over the fields $\F_{q^d}$ with $d$ in their $(q,n)$-essential set
$\mathcal D^*$ \cite[Definitions~3.2.1 and~3.2.3, Proposition~3.2.4]{hachenberger2019computational}, whose construction rests on
\cite[Lemma~15.3 and Theorem~15.5]{hachenberger1997book};
Theorem~\ref{thm:essential} refines this reduction to individual targets (see the paragraph after its proof).

Theorems~\ref{thm:criterion}
and~\ref{thm:product} are closely related to this decomposition theory, but are stated in terms of fibre sums of
partial Lagrange resolvents. The two factorisations of $\CN_q(n)$ need not coincide. In the following example ours is finer, and
Hachenberger's own computation \cite[Example~14.6]{hachenberger1997book} uses the same finer splitting of
$\ker\Phi_7(\sigma^3)$. The pair
$(2,21)$ is not regular
($\ord_{21}(2)=6$), and the summand $\ker\Phi_7(\sigma^3)=\ker(\Phi_7\Phi_{21})(\sigma)$ of its finest agreeable
decomposition, of dimension $18$, splits into two components of dimension $9$ (the total size of the Frobenius orbits
they read, as in Section~\ref{sec:4599}). Each of them reads one orbit of elements of order~$7$ and one of order~$21$, so
the elements of $E$ whose coordinates outside the component vanish form $\ker g(\sigma)h(\sigma)$ for an irreducible
cubic factor $g$ of $\Phi_7$ and an irreducible sextic factor $h$ of $\Phi_{21}$ over $\F_2$; this kernel is not a generalised
cyclotomic module. For each of the two components, exactly $294$ of its $2^9$ possible coordinate vectors satisfy all its
conditions, and Theorem~\ref{thm:product} gives $\CN_2(21)=1\cdot3\cdot294^2=259308$, where $1$ and $3$ are the corresponding numbers for the two remaining components (of dimensions $1$ and $2$, with one condition each). In
\cite[Example~14.6]{hachenberger1997book} the two
$9$-dimensional pieces appear as the $\F_8$-modules $U_{K,f}$, $f\in\{x^3+x+1,x^3+x^2+1\}$, treated by simultaneous
generation over $\F_2$ and $\F_8$ \cite[Theorem~12.2]{hachenberger1997book}. (These numbers, and those for $(3,10)$
above, are reproduced by the program \texttt{examples.py} of Section~\ref{sec:comp}.)

Conversely, for regular pairs his factorisation can be finer than ours. For a non-exceptional module
$\ker\Phi_k(\sigma)^{p^a}$, the number of complete generators given by \cite[Proposition~21.1]{hachenberger1997book} and
\cite[Theorem~13.1]{hachenberger2019exceptional} is the product, over the irreducible factors $f$ of $\Phi_{k/\tau_k}$
over $\F_{q^{\tau_k}}$, of the number of generators over $\F_{q^{\tau_k}}$ of the subspace $\ker f(\sigma^{\tau_k})^{p^a}$;
each such subspace lies in the space $\ker g(\sigma)^{p^a}$ of the elements whose coordinates outside a single
component vanish, where $g$ is the irreducible factor of $\Phi_k$ over $\F_q$ divisible by $f(x^{\tau_k})$, and this
space is the direct sum of $\tau_k$ such subspaces. For $(2,27)$, for instance, where $\tau_{27}=3$, the elements of
order~$27$ form one $q$-orbit and hence one component, of dimension $18$, whereas his count of the complete generators
of $\ker\Phi_{27}(\sigma)$ is $63^3=250047$, with one factor for each of the three quadratic factors of $\Phi_9$ over
$\F_8$; altogether $\CN_2(27)=3\cdot63^4=47258883$. We have not compared the two decompositions
systematically.

As far as we know, what is new here is the description of complete normality over finite fields and of its
independent pieces by fibre sums of partial Lagrange resolvents, and above all the structure theorem (Theorem~\ref{thm:struct}) with the resulting
bound $W(q)$ for the weight of a component, uniform in $n$ (Theorem~\ref{thm:W}). This bound lets the character-sum
argument work uniformly in $n$, for regular and non-regular pairs alike; it also yields
Corollaries~\ref{cor:hachenberger} and~\ref{cor:pcn}. Earlier
approaches to
the conjecture exploit the module structure of $E$ for regular pairs
\cite{hachenberger2001regular,hachenberger2010twopower,hachenberger2019exceptional}, bound the number of completely
normal elements from below by means of a sum over all intermediate fields
\cite{hachenberger2016asymptotic,garefalakis2019jpaa,garefalakis2019dcc} or over an essential set of them
\cite[Proposition~3.2.5]{hachenberger2019computational}, or use translates, by elements of $\F_q$, of
products of completely normal elements of subextensions of coprime degrees \cite{garefalakis2026cmb} (such products are
completely normal by the Reduction Theorem \cite[Section~4]{hachenberger1997book}, which goes back to Blessenohl and
Johnsen \cite{blessenohl1986}; an elementary proof is given in \cite{hachenberger1994}).

\paragraph{Further questions.} \emph{Sieving.} Below the thresholds of Proposition~\ref{prop:range}, the criterion
$(*)$ is checked pair by pair, and the $70$ pairs at which this check leaves less than one bit to spare are settled by
explicit elements (Propositions~\ref{prop:finiteranges} and~\ref{prop:certificates}). In $(*)$, every prime divisor of
$q^n-1$ and every essential condition enters the error term at full cost, through the factors $2^{\omega(q^n-1)}$ and
$L\le2^M$. For the primitive normal basis theorem \cite{lenstra1987primitive}, Cohen and Huczynska gave a proof without
a computer by means of a sieve on the additive side \cite{cohen2003pnbt}. In that proof, every prime divisor of
$q^n-1$ still enters at full cost, but of the irreducible factors of $x^n-1$ only those in a chosen core do; the others
enter only additively, as long as the sum of $q^{-\deg g}$ over these sieved factors $g$ is less than~$1$ (for a sieve
on both sides, see \cite[Section~4]{cohen2010strong}). For complete normality, Garefalakis and Kapetanakis remark that
in their approach such a sieve could be applied only on the multiplicative side \cite[Section~3]{garefalakis2019jpaa},
and they use it on that side in \cite[Section~5.1]{garefalakis2019dcc}. Since distinct components are independent
(Theorem~\ref{thm:product}), whole components of small weight might take the place of the irreducible factors of
$x^n-1$ on the additive side. How far can such a sieve lower the thresholds? Can it settle every pair with $n>q$ by the
criterion alone, without checking pairs individually, and also the pairs with $n\le q$ that Garefalakis and
Kapetanakis treat by a computer calculation or by the examples of Morgan and Mullen \cite{garefalakis2019jpaa}?

\emph{Closed formulae.} Theorem~\ref{thm:product} writes $\CN_q(n)$ as $q^n$ times the product of the
probabilities $\Pr[G_{\cC}]$ that a random element passes the individual components, and Corollary~\ref{cor:hachenberger}
uses only lower bounds for these factors. For a given pair, each factor can be computed by enumerating the coordinate
vectors of its component, as for $(2,21)$ above; what we ask for are closed formulae that hold for all pairs. They
would extend Hachenberger's formula for regular pairs \cite[Theorem~13.1]{hachenberger2019exceptional} (for the
non-exceptional modules already in \cite[Proposition~21.1]{hachenberger1997book}, and for $p\nmid n$ obtainable from
\cite[Theorem~19.10 and Section~21]{hachenberger1997book}), and by Corollary~\ref{cor:pcn} they would make the main term $\theta\,\CN_q(n)$ of the
asymptotic formula for $\PCN_q(n)$ explicit.

\emph{Small fields.} For small fields the error bounds of Corollary~\ref{cor:pcn}(a) can be trivial (for $(2,8)$
both equal $\theta\,\CN_2(8)$), and the deviation is not negligible: $\PCN_2(8)=56$ (counted in (C1) of
Section~\ref{sec:checks}), whereas $\CN_2(8)=128$ by the equality case of Corollary~\ref{cor:hachenberger}, so that
$\theta\,\CN_2(8)=128^2/255\approx64.25$. In particular, the analogue $\PCN_q(n)\ge\varphi(q^n-1)(1-\frac1q)^{n'}$ of
Corollary~\ref{cor:hachenberger} fails for $(2,8)$, as $\varphi(255)/2=64>56$. A natural question is how large the
relative deviation $|\PCN_q(n)/(\theta\,\CN_q(n))-1|$ can be for small $q^n$, and for which pairs this analogue
holds. Exact values of $\PCN_q(n)$ for small pairs are given in \cite{morgan1996} and, together with those of
$\CN_q(n)$, in \cite[Section~1, Computational Result~3]{hachenberger2019computational}; for many extensions of
prime-power degree, lower bounds for $\PCN_q(n)$ are given in \cite{hachenberger2010twopower}.

\emph{Refinements.} In analogy with the prescribed-trace versions of the primitive normal basis theorem
\cite{cohen1999prescribed,mazumder2026traces}, one may ask whether the component structure can be combined with trace
conditions, so as to obtain primitive completely normal elements with suitably prescribed traces; for many extensions
of prime-power degree, Hachenberger \cite[Definition~12, Theorem~13 and Sections~3--4]{hachenberger2010twopower} shows
that every completely normal element of a suitable intermediate field is the trace of a primitive completely normal
element. Likewise, the strong
primitive normal basis theorem \cite{cohen2010strong} suggests asking for which pairs $(q,n)$ there is a primitive
element $w$ such that both $w$ and $w^{-1}$ are completely normal.

\section*{Acknowledgements}
This work was carried out during an exchange semester at Nanyang Technological University (NTU), Singapore; NTU
kindly provided hospitality and access to its library. I am grateful to Anna-Lena Horlemann (University of St.\ Gallen)
for her encouraging words and for her helpful comments on the presentation of an earlier draft. I thank
Gary Greaves (NTU) for many conversations about the use of AI in mathematical proofs, for valuable comments on the
manuscript and for his support. My thanks also go to Silvio Takafumi Zimmermann (University of Zurich) for reading a draft
and for his critical questions about it, and to Joachim Rosenthal (University of Zurich) for his steady support throughout my
studies. The formalisation builds on Mathlib, and I thank its community. The exchange was supported by a
Swiss-European Mobility Programme (SEMP) scholarship awarded by the University of Zurich and funded by the Swiss
Confederation through Movetia. Any remaining errors are my own.

\section*{Use of AI tools}
The research questions, the overall direction and all final decisions of this work are the author's; the arguments
were generated by AI language models (Claude Fable 5.1 and Claude Opus 5.5, Anthropic) under the author's direction.
In particular, the author posed the questions that led to extending the known cases of the conjecture to further pairs
$(q,n)$, and directed the work on the pair $(2,4599)$ (Section~\ref{sec:4599}), which the first approaches could not
settle, and on generalising the argument found for it. The text of this paper was drafted by the same AI models under
the author's direction and heavily revised by the author. The author has checked every step of the final proof by hand
and has verified the main citations against the original sources. The author takes full responsibility for the
correctness of the results.

The Python programs (Section~\ref{sec:comp}) were written by AI under the author's direction; the author has run them
and confirmed their output, but has not reviewed their code line by line. In addition, at the author's request, AI models reviewed the mathematics and reproduced all
computations with separately written code, in many separate sessions. The Lean~4 formalisation was
generated entirely by AI\@. The author has built it on his own computer and run its audit files, confirming that it
relies only on the axioms and the trusted base set out in Appendix~\ref{app:lean}. He has also verified that the formal
statements of the main theorem and of Corollaries~\ref{cor:hachenberger} and~\ref{cor:pcn}, together with the
definitions they use, faithfully capture these results and that the two cited results used as axioms are stated
correctly; he has not reviewed the rest of the formalisation.
\appendix

\section{The Lean formalisation}\label{app:lean}
This appendix describes the formalisation mentioned in Sections~\ref{sec:intro} and~\ref{sec:comp}. It is not needed
for reading the rest of the paper.

\subsection*{What is formalised}
Our proof of the case $n>q$ and of Corollaries~\ref{cor:hachenberger} and~\ref{cor:pcn}, including every
computer-assisted step, has been formalised in Lean~4 \cite{demoura2021lean4} (v4.34.1) with Mathlib
\cite{mathlib2020,mathlib4repo}; the documentation of the formalisation uses the statement numbers of this paper (a few
file, namespace, section and command names keep those of an earlier draft; the README of the
repository lists them). The main theorem is
\begin{center}\small
\begin{tabular}{@{}l@{}}
\texttt{theorem morgan\_mullen (F E : Type*) [Field F] [Field E] [Finite E] [Algebra F E] :}\\
\texttt{\ \ \ \ HasMMProperty F E}
\end{tabular}
\end{center}
where \texttt{HasMMProperty F E} says that $E$ contains a generator $w$ of $E^\times$ that is normal over every intermediate
field $K$ of $E/F$, normality over $K$ meaning that the conjugates $gw$, $g\in\mathrm{Gal}(E/K)$, form a $K$-basis of $E$.
Among the results taken from Mathlib are the normal basis theorem (\texttt{Mathlib/FieldTheory/Galois/NormalBasis.lean},
by Crim, Liu, Springer and Xu), Dedekind's lemma and the identity $G(\chi,\psi_b)\,G(\chi^{-1},\psi_{-b})=q^n$
($\chi\ne1$, $b\ne0$) for the Gauss sums of Lemma~\ref{lem:Gamma}. Corollary~\ref{cor:hachenberger} is formalised as
the theorems \texttt{hachenberger\_lower\_bound} and \texttt{hachenberger\_eq\_iff} (file \texttt{Sec8Corollary.lean}), in
which $\CN_q(n)$ is the number of elements of $E$ that are normal over every intermediate field.
Corollary~\ref{cor:pcn} is formalised in the file \texttt{Sec8PCN.lean}, with $\PCN_q(n)$ the number of these elements
that generate $E^\times$. Part~(a) is \texttt{pcn\_theta\_cn}, whose two inequalities are the theorems
\texttt{pcn\_\allowbreak sub\_\allowbreak theta\_\allowbreak cn\_\allowbreak abs\_\allowbreak le} and
\texttt{pcn\_\allowbreak error\_\allowbreak le\_\allowbreak star}; part~(b), in the form given there with
$|E|=q^n\ge X$, is \texttt{pcn\_\allowbreak ratio\_\allowbreak tendsto\_\allowbreak one}. In the formal statement of part~(a),
$M$, $S$ and $L$ are defined through a combinatorial model of the essential conditions (file \texttt{Model.lean});
checking this statement therefore also involves these definitions.

\subsection*{Axioms and trusted base}
The formalisation contains no \texttt{sorry} (Lean's placeholder for an omitted proof). The command
\texttt{\#print axioms} (reference output in the file \texttt{AXIOMS\_AUDIT.txt} of the repository) shows that, besides
the three axioms on which all of Mathlib rests (\texttt{propext}, \texttt{Classical.choice}, \texttt{Quot.sound}), the
main theorem depends on only four further axioms. Two of them are the cited results: \texttt{garefalakis\_kapetanakis}
(the case $n\le q$ \cite[Theorem~1.3]{garefalakis2019jpaa}) and \texttt{rosser\_schoenfeld\_psi} (the bound
$\log\lcm(1,\dots,\lfloor x\rfloor)<1.03883\,x$ for $x>0$ of Rosser and Schoenfeld \cite[Theorem~12]{rosser1962approximate}, whose
left-hand side is Chebyshev's function). The other two, \texttt{Lean.ofReduceBool} and \texttt{Lean.trustCompiler}
\cite{leancore_reducebool,leanref_axioms}, enter because the large finite computations are carried out by $25$ Boolean
checkers that are evaluated outside Lean's kernel (its proof checker). These checkers are translated by Lean's compiler
and run by Lean's interpreter, which calls natively compiled library code such as arbitrary-precision arithmetic. The
theorems stating that they are sound (a run returning \texttt{true} implies the mathematical statement), by contrast,
are checked by the kernel. Smaller computations, for instance for $\kappa_3$ and
$\kappa_4$ in Proposition~\ref{prop:Msmall}, are checked by the kernel directly. This is the familiar method of proof by
reflection \cite{boutin1997reflection,gregoire2005ring}, with
evaluation by compiled code as in \cite{gregoire2002compiled,boespflug2011full}; like Lean's \texttt{native\_decide},
which is not used here, it trusts Lean's compiler and interpreter \cite{ullrich2019beans} together with the native
implementations of the library functions used. (In Lean~v4.34.1, \texttt{Lean.ofReduceBool} and \texttt{Lean.trustCompiler} are
marked as deprecated in favour of the mechanism that \texttt{native\_decide} now uses. That mechanism records each
native evaluation of a Boolean term $c$ as an auxiliary axiom $c=\texttt{true}$ and relies on the same compiler and
interpreter. The toolchain
is pinned, so the deprecation does not affect the formalisation.) The audit file \texttt{AuditDeep.lean}, run by the
continuous integration on every push, lists the $25$ checkers and verifies that no constant of this project used by them
carries \texttt{implemented\_by} or \texttt{extern} (attributes that let the executed code differ from the definition to
which the soundness proofs refer). The theorems for Corollary~\ref{cor:hachenberger}, for Corollary~\ref{cor:pcn}(a) and its
second inequality, and for Corollary~\ref{cor:pcn}(b) depend on the same axioms as the main theorem except
\texttt{garefalakis\_kapetanakis}; the first inequality of Corollary~\ref{cor:pcn}(a) depends only on the three axioms
of Mathlib. The proofs of Corollary~\ref{cor:hachenberger} and of
the second inequality of Corollary~\ref{cor:pcn}(a) evaluate only the $6$ checkers for Theorem~\ref{thm:W}, and that of
Corollary~\ref{cor:pcn}(b) in addition the $5$ checkers for Proposition~\ref{prop:Msmall}.

Data produced
outside Lean (factorisations; the numbers $\hat O$, $\hat D$, $\hat M$ certifying multiples of $O_\infty$ and $D_\infty$
for Theorem~\ref{thm:W}, described below; the $70$ certificates of Proposition~\ref{prop:certificates}) and the output
of unverified Lean routines (the labellings for step~(i) of the proof of Theorem~\ref{thm:W}, the inverse matrices in
Proposition~\ref{prop:certificates}) are only checked, never trusted. In particular, every prime factor used in
Proposition~\ref{prop:certificates} is certified by a Pratt certificate \cite{pratt1975every}, whose check rests on
Mathlib's formalisation of Lucas' test (for the closely related Pocklington certificates in proof assistants, see
\cite{caprotti2001primality,gregoire2006pocklington}), and the prime factors
of $q^o-1$ used in Proposition~\ref{prop:Msmall} (at most $13$ digits) are checked by trial division. In sum, one has to
trust Lean's kernel, the axioms named above (in particular the two cited results and their formal statements), Lean's
compiler and interpreter for the $25$ checkers (with the native library code they call), and the formal statement of the main theorem, which uses four short
definitions (file \texttt{Basic.lean}). Neither the Python programs nor any data are trusted, and the correctness of
the formal proof does not depend on how its Lean code was written (see the statement on the use of AI tools before this appendix).
External type checkers such as Lean4Lean \cite{carneiro2024lean4lean} do not implement \texttt{Lean.reduceBool}; they
could re-check the rest of the formal proof only if the $25$ evaluations were recorded as auxiliary axioms instead (the
mechanism mentioned above).

\subsection*{Differences from the proofs in the paper}
The formalisation follows the proofs in the paper, with the same constants, tables, ranges and thresholds, except
where noted below. One argument
takes a different route. Lemma~\ref{lem:Gamma} is proved without the explicit coefficients of Vinogradov's formula: after
identifying $E^\times$ with $\Z/(q^n-1)$ by a discrete logarithm, the indicator function of $P$ becomes
$\prod_{\ell\mid q^n-1}(1-1_{\ell\Z/(q^n-1)})$, and the $\ell^1$-argument of the proof of Theorem~\ref{thm:counting} is
carried out on $\Z/(q^n-1)$, whose additive characters correspond to the multiplicative characters of $E$. The coefficients of this character expansion
then have $\ell^1$-norm at most $\prod_{\ell\mid q^n-1}2(1-\frac1\ell)=2^\omega\theta$, the trivial character has coefficient
$\theta$, and the other characters give Gauss sums of absolute value $q^{n/2}$. Some elementary steps in
Lemma~\ref{lem:coord}(a), Theorem~\ref{thm:criterion}, the paragraph after Definition~\ref{def:cond},
Theorem~\ref{thm:product} and Corollary~\ref{cor:pcn}(b) are arranged differently; see the README of the
formalisation. For instance, the identity
$R_{\hat\xi}=\frac1{d'}C_{d,k}$ is obtained by summing a geometric series over the fibre, and in Corollary~\ref{cor:pcn}(b)
the exponent $\omega+M+c(W)S-\frac n2\log_2q$ is bounded directly, without the margin $\Delta$. The proof of Corollary~\ref{cor:hachenberger} differs in three elementary steps: it uses only
$M_1\le\#\{\eta\in\mu_{n'}:o(\eta)=1\}$, which suffices; it proves $(\dagger)$ for $q\ge3$ in the form
$(1-W)^{s(\eta)/W}\ge1-\frac1q$ by Bernoulli's inequality; and in the case of equality with $q=2$ it obtains
strictness from the strict union bound in a component that contains two distinct conditions, without determining $n'$.
Finally,
Proposition~\ref{prop:Mlocal} is formalised for rational $\vartheta$, $\Theta$, which suffices for its
applications.

\subsection*{Computations and definitions handled differently}
Where the Python programs and the formalisation carry out a computation differently, or the formalisation uses a
different definition, the differences are as follows.
\begin{itemize}[itemsep=1pt]
\item Proposition~\ref{prop:Msmall}: the formalisation checks factorisations of $q^o-1$ computed outside Lean (the
product is $q^o-1$, and each factor is prime by trial division) instead of factoring with \texttt{sympy}.
\item Theorem~\ref{thm:struct}: the formalisation sets $\ord_K(q)=0$ if $\gcd(q,K)>1$ and does not assume that $K$ is
$p$-free, since with this convention condition~(ii) forces $p\nmid K$ unless $K=0$.
\item Closure~(B) in Section~\ref{sec:struct}: the formalisation defines $D_\infty$ as
$D_i$ with \mbox{$i=2(\lfloor\log_2D_0\rfloor+1)^2+1$}; the sequence is constant from this index on.
\item Theorem~\ref{thm:W}: in the evaluation of $\Wcl$, the formalisation replaces $O_\infty$ and $D_\infty$ by multiples.
This is allowed because $\Sigma(O,D)$ increases when $O$ and $D$ are replaced by multiples (paragraph \emph{Evaluation
of $\Wcl$ and $\Wcr$} in the proof). It is done because multiples can be certified by congruences alone, without
computing any order (the Python
program computes $O_\infty$ and $D_\infty$ directly): if $o_0\mid\hat O$, $q^{\hat O}\equiv1\pmod{K_0}$,
$\hat O_{p'}\mid\hat D$,
$q^{\hat M}\equiv1\pmod{\hat D}$, $\hat M_{p'}\mid\hat D$ and $\hat M\mid\hat O$, then $D_\infty\mid\hat D$,
$O_\infty\mid\hat O$ and $N^{**}\mid\lcm(K_0,\hat D,\hat O)$. Indeed $\ord_{K_0}(q)\mid\hat O$ gives $O_0\mid\hat O$ (every term of
$O_0$ other than $o_0$ divides $\ord_{K_0}(q)$), hence $D_0\mid\hat D$; if $D_i\mid\hat D$ then $\ord_{D_i}(q)\mid \ord_{\hat D}(q)\mid\hat M$, so
$D_{i+1}\mid\hat D$; and $O_\infty=\lcm(O_0,\ord_{D_\infty}(q))\mid\lcm(\hat O,\hat M)=\hat O$.
In step~(i) this also certifies that $N^{**}$ divides $42$, $378$, $16$ and $320$ for the four pairs $(q,o_0)$ of that
step. For the labellings of
step~(i), the formalisation verifies the two defining properties for a labelling computed by an unverified routine,
while the Python program uses the components themselves. The sharper bounds $0.032$ ($q=2$, $o_0\ge9$) and $0.051$
($q=3$, $o_0\ge7$) noted at the end of the proof are the bounds proved in the formalisation.
\item Proposition~\ref{prop:range}: the formalisation evaluates the inequalities with proved rational bounds (the Python
program with $50$ digits); for $q=17$ it uses $k=7$, which gives the same threshold $29$.
\item Proposition~\ref{prop:finiteranges}: for $q=3$ the formalisation checks exactly the pairs of the proposition (the
Python program all $n<131200$). It computes $t_k$, when $\varphi(k)\lfloor\log_2q\rfloor\le3000$, by a sieve (proving
only that the computed value is at least $t_k$), and otherwise the second bound of
Lemma~\ref{lem:omega} with $\log_2q$ and $c_q$ replaced by proved rational upper bounds and $\log_2(\lcm(2,k)+1)$ by a
proved rational lower bound. It verifies $(*)$ with the bound of Lemma~\ref{lem:omega} (with these values for $t_k$)
plus $1$ in place of $\omega(q^n-1)$, i.e.\ with the margin of one bit required in
Proposition~\ref{prop:finiteranges}, in exact rational arithmetic: each $q^{-o}$
in $S$ is replaced by an upper bound that is a multiple of $2^{-64}$, $\log_2q$ and $c(W)$ are replaced by proved
rational bounds, and $\log_2(q^n-1)\ge n\log_2q-\frac1{1000}$ is used, which holds since $q^n\ge2048$ for every accepted
pair.
\item Proposition~\ref{prop:certificates}: in the formalisation $f$ is monic of degree $m$ by construction, and the list
of primes is computed outside Lean and only checked. Rank $m$ is certified by checking an inverse matrix computed by an
unverified routine; in this check,
$\gamma$ is computed as $\prod_{0\le t<n/d}\sigma^{dt}(w)$, which is the same element, and only $\gamma^{q^d}=\gamma$ is
checked, which suffices for the implication from rank $m$ to normality.
\item The formalisation uses no floating point (Section~\ref{sec:remarks}, \emph{What the proof uses}): all such
steps use proved rational bounds instead.
\end{itemize}

\subsection*{Statements that are not formalised}
Statements given only for illustration or information are not formalised. These include the values quoted for information in the
proof of Theorem~\ref{thm:W} (among them the exact values of $N^{**}$ and $\Wcl$ and the exact maxima), the
identification of the closure~(B) with the least admissible $K$ in Section~\ref{sec:struct}, the margins quoted after
the proof of Proposition~\ref{prop:finiteranges}, the note after the proof of Proposition~\ref{prop:certificates} on the choice of the
$70$ pairs, the remarks following the proof of Corollary~\ref{cor:pcn}, Remark~\ref{rem:pcnphi}, the paragraph \emph{Sharpness} in Section~\ref{sec:remarks} (attainment of $W(2)$, $W(3)$ and
of $M(q,n)\le n$, and the lower bound for $\delta_n$), the comparisons with earlier work (after the proof of
Theorem~\ref{thm:essential}, in Section~\ref{sec:counting}, in the remark after Theorem~\ref{thm:main} and in
Section~\ref{sec:remarks}), the example $(2,8)$ in Section~\ref{sec:remarks}, the worked examples (in Sections~\ref{subsec:idea}, \ref{sec:coord}, \ref{sec:product}, \ref{sec:bounds},
\ref{sec:4599} and~\ref{sec:remarks}) and the checks (C1)--(C4) of Section~\ref{sec:checks}.

\bibliographystyle{amsplain}
{\raggedright
\bibliography{references}}

\end{document}